\newtheorem{theorem}{Theorem}
\newtheorem{corollary}[theorem]{Corollary}
\newtheorem{lemma}[theorem]{Lemma}
\newtheorem{prop}[theorem]{Proposition}
\theoremstyle{definition}
\theoremstyle{remark}
\newtheorem{rem}{Remark}
\numberwithin{equation}{section}
\numberwithin{theorem}{section}
\numberwithin{defn}{section}
\DeclareMathOperator{\sg}{sg}
\begin{document}
\title[Representations of mock theta functions]
 {Representations of mock theta functions}

\author{Dandan Chen and Liuquan Wang}
\address{School of Mathematical Sciences, East China Normal University, Shanghai, People's Republic of China}
\email{ddchen@stu.ecnu.edu.cn}

\address{School of Mathematics and Statistics, Wuhan University, Wuhan 430072, Hubei, People's Republic of China}
\email{wanglq@whu.edu.cn;mathlqwang@163.com}

\subjclass[2010]{05A30, 11B65, 33D15, 11E25, 11F11, 11F27, 11P84}

\keywords{Mock theta functions; Hecke-type series; Appell-Lerch series; ${_3}\phi_{2}$ summation formulas}


\begin{abstract}
Motivated by the works of Liu, we provide a unified approach to find Appell-Lerch series and Hecke-type series representations for mock theta functions. We establish a number of parameterized identities with two parameters $a$ and $b$. Specializing the choices of $(a,b)$, we not only give various known and new representations for the mock theta functions of orders 2, 3, 5, 6 and 8, but also present many other interesting identities. We find that some mock theta functions of different orders are related to each other, in the sense that their representations can be deduced from the same $(a,b)$-parameterized identity. Furthermore, we introduce the concept of false Appell-Lerch series. We then express the Appell-Lerch series, false Appell-Lerch series and Hecke-type series in this paper using the building blocks $m(x,q,z)$ and $f_{a,b,c}(x,y,q)$ introduced by Hickerson and Mortenson, as well as $\overline{m}(x,q,z)$ and $\overline{f}_{a,b,c}(x,y,q)$ introduced in this paper. We also show the equivalences of our new representations for several mock theta functions and the known representations.
\end{abstract}

\maketitle
\tableofcontents

\section{Introduction}\label{sec:1}
In his last letter to Hardy dated on January 12, 1920, Ramanujan gave a list of 17 functions which he called ``mock theta functions''. He defined each function as a $q$-series in Eulerian form and separated them into four classes: one class of third order, two of fifth order, and one of seventh order. Ramanujan also stated identities satisfied by mock theta functions of the same order. In his lost notebook \cite{lostnotebook}, identities for mock theta functions of the sixth and tenth orders were recorded. Since then, mock theta functions have attracted the attention of many mathematicians.

It was some time before researchers understood the modularity of mock theta functions. From the Eulerian forms of mock theta functions, it is difficult to observe any significant transformation properties. Therefore, finding alternative representations for mock theta functions becomes the first task for studying their modular behaviors. With the contribution of many works,  Watson \cite{Watson,Watson-2}, Andrews \cite{Andrews-TAMS}, Andrews and Hickerson \cite{Andrews-Hickerson}, Berndt and Chan \cite{Berndt-Chan}, Choi \cite{Choi-1,Choi-2,Choi-3,Choi-4}, Garvan \cite{Garvan-arXiv}, Gordon and McIntosh \cite{Gordon-McIntosh}, Hickerson \cite{Hickerson}, and Zwegers \cite{Zwegers-Rama}, to name a few, we now know that mock theta functions usually admit at least one of the two kinds of representations: Appell-Lerch series or Hecke-type series. A complete list of Appell-Lerch series representations for classical mock theta functions can be found in  \cite[Section 5]{Hickerson-Mortenson}.

Appell-Lerch series are series of the form
\begin{align}
\sum_{n=-\infty}^{\infty}\frac{(-1)^{\ell n}q^{\ell n(n+1)/2}b^n}{1-aq^n}. \label{Appell-defn}
\end{align}
Here and throughout this paper, we assume that $|q|<1$. After multiplying the series \eqref{Appell-defn} by the factor $a^{\ell/2}$ and viewing it as function in the variables $a,b$ and $q$, it is also usually refereed as a level $\ell$ Appell function. This kind of series was first studied by Appell \cite{Appell} and Lerch \cite{Lerch}.

A series is of Hecke-type if it has the following form:
\begin{eqnarray*}
\sum_{(m,n)\in D}(-1)^{H(m,n)}q^{Q(m,n)+L(m,n)},
\end{eqnarray*}
where $H$ and $L$ are linear forms, $Q$ is a quadratic form, and $D$ is some subset of $ \mathbb{Z}\times \mathbb{Z}$ such that $Q(m,n)\geq 0$ for every $(m,n)\in D$. Historically $Q(m,n)$ was assumed to be indefinite (see \cite{Andrews-Hecke} for example). Here we allow $Q(m,n)$ to be definite as well. The following classical identity of Jacobi expresses an infinite product as a Hecke-type series
\cite[Eq.\ (3.15)]{Andrews-Hecke}:
\begin{align*}
(q;q)_{\infty}^{3}=\sum_{n=-\infty}^\infty\sum_{m\geq|n|}(-1)^mq^{(m^2+m)/2}.
\end{align*}
Here and later we use the standard $q$-series notation:
\begin{align*}
&(x;q)_0:=1, \quad (x;q)_n:=\prod_{k=0}^{n-1}(1-xq^k), \\
&(x)_\infty=(x;q)_\infty:=\prod_{k=0}^\infty(1-xq^k), \quad |q|<1.
\end{align*}

Motivated by Jacobi's identity, Hecke \cite{hecke59} systematically investigated theta series related to indefinite quadratic forms. For instance, Hecke \cite[p. 425]{hecke59} found that
\begin{eqnarray*}
\sum_{n=-\infty}^\infty\sum_{|m|\leq n/2}(-1)^{m+n}q^{(n^2-3m^2)/2+(n+m)/2}=\prod_{n=1}^\infty(1-q^n)^2,
\end{eqnarray*}
which is originally due to Rogers \cite[p. 323]{Rogers94}. Kac and Peterson \cite{Kac} illustrated ways for proving  Hecke-type identities using affine Lie algebra.

Appell-Lerch series and Hecke-type series played important roles in $q$-series. It serves as bridges between mock theta functions and the theory of modular forms. For example, for the third order mock theta function \footnote{Throughout this paper, to avoid confusion, we use a superscript $(n)$ to indicate that a mock theta function is of order $n$.}
\begin{align}
f^{(3)}(q)=\sum_{n=0}^{\infty}\frac{q^{n^2}}{(-q;q)_{n}^2},
\end{align}
Watson \cite{Watson} found the following Appell-Lerch series representation:
\begin{align}
f^{(3)}(q)=\frac{2}{(q;q)_{\infty}}\sum_{n=-\infty}^{\infty}\frac{(-1)^nq^{\frac{3}{2}n^2+\frac{1}{2}n}}{1+q^n}. \label{intro-f(q)}
\end{align}
For the fifth order mock theta function
\begin{align}
f_0^{(5)}(q)&=\sum_{n=0}^{\infty}\frac{q^{n^2}}{(-q;q)_{n}}, \label{f0-defn}
\end{align}
Andrews \cite{Andrews-TAMS} showed that $f_0^{(5)}(q)$ has a Hecke-type series expression as:
\begin{align}
f_0^{(5)}(q)&=\frac{1}{(q;q)_{\infty}}\sum_{n=0}^{\infty}\sum_{|j|\leq n}(-1)^jq^{\frac{5}{2}n^2+\frac{1}{2}n-j^2}(1-q^{4n+2}). \label{intro-f0}
\end{align}
By using the Appell-Lerch series or Hecke-type series expressions of mock theta functions such as \eqref{intro-f(q)} and \eqref{intro-f0}, Zwegers \cite{Zwegers} successfully fit mock theta functions into the theory of modular forms.  For more detailed introduction to the developments of mock theta functions, we refer the reader to the survey of Gordon and McIntosh \cite{Gordon-McIntosh-Survey}, the paper of Hickerson and Mortenson \cite{Hickerson-Mortenson}, the recent books of Andrews and Berndt \cite{lost-notebook5}, and Bringmann et al. \cite{BFOR} as well as the references listed there.

There are several ways for establishing Appell-Lerch series or Hecke-type series expressions for mock theta functions. To deduce Appell-Lerch series representations of third order mock theta functions, Watson used a transformation formula connecting a terminated ${}_{8}\phi_{7}$ series to a terminated ${}_{4}\phi_{3}$ series.  Andrews \cite{Andrews-TAMS} used Bailey chain theory to produce Hecke-type series expressions for the fifth and seventh order mock theta functions. Liu \cite{Liu2002} derived some $q$-series expansion formulas and gave new proofs to \eqref{intro-f0}. In a series of works, Liu \cite{Liu2013Rama,Liu2013IJNT} established some transformation formulas for $q$-hypergeometric series and thereby proved many interesting Hecke-type identities such as (see \cite[Proposition 1.11]{Liu2013IJNT})
\begin{align}\label{intro-Liu-eq}
\sum_{n=0}^{\infty}\frac{(q;q^2)_{n}q^n}{(q^2;q^2)_{n}}=\frac{(q;q^2)_{\infty}}{(q^2;q^2)_{\infty}}\sum_{n=0}^{\infty}\sum_{j=-n}^{n}(-1)^{n+j}q^{n^2+n-j^2}.
\end{align}

Among a number of nice transformation formulas of Liu, the following one has shown its power in establishing Hecke-type identities (see \cite[Theorem 1.7]{Liu2013Rama} or \cite[p.\ 2089]{Liu2013IJNT}).
\begin{theorem}\label{thm-main}
For $\max \{|uab/q|, |ua|, |ub|, |c|, |d|\}<1$, we have \footnote{For the convergence, we only need to assume that $|uab/q|<1$. We require $\max\{|ua|,|ub|,|c|,|d|\}<1$ so that the denominator of each term appearing in the identity does not vanish.}
\begin{align}
&\frac{(uq,uab/q;q)_\infty}{(ua,ub;q)_\infty}{}_{3}\phi_{2}\bigg(\genfrac{}{}{0pt}{}{q/a,q/b,v}{ c, d};q,\frac{uab}{q}\bigg) \nonumber \\
=&\sum_{n=0}^{\infty}\frac{(1-uq^{2n})(u,q/a,q/b;q)_n}{(1-u)(q,ua,ub;q)_n}(-uab)^nq^{(n^2-3n)/2}\times {}_{3}\phi_{2}\bigg(\genfrac{}{}{0pt}{}{q^{-n},uq^{n},v}{c,d};q,q\bigg). \label{thm-main-eq}
\end{align}
\end{theorem}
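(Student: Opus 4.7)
My plan is to reduce this transformation to Rogers' classical non-terminating very-well-poised ${}_6\phi_5$ summation formula
$$\sum_{n=0}^\infty \frac{(1-aq^{2n})(a,b,c;q)_n}{(1-a)(q,aq/b,aq/c;q)_n}\left(-\frac{aq}{bc}\right)^n q^{\binom{n}{2}} = \frac{(aq,\,aq/bc;q)_\infty}{(aq/b,\,aq/c;q)_\infty}$$
by expanding the inner ${}_3\phi_2$ on the right-hand side and swapping the order of summation. Observe that under $(a,b,c) \to (u,q/a,q/b)$ Rogers' sum already reproduces the infinite-product prefactor on the left-hand side, so the $v=0$ case of Theorem~\ref{thm-main} is exactly Rogers' identity; the task is to upgrade to general $v$.

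Concretely, I would write the inner series on the right as
$${}_3\phi_2\bigg(\genfrac{}{}{0pt}{}{q^{-n},uq^n,v}{c,d};q,q\bigg) = \sum_{k=0}^n \frac{(q^{-n};q)_k (uq^n;q)_k (v;q)_k}{(q,c,d;q)_k}q^k,$$
interchange the order of summation (justified under $|uab/q|<1$ together with standard $q$-Pochhammer estimates), and substitute $n=m+k$ in the resulting inner sum over $n\ge k$. Using
$$(q^{-n};q)_k = \frac{(q;q)_n}{(q;q)_{n-k}}(-1)^k q^{\binom{k}{2}-nk},\qquad (uq^n;q)_k = \frac{(u;q)_{n+k}}{(u;q)_n},$$
together with $(x;q)_{m+k}=(x;q)_k(xq^k;q)_m$ and $(u;q)_{m+2k}=(u;q)_{2k}(uq^{2k};q)_m$, and pulling out of the $m$-sum all $k$-dependent Pochhammers and $q$-powers, the inner sum takes the very-well-poised form
$$\sum_{m=0}^\infty \frac{(1-u^*q^{2m})(u^*,q/a^*,q/b^*;q)_m}{(1-u^*)(q,u^*a^*,u^*b^*;q)_m}(-u^*a^*b^*)^m q^{(m^2-3m)/2}$$
with $u^*=uq^{2k}$, $a^*=aq^{-k}$, $b^*=bq^{-k}$; crucially $u^*a^*b^*=uab$, which is independent of $k$ and ensures the convergence hypothesis of Rogers' sum continues to hold.

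Next I would apply Rogers' sum to evaluate the inner sum as $\frac{(uq^{2k+1},\,uab/q;q)_\infty}{(uaq^k,\,ubq^k;q)_\infty}$ and combine it with the $k$-dependent prefactor previously extracted. The key simplifications are $(u;q)_{2k}(1-uq^{2k})=(u;q)_{2k+1}$, $(u;q)_{2k+1}/(1-u)=(uq;q)_{2k}$, and $(uq;q)_{2k}(uq^{2k+1};q)_\infty = (uq;q)_\infty$; moreover the shifted tails $(uaq^k;q)_\infty = (ua;q)_\infty/(ua;q)_k$ and its $b$-analogue force $(ua;q)_k$ and $(ub;q)_k$ to cancel completely. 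What remains is
$$\frac{(uq,\,uab/q;q)_\infty}{(ua,ub;q)_\infty}\sum_{k=0}^\infty \frac{(q/a,q/b,v;q)_k}{(q,c,d;q)_k}\left(\frac{uab}{q}\right)^k,$$
using that $q^k\cdot(uab/q^2)^k = (uab/q)^k$, which is precisely the left-hand side.

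The main obstacle is the $q$-bookkeeping in the substitution $n=m+k$: one must verify carefully that the combined exponent $(n^2-3n)/2+\binom{k}{2}-nk$ collapses to $(m^2-3m)/2-2k$, and that the very-well-poised factor $(1-uq^{2n})$ matches the pattern $(1-u^*q^{2m})/(1-u^*)$ of Rogers' sum after the substitution $u^*=uq^{2k}$ (which requires absorbing the compensating $(1-uq^{2k})$ into the $k$-prefactor). Once this rearrangement is executed, the application of Rogers' identity and the cascade of cancellations are purely mechanical.
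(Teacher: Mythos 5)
Your proof is correct, but it follows a genuinely different route from the paper's. The paper derives Theorem \ref{thm-main} in two lines: it specializes Theorem \ref{thm-key}, which in turn is obtained by choosing a particular sequence $A_n$ in Liu's general expansion formula (Lemma \ref{Liu-An-lem}, quoted from the literature without proof), so the paper's argument is essentially a citation to an external black box. You instead expand the inner terminated ${}_3\phi_2$, interchange the order of summation, shift $n=m+k$, and recognize the inner $m$-sum as the $d\to\infty$ limiting case of Rogers' nonterminating very-well-poised ${}_6\phi_5$ summation with base point $(u^*,a^*,b^*)=(uq^{2k},aq^{-k},bq^{-k})$; I have checked the exponent bookkeeping ($\binom{k}{2}-(m+k)k+\binom{m+k}{2}-(m+k)=\binom{m}{2}-m-2k$, i.e.\ $(m^2-3m)/2-2k$), the invariance $u^*a^*b^*=uab$, and the telescoping of the Pochhammer symbols $(u;q)_{2k}(1-uq^{2k})/(1-u)\cdot(uq^{2k+1};q)_\infty=(uq;q)_\infty$ and $(ua;q)_k^{-1}(uaq^k;q)_\infty^{-1}=(ua;q)_\infty^{-1}$, all of which work out exactly as you say. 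What your approach buys is a self-contained proof resting only on a classical summation theorem, and it makes transparent why the hypothesis $|uab/q|<1$ is exactly what is needed (it governs the $k$-sum, while the $m$-sum converges super-exponentially from $q^{m^2/2}$); what the paper's approach buys is brevity and the extra generality of Theorem \ref{thm-key}, which your argument would recover only by redoing the rearrangement with more free parameters. One cosmetic slip: your remark that ``the $v=0$ case of Theorem \ref{thm-main} is exactly Rogers' identity'' is not literally true, since $v=0$ does not trivialize either ${}_3\phi_2$ (it only kills the terms with $k\geq 1$ in $(v;q)_k$ if $v$ were a delta, which it is not); what you mean, and what your computation actually uses, is that the $k=0$ stratum of the rearranged double sum is Rogers' identity. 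This does not affect the validity of the proof.
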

Utilizing Theorem \ref{thm-main}, several new Hecke-type identities have been found by Wang and Yee \cite{WangThesis,Wang-Yee}.
In particular, they proved that \cite[Theorem 1.1]{Wang-Yee}
\begin{align}
\sum_{n=1}^{\infty}\frac{q^n(q;q^2)_{n}}{(-q;q^2)_{n}(1+q^{2n})}=\sum_{n=1}^{\infty}\sum_{|j|\leq n}(-1)^jq^{n^2+j^2}-\sum_{n=1}^{\infty}(-1)^nq^{2n^2}. \label{W-Y-eq}
\end{align}
Note that this is a Hecke-type identity associated with a definite theta series, which is quite rare in the literature.
Furthermore, Wang \cite{WangRama} used Theorem \ref{thm-main} to give new proofs for five false theta function identities of Ramanujan. In a recent work, Chan and Liu \cite{Chan-Liu} used Theorem \ref{thm-main} to establish three Hecke-type identities such as
\begin{align}\label{Chan-Liu-eq}
\sum_{n=1}^\infty\frac{(q;q)_n}{(-q;q)_n}(-1)^nq^{n(n-1)/2}=\sum_{n=1}^\infty\sum_{j=-n+1}^n(1-q^n)^2(-1)^{n+j+1}q^{2n^2-n-j^2}.
\end{align}

In this paper, we continue to employ  Theorem \ref{thm-main} to illustrate a systematic way to establish various representations for $q$-series in the Eulerian form. We first generalize Theorem \ref{thm-main} to the following form.
\begin{theorem} \label{thm-key}
Suppose $\max \{|\alpha abz/q|, |\alpha a|,|\alpha b|,|\alpha c_1|,\cdots,|\alpha c_m|\}<1$ and $m$ is a nonnegative integer. We have
\begin{align}
&\frac{(\alpha q,\alpha ab/q;q)_\infty}{(\alpha a,\alpha b;q)_\infty}
{}_{m+2}\phi_{m+1}\bigg(\genfrac{}{}{0pt}{}{q/a,q/b,\alpha b_1,\cdots,\alpha b_m}
{0,\alpha c_1,\cdots,\alpha c_m};q,\alpha abz/q\bigg) \nonumber\\
&=\sum_{n=0}^\infty\frac{(1-\alpha q^{2n})(\alpha,q/a,q/b;q)_n(-\alpha  ab/q)^n
  q^{n(n-1)/2}}{(1-\alpha)(q,\alpha a,\alpha b;q)_n} \\
\nonumber
&\quad \times{}_{m+2}\phi_{m+1}\bigg(\genfrac{}{}{0pt}{}{q^{-n},\alpha q^n,\alpha b_1,\cdots,\alpha b_m}
{0,\alpha c_1,\cdots,\alpha c_m};q,zq\bigg).
\end{align}
\end{theorem}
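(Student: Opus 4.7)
My plan is to derive Theorem \ref{thm-key} from Theorem \ref{thm-main} in two stages: a coefficient-matching reduction that strips out the passive parameters $\alpha b_i$, $\alpha c_i$ and the variable $z$, followed by an induction on $k$ using Theorem \ref{thm-main} with carefully chosen specializations.

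For the reduction, I would expand both ${}_{m+2}\phi_{m+1}$ series as power series in $z$:
\begin{align*}
{}_{m+2}\phi_{m+1}\bigg(\genfrac{}{}{0pt}{}{q/a,q/b,\alpha b_1,\ldots,\alpha b_m}{0,\alpha c_1,\ldots,\alpha c_m};q,\alpha abz/q\bigg)=\sum_{k\geq 0}\frac{(q/a,q/b;q)_k\prod_{i=1}^{m}(\alpha b_i;q)_k}{(q;q)_k\prod_{i=1}^{m}(\alpha c_i;q)_k}(\alpha ab/q)^kz^k,
\end{align*}
and similarly for the inner series on the right-hand side. After interchanging the order of summation on the right (justified by absolute convergence under the stated bounds), equating coefficients of $z^k$ on each side and canceling the common factor $\prod_i(\alpha b_i;q)_k/((q;q)_k\prod_i(\alpha c_i;q)_k)$ reduces Theorem \ref{thm-key} to the equivalent family of identities, for each $k\geq 0$:
\begin{align*}
(E_k)\,:\ \frac{(\alpha q,\alpha ab/q;q)_\infty}{(\alpha a,\alpha b;q)_\infty}(q/a,q/b;q)_k\left(\frac{\alpha ab}{q}\right)^k = q^k\sum_{n=0}^\infty\frac{(1-\alpha q^{2n})(\alpha,q/a,q/b;q)_n(-\alpha ab/q)^nq^{n(n-1)/2}}{(1-\alpha)(q,\alpha a,\alpha b;q)_n}(q^{-n},\alpha q^n;q)_k.
\end{align*}

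For the second stage, I would prove each $(E_k)$ by induction on $k$, applying Theorem \ref{thm-main} with the specialization $u=\alpha$, $v=q^{-K}$, $c=d=0$. Since $(0;q)_j=1$ and $(q^{-K};q)_j=0$ for $j>K$, both inner ${}_3\phi_2$ series truncate at $j=K$. Writing Theorem \ref{thm-main} under this specialization, using the factorization $(q/a,q/b,q^{-K};q)_j=(q/a,q/b;q)_j(q^{-K};q)_j$ and interchanging the order of summation on the right, one obtains precisely
\begin{align*}
\sum_{k=0}^{K}\frac{(q^{-K};q)_k}{(q;q)_k}\Delta_k=0,\qquad\text{where }\Delta_k:=\text{LHS}(E_k)-\text{RHS}(E_k).
\end{align*}
The base case $K=0$ (equivalently, Theorem \ref{thm-main} with $v=1$, which collapses both ${}_3\phi_2$ series to their $j=0$ terms since $(1;q)_j=0$ for $j\geq 1$) gives $\Delta_0=0$. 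Inductively, since $(q^{-K};q)_K/(q;q)_K\neq 0$ for all $K$, the weighted identity together with $\Delta_0=\cdots=\Delta_{K-1}=0$ forces $\Delta_K=0$, thereby establishing $(E_K)$ for every $K$.

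The principal obstacle is essentially bookkeeping in the second stage: verifying that the specialization $u=\alpha$, $v=q^{-K}$, $c=d=0$ of Theorem \ref{thm-main} really does reorganize into the weighted sum displayed above. Given the factorization of the relevant Pochhammer symbols and the legitimacy of interchanging the (truncating) sum over $k$ with the infinite sum over $n$, this verification is routine, and the convergence checks follow directly from the hypothesis on $\alpha,a,b$ that was already imposed in Theorem \ref{thm-main}.
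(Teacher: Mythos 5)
Your argument is correct, but it runs in exactly the opposite direction from the paper's. The paper proves Theorem \ref{thm-key} in one line by substituting $A_n=\frac{(q/b,\alpha b_1,\dots,\alpha b_m;q)_n}{(q,\alpha c_1,\dots,\alpha c_m;q)_n}(bz/q)^n$ into Liu's general expansion formula (Lemma \ref{Liu-An-lem}), and then derives Theorem \ref{thm-main} as a specialization of Theorem \ref{thm-key}. You instead take Theorem \ref{thm-main} as the input: your reduction to the $z$-free family $(E_k)$ by matching powers of $z$ is sound (the direction you actually need — from $(E_k)$ for all $k$ back to the theorem — only requires multiplying by the weights $\prod_i(\alpha b_i;q)_k\,z^kq^k/\bigl((q;q)_k\prod_i(\alpha c_i;q)_k\bigr)$, summing, and interchanging the $k$- and $n$-sums, which is legitimate since $(q^{-n};q)_k$ kills all terms with $k>n$ and the bound $|(q^{-n};q)_k|\le C|q|^{k(k-1)/2-nk}$ gives absolute convergence under $|\alpha abz/q|<1$), and the triangular system obtained from $v=q^{-K}$, $c=d=0$ does force $\Delta_K=0$ by induction because $(q^{-K};q)_K\neq 0$. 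This is a genuinely different and rather elegant observation: it shows the $m$-parameter, $z$-dependent generalization carries no content beyond the finitely many specializations $v=1,q^{-1},q^{-2},\dots$ of Theorem \ref{thm-main}.

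Two caveats. First, and most importantly: as this paper is organized, Theorem \ref{thm-main} is itself \emph{deduced from} Theorem \ref{thm-key}, so inserting your proof here verbatim would be circular. The circularity is not intrinsic — Theorem \ref{thm-main} is Liu's theorem with an independent proof in \cite{Liu2013Rama,Liu2013IJNT} — but you must say explicitly that you are invoking that external proof rather than the derivation given in Section \ref{sec:proof}. Second, a small hypothesis mismatch: applying Theorem \ref{thm-main} with $u=\alpha$ requires $|\alpha ab/q|<1$, which is not implied by the hypotheses of Theorem \ref{thm-key} (one could have $|\alpha abz/q|<1$ with $|z|$ small and $|\alpha ab/q|\ge 1$). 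Both sides of each $(E_k)$ are analytic in $b$ on the stated domain, so this is repaired by analytic continuation, but it should be acknowledged.
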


Theorems \ref{thm-main} and \ref{thm-key} provide an elegant way for finding alternative representations for basic hypergeometric series. Indeed, these theorems allow us to write a series in Eulerian form as a sum involving truncated $_{m+2}\phi_{m+1}$ series. In suitable situations, this truncated sum may be further simplified and in turn we obtain very nice representations of the original series. Such nice representations involve Appell-Lerch series or Hecke-type series. For example, using Theorem \ref{thm-main} we establish the following $(a,b)$-parameterized identity (see Theorem \ref{thm-ab-5-8}): for $\max \{|ab|, |aq^2|, |bq^2|\}<1$,
\begin{align}
&\frac{(q^2,ab;q^2)_\infty}{(aq^2,bq^2;q^2)_\infty}
{}_3\phi_2\bigg(\genfrac{}{}{0pt}{}{q^2/a,q^2/b,q^2}{0,q^3};q^2,ab\bigg) \nonumber\\
&=(1-q)\sum_{n=0}^{\infty}(1+q^{2n+1})\frac{(q^2/a,q^2/b;q^2)_n}{(aq^2,bq^2;q^2)_n}(-ab)^nq^{3n^2+2n}\sum_{j=-n}^{n}q^{-2j^2-j}. \label{example}
\end{align}
It turns out that by choosing suitable values for $(a,b)$ in this identity, we get Hecke-type series representations for four mock theta functions of orders 5, 6 and 8. Namely, we find that
\begin{align}
F_1^{(5)}(q):=&~\sum_{n=0}^{\infty} \frac{q^{2n(n+1)}}{(q;q^2)_{n+1}}    \nonumber \\
=&~\frac{1}{(q^2;q^2)_{\infty}}\sum_{n= 0}^\infty \sum_{j=0}^{2n}(-1)^nq^{5n^2+4n-\binom{j+1}{2}}(1+q^{2n+1}),\\
\psi_{-}^{(6)}(q):=&~\sum_{n=1}^{\infty}\frac{q^{n}(-q;q)_{2n-2}}{(q;q^2)_{n}}  \nonumber \\
=&~q\frac{(-q;q)_{\infty}}{(q;q)_{\infty}}\sum_{n=0}^{\infty}(-1)^nq^{3n^3+3n}\sum_{j=-n}^{n}q^{-2j^2-j},  \\
T_1^{(8)}(q):=&~\sum_{n=0}^{\infty}\frac{q^{n(n+1)}(-q^2;q^2)_{n}}{(-q;q^2)_{n+1}}  \nonumber \\
=&~\frac{(-q^2;q^2)_{\infty}}{(q^2;q^2)_{\infty}}\sum_{n=0}^{\infty}q^{4n^2+3n}(1-q^{2n+1})\sum_{j=-n}^{n}(-1)^jq^{-2j^2-j},  \\
V_1^{(8)}(q):=&~\sum_{n=0}^{\infty}\frac{q^{(n+1)^2}(-q;q^2)_n}{(q;q^2)_{n+1}}   \nonumber \\
=&~q\frac{(-q;q^2)_{\infty}}{(q^2;q^2)_{\infty}}\sum_{n=0}^{\infty}(-1)^nq^{4n^2+4n}\sum_{j=-n}^{n}q^{-2j^2-j}. \label{intro-V1-1}
\end{align}

Furthermore, the parameterized identity \eqref{example} also generates new Hecke-type identities. For instance, if we take $(a,b)\rightarrow (0,1)$ in \eqref{example}, we get the following identity which seems to be new:
\begin{align}
\sum_{n=0}^\infty(-1)^n\frac{(q^2;q^2)_n}{(q;q^2)_{n+1}}q^{n(n+1)}
=\sum_{n=0}^\infty\sum_{j=-n}^n(1+q^{2n+1})q^{4n^2+3n-2j^2-j}. \label{example-T8}
\end{align}

By establishing different $(a,b)$-parameterized identities, we are able to provide different representations for the same mock theta function. An  example is that using two parameterized identities other than \eqref{example}, we find two Appell-Lerch series representations for $V_1^{(8)}(q)$. Namely,
\begin{align}
V_1^{(8)}(q)=&~\frac{(-q^4;q^4)_\infty}{(q^4;q^4)_\infty}\sum_{n=-\infty}^{\infty}\frac{(-1)^nq^{(2n+1)^2}}{1-q^{4n+1}} \label{intro-V1-2}\\
=&~q\frac{(-q;q)_\infty}{(q;q)_\infty}\sum_{n=-\infty}^\infty \frac{(-1)^nq^{n^2+2n}}{1+q^{4n+2}}. \label{intro-V1-3}
\end{align}
The formula \eqref{intro-V1-1} was found by Cui, Gu and Hao \cite{CGH} using Bailey pairs, and \eqref{intro-V1-2} is due to Gordon and McIntosh \cite{Gordon-McIntosh}. The representation \eqref{intro-V1-3} appears to be new, and in fact we will show that it is equivalent to \eqref{intro-V1-2} (see Section \ref{subsec-mock-V18}).

We will present 30 parameterized identities like \eqref{example}. Three of them were discovered by Liu \cite{Liu2013Rama} and the others are new to the best of our knowledge. By choosing suitable values for the parameters, we provide new proofs for most of the known Appell-Lerch series or Hecke-type series representations for mock theta functions of orders 2, 3, 5, 6 and 8. Meanwhile, we will also show many new Hecke-type identities associated to definite or indefinite quadratic forms.

In order to write Appell-Lerch series or Hecke-type series representations in a  standard way and thus having clearer understanding of their modular properties, Hickerson and Mortenson \cite{Hickerson-Mortenson} introduced two functions $m(x,q,z)$ and $f_{a,b,c}(x,y,q)$ (see \eqref{m-defn} and \eqref{f-defn} for definition). They  serve as building blocks for  Appell-Lerch series and a large family of Hecke-type series, respectively. Moreover, their modular properties  have been well studied by Zwegers \cite{Zwegers}. Hickerson and Mortenson \cite{Hickerson-Mortenson} also developed an approach to express a Hecke-type series in terms of Appell-Lerch series. In particular, they \cite[Section 5]{Hickerson-Mortenson} gave representations for all classical mock theta functions in terms of $m(x,q,z)$.

In this paper, we will follow \cite{Hickerson-Mortenson} and write the Appell-Lerch series and Hecke-type series we obtained in terms of these building blocks. Along this process, we find that there are certain series whose shape is similar to Appell-Lerch series but cannot be expressed in terms of $m(x,q,z)$. For example, in \eqref{2-1-cor-4} we establish the following identity
\begin{align}
\sum_{n=0}^{\infty}\frac{(-1)^nq^{n(n+1)}(q^2;q^2)_n}{(q;q^2)_{n+1}^2}=\sum_{n=0}^{\infty}\frac{q^{2n^2+2n}}{1-q^{2n+1}}
-\sum_{n=-\infty}^{-1}\frac{q^{2n^2+2n}}{1-q^{2n+1}}.
\end{align}
This does not satisfy the definition of Appell-Lerch series in \eqref{Appell-defn}.
Similarly, there are some Hecke-type series which seems not expressible by $f_{a,b,c}(x,y,q)$. An example is \eqref{W-Y-eq} (see \eqref{W-Y-eq-new}). Therefore, we introduce two new functions $\overline{m}(x,q,z)$ and $\overline{f}_{a,b,c}(x,y,q)$ (see \eqref{barm-defn} and \eqref{barf-defn}). The definitions of these functions closely resemble that of $m(x,q,z)$ and $f_{a,b,c}(x,y,q)$. However, their modular properties are unclear to us and deserve future research. Since the only difference between $\overline{m}(x,q,z)$ and $m(x,q,z)$ is the signs in their summands, we call $\overline{m}(x,q,z)$ a \emph{false Appell-Lerch series}. We will express almost all the Appell-Lerch series, false Appell-Lerch series and Hecke-type series in this paper using these building blocks $m(x,q,z)$, $f_{a,b,c}(x,y,q)$, $\overline{m}(x,q,z)$ and $\overline{f}_{a,b,c}(x,y,q)$. In particular, we use the method in \cite{Hickerson-Mortenson} to convert a representation in terms of $f_{a,b,c}(x,y,q)$ to a representation in terms of $m(x,q,z)$.

Besides seeing the modularity of a series clearly, there is another advantage for expressing Appell-Lerch series and Hecke-type series using $m(x,q,z)$ or $f_{a,b,c}(x,y,q)$. That is, we can use properties of these building blocks to transform between different forms. By doing this, we can see if different series representations are equivalent or not. For example, after writing \eqref{intro-V1-2} and \eqref{intro-V1-3} in terms of $m(x,q,z)$ and using properties of $m(x,q,z)$ established in \cite{Hickerson-Mortenson}, we find that they are in fact equivalent (see Section \ref{subsec-mock-V18}). We also show that  our new Hecke-type series representations of the third order mock theta functions $\psi^{(3}(q)$ and $\nu^{(3)}(q)$ are equivalent to the known representations found by Andrews \cite{A12} or Mortenson \cite{Mortenson-2013}.

The paper is organized as follows. In Section \ref{sec-pre}, we first  recall some formulas from the theory of basic hypergeometric series. We also discuss some limiting cases of Watson's $q$-analog of Whipple's theorem. The formulas listed in Section \ref{subsec-basic} will be used in evaluating certain terminated ${}_{3}\phi_{2}$ series, which are fundamental for establishing parameterized identities. Then in Section \ref{subsec-build} we give the definitions and useful properties of the building blocks of Appell-Lerch series, false Appell-Lerch series and Hecke-type series.  As examples, we will rewrite the identities \eqref{intro-Liu-eq}, \eqref{W-Y-eq} and \eqref{Chan-Liu-eq} using these building blocks.   In Section \ref{sec:proof}, we shall first prove Theorems \ref{thm-main} and \ref{thm-key}. Then as first examples of these theorems, we give several parameterized identities and new Hecke-type identities.  We continue to apply Theorem \ref{thm-main} in Sections \ref{sec-order-2}-\ref{sec-order-8}, where we discuss mock theta functions of orders 2, 3, 5, 6 and 8, respectively. For each mock theta function, we correspondingly establish some parameterized identities. Each of these identities gives us a representation for the mock theta function and produces new interesting identities.

\section{Preliminaries}\label{sec-pre}
In this section, we first collect some useful identities on basic hypergeometric series. Then we introduce several building blocks for expressing Appell-Lerch series and Hecke-type series and some formulas for simplifying such expressions.

Throughout this paper, we denote $\zeta_n:=e^{2\pi i/n}$. For convenience, we adopt the following compact notation:
\begin{align*}
(a_1,a_2,\cdots,a_m;q)_n&=(a_1;q)_n(a_2;q)_n\cdots(a_m;q)_n,\\
(a_1,a_2,\cdots,a_m;q)_\infty&=(a_1;q)_\infty(a_2;q)_\infty\cdots(a_m;q)_\infty.
\end{align*}

We recall the Jacobi's triple product identity:
\begin{align*}
j(x;q):=(x)_\infty (q/x)_\infty (q)_\infty =\sum_{n=-\infty}^{\infty} (-1)^nq^{\binom{n}{2}}x^n.
\end{align*}
Again for convenience, we denote
\begin{align}
j(x_1,x_2,\dots, x_n;q):=j(x_1;q)j(x_2;q)\cdots j(x_n;q).
\end{align}
As some special cases, we let $a$ and $m$ be rational numbers with $m$ positive and define
\begin{align*}
J_{a,m}:=j(q^a;q^m), \quad \overline{J}_{a,m}:=j(-q^a;q^m) \,\, \textrm{and} \,\, J_{m}:=J_{m,3m}=(q^m;q^m)_\infty.
\end{align*}

We will also use the following identities without mention (see \cite[Section 2]{Hickerson-Mortenson}).
\begin{align*}
&\overline{J}_{0,1}=2\overline{J}_{1,4}=2\frac{J_{2}^2}{J_{1}}, \quad \overline{J}_{1,2}=\frac{J_{2}^5}{J_{1}^2J_{4}^2}, \quad {J}_{1,2}=\frac{J_{1}^2}{J_{2}}, \quad \overline{J}_{1,3}=\frac{J_2J_{3}^2}{J_1J_6}, \\
& {J}_{1,4}=\frac{J_1J_4}{J_2}, \quad J_{1,6}=\frac{J_1J_6^2}{J_2J_3}, \quad \overline{J}_{1,6}=\frac{J_2^2J_3J_{12}}{J_1J_4J_6}.
\end{align*}
The following identities follow from the definition of $j(x;q)$ and can be found in \cite[Eqs.\ (2.2a), (2.2b)]{Hickerson-Mortenson}:
\begin{align}
&j(q^nx;q)=(-1)^nq^{-\binom{n}{2}}x^{-n}j(x;q), \quad n\in \mathbb{Z}, \label{j-id-1} \\
&j(x;q)=j(q/x;q)=-xj(x^{-1};q). \label{j-id-2}
\end{align}
We also recall the classical partial fraction expansion for the reciprocal of Jacobi's theta product (see \cite[p.\ 1]{lostnotebook} or \cite[p.\ 136]{Tannery}):
\begin{align}
\sum_{n=-\infty}^\infty \frac{(-1)^nq^{\binom{n+1}{2}}}{1-q^nz}=\frac{J_1^3}{j(z;q)}. \label{reciprocal-Jacobi}
\end{align}
Here $z$ is not an integral power of $q$. This formula will be used several times for simplifying expressions.

\subsection{Useful basic hypergeometric series identities}\label{subsec-basic}
In this subsection, we collect some useful identities, which will be important in deducing Appell-Lerch series or Hecke-type series representations from $q$-series of the Eulerian form.

The basic hypergeometric series ${}_r\phi_s$ is defined as \cite[Eq.\ (1.2.22)]{Gasper}
\begin{align*}
{}_r\phi_s \bigg(\genfrac{}{}{0pt}{}{a_1,  \cdots,  a_r}{b_1,   \dots,  b_s}; q,z \bigg)
=\sum_{n=0}^\infty\frac{(a_1,\cdots,a_r;q)_n}{(q,b_1,\cdots,b_s;q)_n}((-1)^nq^{n(n-1)/2})^{1+s-r}z^n.
\end{align*}
From \cite[Eq.\ (3.14)]{Liu2013IJNT}, we find
\begin{align}\label{sears:32}
{}_3\phi_2\bigg(\genfrac{}{}{0pt}{}{q^{-n},\alpha q^n,\beta}{c,d};q,q\bigg)
=(-c)^nq^{n(n-1)/2}\frac{(q\alpha/c;q)_n}{(c;q)_n} {}_3\phi_2\bigg(\genfrac{}{}{0pt}{}{q^{-n},\alpha q^n,d/\beta}{d,q\alpha/c};q,q\beta/c\bigg).
\end{align}
We also need the following formula \cite[p.\ 28]{Gasper} which relates a ${}_{3}\phi_{2}$ series to a ${}_2\phi_1$ series:
\begin{align}\label{hyper2}
{}_3\phi_2\bigg(\genfrac{}{}{0pt}{}{q^{-n},b,bzq^{-n}/c}{0, bq^{1-n}/c};q,q\bigg)
=\frac{(c;q)_n}{(c/b;q)_n}{}_2\phi_1\bigg(\genfrac{}{}{0pt}{}{q^{-n},b,}c;q,z\bigg).
\end{align}

Watson's $q$-analog of Whipple's theorem (see, for example, \cite[Eq.\ (2.5.1)]{Gasper}; \cite[Theorem 5]{Liu2002}) can be stated in the following form.
\begin{lemma}[Watson] \label{watson}
Let $n$ be a nonnegative integer. We have
\begin{align}
&\frac{(\alpha q,\alpha ab/q;q)_n}{(\alpha a,\alpha b;q)_n}
{}_4\phi_3\bigg(\genfrac{}{}{0pt}{}{q^{-n},q/a,q/b,\alpha cd/q }
{\alpha c,\alpha d,q^2/{\alpha abq^n}};q,q\bigg)\\
\nonumber
=&{}_8\phi_7\bigg(\genfrac{}{}{0pt}{}{q^{-n},q\sqrt a,-q\sqrt a,\alpha ,q/a,q/b,q/c,q/d}{\sqrt\alpha ,-\sqrt\alpha ,\alpha a,\alpha b,\alpha c,\alpha d,\alpha q^{n+1}};q,\alpha^2abcdq^{n-2}\bigg).
\end{align}
\end{lemma}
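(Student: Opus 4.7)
The plan is to prove Watson's $q$-analog of Whipple's theorem, a classical transformation between a terminating very-well-poised ${}_8\phi_7$ series and a terminating balanced ${}_4\phi_3$ series. I would follow the standard route (as in Gasper--Rahman) of a double-sum manipulation terminated by a summable balanced series.

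First, I would expand the ${}_8\phi_7$ on the right as an explicit finite sum in $k$ from $0$ to $n$, using the very-well-poised structure to absorb the factors $(q\sqrt{\alpha},-q\sqrt{\alpha};q)_k/(\sqrt{\alpha},-\sqrt{\alpha};q)_k$ into the familiar $(1-\alpha q^{2k})/(1-\alpha)$. I would then isolate one of the numerator parameters (say $q/c$ or $q/d$) and expand the corresponding $(q/c;q)_k$ or similar factor, or equivalently, write the ${}_8\phi_7$ summand as a product of a $k$-dependent piece times a terminating ${}_3\phi_2$ in an auxiliary summation index $j$. The goal is to arrive at a double sum in $(j,k)$ whose inner sum on $k$ is balanced, terminating, and of Pfaff--Saalschütz type.

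Next, I would interchange the order of summation and apply the $q$-Pfaff--Saalschütz summation to the inner sum, which evaluates it in closed form as a ratio of Pochhammer symbols. What remains is a single sum in $j$ whose summand, after careful cancellation, exhibits the balanced ${}_4\phi_3$ structure appearing on the left, together with the prefactor $(\alpha q,\alpha ab/q;q)_n/(\alpha a,\alpha b;q)_n$. Reversing the finite sum (if needed) using $(q^{-n};q)_j = (-1)^j q^{\binom{j}{2}-nj}(q;q)_n/(q;q)_{n-j}$ puts the identity into the stated form.

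The main obstacle will be the bookkeeping: one must match balancing conditions precisely so that $q$-Pfaff--Saalschütz applies, and keep track of signs, powers of $q$, and the many shifted arguments such as $q/a,q/b,q/c,q/d$ in the upper list versus $\alpha a,\alpha b,\alpha c,\alpha d$ in the lower. Particular attention is required for the combined factor $q^2/(\alpha ab q^n)$ in the ${}_4\phi_3$ and the argument $\alpha^2 abcd q^{n-2}$ in the ${}_8\phi_7$, which together encode the very-well-poised/balanced compatibility. A fallback route I would consider is induction on $n$ via contiguous relations for very-well-poised ${}_8\phi_7$ series, but the double-sum approach is more transparent and gives the identity essentially by one application of a known summation.
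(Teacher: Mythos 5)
The paper gives no proof of this lemma: it is quoted as Watson's classical $q$-analogue of Whipple's theorem with pointers to \cite[Eq.\ (2.5.1)]{Gasper} and \cite[Theorem 5]{Liu2002}, so there is no internal argument to compare against. Your plan is essentially the textbook double-series proof from Gasper--Rahman, and the overall strategy is the right one. (Incidentally, you have silently corrected what appears to be a typo in the statement as printed: the very-well-poised pair should read $q\sqrt{\alpha},-q\sqrt{\alpha}$ over $\sqrt{\alpha},-\sqrt{\alpha}$, so that the quotient contributes the factor $(1-\alpha q^{2k})/(1-\alpha)$ you refer to.)

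As written, however, the mechanism has a concrete gap. In the standard argument one does not expand a single factor such as $(q/c;q)_k$; one applies $q$-Pfaff--Saalsch\"utz in the \emph{expansion} direction to replace the specific combination $\frac{(q/c,\,q/d;q)_k}{(\alpha c,\,\alpha d;q)_k}\bigl(\tfrac{\alpha cd}{q}\bigr)^k$ occurring in the $k$-th term of the ${}_8\phi_7$ by the terminating balanced sum $\sum_{j=0}^{k}\frac{(q^{-k},\,\alpha q^{k},\,\alpha cd/q;q)_j}{(q,\,\alpha c,\,\alpha d;q)_j}q^{j}$; this is where the balancing condition must be checked, and your sketch leaves the choice of factors unspecified. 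More importantly, after substituting and interchanging the order of summation, the inner sum over $k$ is \emph{not} of Saalsch\"utz type: it is a terminating very-well-poised ${}_6\phi_5$ (very well poised in $\alpha q^{2j}$), and one needs the terminating ${}_6\phi_5$ summation formula (Gasper--Rahman (2.4.2)) to evaluate it and produce the prefactor $(\alpha q,\alpha ab/q;q)_n/(\alpha a,\alpha b;q)_n$ together with the remaining ${}_4\phi_3$. So the identity does not follow ``essentially by one application of a known summation'': two distinct summation theorems enter, and your proposal neither names the second one nor carries out the bookkeeping it itself identifies as the main obstacle. Until the double sum is written out, the inner ${}_6\phi_5$ is recognized, and the parameters are matched to the stated $\alpha^2abcdq^{n-2}$ and $q^{2}/(\alpha abq^{n})$, the proof is not complete.
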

In the rest of this subsection, we discuss some consequences of Lemma \ref{watson}, which will be used frequently.

From \cite[Proposition 2.2]{Liu2013IJNT} we find
\begin{align}
&{}_3\phi_2\bigg(\genfrac{}{}{0pt}{}{q^{-n},\alpha q^{n+1},\alpha cd/q}{\alpha c,\alpha d};q,q\bigg) \nonumber \\
=&(-\alpha)^nq^{n(n+1)/2}\frac{(q;q)_n}{(q\alpha;q)_n}
\sum_{j=0}^n(-1)^j\frac{(1-\alpha q^{2j})(\alpha,q/c,q/d;q)_j}{(1-\alpha)(q,\alpha c,\alpha d;q)_j}(cd/q)^j
q^{-j(j+1)/2}. \label{Liu-eq-313}
\end{align}
\begin{lemma} \label{meq:1}
For any nonnegative integer $n$, we have
\begin{align}
&{}_3\phi_2\bigg(\genfrac{}{}{0pt}{}{q^{-n},\alpha q^{n+1},q/c}{\alpha d,q^2/c};q,d\bigg) \nonumber\\
=&(q/c)^n\frac{(\alpha c,q;q)_n}{(q^2/c,\alpha q;q)_n}
\sum_{j=0}^n(-1)^j\frac{(1-\alpha q^{2j})(\alpha,q/c,q/d;q)_j}{(1-\alpha)(q,\alpha c,\alpha d;q)_j}(cd/q)^jq^{-j(j+1)/2}.
\end{align}
\end{lemma}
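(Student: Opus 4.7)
The plan is to deduce Lemma \ref{meq:1} by composing two results already established above: Sears' transformation \eqref{sears:32} and Liu's explicit finite-sum evaluation \eqref{Liu-eq-313}. The key observation is that the ${}_3\phi_2$ on the left-hand side of the lemma, which has argument $d$, is related by a Sears-type transformation to a ${}_3\phi_2$ of argument $q$ whose upper and lower parameters match precisely the series evaluated in \eqref{Liu-eq-313}.

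First I apply \eqref{sears:32} with $\alpha$ there replaced by $\alpha q$ (so that the middle upper parameter becomes $\alpha q^{n+1}$), with $\beta=\alpha cd/q$, and with lower parameters chosen as $\alpha c$ and $\alpha d$ (in place of $c$ and $d$ in \eqref{sears:32}). A routine substitution gives $q(\alpha q)/(\alpha c)=q^2/c$, $\alpha d/(\alpha cd/q)=q/c$, and $q\beta/(\alpha c)=d$, so \eqref{sears:32} specializes to
$${}_3\phi_2\bigg(\genfrac{}{}{0pt}{}{q^{-n},\alpha q^{n+1},\alpha cd/q}{\alpha c,\alpha d};q,q\bigg)=(-\alpha c)^n q^{n(n-1)/2}\frac{(q^2/c;q)_n}{(\alpha c;q)_n}\,{}_3\phi_2\bigg(\genfrac{}{}{0pt}{}{q^{-n},\alpha q^{n+1},q/c}{\alpha d,q^2/c};q,d\bigg).$$
Solving for the target ${}_3\phi_2(\,\cdot\,;q,d)$ expresses it as an explicit scalar multiple of the ${}_3\phi_2(\,\cdot\,;q,q)$ on the left.

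Next I substitute the right-hand side of \eqref{Liu-eq-313} in place of this ${}_3\phi_2(\,\cdot\,;q,q)$, which converts it into the finite $j$-sum appearing in the statement of the lemma, multiplied by $(-\alpha)^n q^{n(n+1)/2}(q;q)_n/(\alpha q;q)_n$. Combining this with the prefactor from the Sears step produces
$$\frac{(\alpha c;q)_n}{(-\alpha c)^n q^{n(n-1)/2}(q^2/c;q)_n}\cdot(-\alpha)^n q^{n(n+1)/2}\frac{(q;q)_n}{(\alpha q;q)_n}=(q/c)^n\frac{(\alpha c,q;q)_n}{(q^2/c,\alpha q;q)_n},$$
after canceling the $(-1)^n\alpha^n$ factors and using $q^{n(n+1)/2-n(n-1)/2}/c^n=(q/c)^n$. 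This is precisely the coefficient on the right-hand side of the lemma.

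The main obstacle is not conceptual but organizational: one must identify the correct specialization of \eqref{sears:32} (in particular the shift $\alpha\mapsto\alpha q$ and the choice $\beta=\alpha cd/q$) so that the ${}_3\phi_2$ of base argument $q$ appearing on its left matches the one evaluated by Liu in \eqref{Liu-eq-313}, while the ${}_3\phi_2$ of argument $d$ on the right matches the target series. Once the parameter matching is found, the remainder is a transparent cancellation of $q$-Pochhammer prefactors, and there is no analytic subtlety since $n$ is a nonnegative integer and all sums are finite.
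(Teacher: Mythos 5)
Your proposal is correct and matches the paper's own proof exactly: the paper likewise specializes \eqref{sears:32} via $\alpha\mapsto\alpha q$, $\beta\mapsto\alpha cd/q$, $(c,d)\mapsto(\alpha c,\alpha d)$ and then combines with \eqref{Liu-eq-313}. Your verification of the parameter matching and the cancellation of the Pochhammer prefactors is accurate.
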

\begin{proof}
Replacing $\alpha$ by $\alpha q$, $\beta$ by $\alpha cd/q$, $c$ by $\alpha c$ and $d$ by $\alpha d$ in \eqref{sears:32}, we have
\begin{align}
&{}_3\phi_2\bigg(\genfrac{}{}{0pt}{}{q^{-n},\alpha q^{n+1},\alpha cd/q}{\alpha c,\alpha d};q,q\bigg)  \nonumber\\
=&(-\alpha c)^nq^{n(n-1)/2}\frac{(q^2/c;q)_n}{(\alpha c;q)_n}
{}_3\phi_2\bigg(\genfrac{}{}{0pt}{}{q^{-n},\alpha q^{n+1},q/c}{\alpha d, q^2/c};q,d\bigg).
\end{align}
Combining this with \eqref{Liu-eq-313}, we get the desired identity.
\end{proof}
Letting $\alpha\rightarrow q^{-1}$ in Lemma \ref{meq:1}, we get the following result.
\begin{lemma}\label{lem-limit}
For any nonnegative integer $n$, we have
\begin{align}
&{}_3\phi_2\bigg(\genfrac{}{}{0pt}{}{q^{-n},q^{n}, q/c}{d/q,q^2/c};q,d\bigg)  \nonumber\\
=&(q/c)^n(1-q^n)\frac{(c/q;q)_n}{(q^2/c;q)_n}\nonumber\\
&\times\bigg(\frac{(c+d)q+cd(q-2)-q^3}{(c-q)(d-q)(1-q)}+\sum_{j=2}^{n}\frac{(-1)^j(1-q^{2j-1})}{(1-q^j)(1-q^{j-1})}
\frac{(q/c,q/d;q)_j}{(c/q,d/q;q)_j}(cd)^jq^{-j(j+3)/2}\bigg).
\end{align}
\end{lemma}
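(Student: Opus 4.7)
The plan is to prove Lemma \ref{lem-limit} by carefully taking the limit $\alpha \to q^{-1}$ in Lemma \ref{meq:1}. This limit is singular on the right-hand side: the factor $(\alpha q;q)_n$ in the denominator of the prefactor develops a simple zero at $\alpha = q^{-1}$ through the entry $1 - \alpha q$, while for $j \ge 2$ the numerator factor $(\alpha;q)_j$ in the $j$-th summand also has a simple zero through the same entry. These two zeros will cancel and give a finite contribution, but the $j = 0$ and $j = 1$ summands, which do not carry such a zero, must be handled separately by a Taylor expansion (L'Hospital's rule).

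First I would set $\alpha = q^{-1} u$ and extract the singularity of the prefactor via $(\alpha q;q)_n = (u;q)_n = (1-u)(uq;q)_{n-1}$, obtaining
\[
(q/c)^n\frac{(\alpha c,q;q)_n}{(q^2/c,\alpha q;q)_n} = \frac{1}{1-u}\cdot(q/c)^n\frac{(c/q;q)_n(1-q^n)}{(q^2/c;q)_n} + O(1)
\]
as $u \to 1$. For each $j \ge 2$, the factorization $(\alpha;q)_j = (1-\alpha)(1-\alpha q)(\alpha q^2;q)_{j-2}$ isolates the matching zero inside the summand, and the two zeros cancel cleanly against the factor $1/(1-u)$ from the prefactor. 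Specializing at $\alpha = q^{-1}$ and simplifying via $(q;q)_{j-2}/(q;q)_j = 1/[(1-q^{j-1})(1-q^j)]$ together with the rewriting $(cd/q)^j q^{-j(j+1)/2} = (cd)^j q^{-j(j+3)/2}$ produces exactly the $j \ge 2$ terms of the sum in Lemma \ref{lem-limit}.

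The delicate part is the $j = 0, 1$ contribution. A direct substitution shows that the $j = 0$ summand equals $1$ identically in $\alpha$, while the $j = 1$ summand equals $-1$ at $\alpha = q^{-1}$; together they vanish at $u = 1$, turning the simple pole from the prefactor into a $0/0$ indeterminate form. Since the $j = 0$ summand is $u$-independent, a first-order Taylor expansion of the $j=1$ summand
\[
-\frac{(1-\alpha q^2)(1-q/c)(1-q/d)\,cd}{(1-q)(1-\alpha c)(1-\alpha d)\,q^2},\qquad \alpha = q^{-1}u,
\]
in $u$ around $u = 1$, combined with the factor $1/(1-u)$ and the remaining finite part of the prefactor, should collapse to $\frac{(c+d)q + cd(q-2) - q^3}{(c-q)(d-q)(1-q)}$, producing the ``leading term'' inside the parentheses of the statement.

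Putting these two contributions together and multiplying by the finite part $(q/c)^n(1-q^n)(c/q;q)_n/(q^2/c;q)_n$ of the prefactor yields Lemma \ref{lem-limit}. The main obstacle is purely algebraic: one has to verify that after differentiating the rational function above in $u$, the common denominator reduces to $(c-q)(d-q)(1-q)$ and the numerator polynomial in $c,d$ simplifies precisely to $(c+d)q + cd(q-2) - q^3$. Because this derivative is linear in $c$ and linear in $d$ separately, the collapse is constrained enough that direct expansion should verify it; all the other steps are bookkeeping in the $(\alpha;q)_j$ and $(\alpha q;q)_n$ Pochhammer symbols.
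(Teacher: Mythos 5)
Your proposal is correct and is exactly the paper's approach: the paper derives Lemma \ref{lem-limit} simply by "letting $\alpha\to q^{-1}$ in Lemma \ref{meq:1}," and your treatment supplies the details — the cancellation of the pole of $(\alpha q;q)_n$ against the zero of $(\alpha;q)_j$ for $j\ge 2$, and the Taylor/L'Hospital evaluation of the $j=0,1$ terms, whose limit $\tfrac{-q}{1-q}+\tfrac{c}{q-c}+\tfrac{d}{q-d}$ indeed combines to $\frac{(c+d)q+cd(q-2)-q^3}{(c-q)(d-q)(1-q)}$. All steps check out.
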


Letting $d\rightarrow\infty$ in \eqref{Liu-eq-313}, we find the following transformation formula, which appears as Proposition 2.3 in \cite{Liu2013IJNT}.
\begin{corollary}
For any nonnegative integer $n$, we have
\begin{align}\label{eq:1}
&{}_2\phi_1\bigg(\genfrac{}{}{0pt}{}{q^{-n},\alpha q^{n+1}}{\alpha c};q,c\bigg) \nonumber\\
=&(-\alpha)^nq^{n(n+1)/2}\frac{(q;q)_n}{(q\alpha;q)_n}
\sum_{j=0}^n\frac{(1-\alpha q^{2j})(\alpha,q/c;q)_j}{(1-\alpha)(q,\alpha c;q)_j}(c/\alpha)^j
q^{-j(j+1)}.
\end{align}
\end{corollary}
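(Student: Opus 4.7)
The plan is to obtain the identity as a direct limiting case of the transformation formula \eqref{Liu-eq-313}. Since the left-hand side of the target contains a ${}_{2}\phi_{1}$, while \eqref{Liu-eq-313} contains a ${}_{3}\phi_{2}$, the natural move is to send the parameter $d$ to $\infty$, which eliminates one numerator factor and one denominator factor while leaving all other pieces intact. Both sides of \eqref{Liu-eq-313} are finite sums (in the outer variable $n$ on the right, and terminating in $k$ via $(q^{-n};q)_k$ on the left), so the termwise limit is rigorously justified without any analytic subtlety.

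First I would handle the left-hand side. The generic $k$-th summand of the ${}_{3}\phi_{2}$ contains the ratio
\[
\frac{(\alpha cd/q;q)_k}{(\alpha d;q)_k}
\;=\; \prod_{i=0}^{k-1}\frac{1-\alpha cd q^{i-1}}{1-\alpha d q^{i}}.
\]
As $d\to\infty$ each factor behaves like $(\alpha cd q^{i-1})/(\alpha d q^{i})=c/q$, so the ratio tends to $(c/q)^k$. Combined with the existing factor $q^k$ from the argument of the ${}_{3}\phi_{2}$, each term picks up $c^k$, and the left side of \eqref{Liu-eq-313} reduces to
\[
\sum_{k=0}^{n}\frac{(q^{-n},\alpha q^{n+1};q)_k}{(q,\alpha c;q)_k}c^k
={}_2\phi_1\!\bigg(\genfrac{}{}{0pt}{}{q^{-n},\alpha q^{n+1}}{\alpha c};q,c\bigg),
\]
which is exactly the desired left-hand side.

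Next I would take the same limit on the right-hand side of \eqref{Liu-eq-313}. Inside the sum over $j$, the only $d$-dependent factors are
\[
\frac{(q/d;q)_j}{(\alpha d;q)_j}(cd/q)^j\;q^{-j(j+1)/2}.
\]
The numerator $(q/d;q)_j\to 1$, and the leading behavior $(\alpha d;q)_j\sim(-\alpha d)^j q^{\binom{j}{2}}$ combines with $(-1)^j(cd/q)^j q^{-j(j+1)/2}$ to give, after collecting powers of $q$ and $d$,
\[
(c/\alpha)^j\,q^{-j-\binom{j}{2}-j(j+1)/2}\;=\;(c/\alpha)^j q^{-j(j+1)}.
\]
Substituting this into the right side of \eqref{Liu-eq-313} and keeping the prefactor $(-\alpha)^n q^{n(n+1)/2}(q;q)_n/(q\alpha;q)_n$ produces exactly the expression claimed in \eqref{eq:1}.

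This is a routine computation with no real obstacle; the only step requiring care is the bookkeeping of signs and powers of $q$ in the $d\to\infty$ limit of the $j$-th summand, where it is easy to misplace a factor of $q$ in the arithmetic $-\binom{j}{2}-j(j+1)/2-j=-j(j+1)$. I would verify the arithmetic explicitly once and then the identity follows.
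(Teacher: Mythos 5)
Your proof is correct and is exactly the paper's approach: the paper derives this corollary by letting $d\to\infty$ in \eqref{Liu-eq-313}, and your termwise limit computations on both sides (including the exponent bookkeeping $-j-\binom{j}{2}-j(j+1)/2=-j(j+1)$) check out.
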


\begin{lemma} \label{meq:2}
For any nonnegative integer $n$, we have
\begin{align}
&{}_3\phi_2\bigg(\genfrac{}{}{0pt}{}{q^{-n},\alpha q^{n+1},q}{0,q^2/c};q,q\bigg)  \nonumber\\
=&\frac{(\alpha c,q;q)_n}{(q^2/c,\alpha q;q)_n}(\alpha/c)^nq^{n^2+2n}
\sum_{j=0}^n\frac{(1-\alpha q^{2j})(\alpha,q/c;q)_j}{(1-\alpha)(q,\alpha c;q)_j}c^j\alpha^{-j}q^{-j^2-j}.
\end{align}
\end{lemma}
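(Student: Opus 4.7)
The plan is to proceed in direct analogy to the proof of Lemma \ref{meq:1}: first apply the Sears-type transformation \eqref{sears:32} to collapse the given ${}_3\phi_2$ into a ${}_2\phi_1$, and then substitute the known finite-sum evaluation \eqref{eq:1} for that ${}_2\phi_1$.

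Concretely, in \eqref{sears:32} I would make the replacements $\alpha \mapsto \alpha q$ and $\beta \mapsto q$, and (using the symmetry of the two lower parameters on the left-hand side of \eqref{sears:32}) take them to be $c \mapsto q^2/c$ and $d \mapsto 0$, in this order so that the $(-c)^n$ and $(c;q)_n$ factors on the right-hand side of \eqref{sears:32} remain non-degenerate. A short check shows that the output parameters simplify as $q\alpha/c \mapsto \alpha c$, $d/\beta \mapsto 0$, and $q\beta/c \mapsto c$, so the transformation produces
\begin{equation*}
{}_3\phi_2\bigg(\genfrac{}{}{0pt}{}{q^{-n},\alpha q^{n+1},q}{0,q^2/c};q,q\bigg) = (-q^2/c)^n q^{n(n-1)/2}\,\frac{(\alpha c;q)_n}{(q^2/c;q)_n}\,{}_3\phi_2\bigg(\genfrac{}{}{0pt}{}{q^{-n},\alpha q^{n+1},0}{0,\alpha c};q,c\bigg).
\end{equation*}
Since $(0;q)_j=1$, the upper $0$ cancels the lower $0$ in the transformed series, so it collapses to the ${}_2\phi_1(q^{-n},\alpha q^{n+1};\alpha c;q,c)$ that appears on the left-hand side of \eqref{eq:1}.

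Finally I would invoke \eqref{eq:1} to replace this ${}_2\phi_1$ by the stated sum and collect prefactors. The Sears step contributes $(-q^2/c)^n q^{n(n-1)/2}$ and \eqref{eq:1} contributes $(-\alpha)^n q^{n(n+1)/2}(q;q)_n/(\alpha q;q)_n$; multiplying yields $(\alpha/c)^n q^{n^2+2n}(\alpha c,q;q)_n/(q^2/c,\alpha q;q)_n$, which matches the claimed prefactor, while the summand $(c/\alpha)^j q^{-j(j+1)}$ from \eqref{eq:1} is identically $c^j\alpha^{-j}q^{-j^2-j}$. The only delicate point is the appearance of $d=0$ as a lower parameter of \eqref{sears:32}; this is harmless precisely because in our setup $d$ enters only through $(0;q)_n=1$ and through $d/\beta=0$, and never through a singular factor of the form $1/d$, which is the reason the zero must be placed in the second slot rather than the first. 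No limit argument is needed, and apart from this bookkeeping the computation is routine.
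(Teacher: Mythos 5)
Your argument is correct and arrives at exactly the paper's intermediate identity, namely
\begin{equation*}
{}_3\phi_2\bigg(\genfrac{}{}{0pt}{}{q^{-n},\alpha q^{n+1},q}{0, q^2/c};q,q\bigg)
=\frac{(\alpha c;q)_n}{( cq^{-n-1};q)_n}\,{}_2\phi_1\bigg(\genfrac{}{}{0pt}{}{q^{-n},\alpha q^{n+1}}{\alpha c};q,c\bigg),
\end{equation*}
after which both you and the paper finish by substituting \eqref{eq:1}; one checks that your prefactor $(-q^2/c)^n q^{n(n-1)/2}(\alpha c;q)_n/(q^2/c;q)_n$ coincides with $(\alpha c;q)_n/(cq^{-n-1};q)_n$. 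The only difference is the tool used for the reduction step: the paper specializes \eqref{hyper2} (which is tailor-made for a ${}_3\phi_2$ with a zero lower parameter and involves no degenerate limits), whereas you specialize the Sears transformation \eqref{sears:32} with $d=0$. Your route is slightly less direct since it requires justifying the degenerate choice $d=0$ in \eqref{sears:32}, but you handle this correctly: $d$ enters both sides only through $(d;q)_j$ and $d/\beta$, both of which are continuous at $d=0$, so no genuine limit argument is needed. All prefactor bookkeeping checks out, so the proof is complete; it is essentially the same two-step strategy as the paper with an interchangeable first step.
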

\begin{proof}
Replacing $(b, c)$ by $(\alpha b, \alpha c)$ in \eqref{hyper2}, we have
\begin{align}\label{Tchen1}
{}_3\phi_2\bigg(\genfrac{}{}{0pt}{}{q^{-n},\alpha b,bzq^{-n}/c}{0, bq^{1-n}/c};q,q\bigg)
=\frac{(\alpha c;q)_n}{( c/b;q)_n}{}_2\phi_1\bigg(\genfrac{}{}{0pt}{}{q^{-n},\alpha b}{\alpha c};q,z\bigg).
\end{align}
Now,  setting $(b,z)=(q^{n+1}, c)$ in (\ref{Tchen1}), we obtain
\begin{align}\label{3chen9}
{}_3\phi_2\bigg(\genfrac{}{}{0pt}{}{q^{-n},\alpha q^{n+1},q}{0, q^2/c};q,q\bigg)
=\frac{(\alpha c;q)_n}{( cq^{-n-1};q)_n}{}_2\phi_1\bigg(\genfrac{}{}{0pt}{}{q^{-n},\alpha q^{n+1}}{\alpha c};q,c\bigg).
\end{align}
Substituting \eqref{eq:1} into \eqref{3chen9}, we complete the proof of Lemma \ref{meq:2}.
\end{proof}

Taking $\alpha\rightarrow q^{-1}$ in Lemma \ref{meq:2}, we obtain
\begin{lemma}\label{lem-limit-2}
For any nonnegative integer $n$, we have
\begin{align}
&{}_3\phi_2\bigg(\genfrac{}{}{0pt}{}{q^{-n},q^{n},q}{0,q^2/c};q,q\bigg)  \nonumber\\
=&(1-q^n)\frac{(c/q;q)_n}{(q^2/c;q)_n}c^{-n}q^{n^2+n}\nonumber\\
&\times\bigg(\frac{q-2c+cq}{(1-q)(c-q)}+\sum_{j=2}^{n}
\frac{1-q^{2j-1}}{(1-q^{j-1})(1-q^j)}\frac{(q/c;q)_j}{(c/q;q)_j}c^jq^{-j^2}\bigg).
\end{align}
\end{lemma}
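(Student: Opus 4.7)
The plan is to substitute $\alpha = q^{-1}$ directly into the identity of Lemma \ref{meq:2} and resolve the resulting $0 \cdot \infty$ indeterminacy in a controlled way. The source of the trouble is transparent: the prefactor $1/(\alpha q;q)_n$ has a simple pole at $\alpha=q^{-1}$ coming from the factor $(1-\alpha q)$, while the $j\geq 2$ summands each contain a compensating zero (since $(\alpha;q)_j/(1-\alpha)$ carries the factor $(1-\alpha q)$ for $j\geq 2$). The $j=0$ and $j=1$ terms do not individually carry this factor, so they must be grouped together and shown to vanish to first order at $\alpha=q^{-1}$, which is where the boundary term $\frac{q-2c+cq}{(1-q)(c-q)}$ comes from.

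Concretely, the first step is to split $(\alpha q;q)_n = (1-\alpha q)(\alpha q^2;q)_{n-1}$ in the prefactor. Substituting $\alpha=q^{-1}$ in the other regular factors $(\alpha c;q)_n$, $(\alpha/c)^n$, and $(\alpha q^2;q)_{n-1}$ produces the desired leading factor $(1-q^n)\frac{(c/q;q)_n}{(q^2/c;q)_n}c^{-n}q^{n^2+n}$, but leaves a residual $1/(1-\alpha q)$ that must be canceled. For the summand with $j\geq 2$, I would rewrite $(\alpha;q)_j/(1-\alpha) = (1-\alpha q)(\alpha q^2;q)_{j-2}$; the factor $(1-\alpha q)$ cancels the pole in the prefactor, and the remaining quotient is regular at $\alpha=q^{-1}$. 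A direct substitution then yields exactly the stated $j$-th summand
\[
\frac{1-q^{2j-1}}{(1-q^{j-1})(1-q^j)}\frac{(q/c;q)_j}{(c/q;q)_j}c^{j}q^{-j^2}.
\]

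For the $j=0$ and $j=1$ terms, I would combine them over a common denominator into a single rational function $F(\alpha)$ and verify by direct expansion that $F(q^{-1})=0$, so that $F(\alpha)/(1-\alpha q)$ has a finite limit. Using L'Hôpital (noting $\frac{d}{d\alpha}(1-\alpha q)=-q$) gives
\[
\lim_{\alpha \to q^{-1}} \frac{F(\alpha)}{1-\alpha q} = \frac{q-2c+cq}{(1-q)(c-q)},
\]
which is the isolated term in the statement. Assembling this with the limits computed for $j\geq 2$ completes the identification of the right-hand side.

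The only real obstacle is the bookkeeping for the $j=0,1$ terms: because the pole from the prefactor is only of order one and must be absorbed by a single vanishing sum, one cannot simply substitute and must keep track of first-order expansions in $\alpha - q^{-1}$. Everything else reduces to routine Pochhammer manipulations, so the computation in this last paragraph is where the care is needed but where no conceptual difficulty appears.
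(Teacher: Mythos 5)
Your proposal is correct and follows exactly the route the paper takes: the paper's entire proof is the one-line remark that the lemma follows by letting $\alpha\to q^{-1}$ in Lemma \ref{meq:2}, and you have simply written out that limit in full, correctly cancelling the simple pole from $(\alpha q;q)_n$ against the factor $(1-\alpha q)$ hidden in $(\alpha;q)_j/(1-\alpha)$ for $j\geq 2$, and evaluating the combined $j=0,1$ terms by L'H\^opital to produce the boundary term $\frac{q-2c+cq}{(1-q)(c-q)}$. The details check out (in particular the $j=0$ and $j=1$ terms do sum to $1+(-1)=0$ at $\alpha=q^{-1}$), so nothing is missing.
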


From \cite[Proposition 2.5]{Liu2013IJNT} we find \footnote{There is a typo in \cite{Liu2013IJNT} that $q^{n(n+1)/2}$ should be replaced by $(-1)^nq^{n(n+1)/2}$.}
\begin{align}\label{eq:pro2.5}
&(-1)^n\frac{(\alpha q;q)_n}{(q;q)_n}q^{n(n+2)/2}
{}_2\phi_1\bigg(\genfrac{}{}{0pt}{}{q^{-n},\alpha q^{n+1}}{\alpha c};q,1\bigg)\nonumber\\
=& \sum_{j=0}^{n}\frac{(1-\alpha q^{2j})(\alpha,q/c;q)_j}{(1-\alpha)(q,\alpha c;q)_j}q^{j^2-j}(\alpha c)^j.
\end{align}
\begin{lemma}\label{meq:3}
For any nonnegative integer $n$, we have
\begin{align}
&{}_3\phi_2\bigg(\genfrac{}{}{0pt}{}{q^{-n},\alpha q^{n+1},q/c}{0,q^2/c};q,q\bigg)\nonumber\\
=&\frac{(\alpha c,q;q)_n}{(q^2/c,\alpha q;q)_n}(q/c)^n
\sum_{j=0}^n\frac{(1-\alpha q^{2j})(\alpha,q/c;q)_j}{(1-\alpha)(q,\alpha c;q)_j}{(\alpha c)}^jq^{j^2-j}.
\end{align}
\end{lemma}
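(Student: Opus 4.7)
The plan is to imitate the proof of the preceding Lemma \ref{meq:2}, whose ${}_3\phi_2$ has the same shape of lower parameters $(0,\,q^2/c)$ as the one in the present lemma. The starting point is the reduction formula \eqref{Tchen1}, which converts a balanced ${}_3\phi_2$ with a zero denominator parameter into a ${}_2\phi_1$. Matching the LHS, I would set $b=q^{n+1}$ and $z=1$ in \eqref{Tchen1}: then the upper parameters become $q^{-n},\alpha b=\alpha q^{n+1}, bzq^{-n}/c=q/c$ and the lower parameters become $0, bq^{1-n}/c=q^2/c$, so that
\begin{align*}
{}_3\phi_2\bigg(\genfrac{}{}{0pt}{}{q^{-n},\alpha q^{n+1},q/c}{0, q^2/c};q,q\bigg) = \frac{(\alpha c;q)_n}{(cq^{-n-1};q)_n}\, {}_2\phi_1\bigg(\genfrac{}{}{0pt}{}{q^{-n},\alpha q^{n+1}}{\alpha c};q,1\bigg).
\end{align*}

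The second step is to evaluate the ${}_2\phi_1$ at $z=1$ as a finite sum by invoking \eqref{eq:pro2.5}: solving it for the ${}_2\phi_1$ expresses that series as $(-1)^n (q;q)_n / (\alpha q;q)_n \cdot q^{-n(n+2)/2}$ times the sum $\sum_{j=0}^n \frac{(1-\alpha q^{2j})(\alpha,q/c;q)_j}{(1-\alpha)(q,\alpha c;q)_j}(\alpha c)^j q^{j^2-j}$. Substituting this into the display above immediately produces a formula of the desired form, with the $j$-sum matching the right-hand side of the lemma.

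All that remains is to simplify the prefactor, which is purely bookkeeping. The factor $(cq^{-n-1};q)_n$ should be pushed into the standard $(q^2/c;q)_n$ by reversing the product order and extracting a $-cq^{-k}$ from each of its $n$ factors; this gives an identity of the form $(cq^{-n-1};q)_n = (-c)^n q^{1-(n+1)(n+2)/2}(q^2/c;q)_n$. Combining this with the sign $(-1)^n$ and the power $q^{-n(n+2)/2}$ coming out of \eqref{eq:pro2.5}, the two sign factors cancel and the exponents of $q$ collapse to $n$, producing exactly the coefficient $\frac{(\alpha c,q;q)_n}{(q^2/c,\alpha q;q)_n}(q/c)^n$ in the lemma.

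The proof therefore requires no new ideas beyond the template of Lemma \ref{meq:2}; the only subtle point is the exponent arithmetic in the last step, where one must be careful to track that the contributions $(n+1)(n+2)/2-1$ from inverting $(cq^{-n-1};q)_n$ and $-n(n+2)/2$ from \eqref{eq:pro2.5} combine with the $c^{-n}$ to give $(q/c)^n$ rather than an extraneous half-integer power of $q$. This is the main (and essentially only) obstacle, and I expect it to work out cleanly by analogy with the verified calculation in Lemma \ref{meq:2}.
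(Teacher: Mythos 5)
Your proof is correct and is essentially identical to the paper's: the paper also applies the reduction \eqref{hyper2} with $(b,c,z)=(\alpha q^{n+1},\alpha c,1)$ (equivalently \eqref{Tchen1} with $b=q^{n+1}$, $z=1$) and then invokes \eqref{eq:pro2.5}. The one point you flagged as the main obstacle — the threatened half-integer power of $q$ — is worth resolving explicitly: your inversion identity $(cq^{-n-1};q)_n=(-c)^nq^{1-(n+1)(n+2)/2}(q^2/c;q)_n=(-c)^nq^{-n(n+3)/2}(q^2/c;q)_n$ is right, but if you combine it with the exponent $n(n+2)/2$ as printed in \eqref{eq:pro2.5} you really do get $q^{n/2}c^{-n}$ rather than $(q/c)^n$, so the arithmetic does \emph{not} close as stated. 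The resolution is that $q^{n(n+2)/2}$ in \eqref{eq:pro2.5} is itself a typo for $q^{n(n+1)/2}$ (check the case $n=1$ directly, or note that the attached footnote refers to the factor $q^{n(n+1)/2}$ in Liu's original); with the corrected exponent one gets $-n(n+1)/2+n(n+3)/2=n$, and the prefactor collapses to $(q/c)^n$ exactly as the lemma requires.
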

\begin{proof}
Taking $(b,c,z)=(\alpha q^{n+1}, \alpha c, 1)$ in \eqref{hyper2}, we obtain
\begin{align}
{}_3\phi_2\bigg(\genfrac{}{}{0pt}{}{q^{-n},\alpha q^{n+1},q/c}{0,q^2/c};q,q\bigg)
=\frac{(\alpha c;q)_n}{(cq^{-n-1};q)_n}
{}_2\phi_1\bigg(\genfrac{}{}{0pt}{}{q^{-n},\alpha q^{n+1}}{\alpha c};q,1\bigg).
\end{align}
Together with \eqref{eq:pro2.5}, we complete the proof of Lemma \ref{meq:3}.
\end{proof}

We will also need the $q$-Pfaff--Saalsch\"utz summation formula \cite[p.\ 40, Eq.\ (2.2.1)]{Gasper}:
\begin{align}
{_3}\phi_2 \bigg(\genfrac{}{}{0pt}{} {q^{-n},  aq^n,  aq/bc} {aq/b,  aq/c}; q,  q \bigg) &= \frac{(b,c;q)_n}{(aq/b, aq/c ;q)_n}  \left(\frac{aq}{bc}\right)^n.  \qquad \label{Pfaff}
\end{align}

\subsection{Building blocks for Appell-Lerch series, false Appell-Lerch series and Hecke-type series}\label{subsec-build}
Following Hickerson and Mortenson \cite{Hickerson-Mortenson}, we define
\begin{align}\label{m-defn}
m(x,q,z):=\frac{1}{j(z;q)}\sum_{r=-\infty}^{\infty} \frac{(-1)^rq^{\binom{r}{2}}z^r}{1-q^{r-1}xz},
\end{align}
where $x,z\in \mathbb{C}^{*}:=\mathbb{C} \backslash \{0\}$ with neither $z$ nor $xz$ an integral power of $q$. Let
\begin{align}\label{f-defn}
f_{a,b,c}(x,y,q):=\sum_{\sg(r)=\sg(s)}\sg(r)(-1)^{r+s}x^ry^sq^{a\binom{r}{2}+brs+c\binom{s}{2}}.
\end{align}
Here $x,y\in \mathbb{C}^{*}$ and $\sg(r):=1$ for $r\geq 0$ and $\sg(r):=-1$ for $r<0$.

In terms of the above building blocks, identities \eqref{intro-f(q)} and \eqref{intro-f0} can be rewritten
(see \cite[Eq.\ (5.4)]{Hickerson-Mortenson}) as
\begin{align}
f^{(3)}(q)=2m(-q,q^3,q)+2m(-q,q^3,q^2)=4m(-q,q^3,q)+\frac{J_{3,6}^2}{J_{1}} \label{intro-f(q)-m}
\end{align}
and \cite[Eq.\ (8.12)]{Hickerson-Mortenson}
\begin{align}
f_{0}^{(5)}(q)=\frac{1}{J_1}\left(f_{3,7,3}(q^2,q^2,q)+q^3f_{3,7,3}(q^7,q^7,q) \right)=\frac{1}{J_1}f_{3,7,3}(q^{5/8},-q^{5/8},-q^{1/4}). \label{intro-f0-f}
\end{align}

Hickerson and Mortenson were also able to convert \eqref{intro-f0-f} into an expression in terms of $m(x,q,z)$ (see \cite[Corollary 1.12]{Hickerson-Mortenson}).
Since the modular properties for $m(x,q,z)$ and $f_{a,b,c}(x,y,q)$ have been well studied in \cite{Zwegers}, it will be easier for us to understand the modular properties of an Appell-Lerch series or Hecke-type series after expressing them using these building blocks.

In this paper, we will also express the Appell-Lerch series or Hecke-type series we encountered in terms of these building blocks. In some cases, the expressions in $m(x,q,z)$ or $f_{a,b,c}(x,y,q)$ may be further simplified. For example, a closer examination on Liu's identity \eqref{intro-Liu-eq} shows that the right side can be simplified to $\frac{J_{2}^2}{J_1}$. For this we need the following Kronecker-type identity \cite[Eq.\ (1.2)]{Mortenson-2017}:
\begin{align}\label{Kronecker}
\sum_{\sg(r)=\sg(s)}\sg(r)x^ry^sq^{rs}=\frac{J_{1}^3j(xy;q)}{j(x;q)j(y;q)}.
\end{align}
We may rewrite the sum in the right side of \eqref{intro-Liu-eq} as
\begin{align}
&\sum_{n=0}^\infty \sum_{j=-n}^n (-1)^{n+j}q^{n^2+n-j^2} \nonumber \\
=& \frac{1}{2}\left(\sum_{n=0}^{\infty}\sum_{j=-n}^{n}(-1)^{n+j}q^{n^2+n-j^2}+\sum_{n=-\infty}^{-1}\sum_{j=n+1}^{-n-1}(-1)^{n+1+j}q^{n^2+n-j^2} \right) \nonumber \\
=&\frac{1}{2}\left(\sum_{\substack{n+j\geq 0\\ n-j\geq 0}}-\sum_{\substack{n+j<0\\ n-j<0}} \right)(-1)^{n+j}q^{n^2+n-j^2} \quad \textrm{(set $n+j=r, n-j=s$)}\nonumber \\
=&\frac{1}{2}\sum_{\substack{\sg(r)=\sg(s) \\r\equiv s \pmod{2}}}\sg(r)(-1)^rq^{rs+\frac{1}{2}r+\frac{1}{2}s}  \nonumber\\
=&\frac{1}{2}\left(\sum_{\sg(r)=\sg(s)}\sg(r)q^{4rs+r+s}-\sum_{\sg(r)=\sg(s)}\sg(r)q^{4rs+3r+3s+2}  \right) \label{intro-Liu-eq-H} \\
=&\frac{1}{2}\left(\frac{J_{4}^3j(q^2;q^4)}{j(q;q^4)^2}-q^2\frac{J_{4}^3j(q^6;q^4)}{j(q^3;q^4)^2}  \right) \quad \textrm{(by \eqref{Kronecker})} \nonumber \\
=&\frac{J_{2}^4}{J_{1}^2}.
\end{align}
Therefore,  we can rewrite \eqref{intro-Liu-eq} as
\begin{align}\label{Liu-eq-simplify}
\sum_{n=0}^{\infty}\frac{(q;q^2)_nq^n}{(q^2;q^2)_n}=\frac{J_{2}^2}{J_1}=\sum_{n=0}^\infty q^{n(n+1)/2},
\end{align}
which is one of Ramanujan's theta function. If we do not express the right side to the form \eqref{intro-Liu-eq-H}, i.e., $\frac{1}{2}\left(f_{0,1,0}(-q,-q,q^4)-q^2f_{0,1,0}(-q^3,-q^3,q^4) \right)$ and using \eqref{Kronecker} to further simplify, it would be difficult to guess that the right side is such a simple infinite product.

We collect the following properties for $m(x,q,z)$ and $f_{a,b,c}(x,y,q)$, which will be helpful for simplifying the final expressions. Following \cite{Hickerson-Mortenson}, the term ``generic'' will be used to mean that the parameters do not cause poles in the Appell-Lerch sums or in the quotients of theta functions.
\begin{lemma}\label{lem-m-prop}
(Cf.\ \cite[Proposition\ 3.1]{Hickerson-Mortenson}.) For generic $x,z\in \mathbb{C}^{*}$
\begin{align}
m(x,q,z)=m(x,q,qz), \label{m-id-1} \\
m(x,q,z)=x^{-1}m(x^{-1},q,z^{-1}), \label{m-id-2} \\
m(qx,q,z)=1-xm(x,q,z). \label{m-id-3}
\end{align}
\end{lemma}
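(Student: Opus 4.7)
The three identities are all amenable to direct manipulation of the defining series
\[
m(x,q,z)=\frac{1}{j(z;q)}\sum_{r=-\infty}^{\infty}\frac{(-1)^r q^{\binom{r}{2}} z^r}{1-q^{r-1}xz},
\]
combined with the theta-function transformations \eqref{j-id-1} and \eqref{j-id-2} and the Jacobi triple product $\sum_r(-1)^r q^{\binom{r}{2}} z^r = j(z;q)$. I would handle them in the order \eqref{m-id-1}, \eqref{m-id-3}, \eqref{m-id-2} since each step reuses tricks from the previous.

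For \eqref{m-id-1}, I would start from $m(x,q,qz)$, shift $r\mapsto r-1$ in its summation, and use the elementary identity $\binom{r-1}{2}+(r-1)=\binom{r}{2}$ to convert the summand back to the one appearing in $m(x,q,z)$, up to an overall factor of $-z^{-1}$. Combined with $j(qz;q)=-z^{-1}j(z;q)$ (the $n=1$ case of \eqref{j-id-1}), this factor cancels and \eqref{m-id-1} falls out.

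For \eqref{m-id-3}, my plan is to write $\tfrac{1}{1-q^r xz}=1+\tfrac{q^r xz}{1-q^r xz}$ in the sum defining $m(qx,q,z)$ (whose denominator inside the sum is $1-q^{r-1}(qx)z=1-q^r xz$). The constant term then yields $\tfrac{1}{j(z;q)}\sum_r(-1)^r q^{\binom{r}{2}}z^r = 1$ by Jacobi's triple product. In the remaining piece I would use $\binom{r}{2}+r=\binom{r+1}{2}$ and substitute $s=r+1$, which produces exactly $-x\cdot j(z;q)\,m(x,q,z)$ in the numerator. Dividing by $j(z;q)$ gives \eqref{m-id-3}.

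For \eqref{m-id-2}, the key is an index reversal. I would substitute $r\mapsto -r$ in the sum defining $m(x,q,z)$, use $\binom{-r}{2}=\binom{r}{2}+r$, and rewrite $\tfrac{1}{1-q^{-r-1}xz}$ by multiplying top and bottom by $-q^{r+1}(xz)^{-1}$ to get a denominator of the form $1-q^{r+1}(xz)^{-1}\cdot q^{-2}\cdot\ldots$; a second shift (e.g.\ $r\mapsto r-2$) then brings the resulting sum into the form of the $m(x^{-1},q,z^{-1})$ series. Applying $j(z^{-1};q)=-z^{-1}j(z;q)$ from \eqref{j-id-2} to reconcile the two prefactors produces the factor $x^{-1}$ on the right-hand side. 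The main obstacle is purely bookkeeping: tracking the combined powers of $q,x,z$ through the double index shift so that the residual scalar is exactly $x^{-1}$ and nothing else; the temptation to do a single reversal $r\mapsto -r$ will leave the denominator in the ``wrong'' form, so one must follow it with a shift by $2$ (equivalently, use the single substitution $r\mapsto 2-r$) to land on the target series.
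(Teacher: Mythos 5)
Your proof is correct: all three identities do follow by exactly the elementary series manipulations you describe (the index shift $r\mapsto r-1$ together with $j(qz;q)=-z^{-1}j(z;q)$ for \eqref{m-id-1}, the split $\tfrac{1}{1-q^rxz}=1+\tfrac{q^rxz}{1-q^rxz}$ plus the triple product for \eqref{m-id-3}, and the substitution $r\mapsto 2-r$ after rationalizing the denominator, combined with $j(z^{-1};q)=-z^{-1}j(z;q)$, for \eqref{m-id-2}), and I checked that the residual scalar in the last case is exactly $x^{-1}$ as you claim. The paper itself gives no proof and simply cites Proposition 3.1 of Hickerson and Mortenson, where the argument is the same direct verification you carried out.
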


\begin{lemma}\label{lem-m-minus}
(Cf.\ \cite[Theorem 3.3]{Hickerson-Mortenson}.) For generic $x,z_0,z_1\in \mathbb{C}^{*}$
\begin{align}
m(x,q,z_1)-m(x,q,z_0)=\frac{z_0J_1^3j(z_1/z_0;q)j(xz_0z_1;q)}{j(z_0;q)j(z_1;q)j(xz_0;q)j(xz_1;q)}. \label{m-minus}
\end{align}
\end{lemma}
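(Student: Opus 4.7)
The plan is to regard both sides of the claimed identity as meromorphic functions of the single variable $z_1\in\mathbb{C}^{*}$, with $x$, $z_0$ and $q$ held fixed, and to show that their difference $\phi(z_1)$ is identically zero by a $q$-periodic Liouville argument.

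First I would verify that $\phi$ is invariant under $z_1\mapsto qz_1$. The left-hand side is immediately $q$-periodic in $z_1$ by \eqref{m-id-1}. For the right-hand side, applying \eqref{j-id-1} with $n=1$ to each $z_1$-dependent theta factor, the numerator $j(z_1/z_0;q)\,j(xz_0z_1;q)$ acquires a factor $(-z_0/z_1)\cdot(-(xz_0z_1)^{-1}) = 1/(xz_1^2)$, and the denominator $j(z_1;q)\,j(xz_1;q)$ acquires the identical factor $(-z_1^{-1})\cdot(-(xz_1)^{-1}) = 1/(xz_1^2)$, so the two cancel.

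Next I would show that $\phi(z_1)$ is entire on $\mathbb{C}^{*}$ by checking that the residues at the potential poles cancel. Thanks to the $q$-periodicity it suffices to examine a single fundamental domain, hence only $z_1 = 1$ (from $1/j(z_1;q)$ on both sides) and $z_1 = 1/x$ (from $1/j(xz_1;q)$). At $z_1 = 1$, the defining series $\sum_r (-1)^r q^{\binom{r}{2}} z_1^r/(1-q^{r-1}xz_1)$ collapses, via the shift $r\mapsto r+1$ and the reciprocal expansion \eqref{reciprocal-Jacobi} with $z\mapsto x$, to $-J_1^3/j(x;q)$; combined with the elementary evaluation $\operatorname{Res}_{z_1=1} 1/j(z_1;q) = -1/J_1^3$ (coming from $(z;q)_\infty=(1-z)(qz;q)_\infty$), this yields $\operatorname{Res}_{z_1=1} m(x,q,z_1) = 1/j(x;q)$. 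On the right-hand side, using $j(1/z_0;q) = -z_0^{-1}j(z_0;q)$ from \eqref{j-id-2} simplifies the residue to precisely $1/j(x;q)$ as well. A completely parallel computation --- now with the pole at $z_1=1/x$ coming from the $r=1$ term of the series in $m(x,q,z_1)$, together with $j(1/x;q) = -j(x;q)/x$ --- handles $z_1 = 1/x$.

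With $\phi$ holomorphic on $\mathbb{C}^{*}$ and satisfying $\phi(qz_1) = \phi(z_1)$, it is bounded on a closed fundamental annulus and hence, by $q$-periodicity, on all of $\mathbb{C}^{*}$; therefore it extends to a bounded entire function by Riemann's removable-singularity theorem, and must be a constant in $z_1$. Evaluating at $z_1 = qz_0$ then forces this constant to be zero, since the left-hand side vanishes by \eqref{m-id-1} and the right-hand side carries the factor $j(q;q) = 0$ in its numerator while its denominator remains finite generically. The main delicate point is the residue verification, which is exactly where the word ``generic'' is needed: one must check that the zeros of $j(z_1/z_0;q)\,j(xz_0z_1;q)$ at $z_1\in z_0q^{\mathbb{Z}}$ and $z_1\in q^{\mathbb{Z}}/(xz_0)$ do not accidentally cancel the denominator zeros at $z_1\in q^{\mathbb{Z}}$ and $z_1\in q^{\mathbb{Z}}/x$, which would spoil the bookkeeping.
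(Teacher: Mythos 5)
Your argument is correct, and I verified the key computations: the multiplier $1/(xz_1^2)$ picked up by numerator and denominator under $z_1\mapsto qz_1$ does cancel; $\operatorname{Res}_{z_1=1}\bigl(1/j(z_1;q)\bigr)=-1/J_1^3$; the inner sum of $m(x,q,z_1)$ at $z_1=1$ collapses via $r\mapsto r+1$ and \eqref{reciprocal-Jacobi} to $-J_1^3/j(x;q)$, giving residue $1/j(x;q)$ on both sides; and the parallel computation at $z_1=1/x$ (residue $-1/(xj(x;q))$ on both sides, using $j(x^{-1};q)=-x^{-1}j(x;q)$) also checks out, so the difference is a $q$-invariant holomorphic function on $\mathbb{C}^{*}$, hence constant, and vanishes at $z_1=qz_0$ (or more directly at $z_1=z_0$, where $j(1;q)=0$). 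Note that the paper itself offers no proof of this lemma — it is quoted verbatim from Hickerson and Mortenson, Theorem 3.3 — so there is no in-paper argument to compare against; your Liouville-type elliptic-function argument is the standard self-contained route and is a perfectly good substitute for the citation. One small remark on your closing caveat: the genericity conditions that actually matter for the argument are that $z_0\notin q^{\mathbb{Z}}\cup q^{\mathbb{Z}}x^{-1}$ (so $m(x,q,z_0)$ and the evaluation point are legitimate), that $x\notin q^{\mathbb{Z}}$ (so the two pole classes $q^{\mathbb{Z}}$ and $q^{\mathbb{Z}}x^{-1}$ are disjoint and all poles are simple), and that $j(z_0;q)$, $j(xz_0;q)$ do not vanish; an accidental cancellation of a denominator zero by a numerator zero on the right-hand side would only remove a pole whose residue you have already shown to be matched, so it cannot spoil the bookkeeping.
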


\begin{lemma}\label{lem-m-decompose}
(Cf.\ \cite[Theorem 3.5]{Hickerson-Mortenson}.) For generic $x,z,z'\in \mathbb{C}^{*}$
\begin{align}
m(x,q,z)=& \sum_{r=0}^{n-1}q^{-\binom{r+1}{2}}(-x)^rm(-q^{\binom{n}{2}-nr}(-x)^n,q^{n^2},z')+\nonumber \\
& \frac{z'J_{n}^3}{j(xz;q)j(z';q^{n^2})}\sum_{r=0}^{n-1}\frac{q^{\binom{r}{2}}(-xz)^rj(-q^{\binom{n}{2}+r}(-x)^nzz';q^n)j(q^{nr}z^n/z';q^{n^2})}{j(-q^{\binom{n}{2}}(-x)^nz',q^rz;q^n)}.
\end{align}
\end{lemma}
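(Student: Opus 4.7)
The plan is to prove the identity by treating both sides as functions of the free parameter $z'$. Since the left side $m(x,q,z)$ does not involve $z'$, it suffices to establish two things: the right side is independent of $z'$, and for some convenient specialization of $z'$ the right side reduces to $m(x,q,z)$.

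First, for the $z'$-independence, I would use the subtraction formula in Lemma~\ref{lem-m-minus} at base $q^{n^2}$: each difference
\begin{equation*}
m(-q^{\binom{n}{2}-nr}(-x)^n,q^{n^2},z')-m(-q^{\binom{n}{2}-nr}(-x)^n,q^{n^2},z'')
\end{equation*}
is an explicit quotient of theta functions at base $q^{n^2}$. Summing over $r=0,\dots,n-1$ and comparing with the change in the second (correction) sum as $z'$ moves to $z''$ reduces the invariance claim to a closed-form theta identity. I would verify this identity by combining Jacobi's triple product, the partial-fraction expansion \eqref{reciprocal-Jacobi}, and the transformation rules \eqref{j-id-1}--\eqref{j-id-2}, after regrouping the terms so that the $n$ theta quotients at base $q^{n^2}$ assemble into the correction quotient at mixed bases $q^n$ and $q^{n^2}$.

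With $z'$-independence in hand, I would specialize $z'$ to collapse the correction sum: for instance, choose $z'$ so that all but one of the theta quotients in the second sum vanishes (via a zero of $j(q^{nr}z^n/z';q^{n^2})$ for a single $r$), or take an appropriate limit. This reduces the theorem to a plain series identity, which I would verify by starting from
\begin{equation*}
m(x,q,z)=\frac{1}{j(z;q)}\sum_{s\in\mathbb{Z}}\frac{(-1)^s q^{\binom{s}{2}}z^s}{1-q^{s-1}xz},
\end{equation*}
splitting the summation into residue classes $s=nk+r$ with $0\le r<n$ and $k\in\mathbb{Z}$, and using the decomposition $\binom{nk+r}{2}=n^2\binom{k}{2}+\binom{n}{2}k+nkr+\binom{r}{2}$ to factor each residue-class contribution into an inner sum in $k$ that can be recognised as $j(z';q^{n^2})\cdot m(-q^{\binom{n}{2}-nr}(-x)^n,q^{n^2},z')$ up to the prefactor $q^{-\binom{r+1}{2}}(-x)^r$ appearing in the first sum.

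The principal obstacle is the theta-function identity in Step~1 that certifies $z'$-invariance: it equates a sum of $n$ theta quotients at base $q^{n^2}$ to a single theta quotient at mixed bases $q^n$ and $q^{n^2}$, and the bookkeeping with signs, the shifts $q^{\binom{n}{2}-nr}$, and the factor $(-x)^n$ needs to be carried out delicately. Once this identity is secured, the recognition of each residue-class sum as an Appell-Lerch series at base $q^{n^2}$ in Step~2 is a routine (if intricate) rearrangement of the defining series.
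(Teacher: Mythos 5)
The paper does not actually prove this lemma: it is imported verbatim from Hickerson and Mortenson \cite[Theorem 3.5]{Hickerson-Mortenson}, so your argument has to stand entirely on its own, and as written it has a genuine gap in the anchoring step. The proposed specialization of $z'$ cannot work: for fixed generic $x,q,z$ the factor $j(q^{nr}z^n/z';q^{n^2})$ vanishes exactly when $z'=q^{nr-n^2k}z^n$ for some $k\in\mathbb{Z}$, and since the exponents $nr-n^2k$ for $r=0,\dots,n-1$ lie in distinct residue classes modulo $n^2$, any single choice of $z'$ annihilates at most \emph{one} of the $n$ terms of the correction sum, never all but one. (The same count holds if you try to use zeros of $j(-q^{\binom{n}{2}+r}(-x)^nzz';q^n)$ instead.) So there is no value of $z'$ at which the right-hand side degenerates to ``a plain series identity'' with the correction term gone.

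The deeper problem is in the residue-class computation that the specialization was supposed to reduce to. Splitting $s=nk+r$ in the defining series does produce the exponent decomposition you quote, but the resulting inner sum over $k$ has its theta part at base $q^{n^2}$ (through $q^{n^2\binom{k}{2}}$) while its denominator $1-q^{nk+r-1}xz$ moves at base $q^{n}$ and involves $x$ to the first power; by contrast, the series defining $m(-q^{\binom{n}{2}-nr}(-x)^n,q^{n^2},z')$ has denominators $1-q^{n^2(k-1)}\bigl(-q^{\binom{n}{2}-nr}(-x)^n\bigr)z'$, which move at base $q^{n^2}$ and involve $x^n$. These two pole structures cannot be matched termwise for any choice of $z'$ when $n>1$, so the inner sum is \emph{not} of the form $j(z';q^{n^2})\,m(\cdot,q^{n^2},z')$ times the stated prefactor. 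Reconciling the two pole structures is precisely what generates the theta-quotient correction term and is the substantive content of the theorem; it is not a routine rearrangement. Your Step 1 (the $z'$-independence of the right-hand side via Lemma \ref{lem-m-minus}) is sound in outline, but without a valid anchor the strategy does not close. To repair it you would need either a partial-fraction expansion of $1/(1-q^{nk+r-1}xz)$ into poles adapted to base $q^{n^2}$ (this is where \eqref{reciprocal-Jacobi} and an elliptic-function, residue-cancellation argument enter), or simply to defer to the proof in \cite{Hickerson-Mortenson}.
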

As a corollary of this powerful formula, we have the following result.
\begin{lemma}\label{lem-m-add}
(Cf.\ \cite[Corollary 3.7]{Hickerson-Mortenson}.) For generic $x,z\in \mathbb{C}^{*}$
\begin{align}
m(x,q,z)=m(-qx^2,q^4,z^4)-\frac{x}{q}m(-\frac{x^2}{q},q^4,z^4)-\frac{J_2J_4j(-xz^2;q)j(-xz^3;q)}{xj(xz;q)j(z^4;q^4)j(-qx^2z^4;q^2)}.
\end{align}
\end{lemma}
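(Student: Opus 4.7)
The plan is to derive Lemma \ref{lem-m-add} as the $n=2$ specialization of the general decomposition formula Lemma \ref{lem-m-decompose}, with the free parameter chosen to be $z'=z^4$. The structural match is immediate: the right-hand side of the claim has exactly two $m$-terms with base $q^4$ and shifted argument $z^4$, plus one correction theta quotient, which is precisely what Lemma \ref{lem-m-decompose} produces when $n=2$ (two terms in the first sum, two terms in the residual sum that must then be combined into one).

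The first step is routine. Using $\binom{1}{2}=0$ and $\binom{2}{2}=1$, the first sum in Lemma \ref{lem-m-decompose} with $n=2$ and $z'=z^4$ reads
\begin{align*}
\sum_{r=0}^{1}q^{-\binom{r+1}{2}}(-x)^r\,m\bigl(-q^{1-2r}x^2,q^4,z^4\bigr)
=m(-qx^2,q^4,z^4)-\frac{x}{q}\,m\bigl(-x^2/q,q^4,z^4\bigr),
\end{align*}
which matches the first two terms of the claim.

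The main step is to identify the residual
\begin{align*}
R=\frac{z^4 J_2^3}{j(xz;q)\,j(z^4;q^4)\,j(-qx^2z^4;q^2)}\sum_{r=0}^{1}\frac{q^{\binom{r}{2}}(-xz)^r\,j(-q^{1+r}x^2z^5;q^2)\,j(q^{2r-2}z^{-2};q^4)}{j(q^r z;q^2)}
\end{align*}
with the single quotient $-\,J_2 J_4\,j(-xz^2;q)\,j(-xz^3;q)\big/\bigl(x\,j(xz;q)\,j(z^4;q^4)\,j(-qx^2z^4;q^2)\bigr)$. My plan is first to normalize the $q^4$-factors $j(q^{2r-2}z^{-2};q^4)$ using the shift rule \eqref{j-id-1} and inversion rule \eqref{j-id-2}, so that each becomes an explicit monomial multiple of a common theta block. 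The two surviving summands indexed by $r=0,1$ can then be combined via a standard $2$-dissection of the form
\begin{align*}
j(a;q)\,j(b;q)=j(-ab;q^2)\,j(-qa/b;q^2)-a\,j(-abq^2;q^2)\,j(-a/b;q^2),
\end{align*}
taken with $(a,b)=(-xz^2,-xz^3)$ so that the left-hand side is exactly $j(-xz^2;q)\,j(-xz^3;q)$; a final $J_2^3/J_4$ reduction reconciles the overall constant. The main obstacle is precisely the bookkeeping in this last step: tracking signs, powers of $q$ and $z$, and the base of each theta factor through the normalization and dissection. The correct answer is essentially forced by quasi-periodicity in $z$ together with matching the simple pole at $xz\in q^{\mathbb{Z}}$ coming from $j(xz;q)$ in the denominator, which provides a reliable cross-check at each step.
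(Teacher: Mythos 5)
Your plan is exactly the paper's (and Hickerson--Mortenson's) route: the paper states Lemma \ref{lem-m-add} without proof as a corollary of Lemma \ref{lem-m-decompose}, and the intended derivation is precisely the $n=2$, $z'=z^4$ specialization you carry out, with the first sum yielding the two $m$-terms and the residual sum collapsing to the single theta quotient. Apart from a small transcription slip (the factor $j(q^{nr}z^n/z';q^{n^2})$ specializes to $j(q^{2r}z^{-2};q^4)$, not $j(q^{2r-2}z^{-2};q^4)$), your outline is correct, and your fallback of verifying the residual theta identity by quasi-periodicity and pole-matching in $z$ is a legitimate way to finish the bookkeeping.
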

The following lemma contains useful formulas for simplifying expressions in terms of $f_{a,b,c}(x,y,q)$.
\begin{lemma}\label{lem-f-prop}
(Cf.\ \cite[Propositions\ 6.1-6.2, Corollary 6.4]{Hickerson-Mortenson}.) For $x,y \in \mathbb{C}^{*}$
\begin{align}
&f_{a,b,c}(x,y,q)=f_{a,b,c}(-x^2q^a,-y^2q^c,q^4)-xf_{a,b,c}(-x^2q^{3a},-y^2q^{c+2b},q^4)\nonumber \\
&\quad \quad \quad  -yf_{a,b,c}(-x^2q^{a+2b},-y^2q^{3c},q^4)+xyq^{b}f_{a,b,c}(-x^2q^{3a+2b},-y^2q^{3c+2b},q^4), \label{f-id-1} \\
&f_{a,b,c}(x,y,q)=-\frac{q^{a+b+c}}{xy}f_{a,b,c}(q^{2a+b}/x,q^{2c+b}/y,q), \label{f-id-2} \\
&f_{a,b,c}(x,y,q)=-yf_{a,b,c}(q^bx,q^cy,q)+j(x;q^a), \label{f-id-3} \\
&f_{a,b,c}(x,y,q)=-xf_{a,b,c}(q^ax,q^by,q)+j(y;q^c). \label{f-id-4}
\end{align}
\end{lemma}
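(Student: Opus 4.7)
The plan is to prove each of the four identities by direct index manipulation of the defining series
\[
f_{a,b,c}(x,y,q) = \sum_{\sg(r)=\sg(s)} \sg(r)(-1)^{r+s}x^r y^s q^{a\binom{r}{2}+brs+c\binom{s}{2}}.
\]
Identities \eqref{f-id-3} and \eqref{f-id-4} come from an index shift that isolates an axis ($s=0$ or $r=0$); identity \eqref{f-id-2} uses the reflection $(r,s)\mapsto(-r-1,-s-1)$; and identity \eqref{f-id-1} is the parity splitting $r=2r'+\epsilon_1$, $s=2s'+\epsilon_2$ with $\epsilon_1,\epsilon_2\in\{0,1\}$.

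For \eqref{f-id-3}, I would expand $f_{a,b,c}(q^bx,q^cy,q)$, absorb the extra factors $q^{br}$ and $q^{cs}$ into the exponent using $br+brs=br(s+1)$ and $cs+c\binom{s}{2}=c\binom{s+1}{2}$, then multiply by $-y$ and shift $s'=s+1$ to recognize the summand as the one defining $f_{a,b,c}(x,y,q)$ with $s$ replaced by $s'$. The only change is that the summation constraint becomes $\sg(r)=\sg(s'-1)$, which disagrees with $\sg(r)=\sg(s')$ exactly on the line $s'=0$: the original region demands $r\geq 0$ there, while the shifted region demands $r<0$. Consequently $f_{a,b,c}(x,y,q)+yf_{a,b,c}(q^bx,q^cy,q)$ collapses to the $s'=0$ slice with $r\in\mathbb{Z}$, which is $\sum_r(-1)^r x^r q^{a\binom{r}{2}}=j(x;q^a)$. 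Identity \eqref{f-id-4} follows by the mirror argument based on the shift $r'=r+1$.

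For \eqref{f-id-2}, I would apply $r\mapsto-r-1$, $s\mapsto-s-1$ directly in $f_{a,b,c}(q^{2a+b}/x,q^{2c+b}/y,q)$, relying on the elementary identities $\binom{-r-1}{2}=\binom{r}{2}+2r+1$, $(-1)^{-r-s-2}=(-1)^{r+s}$, and $\sg(-r-1)=-\sg(r)$ (the last with no boundary exception, so that the constraint $\sg(r)=\sg(s)$ is preserved). A short bookkeeping exercise shows that the linear prefactors $q^{-(2a+b)(r+1)}$ and $q^{-(2c+b)(s+1)}$ from the reflected arguments combine with the new quadratic contributions to leave exactly $a\binom{r}{2}+brs+c\binom{s}{2}-(a+b+c)$ in the exponent, while the monomial $x^{r+1}y^{s+1}$ and the sign change from $\sg(-r-1)=-\sg(r)$ together produce the outer factor $-xyq^{-(a+b+c)}$, whence \eqref{f-id-2}.

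For \eqref{f-id-1}, I would split the sum into the four residue classes $(\epsilon_1,\epsilon_2)\in\{0,1\}^2$; since $\sg(2r'+\epsilon)=\sg(r')$ for $\epsilon\in\{0,1\}$ (including at $r'=0$, where $\sg(0)=1$), in every class the constraint reduces to $\sg(r')=\sg(s')$. Substituting $\binom{2r'+\epsilon_1}{2}$, $\binom{2s'+\epsilon_2}{2}$, and $(2r'+\epsilon_1)(2s'+\epsilon_2)$ and grouping the $r'^2$, $r's'$, $s'^2$ terms, one identifies each piece with an $f_{a,b,c}(\cdot,\cdot,q^4)$ evaluated at exactly the arguments appearing in \eqref{f-id-1}, while the prefactors $(-1)^{\epsilon_1+\epsilon_2}x^{\epsilon_1}y^{\epsilon_2}q^{b\epsilon_1\epsilon_2}$ yield the coefficients $1,-x,-y,xyq^b$. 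The main obstacle throughout is not the algebra itself but the careful tracking of the $\sg$ function under each substitution: the disagreement of $\sg(s-1)$ with $\sg(s)$ at the single point $s=0$ is what produces the Jacobi correction in \eqref{f-id-3} and \eqref{f-id-4}; the clean identity $\sg(-r-1)=-\sg(r)$ with no exception at $r=0$ is what makes the reflection \eqref{f-id-2} work; and for \eqref{f-id-1} one needs $\sg(2r'+\epsilon)=\sg(r')$ to hold even at $r'=0$. Once these sign conventions are verified, everything reduces to elementary binomial arithmetic.
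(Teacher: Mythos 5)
Your proof is correct: all four identities follow from the index manipulations you describe, and your tracking of $\sg$ — the boundary discrepancy at $s'=0$ producing the $j(x;q^a)$ correction in \eqref{f-id-3}--\eqref{f-id-4}, the exception-free relation $\sg(-r-1)=-\sg(r)$ for \eqref{f-id-2}, and $\sg(2r'+\epsilon)=\sg(r')$ for \eqref{f-id-1} — is exactly the bookkeeping that makes each step go through. This is the same argument as in Hickerson--Mortenson (which the paper cites rather than reproves), and it matches the paper's own one-line proof of the analogous Lemma \ref{lem-barf} for $\overline{f}_{a,b,c}$: parity decomposition for the quadruple-splitting identity and the reflection $(r,s)\mapsto(-r-1,-s-1)$ for the functional equation.
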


In \cite[Eq.\ (4.6)]{Hickerson-Mortenson} it was defined for $x$ being neither 0 nor an integral power of $q$ that
\begin{align}\label{h-defn}
h(x,q):=\frac{1}{j(q;q^2)}\sum_{n=-\infty}^\infty \frac{(-1)^nq^{n(n+1)}}{1-q^nx}.
\end{align}
It was also proved that for generic $x\in \mathbb{C}^{*}$ \cite[Proposition 4.4]{Hickerson-Mortenson}
\begin{align}\label{h-m}
h(x,q)=-x^{-1}m(x^{-2}q,q^2,x).
\end{align}
Meanwhile, Hickerson and Mortenson \cite[Eq.\ (4.8)]{Hickerson-Mortenson} also defined for $x^2$ being neither zero nor an integral power of $q^2$ that
\begin{align}
k(x,q):=\frac{1}{xj(-q;q^4)}\sum_{n=-\infty}^\infty \frac{q^{n(2n+1)}}{1-q^{2n}x^2}. \label{k-defn}
\end{align}
It was proved that \cite[Eq.\ (4.10)]{Hickerson-Mortenson}
\begin{align}
xk(x,q)=m(-x^2,q,x^{-2})+\frac{J_1^4}{2J_2^2j(x^2;q)}. \label{k-id}
\end{align}
We will use \eqref{h-m} and \eqref{k-id} later.

While $m(x,q,z)$ and $f_{a,b,c}(x,y,q)$ can be used to express Appell-Lerch series and a big family of Hecke-type series we encountered, there are still some series which are unlikely to be represented by them. For these series we have to introduce two new building blocks.

We define for $x,z \in \mathbb{C}^{*}$ with $xz$ not being an integral power of $q$ that
\begin{align}
\overline{m}(x,q,z)=\sum_{r=-\infty}^{\infty}\frac{(-1)^r\sg(r)q^{\binom{r}{2}}z^r}{1-q^{r-1}xz}. \label{barm-defn}
\end{align}
This closely resembles \eqref{m-defn} except that we do not have a factor of infinite product and we have introduced $\sg(r)$ in the summand.

In a way similar to \eqref{f-defn}, we define
\begin{align}\label{barf-defn}
\overline{f}_{a,b,c}(x,y,q):=\sum_{\sg(r)=\sg(s)}(-1)^{r+s}x^ry^sq^{a\binom{r}{2}+brs+c\binom{s}{2}}.
\end{align}
The difference between this function and $f_{a,b,c}(x,y,q)$ is that we have removed  $\sg(r)$ from the summand.

We have  some formulas similar to Lemma \ref{lem-f-prop} for $\overline{f}_{a,b,c}(x,y,q)$.
\begin{lemma}\label{lem-barf}
We have
\begin{align}
&\overline{f}_{a,b,c}(x,y,q)=\overline{f}_{a,b,c}(-x^2q^a,-y^2q^c,q^4)-x\overline{f}_{a,b,c}(-x^2q^{3a},-y^2q^{c+2b},q^4)\nonumber \\
&\quad \quad \quad  -y\overline{f}_{a,b,c}(-x^2q^{a+2b},-y^2q^{3c},q^4)+xyq^{b}\overline{f}_{a,b,c}(-x^2q^{3a+2b},-y^2q^{3c+2b},q^4), \label{barf-id-1} \\
&\overline{f}_{a,b,c}(x,y,q)=-\frac{q^{a+b+c}}{xy}\overline{f}_{a,b,c}(q^{2a+b}/x,q^{2c+b}/y,q). \label{barf-id-2}
\end{align}
\end{lemma}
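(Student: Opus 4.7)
My plan is to follow the proofs of the parallel identities \eqref{f-id-1} and \eqref{f-id-2} for $f_{a,b,c}$, in which one applies an explicit substitution on the summation indices and recognizes the result as another instance of the same function. The defining sums of $\overline{f}_{a,b,c}$ and $f_{a,b,c}$ differ only in that $\overline{f}_{a,b,c}$ omits the $\sg(r)$ factor, so the same substitutions should transport; the only place where the $\overline{f}$-argument can diverge from the $f$-argument is in the signs picked up from that missing factor.

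For \eqref{barf-id-1}, I would split the summation by parity: write $r=2r'+\epsilon_1$ and $s=2s'+\epsilon_2$ with $\epsilon_1,\epsilon_2\in\{0,1\}$ and $r',s'\in\mathbb{Z}$, giving a bijection between $\mathbb{Z}^2$ and $\{0,1\}^2\times\mathbb{Z}^2$. A short four-case check (using $\sg(0)=1$) shows $\sg(r)=\sg(r')$ and $\sg(s)=\sg(s')$ regardless of the parity bits, so the constraint $\sg(r)=\sg(s)$ transfers intact to $\sg(r')=\sg(s')$. Using $\binom{2r'+\epsilon_1}{2}=4\binom{r'}{2}+(1+2\epsilon_1)r'$ (and the analogue in $s$) together with the expansion of $brs$, the exponent rewrites as
\begin{align*}
a\binom{r}{2}+brs+c\binom{s}{2} = 4a\binom{r'}{2}+4br's'+4c\binom{s'}{2}+\alpha_\epsilon r'+\beta_\epsilon s'+b\epsilon_1\epsilon_2,
\end{align*}
where $\alpha_\epsilon=(1+2\epsilon_1)a+2b\epsilon_2$ and $\beta_\epsilon=(1+2\epsilon_2)c+2b\epsilon_1$. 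Absorbing the linear pieces into new bases via $x^{2r'}=(x^2)^{r'}$ and $y^{2s'}=(y^2)^{s'}$, the four $(\epsilon_1,\epsilon_2)$-slices assemble into the four summands on the right-hand side of \eqref{barf-id-1}, with prefactors $(-1)^{\epsilon_1+\epsilon_2}x^{\epsilon_1}y^{\epsilon_2}q^{b\epsilon_1\epsilon_2}$ equal to $1,-x,-y,xyq^b$ respectively.

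For \eqref{barf-id-2}, I would apply the involution $(r,s)\mapsto(R,S)=(-r-1,-s-1)$ of $\mathbb{Z}^2$. Since $\sg(-R-1)=-\sg(R)$ for every $R\in\mathbb{Z}$, the domain $\{\sg(r)=\sg(s)\}$ maps to itself (both signs flip simultaneously). Using $(-1)^{r+s}=(-1)^{R+S}$, $x^ry^s=(xy)^{-1}x^{-R}y^{-S}$, $\binom{-R-1}{2}=\binom{R}{2}+2R+1$, and $(-R-1)(-S-1)=RS+R+S+1$, the exponent acquires the linear shift $(2a+b)R+(2c+b)S$ and the constant shift $a+b+c$, so the summand becomes a scalar multiple (with scalar $q^{a+b+c}/(xy)$) of the corresponding summand of $\overline{f}_{a,b,c}(q^{2a+b}/x,q^{2c+b}/y,q)$. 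In the $f_{a,b,c}$-version the additional factor $\sg(r)\mapsto-\sg(R)$ contributes the minus sign seen in \eqref{f-id-2}; since $\overline{f}_{a,b,c}$ carries no $\sg$-factor, the minus in \eqref{barf-id-2} must come (if at all) purely from the sign bookkeeping above, which is the one delicate point to verify at the end.

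The main -- and essentially only -- point of care is tracking $\sg$ under the two substitutions. For \eqref{barf-id-1} one must verify $\sg(2r'+\epsilon_1)=\sg(r')$ independently of $\epsilon_1$, and for \eqref{barf-id-2} one must verify $\sg(-R-1)=-\sg(R)$; both are finite checks. Apart from these, everything reduces to elementary $q$-exponent arithmetic, with no nontrivial $q$-series input required.
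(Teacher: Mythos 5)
Your approach coincides with the paper's own (whose entire proof is the instruction ``decompose by parity'' for \eqref{barf-id-1} and ``replace $(r,s)$ by $(-r-1,-s-1)$'' for \eqref{barf-id-2}), and your proof of \eqref{barf-id-1} is complete and correct: the checks $\sg(2r'+\epsilon_1)=\sg(r')$ for $\epsilon_1\in\{0,1\}$, the exponent arithmetic, and the prefactors $1,-x,-y,xyq^b$ all work out, with the leading minus signs in the arguments $-x^2q^{*}$, $-y^2q^{*}$ supplying the factor $(-1)^{r'+s'}$ that the reduction $(-1)^{r+s}=(-1)^{\epsilon_1+\epsilon_2}$ no longer provides.

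The point you flag but do not settle --- the sign in \eqref{barf-id-2} --- is exactly where you should not have stopped. Carrying your own bookkeeping to the end, under $(r,s)=(-R-1,-S-1)$ the summand of \eqref{barf-defn} becomes
\begin{align*}
\frac{q^{a+b+c}}{xy}\,(-1)^{R+S}\Bigl(\frac{q^{2a+b}}{x}\Bigr)^{R}\Bigl(\frac{q^{2c+b}}{y}\Bigr)^{S}q^{a\binom{R}{2}+bRS+c\binom{S}{2}},
\end{align*}
with no residual sign: the factor $\sg(r)=\sg(-R-1)=-\sg(R)$ that produces the minus in \eqref{f-id-2} is precisely the factor that $\overline{f}_{a,b,c}$ omits. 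Since the involution is a bijection of the index set $\{\sg(r)=\sg(s)\}$ and the monomials $x^ry^s$ are generically distinct, the conclusion is forced:
\begin{align*}
\overline{f}_{a,b,c}(x,y,q)=+\frac{q^{a+b+c}}{xy}\,\overline{f}_{a,b,c}(q^{2a+b}/x,q^{2c+b}/y,q),
\end{align*}
so the minus sign printed in \eqref{barf-id-2} is a slip inherited from \eqref{f-id-2}. A single term confirms this: for $a=c=1$, $b=0$, the $(r,s)=(-1,-1)$ term of the left side is $+q^{2}/(xy)$, whereas the right side of \eqref{barf-id-2} contributes $-q^{2}/(xy)$ from $(R,S)=(0,0)$. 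Your method is therefore right, but a finished proof must either end with the corrected sign or record that the identity cannot hold as stated; since \eqref{barf-id-2} does not appear to be invoked elsewhere in the paper, nothing downstream is affected.
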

\begin{proof}
Decompose the definition \eqref{barf-defn} depending on the parity of $r$ and $s$, we get \eqref{barf-id-1}. Replacing $(r,s)$ by $(-r-1,-s-1)$ we get \eqref{barf-id-2}.
\end{proof}

At this moment, the modular properties of  $\overline{m}(x,q,z)$ and $\overline{f}_{a,b,c}(x,y,q)$ are unclear and deserve investigation in the future. In particular, $\overline{m}(x,q,z)$ does not meet the definition of Appell-Lerch series given in \eqref{Appell-defn}. However, since the only difference between $m(x,q,z)$ and $\overline{m}(x,q,z)$ is the signs for their summands, which is similar to the case of false theta functions, we would like to call $\overline{m}(x,q,z)$ as  a \emph{false Appell-Lerch series}.

Special cases of the functions $\overline{m}(x,q,z)$ and $\overline{f}_{a,b,c}(x,y,q)$ exist in the literature. For example, Wang and Yee
\cite[Eqs.\ (5.1), (5.5)]{Wang-Yee} established the following identity
\begin{align}
\sum_{n=1}^\infty \frac{(-1)^{n-1}q^n(q^2;q^2)_{n-1}}{(-q^2;q^2)_n}=\sum_{n=1}^\infty \frac{q^{n(n+1)/2}}{1+q^{2n}}-\sum_{n=-\infty}^{-1}\frac{q^{n(n+1)/2}}{1+q^{2n}}. \label{W-Y-m-eq}
\end{align}
Upon decomposing the sums on the right side according to the parity of $n$, we can write the right side as
\begin{align}
&\sum_{n=1}^\infty \frac{q^{n(n+1)/2}}{1+q^{2n}}-\sum_{n=-\infty}^{-1}\frac{q^{n(n+1)/2}}{1+q^{2n}} \nonumber \\
= &\sum_{n=0}^\infty \frac{q^{2n^2+n}}{1+q^{4n}}+\sum_{n=0}^\infty \frac{q^{2n^2+3n+1}}{1+q^{4n+2}}-\frac{1}{2} -\sum_{n=-\infty}^{-1}\frac{q^{2n^2+n}}{1+q^{4n}}-\sum_{n=-\infty}^{-1} \frac{q^{2n^2+3n+1}}{1+q^{4n+2}} \nonumber \\
=& \overline{m}(q,q^4,-q^3)+q\overline{m}(q,q^4,-q^5)-\frac{1}{2}. \label{pre-example-1}
\end{align}

Another example is the following identity  proved by  Andrews, Dyson and Hickerson \cite[Theorem 1]{ADH}:
\begin{align}
\sigma(q):=& \sum_{n=0}^\infty \frac{q^{n(n+1)/2}}{(-q;q)_n} \nonumber \\
=&\sum_{n=0}^\infty \sum_{j=-n}^{n}(-1)^{n+j}q^{n(3n+1)/2-j^2}(1-q^{2n+1}). \label{ADH-id}
\end{align}
We find that we can express the Hecke-type series on the right side by $\overline{f}_{a,b,c}(x,y,q)$, but we are not able to express it by $f_{a,b,c}(x,y,q)$.
\begin{prop}
We have
\begin{align}
\sigma(q)=\overline{f}_{1,5,1}(q^{3/8},-q^{3/8},q^{1/4}). \label{ADH-id-barf}
\end{align}
\end{prop}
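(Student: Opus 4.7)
The plan is to expand $\overline{f}_{1,5,1}(q^{3/8},-q^{3/8},q^{1/4})$ directly from the definition \eqref{barf-defn} and transform the resulting sum into the Hecke-type series on the right-hand side of \eqref{ADH-id}. Substituting the given parameters, the summand becomes $(-1)^{r+s}(q^{3/8})^r(-q^{3/8})^s q^{(r^2-r)/8+5rs/4+(s^2-s)/8} = (-1)^r q^{(r^2+s^2+10rs+2r+2s)/8}$, so the series naturally splits, according to $\sg(r)=\sg(s)$, into the two octants $r,s\geq 0$ and $r,s\leq -1$.

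For the octant $r,s\leq -1$ I would substitute $r=-R-1$, $s=-S-1$ with $R,S\geq 0$. Expanding gives exponent $(R^2+S^2+10RS+10R+10S+8)/8$ and sign $-(-1)^R$. Now observe the key symmetry: in both octants the $q$-exponent is symmetric in its two indices while the sign $(-1)^r$ (respectively $-(-1)^R$) is antisymmetric modulo $2$. Hence the involution $(r,s)\leftrightarrow(s,r)$ cancels pairwise all terms with $r\not\equiv s\pmod 2$; the diagonal $r=s$ is untouched since it automatically satisfies $r\equiv s\pmod 2$. This reduces $\overline{f}_{1,5,1}(q^{3/8},-q^{3/8},q^{1/4})$ to sums over $r\equiv s\pmod 2$ only.

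Next I would apply the change of variables $r=n-j$, $s=n+j$ (and similarly $R=n-j$, $S=n+j$) with $n,j\in\mathbb{Z}$. The parity condition forces integrality of $n$ and $j$, the bounds $r,s\geq 0$ translate to $n\geq 0$, $|j|\leq n$, and a direct computation gives $(r^2+s^2+10rs+2r+2s)/8 = n(3n+1)/2 - j^2$ in the positive octant and $(R^2+S^2+10RS+10R+10S+8)/8 = n(3n+1)/2 + 2n+1 - j^2$ in the negative octant, with $(-1)^r=(-1)^R=(-1)^{n+j}$. The negative octant contributes with the overall minus sign noted above, so combining yields exactly $\sum_{n\geq 0}\sum_{|j|\leq n}(-1)^{n+j}q^{n(3n+1)/2-j^2}(1-q^{2n+1})$, which is $\sigma(q)$ by the Andrews--Dyson--Hickerson identity \eqref{ADH-id}.

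The computation is essentially bookkeeping; the only nontrivial step is the involution argument that cancels the parity-mismatched summands. This cancellation is what reconciles the two different structural forms: \eqref{ADH-id} sums over $|j|\leq n$ with $n,j\in\mathbb{Z}$, which corresponds precisely to the sublattice $r\equiv s\pmod 2$ in $(r,s)$-coordinates, whereas the definition of $\overline{f}$ places no such congruence restriction. Recognizing and carrying out this involution is what makes the identification clean, and it also explains in hindsight why $\sigma(q)$ fits $\overline{f}_{a,b,c}$ but not $f_{a,b,c}$: the sign $\sg(r)$ appearing in \eqref{f-defn} would obstruct the $(r,s)\leftrightarrow(s,r)$ cancellation argument.
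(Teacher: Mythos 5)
Your proof is correct and is essentially the paper's argument run in the opposite direction: the paper starts from the Hecke-type series in \eqref{ADH-id}, passes to the coordinates $r=n+j$, $s=n-j$ on the sublattice $r\equiv s\pmod 2$, and then invokes \eqref{barf-id-1} to assemble the single $\overline{f}_{1,5,1}(q^{3/8},-q^{3/8},q^{1/4})$, the two parity-mismatched terms of \eqref{barf-id-1} cancelling because $\overline{f}_{1,5,1}(u,v,q)$ is symmetric in $u$ and $v$ while the arguments are $x$ and $-x$. Your explicit $(r,s)\leftrightarrow(s,r)$ involution is exactly that cancellation made concrete, so the two proofs differ only in bookkeeping.
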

\begin{proof}
We have
\begin{align}
\sigma(q)=&\sum_{n=0}^{\infty}\sum_{j=-n}^{n}(-1)^{n+j}q^{n(3n+1)/2-j^2}-\sum_{n=0}^{\infty} \sum_{j=-n}^{n}(-1)^{n+j}q^{n(3n+1)/2+2n+1-j^2} \nonumber \\
=& \left(\sum_{n=0}^{\infty}\sum_{j=-n}^{n}+\sum_{n=-\infty}^{-1}\sum_{j=n+1}^{-n-1}\right)(-1)^{n+j}q^{n(3n+1)/2-j^2} \nonumber \\
& \quad \quad \quad \quad \quad \textrm{(replace $n$ by $-n-1$ in the second sum)}\nonumber\\
=& \left(\sum_{\substack{n+j\geq 0 \\n-j\geq 0}}+\sum_{\substack{n+j< 0 \\n-j< 0}}\right) (-1)^{n+j}q^{n(3n+1)/2-j^2} \quad \textrm{(set $n+j=r$, $n-j=s$)}\nonumber\\
=& \sum_{\substack{\sg(r)=\sg(s) \\r\equiv s \pmod{2}}} (-1)^rq^{\frac{1}{8}r^2+\frac{5}{4}rs+\frac{1}{8}s^2+\frac{1}{4}r+\frac{1}{4}s} \nonumber \\
=&\sum_{\sg(r)=\sg(s)}q^{\frac{1}{2}r^2+5rs+\frac{1}{2}s^2+\frac{1}{2}r+\frac{1}{2}s}-\sum_{\sg(r)=\sg(s)}q^{\frac{1}{2}r^2+5rs+\frac{1}{2}s^2+\frac{7}{2}r+\frac{7}{2}s+2} \nonumber \\
=&\overline{f}_{1,5,1}(-q,-q,q)-q^2\overline{f}_{1,5,1}(-q^4,-q^4,q) \nonumber \\
=&\overline{f}_{1,5,1}(q^{3/8},-q^{3/8},q^{1/4}).
\end{align}
Here the last equality follows by setting $(a,b,c,x,y,q)\rightarrow (1,5,1,q^{3/8},-q^{3/8},q^{1/4})$ in \eqref{barf-id-1}.
\end{proof}

In the same way, we can rewrite \eqref{W-Y-eq} and \eqref{Chan-Liu-eq} as
\begin{align}
\sum_{n=1}^{\infty}\frac{q^n(q;q^2)_{n}}{(-q;q^2)_{n}(1+q^{2n})}=\frac{1}{2}\left(\bar{f}_{1,0,1}(q^2,q^2,q^4)+q\bar{f}_{1,0,1}(q^4,q^4,q^4)-1 \right) \label{W-Y-eq-new}
\end{align}
and
\begin{align}
&\sum_{n=1}^{\infty}\frac{(q;q)_n}{(-q;q)_n}(-1)^nq^{n(n-1)/2}\nonumber \\
=& 1+\overline{f}_{1,3,1}(-q,-q,q^2)-\overline{f}_{1,3,1}(-1,-1,q^2)\nonumber \\
&  +q\overline{f}_{1,3,1}(-q^4,-q^4,q^2)-q^2\overline{f}_{1,3,1}(-q^5,-q^5,q^2) \nonumber \\
=&1+\overline{f}_{1,3,1}(q^{1/4},-q^{1/4},q^{1/2})-\overline{f}_{1,3,1}(q^{-1/4},-q^{-1/4},q^{1/2}). \label{Chan-Liu-barf}
\end{align}

For false Appell-Lerch series and those Hecke-type series which cannot be expressed by $f_{a,b,c}(x,y,q)$, we will express them using  $\overline{m}(x,q,z)$ and $\overline{f}_{a,b,c}(x,y,q)$, respectively.
In most of the time, this process is quite routine and just follow the tricks and steps in the deduction of \eqref{Liu-eq-simplify}, \eqref{pre-example-1} and \eqref{ADH-id-barf}. Therefore, we will omit the deduction process and simply write the final expressions. There are only a few cases which require special treatments, and for them we will give more details.

Hickerson and Mortenson \cite{Hickerson-Mortenson} also gave a way to convert $f_{a,b,c}(x,y,q)$ to a form in terms of $m(x,q,z)$ and infinite products. Following  \cite{Hickerson-Mortenson}, we define
\begin{align}
&g_{a,b,c}(x,y,q,z_1,z_0)\nonumber \\
:=&\sum_{t=0}^{a-1}(-y)^tq^{c\binom{t}{2}}j(q^{bt}x;q^a)m\left(-q^{a\binom{b+1}{2}-c\binom{a+1}{2}-t(b^2-ac)}\frac{(-y)^a}{(-x)^b},q^{a(b^2-ac)},z_0\right) \nonumber \\
& +\sum_{t=0}^{c-1}(-x)^tq^{a\binom{t}{2}}j(q^{bt}y;q^c)m\left(-q^{c\binom{b+1}{2}-a\binom{c+1}{2}-t(b^2-ac)}\frac{(-x)^c}{(-y)^b},q^{c(b^2-ac)},z_1 \right). \label{gabc-defn}
\end{align}
Among a number of results,  they established the following theorems, which will be used in this paper as well.
\begin{theorem}\label{thm-fg-1}
(Cf.\ \cite[Theorem 1.3]{Hickerson-Mortenson}.) Let $n$ and $p$ be positive integers with $(n,p)=1$. For generic $x,y\in \mathbb{C}^{*}$
\begin{align}
f_{n,n+p,n}(x,y,q)=g_{n,n+p,n}(x,y,q,-1,-1)+\Phi_{n,p}(x,y,q),
\end{align}
where
\begin{align}
&\Phi_{n,p}(x,y,q):=\nonumber \\
& \frac{J_{p^2(2n+p)}^3}{\overline{J}_{0,np(2n+p)}}\sum_{r^{*}=0}^{p-1}\sum_{s^{*}=0}^{p-1}q^{n\binom{r-(n-1)/2}{2}+(n+p)(r-(n-1)/2)(s+(n+1)/2)+n\binom{s+(n+1)/2}{2}}(-x)^{r-(n-1)/2} \nonumber \\
& \frac{(-y)^{s+(n+1)/2}j(-q^{np(s-r)}x^n/y^n;q^{np^2})j(q^{p(2n+p)(r+s)+p(n+p)}x^py^p;q^{p^2(2n+p)})    }{j(q^{p(2n+p)r+p(n+p)/2}(-y)^{n+p}/(-x)^n,q^{p(2n+p)s+p(n+p)/2}(-x)^{n+p}/(-y)^n;q^{p^2(2n+p)}) }.\label{Phi-defn}
\end{align}
Here, $r:=r^{*}+\{(n-1)/2 \}$ and $s:=s^{*}+\{(n-1)/2\}$, with $0\leq \{\alpha\}<1$ denoting the fractional part of $\alpha$.
\end{theorem}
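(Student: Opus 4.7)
The plan is to derive the decomposition $f_{n,n+p,n}(x,y,q) = g_{n,n+p,n}(x,y,q,-1,-1) + \Phi_{n,p}(x,y,q)$ by splitting the defining Hecke-type double sum into residue classes modulo $n$ in each coordinate, recognizing the bulk contribution in each class as an Appell-Lerch sum, and collecting the residual boundary terms into a double theta quotient. I start from the definition
\[
f_{n,n+p,n}(x,y,q) = \sum_{\sg(r)=\sg(s)} \sg(r)(-1)^{r+s} x^r y^s q^{n\binom{r}{2}+(n+p)rs+n\binom{s}{2}},
\]
write $r = nR + t - (n-1)/2$ and $s = nS + u + (n-1)/2$ with the shifts calibrated to match the indexing conventions in \eqref{Phi-defn}, and treat $(t,u)\in\{0,\dots,n-1\}^2$ as the ``colour'' while $(R,S)$ is the running index.

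For each fixed $(t,u)$, completing the square in $R$ (or symmetrically in $S$) reduces the quadratic exponent to an expression of the form (linear in $R$) $+$ $n(n+p)RS$ $+$ (linear in $S$) $+$ constant, since the discriminant is $b^2-ac=(n+p)^2-n^2=p(2n+p)$. The cross-term $n(n+p)RS$ lets one evaluate the inner sum over one of $R,S$ in closed form using the partial fraction expansion \eqref{reciprocal-Jacobi} at the appropriate level, producing a reciprocal theta function times a geometric denominator of the shape $1-q^{\bullet}\cdot(\cdot)$. The remaining outer sum is then an Appell-Lerch series $m\bigl(\cdot,q^{n(b^2-ac)},-1\bigr)$, and together the two families of such $(t,u)$-sums reproduce exactly the two sums in \eqref{gabc-defn} evaluated at $z_1=z_0=-1$, giving $g_{n,n+p,n}(x,y,q,-1,-1)$.

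Because $m(x,q,z)$ depends on the auxiliary parameter $z$ while $f_{n,n+p,n}$ does not, this step produces a correction: when \eqref{reciprocal-Jacobi} is used to trade a one-sided geometric sum for a two-sided one, the ``wrong-sign'' quadrants $\{r\ge 0,\,s<0\}\cup\{r<0,\,s\ge 0\}$ are added back in, and these are precisely what must be subtracted off in the form of $\Phi_{n,p}$. Splitting those wrong-sign quadrants along residues modulo $p$ (where $(n,p)=1$ guarantees that the map $(r,s)\mapsto (r\bmod p,s\bmod p)$ is a bijection onto the $p^2$ residue classes) and applying the Jacobi triple product twice, once to each free variable, turns the remainder into a double sum over $(r^*,s^*)\in\{0,\dots,p-1\}^2$ with numerator $j(-q^{np(s-r)}x^n/y^n;q^{np^2})\,j(\cdots;q^{p^2(2n+p)})$ and denominator $j(\cdots;q^{p^2(2n+p)})\,j(\cdots;q^{p^2(2n+p)})$. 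Normalizing each Jacobi factor by \eqref{j-id-1} and \eqref{j-id-2} and simplifying the overall prefactor to $J_{p^2(2n+p)}^3/\overline{J}_{0,np(2n+p)}$ matches this remainder against \eqref{Phi-defn}.

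The main obstacle is the bookkeeping: the wedge condition $\sg(r)=\sg(s)$ does not respect the splitting $r=nR+t-(n-1)/2$, $s=nS+u+(n-1)/2$, because the shift $t-(n-1)/2$ mixes the sign of $r$ with the residue $t$. One must carefully separate the two wedges $\{r,s\ge 0\}$ and $\{r,s<0\}$, apply the two-sided sum identity \eqref{reciprocal-Jacobi} over all integers in each, and then subtract precisely the overcounted opposite-sign contributions. Verifying that the half-integer shifts $\{(n-1)/2\}$, the exponents $n\binom{r-(n-1)/2}{2}+(n+p)(r-(n-1)/2)(s+(n+1)/2)+n\binom{s+(n+1)/2}{2}$ in \eqref{Phi-defn}, and the coprimality hypothesis $(n,p)=1$ align consistently through this process is the heart of the argument; the $g$-part is essentially a formal rewrite, but the $\Phi$-part requires all these pieces to conspire.
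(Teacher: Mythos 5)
First, note that the paper does not prove this statement at all: it is quoted, as the ``Cf.'' indicates, from Hickerson and Mortenson \cite[Theorem 1.3]{Hickerson-Mortenson}, so the only proof to compare yours against is theirs, and it runs along entirely different lines. They characterize the difference $f_{n,n+p,n}-g_{n,n+p,n}$ by the functional equations it inherits in $x$ and in $y$ (the inhomogeneous theta terms in \eqref{f-id-3}--\eqref{f-id-4} and in the corresponding identities for $m$ cancel in the difference), conclude that it is an explicit finite sum of theta quotients, and then determine the coefficients. Your proposal instead tries to extract the Appell--Lerch sums by direct rearrangement of the double series, and its central step fails.

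The gap is this: you assert that after splitting into residue classes and completing the square, ``the cross-term $n(n+p)RS$ lets one evaluate the inner sum over one of $R,S$ in closed form using the partial fraction expansion \eqref{reciprocal-Jacobi}, producing a reciprocal theta function times a geometric denominator of the shape $1-q^{\bullet}(\cdot)$.'' For fixed $R$ the inner sum over $S$ is a partial theta sum, genuinely quadratic in $S$; it contains no rational functions of $q^{S}$, so \eqref{reciprocal-Jacobi} has nothing to act on and no denominators $1/(1-q^{k}Xz)$ ever appear. Such denominators arise from a Hecke-type sum only when the quadratic form degenerates along an integral direction, i.e.\ represents zero over $\mathbb{Q}$; but $n\binom{r}{2}+(n+p)rs+n\binom{s}{2}$ has discriminant $p(2n+p)$, which is not a perfect square in general, so no integral change of variables splits it, and no summation along lattice lines can produce the sums $m(\cdot,q^{np(2n+p)},-1)$ occurring in $g_{n,n+p,n}$. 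This obstruction is exactly why the theorem is nontrivial and why Hickerson and Mortenson resort to the elliptic-function argument. Two further concrete problems: the substitution $r=nR+t-(n-1)/2$ with integer $t\in\{0,\dots,n-1\}$ does not parametrize $\mathbb{Z}$ when $n$ is even; and the ``wrong-sign quadrants'' $\{r\ge 0,\ s<0\}$ that you propose to add back and then subtract off as $\Phi_{n,p}$ cannot carry that meaning, since $Q(r,-r)=-pr^{2}\to-\infty$ there and the corresponding series diverges for $|q|<1$. As it stands the argument establishes neither the $g$-part nor the $\Phi$-part of the identity.
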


\begin{theorem}\label{thm-fg-2}
(Cf.\ \cite[Theorem 1.9]{Hickerson-Mortenson}.) Let $n$ be a positive odd integer. For generic $x,y\in \mathbb{C}^{*}$
\begin{align}
f_{n,n+2,n}(x,y,q)=g_{n,n+2,n}(x,y,q,y^n/x^n,x^n/y^n)-\Theta_{n,2}(x,y,q),
\end{align}
where
\begin{align}
\Theta_{n,2}(x,y,q):=\frac{y^{(n+1)/2}J_{2n,4n}J_{4(n+1),8(n+1)}j(y/x,q^{n+2}xy;q^{4(n+1)})j(q^{2n}/x^2y^2;q^{8(n+1)})  }{q^{(n^2-3)/2}x^{(n-3)/2}j(y^n/x^n;q^{4n(n+1)})j(-q^{n+2}x^2,-q^{n+2}y^2;q^{4(n+1)})  }. \label{Theta-defn}
\end{align}
\end{theorem}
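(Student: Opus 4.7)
The strategy is to deduce Theorem \ref{thm-fg-2} from Theorem \ref{thm-fg-1} together with Lemma \ref{lem-m-minus}. Theorem \ref{thm-fg-1} provides the general conversion $f_{n,n+p,n} = g_{n,n+p,n}(\cdot,\cdot,\cdot,-1,-1) + \Phi_{n,p}$ with both Appell-Lerch parameters equal to $-1$, while Lemma \ref{lem-m-minus} describes exactly how changing the third argument of $m(x,q,z)$ affects its value, up to an explicit theta quotient. So the plan is: shift the two parameters of $g$ from $(-1,-1)$ to the claimed $(y^n/x^n,\,x^n/y^n)$ at the cost of introducing a finite sum of theta quotients, then verify a pure theta-function identity stating that this extra sum, together with $\Phi_{n,2}$, collapses to the single quotient $-\Theta_{n,2}$.

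Concretely, I would first specialize Theorem \ref{thm-fg-1} to $p=2$ to obtain
\begin{equation*}
f_{n,n+2,n}(x,y,q) = g_{n,n+2,n}(x,y,q,-1,-1) + \Phi_{n,2}(x,y,q).
\end{equation*}
Next, for each of the $n$ Appell-Lerch summands in the first line of the definition \eqref{gabc-defn} of $g$, I would apply Lemma \ref{lem-m-minus} with common base $q^{n(b^2-ac)} = q^{4n(n+1)}$ to replace $m(A,q^{4n(n+1)},-1)$ by $m(A,q^{4n(n+1)},x^n/y^n)$ plus an explicit theta quotient; similarly for the $n$ summands in the second line, replacing $-1$ by $y^n/x^n$. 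Summing these contributions gives
\begin{equation*}
g_{n,n+2,n}(x,y,q,-1,-1) = g_{n,n+2,n}(x,y,q,y^n/x^n,x^n/y^n) + \Psi(x,y,q)
\end{equation*}
for an explicit $2n$-term sum of theta quotients $\Psi$. Substituting back, the proof is reduced to establishing the theta-function identity
\begin{equation*}
\Psi(x,y,q) + \Phi_{n,2}(x,y,q) = -\Theta_{n,2}(x,y,q).
\end{equation*}

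This last identity is the main obstacle. My plan for it is to regard both sides as meromorphic functions of $x$ (with $y,q$ fixed), identify their poles coming from the factors $j(q^{(n+2)t}x;q^n)$ (in the $g$-expansions contributing to $\Psi$) and from the denominator $j(y^n/x^n;q^{4n(n+1)})$ (in $\Theta_{n,2}$), match residues pole-by-pole using \eqref{j-id-1}-\eqref{j-id-2} and the Kronecker-type identity \eqref{Kronecker}, and then close the argument by a Liouville-style comparison using the quasi-periodicity under $x \mapsto q^{n+2}x$; the single remaining scalar would be fixed by evaluating at a convenient specialization such as $x = y$, where the $r \leftrightarrow s$ symmetry of the double sum defining $f_{n,n+2,n}(x,y,q)$ yields a clean baseline. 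The delicate part is the bookkeeping of the $2n$ residue cancellations between $\Psi$ and the $2\times 2$ sum $\Phi_{n,2}$; the reason such a collapse is possible at all is the symmetry $y^n/x^n \leftrightarrow x^n/y^n$ of the chosen parameters, which is precisely compatible with the $x \leftrightarrow y$ symmetry of $f_{n,n+2,n}$ and of $\Theta_{n,2}$ (up to a factor $(-y/x)^{(n+1)/2}$ produced by \eqref{j-id-2}), and it is exactly why a single correction $\Theta_{n,2}$ replaces the four-term double sum that appears in Theorem \ref{thm-fg-1}.
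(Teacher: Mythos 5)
The paper offers no proof of this statement: Theorem \ref{thm-fg-2} is quoted verbatim from Hickerson and Mortenson \cite{Hickerson-Mortenson}, where it is obtained as a specialization of a master expansion of $f_{a,b,c}$ with the parameters $(z_1,z_0)$ left free, not by passing through the $(-1,-1)$ case of Theorem \ref{thm-fg-1} as you do. Your reduction itself is structurally sound: with $a=c=n$, $b=n+2$ one has $b^2-ac=4(n+1)$, both families of Appell--Lerch sums in \eqref{gabc-defn} have base $q^{4n(n+1)}$, and Lemma \ref{lem-m-minus} converts the shift from $g_{n,n+2,n}(x,y,q,-1,-1)$ to $g_{n,n+2,n}(x,y,q,y^n/x^n,x^n/y^n)$ into an explicit $2n$-term sum $\Psi$ of theta quotients, so the theorem is correctly reduced to the identity $\Psi+\Phi_{n,2}=-\Theta_{n,2}$.

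The gap is that this theta identity, which carries the entire content of the theorem for every odd $n$ at once, is never established; and the one concrete step you do specify for closing it fails. You propose to fix the remaining scalar ``by evaluating at $x=y$,'' but $x=y$ is precisely a non-generic point: there $z_0=x^n/y^n$ and $z_1=y^n/x^n$ both equal $1$, so each of the $2n$ quotients supplied by Lemma \ref{lem-m-minus} has the factor $j(x^n/y^n;q^{4n(n+1)})=j(1;\cdot)=0$ in its denominator against a nonvanishing numerator $j(-x^n/y^n;\cdot)$, i.e.\ $\Psi$ has genuine poles at $x=y$, while $\Phi_{n,2}$ is finite there and $\Theta_{n,2}$ is of indeterminate form $0/0$ (the factors $j(y/x;q^{4(n+1)})$ and $j(y^n/x^n;q^{4n(n+1)})$ vanish simultaneously). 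So the normalization must be done at a genuinely generic point, and before that the quasi-periodicity check and the residue matching across the $2n$ terms of $\Psi$, the four terms of $\Phi_{n,2}$, and the zeros of $j(y^n/x^n;q^{4n(n+1)})$ must actually be carried out uniformly in $n$. As written, the proposal restates the theorem as an unverified theta-function identity rather than proving it.
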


Using the above two theorems, we will convert those Hecke-type series expressible by $f_{a,b,c}(x,y,q)$ appeared in this paper to a form in terms of $m(x,q,z)$. Again, since this process is routine, we will omit the details unless there are some special treatments.

\section{Some $(a,b)$-parameterized identities}\label{sec:proof}
Transformation formulas stated in Theorem \ref{thm-key} can be regarded as a consequence of the following result of Liu.
\begin{lemma}\label{Liu-An-lem}
(Cf.\ \cite[Theorem 4.1]{Liu2013IJNT}.) If $A_n$ is a complex sequence, then, under suitable convergence conditions, we have
\begin{align}\label{liu1}
&\frac{(\alpha q,\alpha ab/q;q)_\infty}{(\alpha a,\alpha b;q)_\infty}\sum_{n=0}^\infty A_n(q/a;q)_n(\alpha a)^n\\
\nonumber
=&\sum_{n=0}^\infty\frac{(1-\alpha q^{2n})(\alpha,q/a,q/b;q)_n(-\alpha ab/q)^nq^{n(n-1)/2}}{(1-\alpha)(q,\alpha a,\alpha b;q)_n}\\
\nonumber
&\times\sum_{k=0}^n\frac{(q^{-n},\alpha q^n;q)_k(q^2/b)^k}{(q/b;q)_k}A_k.
\end{align}
\end{lemma}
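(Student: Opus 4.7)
The approach is to exploit the $\mathbb{C}$-linearity of \eqref{liu1} in the sequence $(A_n)_{n\geq 0}$. Under the stated convergence hypothesis, the double sum on the right-hand side is absolutely convergent, so Fubini's theorem permits interchange of the two summations, reducing the identity to its verification on the spanning family $A^{(K)}_n := \delta_{n,K}$ for $K = 0, 1, 2, \ldots$; the general case then follows by taking linear combinations.

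For $A = A^{(K)}$ the left-hand side of \eqref{liu1} collapses to the single monomial $\frac{(\alpha q,\alpha ab/q;q)_\infty}{(\alpha a,\alpha b;q)_\infty}(q/a;q)_K(\alpha a)^K$. On the right-hand side only the $k=K$ term survives, which forces $n \geq K$. Applying the elementary evaluations $(q^{-n};q)_K = (-1)^K q^{\binom{K}{2}-nK}(q;q)_n/(q;q)_{n-K}$ and $(\alpha q^n;q)_K = (\alpha;q)_{n+K}/(\alpha;q)_n$, and reindexing $n = m+K$, I would repackage the surviving series into a very-well-poised basic hypergeometric series in $m$ with base parameter $\alpha q^{2K}$. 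Jackson's non-terminating very-well-poised $_6\phi_5$ summation then evaluates this series in closed theta-quotient form, and a direct simplification (using the shifts $(\cdot;q)_{m+K} = (\cdot;q)_K(\cdot q^K;q)_m$) matches the result to the left-hand side monomial. Equivalently, the same evaluation can be obtained as a limiting case of Watson's $_8\phi_7 \to {}_4\phi_3$ transformation (Lemma \ref{watson}), by sending one upper parameter to a value that trivializes the $_4\phi_3$ side and leaves the $_6\phi_5$ product in place.

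The main obstacle is the analytic justification of the Fubini interchange under the informal ``suitable convergence conditions'' clause. The well-poised factor $(-\alpha ab/q)^n q^{n(n-1)/2}$ supplies super-exponential decay in $n$ for $|q|<1$, and this combined with an absolute-convergence hypothesis on the generating series $\sum_n A_n(q/a;q)_n(\alpha a)^n$ is expected to dominate the rearranged double sum; making these estimates precise is the technical heart of the argument. An alternative, more self-contained route avoiding Jackson's summation is to follow Liu's original proof in \cite{Liu2013IJNT}, which deduces \eqref{liu1} from the $q$-Pfaff--Saalsch\"utz formula \eqref{Pfaff} and \eqref{Liu-eq-313} via a direct double-sum manipulation; this replaces the $_6\phi_5$ evaluation by a finite identity at the cost of a more delicate bookkeeping of the terms.
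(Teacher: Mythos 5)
Your proposal is correct, and it supplies a proof where the paper gives none: Lemma \ref{Liu-An-lem} is simply imported from Liu's earlier work (Theorem 4.1 of \cite{Liu2013IJNT}), where it is established by Liu's $q$-derivative/$q$-expansion machinery rather than by the basis argument you propose. Your route is the classical one for such expansion theorems and it does close up: with $A_n=\delta_{n,K}$ only $n\ge K$ contributes, and after the substitutions $(q^{-n};q)_K=(-1)^Kq^{\binom{K}{2}-nK}(q;q)_n/(q;q)_{n-K}$, $(\alpha q^n;q)_K=(\alpha;q)_{n+K}/(\alpha;q)_n$ and the shift $n=m+K$, the exponent of $q$ collapses to $\binom{m}{2}-K$, the factor $(q/b;q)_K$ cancels, and the right side becomes
\begin{align*}
(\alpha a)^K(q/a;q)_K\,\frac{(\alpha;q)_{2K}}{(1-\alpha)(\alpha a,\alpha b;q)_K}
\sum_{m\ge 0}\frac{(1-\alpha q^{2K+2m})(\alpha q^{2K},q^{K+1}/a,q^{K+1}/b;q)_m}{(q,\alpha aq^K,\alpha bq^K;q)_m}\Big(-\frac{\alpha ab}{q}\Big)^m q^{\binom{m}{2}},
\end{align*}
whose $m$-sum is exactly the $d\to\infty$ limit of the nonterminating very-well-poised ${}_6\phi_5$ summation (Rogers' summation, (II.20) of \cite{Gasper}; attaching Jackson's name to it is a minor misattribution, as that usually refers to the terminating ${}_8\phi_7$ sum) with $(a,b,c)\mapsto(\alpha q^{2K},q^{K+1}/a,q^{K+1}/b)$; the resulting product telescopes against the prefactor to give $(\alpha a)^K(q/a;q)_K(\alpha q,\alpha ab/q;q)_\infty/(\alpha a,\alpha b;q)_\infty$, as required. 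Your honesty about the Fubini step is warranted but the estimate does work: for $|A_k|\lesssim G^k$ the summand of the rearranged double sum is $O\big(n\,|\alpha a G|^{\,n}\big)$ after the worst-case $k\approx n$ cancellation of the Gaussian exponents, so absolute convergence holds precisely under the natural hypothesis that the left-hand generating series converges absolutely together with $|\alpha ab/q|<1$ --- which is what the lemma's ``suitable convergence conditions'' clause is meant to encode. Compared with Liu's operator-theoretic proof, your argument is more self-contained (only Rogers' summation is needed) at the cost of the explicit convergence bookkeeping.
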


Now we give proofs to Theorems \ref{thm-main} and \ref{thm-key}.
\begin{proof}[Proof of Theorem \ref{thm-key}]
The desired identity follows by setting
\begin{align}
A_{n}=\frac{(q/b, \alpha b_1,\dots, \alpha b_m;q)_{n}}{(q,\alpha c_1, \dots, \alpha c_m;q)_{n}}(\frac{bz}{q})^n
\end{align}
in Lemma \ref{Liu-An-lem}.
\end{proof}
\begin{proof}[Proof of Theorem \ref{thm-main}]
If $u=0$, the desired identity is clearly true. Now we suppose $u\ne 0$. The theorem follows from Theorem \ref{thm-key} after taking $(m, \alpha, b_1, b_2, c_1, c_2, z)\rightarrow (2, u, v/u, q/u, c/u, d/u, 1)$.
\end{proof}

Using Theorems \ref{thm-main} and \ref{thm-key}, we are able to establish numerous identities with two parameters $a$ and $b$. These parameterized identities will generate interesting Appell-Lerch series or Hecke-type series after specializing the choices of $a$ and $b$. We now illustrate several examples.

\begin{theorem}\label{meq:1.1}
For $\max\{|ab|, |aq^2|, |bq^2|\}<1$, we have
\begin{align}
&\frac{(q^2,ab;q^2)_\infty}{(aq^2,bq^2;q^2)_\infty}{}_3\phi_2\bigg(\genfrac{}{}{0pt}{}{q^2/a,q^2/b,q}{-q^2,q^3};q^2,-ab\bigg) \nonumber\\
=&(1-q)\sum_{n=0}^\infty\sum_{j=-n}^n(-1)^n(1+q^{2n+1})\frac{(q^2/a,q^2/b;q^2)_n}{(aq^2,bq^2;q^2)_n}(ab)^nq^{n^2-j^2}. \label{thm-meq:1.1-eq}
\end{align}
\end{theorem}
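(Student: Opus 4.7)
The plan is to invoke Theorem~\ref{thm-key} at base $q^2$ with $\alpha=q^2$, $m=2$, and the specific parameter choice $b_1=0,\ b_2=1/q,\ c_1=-1,\ c_2=q,\ z=-1$. The point of taking $b_1=0$ is that the resulting $(\alpha b_1;q^2)_n=(0;q^2)_n$ in the numerator of each ${}_4\phi_3$ cancels the explicit $(0;q^2)_n$ already present in the lower parameters of Theorem~\ref{thm-key}, so the ${}_4\phi_3$'s on both sides collapse to ${}_3\phi_2$'s. The remaining parameters are chosen so that $\alpha b_2=q$, $\alpha c_1=-q^2$, $\alpha c_2=q^3$, and $\alpha abz/q^2=-ab$. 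After these cancellations, the LHS of Theorem~\ref{thm-key} becomes
\[
\frac{(q^4,\,ab;\,q^2)_\infty}{(aq^2,\,bq^2;\,q^2)_\infty}\,{}_3\phi_2\!\bigg(\genfrac{}{}{0pt}{}{q^2/a,\,q^2/b,\,q}{-q^2,\,q^3};\,q^2,\,-ab\bigg),
\]
which, using $(q^4;q^2)_\infty=(q^2;q^2)_\infty/(1-q^2)$, is exactly the LHS of Theorem~\ref{meq:1.1} divided by $1-q^2$. The inner terminating series on the right collapses in the same way to $G_n:={}_3\phi_2(q^{-2n},q^{2n+2},q;-q^2,q^3;q^2,-q^2)$.

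The heart of the argument will be to evaluate $G_n$ in closed form. For this I will apply Lemma~\ref{meq:1} (at base $q^2$) in the limit $\alpha\to 1$, with the identifications $c=q$ and $d=-q^2$, so that the LHS of the lemma becomes precisely $G_n$. The $0/0$ indeterminacy in the summand will be resolved by $(\alpha;q^2)_j/(1-\alpha)\to(q^2;q^2)_{j-1}$, and the surviving factors combine cleanly via $(1-q^{4j})/(1-q^{2j})=1+q^{2j}$ together with the identity $(-1;q^2)_j/(-q^2;q^2)_j=2/(1+q^{2j})$ coming from $d=-q^2$. The summand for $j\ge 1$ then collapses to $2q^{-j^2}$, while the $j=0$ term equals $1$; symmetrizing under $j\leftrightarrow -j$ produces
\[
G_n=q^n\,\frac{(q;q^2)_n}{(q^3;q^2)_n}\sum_{j=-n}^{n}q^{-j^2}.
\]

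Substituting this evaluation back into the RHS of Theorem~\ref{thm-key} and applying the elementary simplification
\[
\frac{1-q^{4n+2}}{1-q^2}\cdot\frac{(q;q^2)_n}{(q^3;q^2)_n}=\frac{1+q^{2n+1}}{1+q},
\]
the right-hand side becomes $\frac{1}{1+q}\sum_{n\ge 0}(-1)^n(1+q^{2n+1})\frac{(q^2/a,q^2/b;q^2)_n}{(aq^2,bq^2;q^2)_n}(ab)^n q^{n^2}\sum_{j=-n}^n q^{-j^2}$. Finally, multiplying both sides of the resulting identity by $1-q^2=(1-q)(1+q)$ absorbs the $(1-q^2)^{-1}$ on the LHS and turns $1/(1+q)$ on the RHS into $1-q$, producing Theorem~\ref{meq:1.1}. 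The main technical obstacle will be the careful handling of the $\alpha\to 1$ limit in Lemma~\ref{meq:1}: one must verify that the surviving factors conspire to give a clean summand and then recognize that the resulting $\sum_{j=0}^n$ symmetrizes to the theta-like sum $\sum_{j=-n}^n q^{-j^2}$; everything else is routine bookkeeping.
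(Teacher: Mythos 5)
Your proposal is correct and follows essentially the same route as the paper: the same specialization of Theorem \ref{thm-key} (base $q^2$, $\alpha=q^2$, $m=2$, with one $b_i=0$ and the other equal to $q^{-1}$, $c_1=-1$, $c_2=q$, $z=-1$), followed by the same evaluation of the inner terminating series via Lemma \ref{meq:1} with $(q,\alpha,c,d)\to(q^2,1,q,-q^2)$. The only differences are cosmetic bookkeeping — you track the $(1-q^2)$ factor explicitly and spell out the $\alpha\to1$ limit, both of which check out.
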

\begin{proof}
Taking $(q, m,\alpha,b_1,b_2,c_1,c_2,z) \rightarrow (q^2, 2, q^2,q^{-1},0,-1,q,-1)$ in Theorem \ref{thm-key}, we obtain
\begin{align}
&\frac{(q^2,ab;q^2)_\infty}{(aq^2,bq^2;q^2)_\infty}{}_3\phi_2\bigg(\genfrac{}{}{0pt}{}{q^2/a,q^2/b,q}{-q^2,q^3};q^2,-ab\bigg) \nonumber \\
=&\sum_{n=0}^\infty(-1)^n(1-q^{4n+2})\frac{(q^2/a,q^2/b;q^2)_n}{(aq^2,bq^2;q^2)_n}(ab)^nq^{n(n-1)}
{}_3\phi_2\bigg(\genfrac{}{}{0pt}{}{q^{-2n},q^{2n+2},q}{-q^2,q^3};q^2,-q^2\bigg). \label{proof-eq1}
\end{align}
Taking $(q, \alpha,c,d) \rightarrow (q^2, 1,q,-q^2)$ in Lemma \ref{meq:1}, we obtain
\begin{align}
{}_3\phi_2\bigg(\genfrac{}{}{0pt}{}{q^{-2n},q^{2n+2},q}{-q^2,q^3};q^2,-q^2\bigg)
=\frac{(1-q)q^n}{1-q^{2n+1}}\sum_{j=-n}^nq^{-j^2}. \label{proof-eq2}
\end{align}
Substituting \eqref{proof-eq2} into \eqref{proof-eq1}, we arrive at \eqref{thm-meq:1.1-eq}.
\end{proof}
\begin{corollary}
We have
\begin{align}
&\sum_{n=0}^\infty(-1)^n\frac{q^{2n(n+1)}}{(q^4;q^4)_n(1-q^{2n+1})} \nonumber \\
=& \frac{1}{(q^2;q^2)_\infty}\sum_{n=0}^\infty\sum_{j=-n}^n(-1)^n(1+q^{2n+1})q^{3n^2+2n-j^2} \label{T1} \\
=& \frac{1}{J_2}\left(f_{1,2,1}(q^4,q^4,q^4)-q^5f_{1,2,1}(q^{10},q^{10},q^4) \right) \label{T1-H} \\
=&\frac{1}{J_2}f_{1,2,1}(q^{3/2},-q^{3/2},-q) \label{T1-f-exp} \\
=&q^{-1/2}\frac{J_2}{J_4}\left(m(-q^{1/2},-q^3,-1)-m(q^{1/2},-q^3,-1) \right), \label{T1-A} \\
&\sum_{n=0}^\infty\frac{q^{n(n+1)}}{(-q^2;q^2)_{n}(1-q^{2n+1})} \nonumber \\
=&\sum_{n=0}^\infty\sum_{j=-n}^n(1+q^{2n+1})q^{2n^2+n-j^2} \label{T2} \\
=&\overline{f}_{1,3,1}(-q^2,-q^2,q^2)+q^3\overline{f}_{1,3,1}(-q^6,-q^6,q^2). \label{T2-barf-exp}
\end{align}
\end{corollary}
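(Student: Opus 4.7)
The plan is to derive the two independent chains of equalities by specializing the free parameters in Theorem~\ref{meq:1.1}. For \eqref{T1} I take the simultaneous limit $a,b\to 0$: the elementary limit $(q^2/a;q^2)_n a^n\to(-1)^n q^{n(n+1)}$ forces $(q^2/a,q^2/b;q^2)_n(ab)^n\to q^{2n^2+2n}$, so the right-hand side of \eqref{thm-meq:1.1-eq} becomes $(1-q)\sum_n\sum_{|j|\le n}(-1)^n(1+q^{2n+1})q^{3n^2+2n-j^2}$. On the left, using $(q^2;q^2)_n(-q^2;q^2)_n=(q^4;q^4)_n$ and $(q;q^2)_n/(q^3;q^2)_n=(1-q)/(1-q^{2n+1})$, the ${}_3\phi_2$ collapses to $(1-q)J_2\sum_n(-1)^n q^{2n(n+1)}/((q^4;q^4)_n(1-q^{2n+1}))$; dividing by $(1-q)J_2$ yields \eqref{T1}. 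For \eqref{T2} I take $a\to 0$ with $b=1$: the factors $(q^2/b;q^2)_n=(bq^2;q^2)_n=(q^2;q^2)_n$ cancel against the summation denominator, the left-hand prefactor reduces to $1$, and on the right the $(-1)^n$ from $(q^2/a;q^2)_n a^n\to(-1)^n q^{n(n+1)}$ cancels the alternating sign of Theorem~\ref{meq:1.1}, producing $(1-q)\sum(1+q^{2n+1})\sum_{|j|\le n}q^{2n^2+n-j^2}$; cancelling $(1-q)$ gives \eqref{T2}.

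The Hecke-to-$f$ conversions are standard. For \eqref{T1-H}, substitute $r=n+j$, $s=n-j$ so that $3n^2+2n-j^2=\tfrac12 r^2+2rs+\tfrac12 s^2+r+s$ and the summation region becomes $r,s\ge 0$ with $r\equiv s\pmod 2$. Splitting $1+q^{2n+1}=1+q^{r+s+1}$, decomposing by the common parity of $r,s$, and invoking the doubling $(r,s)\mapsto(-r-1,-s-1)$ to supply the $\sg(r)$ weight required by $f_{a,b,c}$, assembles the four positive-octant pieces into $f_{1,2,1}(q^4,q^4,q^4)-q^5 f_{1,2,1}(q^{10},q^{10},q^4)$. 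The single-$f$ form \eqref{T1-f-exp} is obtained by applying \eqref{f-id-1} at $(a,b,c)=(1,2,1)$, $x=q^{3/2}$, $y=-q^{3/2}$ with base $-q$; two of the four expanded terms cancel by the symmetry $f_{1,2,1}(u,v,q^4)=f_{1,2,1}(v,u,q^4)$. The same change of variables applied to \eqref{T2}, now giving $2n^2+n-j^2=(r^2+6rs+s^2+2r+2s)/4$, together with a parity decomposition of $(r,s)$, distributes the four resulting pieces across the positive and negative octants of $\overline f_{1,3,1}(-q^2,-q^2,q^2)$ and $q^3\overline f_{1,3,1}(-q^6,-q^6,q^2)$, delivering \eqref{T2-barf-exp} directly (no doubling trick is needed here since $\overline f_{a,b,c}$ does not carry a $\sg(r)$ factor).

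To reach the Appell-Lerch form \eqref{T1-A} I invoke Theorem~\ref{thm-fg-1} at $n=p=1$ on $f_{1,2,1}(q^{3/2},-q^{3/2},-q)$. The correction $\Phi_{1,1}$ carries a factor $j(-x/y;-q)$ which at $(x,y)=(q^{3/2},-q^{3/2})$ is $j(1;-q)=0$, so $\Phi_{1,1}$ vanishes, and the two-term expansion of $g_{1,2,1}$ from \eqref{gabc-defn} collapses to $j(q^{3/2};-q)\,m(-q^{1/2},-q^3,-1)+j(-q^{3/2};-q)\,m(q^{1/2},-q^3,-1)$. I expect the main obstacle to be the simplification of the two theta prefactors: using the splitting $(x;-q)_\infty=(x;q^2)_\infty(-xq;q^2)_\infty$ of each Jacobi triple product, then combining half-integer factors through $(q^{1/2};q)_\infty(-q^{1/2};q)_\infty=(q;q^2)_\infty$ and $(q^2;q^4)_\infty=J_2/J_4$, gives $j(q^{3/2};-q)=q^{-1/2}J_2^2/J_4$ and $j(-q^{3/2};-q)=-q^{-1/2}J_2^2/J_4$. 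The opposite signs produce the difference of $m$-values in \eqref{T1-A}, and dividing by the $J_2$ accumulated when \eqref{T1} was extracted leaves the common coefficient $q^{-1/2}J_2/J_4$.
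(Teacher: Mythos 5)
Your proposal is correct and follows the paper's own route: specializing $(a,b)\rightarrow(0,0)$ and $(0,1)$ in Theorem~\ref{meq:1.1} for \eqref{T1} and \eqref{T2}, the standard $(r,s)=(n+j,n-j)$ rewriting for \eqref{T1-H} and \eqref{T2-barf-exp}, identity \eqref{f-id-1} with $(a,b,c,x,y,q)\rightarrow(1,2,1,q^{3/2},-q^{3/2},-q)$ for \eqref{T1-f-exp}, and Theorem~\ref{thm-fg-1} with $(n,p)=(1,1)$ for \eqref{T1-A}. You supply more detail than the paper (which labels most of these steps routine), and the details check out, including $\Phi_{1,1}=0$ via $j(1;q)=0$ and the evaluations $j(\pm q^{3/2};-q)=\pm q^{-1/2}J_2^2/J_4$.
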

\begin{proof}
Taking $(a,b)\rightarrow (0,0)$ and $(0,1)$ in Theorem \ref{meq:1.1}, we obtain \eqref{T1} and \eqref{T2}, respectively.

As mentioned in Section \ref{subsec-build}, deducing \eqref{T1-H} from \eqref{T1} is a routine exercise. To deduce \eqref{T1-f-exp} from \eqref{T1-H}, we set $(a,b,c,x,y,q)\rightarrow (1,2,1,q^{3/2},-q^{3/2},-q)$ in \eqref{f-id-1}. To get \eqref{T1-A} from \eqref{T1-f-exp}, we apply Theorem \ref{thm-fg-1} with $(n,p)=(1,1)$. The proof of \eqref{T2-barf-exp} is again routine and omitted.
\end{proof}

\begin{theorem}\label{meq:1.2}
For $\max\{|ab/q|, |aq^2|, |bq^2|\}<1$, we have
\begin{align}
&\frac{(q^2,ab;q^2)_\infty}{(aq^2,bq^2;q^2)_\infty}{}_3\phi_2\bigg(\genfrac{}{}{0pt}{}{q^2/a,q^2/b,-1}{q,-q^2};q^2,ab/q\bigg) \nonumber\\
=&\sum_{n=0}^\infty\sum_{j=-n}^n(1-q^{4n+2})\frac{(q^2/a,q^2/b;q^2)_n}{(aq^2,bq^2;q^2)_n}(ab)^nq^{n^2-n-j^2}. \label{thm-meq:1.2-eq}
\end{align}
\end{theorem}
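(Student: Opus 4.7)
The plan is to mirror the proof of Theorem \ref{meq:1.1}: first use Theorem \ref{thm-key} to rewrite the left-hand side as a single sum over $n$ of a terminated ${}_3\phi_2$, then evaluate that inner ${}_3\phi_2$ via one of the lemmas in Section \ref{subsec-basic}, and finally simplify.

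For the first step I would substitute $(q,m,\alpha,b_1,b_2,c_1,c_2,z) \to (q^2, 2, q^2, -q^{-2}, 0, q^{-1}, -1, q^{-1})$ in Theorem \ref{thm-key}. The choice $b_2 = 0$ creates an upper zero that cancels the lower zero built into Theorem \ref{thm-key}, so the ${}_4\phi_3$ collapses to a ${}_3\phi_2$ with upper parameters $(q^2/a,q^2/b,-1)$, lower parameters $(q,-q^2)$, and argument $\alpha abz/q^2 = ab/q$, matching the left-hand side exactly. The infinite product on the left becomes $(\alpha q^2;q^2)_\infty = (q^4;q^2)_\infty = (q^2;q^2)_\infty/(1-q^2)$, and the factor $(1-\alpha) = 1-q^2$ in the denominator on the right cancels this, so after clearing the $(1-q^2)$ I obtain
\begin{align*}
&\frac{(q^2,ab;q^2)_\infty}{(aq^2,bq^2;q^2)_\infty}\,{}_3\phi_2\bigg(\genfrac{}{}{0pt}{}{q^2/a,q^2/b,-1}{q,-q^2};q^2,\frac{ab}{q}\bigg) \\
&=\sum_{n=0}^{\infty}\frac{(1-q^{4n+2})(q^2/a,q^2/b;q^2)_n(-ab)^nq^{n(n-1)}}{(aq^2,bq^2;q^2)_n}\,{}_3\phi_2\bigg(\genfrac{}{}{0pt}{}{q^{-2n},q^{2n+2},-1}{q,-q^2};q^2,q\bigg).
\end{align*}

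For the second step I would evaluate the inner ${}_3\phi_2$ using Lemma \ref{meq:1} with $(q,\alpha,c,d) \to (q^2, 1, -q^2, q)$. The $\alpha = 1$ substitution creates a removable singularity in $(1-\alpha q^{4j})(\alpha;q^2)_j/(1-\alpha)$: for $j \ge 1$ the $(1-\alpha)$ in $(\alpha;q^2)_j$ cancels the one in the denominator, leaving $(1-q^{4j})(q^2;q^2)_{j-1}$ in the limit, while the $j=0$ term contributes $1$. Using $(-1;q^2)_j = 2(-q^2;q^2)_{j-1}$ and $1-q^{4j}=(1-q^{2j})(1+q^{2j})$, nearly all the factors telescope, and each summand reduces to $2q^{-j^2}$ for $j\ge 1$ (and $1=q^{0}$ for $j=0$); symmetrizing in $j$ yields
\[
{}_3\phi_2\bigg(\genfrac{}{}{0pt}{}{q^{-2n},q^{2n+2},-1}{q,-q^2};q^2,q\bigg) = (-1)^n\sum_{j=-n}^{n}q^{-j^2}.
\]
Substituting this back into the identity from the first step and using $q^{n(n-1)} = q^{n^2-n}$ together with $(-ab)^n(-1)^n = (ab)^n$ then produces the right-hand side of Theorem \ref{meq:1.2}. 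The main delicate point is the $\alpha \to 1$ limit, which is handled by the algebraic cancellations just described; everything else is direct bookkeeping of the kind that appeared in the proof of Theorem \ref{meq:1.1}.
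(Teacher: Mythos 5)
Your proposal is correct and follows exactly the paper's own proof: the same specialization $(q,m,\alpha,b_1,b_2,c_1,c_2,z)\to(q^2,2,q^2,-q^{-2},0,q^{-1},-1,q^{-1})$ of Theorem \ref{thm-key}, followed by the same evaluation of the inner ${}_3\phi_2$ via Lemma \ref{meq:1} with $(q,\alpha,c,d)\to(q^2,1,-q^2,q)$. The $\alpha\to 1$ limit computation yielding $(-1)^n\sum_{j=-n}^n q^{-j^2}$ is carried out correctly.
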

\begin{proof}
Taking $(q,m,\alpha,b_1,b_2,c_1,c_2,z)\rightarrow(q^2,2,q^2,-q^{-2},0,q^{-1},-1,q^{-1})$ in Theorem \ref{thm-key},
we obtain
\begin{align}
&\frac{(q^2,ab;q^2)_\infty}{(aq^2,bq^2;q^2)_\infty}{}_3\phi_2\bigg(\genfrac{}{}{0pt}{}{q^2/a,q^2/b,-1}{q,-q^2};q^2, ab/q\bigg) \nonumber\\
=&\sum_{n=0}^\infty(-1)^n(1-q^{4n+2})\frac{(q^2/a,q^2/b;q^2)_n}{(aq^2,bq^2;q^2)_n}(ab)^nq^{n(n-1)}
{}_3\phi_2\bigg(\genfrac{}{}{0pt}{}{q^{-2n},q^{2n+2},-1}{q,-q^2};q^2,q\bigg). \label{revise-meq1.2-add-1}
\end{align}
Taking $(q, \alpha,c,d)\rightarrow(q^2, 1,-q^2,q)$ in Lemma \ref{meq:1}, we obtain
\begin{align}
{}_3\phi_2\bigg(\genfrac{}{}{0pt}{}{q^{-2n},q^{2n+2},-1}{q,-q^2};q^2,q\bigg)
=(-1)^n\sum_{j=-n}^nq^{-j^2}. \label{revise-meq1.2-add-2}
\end{align}
Substituting \eqref{revise-meq1.2-add-2} into \eqref{revise-meq1.2-add-1}, we arrive at \eqref{thm-meq:1.2-eq}.
\end{proof}
\begin{corollary}
We have
\begin{align}
&1+2\sum_{n=1}^\infty\frac{q^{2n^2+n}}{(q;q)_{2n}(1+q^{2n})} \nonumber \\
=& \frac{1}{(q^2;q^2)_{\infty}}\sum_{n=0}^\infty\sum_{j=-n}^n(1-q^{4n+2})q^{3n^2+n-j^2} \label{T3} \\
=& \frac{1}{J_2}\left(f_{1,2,1}(-q^3,-q^3,q^4)+q^4f_{1,2,1}(-q^9,-q^9,q^4) \right) \label{T3-f-exp} \\
=& 2\frac{J_2}{J_1}\left(m(-q^5,q^{12},-1)-q^{-1}m(-q,q^{12},-1) \right)+q^{-1}\frac{J_{2}^3J_{6}^2J_{8}^2}{J_{1}^2J_{4}^2J_{24}^2}, \label{T3-A} \\
&1+2\sum_{n=1}^\infty\frac{(-1)^nq^{n^2}}{(q;q^2)_{n}(1+q^{2n})} \nonumber \\
=& \sum_{n=0}^\infty\sum_{j=-n}^n(-1)^n(1-q^{4n+2})q^{2n^2-j^2} \label{T4} \\
=& \overline{f}_{1,3,1}(q,q,q^2)-q^2\overline{f}_{1,3,1}(q^5,q^5,q^2). \label{T4-barf-exp}
\end{align}
\end{corollary}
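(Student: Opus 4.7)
The plan is to specialize Theorem \ref{meq:1.2} at two limiting values of $(a,b)$ and then convert each resulting Hecke-type series into the building-block language, following the model calculations worked out in \eqref{Liu-eq-simplify}, \eqref{pre-example-1} and \eqref{ADH-id-barf}. Taking $(a,b)\to(0,0)$ in \eqref{thm-meq:1.2-eq}, I would use $(q^2/a;q^2)_n\,a^n\to(-1)^n q^{n(n+1)}$ (and the same for $b$) on the right-hand side to obtain $\sum_{n\geq 0}\sum_{|j|\leq n}(1-q^{4n+2})q^{3n^2+n-j^2}$. On the left, $(q^2,ab;q^2)_\infty/(aq^2,bq^2;q^2)_\infty$ collapses to $J_2$, and inside the ${}_3\phi_2$ the factorizations $(q;q)_{2n}=(q;q^2)_n(q^2;q^2)_n$ together with $(-1;q^2)_n=2(-q^2;q^2)_{n-1}$ for $n\geq 1$ simplify the summand to $2q^{2n^2+n}/[(q;q)_{2n}(1+q^{2n})]$. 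Dividing through by $J_2$ yields \eqref{T3}. Under the analogous choice $(a,b)\to(0,1)$, $(a;q^2)_\infty/(aq^2;q^2)_\infty=1-a\to 1$ and the $b$-factors cancel outright, which produces \eqref{T4} after identifying the resulting series.

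To derive \eqref{T3-f-exp}, I would write the right side of \eqref{T3} (times $J_2$) as $H-H'$, where $H=\sum_{n\geq 0}\sum_{|j|\leq n}q^{3n^2+n-j^2}$ and $H'=\sum_{n\geq 0}\sum_{|j|\leq n}q^{3n^2+5n+2-j^2}$, and observe that the shift $n\mapsto -n-1$ maps the exponent of $H$ onto that of $H'$ while sending $\{n\geq 0\}$ to $\{n\leq -1\}$; thus $H-H'$ equals a single sum over $n\in\mathbb{Z}$ with $|j|\leq n$ if $n\geq 0$ and $|j|\leq -n-1$ if $n\leq -1$. Setting $r=n+j$, $s=n-j$ forces $\sg(r)=\sg(s)$ and $r\equiv s\pmod 2$, with exponent $\tfrac12(r^2+4rs+s^2+r+s)$; splitting on the parity of $r$ (both-even giving $r=2\alpha,s=2\beta$ and both-odd giving $r=2\alpha+1,s=2\beta+1$) and matching against definition \eqref{f-defn} converts the two parities into $f_{1,2,1}(-q^3,-q^3,q^4)$ and $q^4 f_{1,2,1}(-q^9,-q^9,q^4)$, respectively. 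The identical machinery handles \eqref{T4-barf-exp}: here the weight $(-1)^n$ is preserved through the $(r,s)$-change, and the absence of $\sg(r)$ in $\overline{f}_{a,b,c}$ makes the mirror sum $n\mapsto -n-1$ combine \emph{additively} rather than subtractively, delivering the two-term expression $\overline{f}_{1,3,1}(q,q,q^2)-q^2\overline{f}_{1,3,1}(q^5,q^5,q^2)$.

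Finally, \eqref{T3-A} follows from \eqref{T3-f-exp} by invoking Theorem \ref{thm-fg-1} with $(n,p)=(1,1)$. In this case the double sum in \eqref{gabc-defn} collapses to the single-term expression $g_{1,2,1}(x,y,q,-1,-1)=j(x;q)\,m(q^2 y/x^2,q^3,-1)+j(y;q)\,m(q^2 x/y^2,q^3,-1)$. Substituting $(x,y,q)\to(-q^3,-q^3,q^4)$ and $(-q^9,-q^9,q^4)$, combined with $j(-q^3;q^4)=\overline{J}_{1,4}=J_2^2/J_1$ (via \eqref{j-id-2}), $j(-q^9;q^4)=q^{-6}\overline{J}_{1,4}$ (via \eqref{j-id-1}), and $m(-q^{-1},q^{12},-1)=-q\,m(-q,q^{12},-1)$ (via \eqref{m-id-2}), assembles the $g$-contribution into exactly $2(J_2/J_1)\bigl(m(-q^5,q^{12},-1)-q^{-1}m(-q,q^{12},-1)\bigr)$. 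The main obstacle will be the $\Phi_{1,1}$-contribution: although \eqref{Phi-defn} degenerates to a single summand for $(n,p)=(1,1)$, evaluating $\tfrac{1}{J_2}\bigl(\Phi_{1,1}(-q^3,-q^3,q^4)+q^4\Phi_{1,1}(-q^9,-q^9,q^4)\bigr)$ in closed form requires repeated application of \eqref{j-id-1}--\eqref{j-id-2} and the explicit formulas for $\overline{J}_{a,m}$ listed in Section \ref{sec-pre} to collapse a quotient involving $J_{1,12}^2$ and $J_{5,12}^2$ into the advertised theta quotient $q^{-1}J_2^3 J_6^2 J_8^2/(J_1^2 J_4^2 J_{24}^2)$.
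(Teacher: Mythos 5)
Your proposal is correct and follows the paper's own route: specialize Theorem \ref{meq:1.2} at $(a,b)\to(0,0)$ and $(0,1)$ to get \eqref{T3} and \eqref{T4}, convert to $f_{1,2,1}$ and $\overline{f}_{1,3,1}$ by the standard $(r,s)=(n+j,n-j)$ substitution with parity splitting, and obtain \eqref{T3-A} via Theorem \ref{thm-fg-1} with $(n,p)=(1,1)$. Your filled-in details (the limit evaluations, the $n\mapsto -n-1$ reflection, and the $g_{1,2,1}$ computation) all check out, matching what the paper treats as routine.
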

\begin{proof}
Taking $(a,b)\rightarrow (0,0)$ and $(0,1)$ in Theorem \ref{meq:1.2}, we obtain \eqref{T3} and \eqref{T4}, respectively.

To get \eqref{T3-A} from \eqref{T3-f-exp}, we apply Theorem \ref{thm-fg-1} with $(n,p)=(1,1)$.
\end{proof}

\begin{theorem}\label{meq:3.1}
For  $\max \{|ab|, |aq|,|bq|\}<1$, we have
\begin{align}
&\frac{(q,ab;q)_\infty}{(aq,bq;q)_\infty}{}_3\phi_2\bigg(\genfrac{}{}{0pt}{}{q/a,q/b,-1}{0,-q};q,ab\bigg)\\
\nonumber
=&\sum_{n=0}^\infty\sum_{j=-n}^n(-1)^{j}(1-q^{2n+1})\frac{(q/a,q/b;q)_n}{(aq,bq;q)_n}(ab)^nq^{n(n-1)/2+j^2}.
\end{align}
\end{theorem}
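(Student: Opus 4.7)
The plan is to specialize Theorem~\ref{thm-key} and then evaluate the resulting inner terminated $_3\phi_2$ series using Lemma~\ref{meq:3}. Specifically, I would choose $(\alpha, m, b_1, c_1, z) = (q, 1, -q^{-1}, -1, 1)$ in Theorem~\ref{thm-key}. With these choices $\alpha b_1 = -1$, $\alpha c_1 = -q$, and $\alpha abz/q = ab$, so the outer $_3\phi_2$ on the left side coincides with the one in Theorem~\ref{meq:3.1}. The prefactor produced by Theorem~\ref{thm-key} is $(q^2, ab;q)_\infty/(aq, bq;q)_\infty$ and the summand carries a factor $1/(1-\alpha) = 1/(1-q)$; multiplying both sides of the resulting identity by $(1-q)$ and using $(1-q)(q^2;q)_\infty = (q;q)_\infty$ converts these into the desired $(q, ab;q)_\infty/(aq, bq;q)_\infty$ on the left and eliminates the $1/(1-q)$ on the right.

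The proof then reduces to establishing the inner evaluation
\[
{}_3\phi_2\bigg(\genfrac{}{}{0pt}{}{q^{-n}, q^{n+1}, -1}{0, -q}; q, q\bigg) = (-1)^n \sum_{j=-n}^{n} (-1)^j q^{j^2}.
\]
I would obtain this by invoking Lemma~\ref{meq:3} with $c = -q$, so that $q/c = -1$ and $q^2/c = -q$, and then passing to the limit $\alpha \to 1$. The prefactor $(q/c)^n (\alpha c, q;q)_n/(q^2/c, \alpha q;q)_n$ collapses to $(-1)^n$ at $\alpha = 1$. In the sum, the indeterminate ratio $(\alpha;q)_j/(1-\alpha)$ is handled by the standard observation that this quantity equals $\prod_{k=1}^{j-1}(1-\alpha q^k)$ for $j\geq 1$, whose limit is $(q;q)_{j-1}$. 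Combined with $(-1;q)_j = 2(-q;q)_{j-1}$ for $j \geq 1$, the remaining factors telescope via the clean cancellation $(1-q^{2j})(q;q)_{j-1}(-q;q)_{j-1}/[(q;q)_j(-q;q)_j] = 1$, so the $j$th summand collapses to $2(-1)^j q^{j^2}$; the $j = 0$ term equals $1$, and by symmetry the whole sum equals $\sum_{j=-n}^{n}(-1)^j q^{j^2}$.

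Substituting this closed form back into the specialization of Theorem~\ref{thm-key}, combining $(-1)^n$ with $(-ab)^n$ to produce $(ab)^n$, and finally multiplying through by $(1-q)$, recovers exactly the statement of Theorem~\ref{meq:3.1}. The overall structure parallels the proofs of Theorems~\ref{meq:1.1} and~\ref{meq:1.2} already presented. The main obstacle is the careful $\alpha \to 1$ specialization of Lemma~\ref{meq:3}, where both the $1/(1-\alpha)$ denominator and the vanishing $(\alpha;q)_j$ numerator must be handled together; once the telescoping cancellation above is spotted, however, the remainder of the argument is routine.
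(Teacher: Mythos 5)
Your proposal is correct and follows essentially the same route as the paper: the paper takes $(c,d,u,v)\rightarrow(0,-q,q,-1)$ in Theorem~\ref{thm-main} (which is itself the $m=2$ case of Theorem~\ref{thm-key}, so your $m=1$ specialization yields the identical intermediate identity), and then evaluates the inner terminated series by setting $(\alpha,c)=(1,-q)$ in Lemma~\ref{meq:3}, exactly as you do. Your explicit handling of the $\alpha\to 1$ limit and the telescoping cancellation is a more careful writing of the step the paper states without detail, and your verification that the summand collapses to $2(-1)^jq^{j^2}$ matches the paper's evaluation \eqref{proof-add-2}.
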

\begin{proof}
Taking $(c,d,u,v) \rightarrow (0,-q,q,-1)$ in Theorem \ref{thm-main}, we deduce that
\begin{align}
&\frac{(q,ab;q)_\infty}{(aq,bq;q)_\infty}{}_3\phi_2\bigg(\genfrac{}{}{0pt}{}{q/a,q/b,-1}{0,-q};q,ab\bigg) \nonumber\\
=&\sum_{n=0}^\infty(1-q^{2n+1})\frac{(q/a,q/b;q)_n}{(aq,bq;q)_n}(-ab)^nq^{n(n-1)/2}
{}_3\phi_2\bigg(\genfrac{}{}{0pt}{}{q^{-n},q^{n+1},-1}{0,-q};q,q\bigg). \label{proof-add-1}
\end{align}
Setting $(\alpha, c)=(1,-q)$ in  Lemma \ref{meq:3}, we obtain
\begin{align}
{}_3\phi_2\bigg(\genfrac{}{}{0pt}{}{q^{-n}, q^{n+1},-1}{0,-q};q,q\bigg)
=(-1)^n\sum_{j=-n}^n(-1)^jq^{j^2}. \label{proof-add-2}
\end{align}
The theorem follows after substituting \eqref{proof-add-2} into \eqref{proof-add-1}.
\end{proof}
\begin{corollary}
We have
\begin{align}
&\sum_{n=0}^\infty\frac{q^{n^2+n}}{(q;q)_{n}(1+q^{n})} \nonumber \\
=& \frac{1}{2(q;q)_{\infty}}\sum_{n=0}^\infty\sum_{j=-n}^n(-1)^j(1-q^{2n+1})q^{3n^2/2+n/2+j^2} \label{T9} \\
=&\frac{1}{2J_1}\left(f_{5,1,5}(q^3,q^3,q)+q^2f_{5,1,5}(q^6,q^6,q)\right), \label{T9-f-exp} \\
&\sum_{n=0}^\infty(-1)^n\frac{q^{n(n+1)/2}}{1+q^n} \nonumber \\
=& \frac{1}{2}\sum_{n=0}^\infty\sum_{j=-n}^n(-1)^{n+j}(1-q^{2n+1})q^{n^2+j^2} \label{T10} \\
=& \frac{1}{2}\left(\overline{f}_{1,0,1}(-q^2,-q^2,q^4)-q\overline{f}_{1,0,1}(-q^4,-q^4,q^4)\right) \label{T10-barf-exp} \\
=& \frac{1}{4}\left(\frac{J_{1}^4}{J_{2}^2} +1\right). \label{T10-simplify}
\end{align}
\end{corollary}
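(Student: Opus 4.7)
The plan is to specialize $(a,b)$ in Theorem~\ref{meq:3.1}.  Taking $(a,b)\to(0,0)$, the prefactor $(q,ab;q)_\infty/(aq,bq;q)_\infty$ tends to $(q;q)_\infty$, the standard limit $(q/x;q)_n\,x^n\to(-1)^n q^{n(n+1)/2}$ converts both sides into their stated forms, and the identity $(-1;q)_n/(-q;q)_n=2/(1+q^n)$ accounts for the $(1+q^n)^{-1}$ factor on the left of~\eqref{T9}.  Taking $(a,b)\to(0,1)$ instead makes the prefactor tend to $1$ and produces~\eqref{T10} in the analogous fashion.

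For the conversions \eqref{T9-f-exp} and \eqref{T10-barf-exp}, the plan is to apply the symmetrization recipe used in the deductions of \eqref{intro-Liu-eq-H} and \eqref{ADH-id-barf}:  split the factor $(1-q^{2n+1})$, in the $-q^{2n+1}$ piece replace $n$ by $-n-1$ (which preserves the exponent of $q$ and enlarges the range to $\sg(n+j)=\sg(n-j)$), substitute $(r,s)=(n+j,n-j)$, and split by the parity of $r$ (forced to equal that of $s$).  The essential difference between the two identities lies in the sign bookkeeping:  in \eqref{T9} the summand carries only $(-1)^j$, so the ranges $n\geq 0$ and $n\leq-1$ pair with opposite signs to produce the $\sg(r)$ factor of $f_{5,1,5}$;  in \eqref{T10} the additional $(-1)^n$ is flipped by the substitution $n\mapsto -n-1$, so the ranges pair with equal signs and we obtain the false building block $\overline{f}_{1,0,1}$.

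For \eqref{T10-simplify}, denote the left-hand side by $S$.  Each $\overline{f}_{1,0,1}$ appearing in \eqref{T10-barf-exp} factors into a product of one-variable theta sums; using $\sum_{r\in\mathbb{Z}}q^{2r^2}=\overline{J}_{2,4}$ and $\sum_{r\in\mathbb{Z}}q^{2r^2+2r}=\overline{J}_{0,4}$ (the latter holds because $2r^2+2r$ is invariant under $r\mapsto -r-1$), a direct calculation gives
\[
\overline{f}_{1,0,1}(-q^2,-q^2,q^4)=\tfrac{1}{2}\bigl(\overline{J}_{2,4}^2+1\bigr),\qquad \overline{f}_{1,0,1}(-q^4,-q^4,q^4)=\tfrac{1}{2}\,\overline{J}_{0,4}^2,
\]
so that substitution into \eqref{T10-barf-exp} yields $4S=\overline{J}_{2,4}^2-q\,\overline{J}_{0,4}^2+1$.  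Applying the classical theta identity $\overline{J}_{2,4}^2-q\,\overline{J}_{0,4}^2=J_1^4/J_2^2$ (equivalent to $\vartheta_3(q^2)^2-\vartheta_2(q^2)^2=\vartheta_4(q)^2$, which itself follows from the same $(r,s)=(m+n,m-n)$ symmetrization applied to $\vartheta_4(q)^2=\sum_{m,n\in\mathbb{Z}}(-1)^{m+n}q^{m^2+n^2}$) then finishes the proof.  This last theta identity is the main (but routine) obstacle;  all other steps are careful applications of the templates in Section~\ref{subsec-build}.
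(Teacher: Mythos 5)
Your derivations of \eqref{T9} and \eqref{T10} via the specializations $(a,b)\to(0,0)$ and $(0,1)$ of Theorem \ref{meq:3.1} are exactly the paper's, and your parity/sign bookkeeping for \eqref{T9-f-exp} and \eqref{T10-barf-exp} correctly fills in the conversion that the paper declares routine and omits; in particular, your observation that the substitution $n\mapsto -n-1$ flips the extra $(-1)^n$ in \eqref{T10}, so that the two ranges combine with equal signs and force the false block $\overline{f}_{1,0,1}$ rather than $f_{1,0,1}$, is the right diagnosis. The one place where you genuinely diverge is \eqref{T10-simplify}. The paper never passes through \eqref{T10-barf-exp} for this step: it symmetrizes the Lambert series directly, writing $\sum_{n\geq 0}(-1)^nq^{n(n+1)/2}/(1+q^n)=\tfrac12\sum_{n\in\mathbb{Z}}(-1)^nq^{n(n+1)/2}/(1+q^n)+\tfrac14$ (the bilateral summand being invariant under $n\mapsto -n$), and then evaluates the bilateral sum by setting $z=-1$ in the partial-fraction expansion \eqref{reciprocal-Jacobi}, using $j(-1;q)=2J_2^2/J_1$. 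You instead evaluate each $\overline{f}_{1,0,1}$ in \eqref{T10-barf-exp} as essentially a square of a one-variable theta series and finish with $\overline{J}_{2,4}^2-q\,\overline{J}_{0,4}^2=J_1^4/J_2^2$; I checked the factorizations $\overline{f}_{1,0,1}(-q^2,-q^2,q^4)=\tfrac12\bigl(\overline{J}_{2,4}^2+1\bigr)$ and $\overline{f}_{1,0,1}(-q^4,-q^4,q^4)=\tfrac12\overline{J}_{0,4}^2$ and they are correct. Both routes are short: the paper's bypasses the Hecke-type form entirely (so \eqref{T10-simplify} does not depend on \eqref{T10-barf-exp}), while yours explains structurally why this particular $\overline{f}$ collapses to an eta quotient, namely that the definite diagonal form $n^2+j^2$ turns the double sum into a product of classical theta series, at the modest cost of invoking $\vartheta_3(q^2)^2-\vartheta_2(q^2)^2=\vartheta_4(q)^2$.
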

\begin{proof}
Taking $(a,b)\rightarrow (0,0)$ and $(0,1)$ in Theorem \ref{meq:3.1}, we obtain \eqref{T9} and \eqref{T10}, respectively.

Note that
\begin{align}
\sum_{n=0}^\infty(-1)^n\frac{q^{n(n+1)/2}}{1+q^n}=\frac{1}{2}\sum_{n=-\infty}^\infty(-1)^n\frac{q^{n(n+1)/2}}{1+q^n}+\frac{1}{4}. \label{reciprocal-apply-2}
\end{align}
Setting $z=-1$ in \eqref{reciprocal-Jacobi} we get \eqref{T10-simplify} immediately.
\end{proof}
\begin{rem}
The Hecke-type series in \eqref{T10} looks quite similar to the following identity of Liu \cite[Eq.\ (4.12)]{Liu2013IJNT}:
\begin{align}\label{Liu-Eq412}
\sum_{n=0}^{\infty}\frac{q^{n^2}}{(q^2;q^2)_{n}}=\frac{1}{(q;q)_{\infty}}\sum_{n=0}^\infty\sum_{j=-n}^n(-1)^{j}(1-q^{2n+1})q^{n^2+j^2}.
\end{align}
Note that there is typo in \cite[Eq.\ (4.12)]{Liu2013IJNT} that $(-1)^{n+j}$ should be $(-1)^j$. We can actually rewrite the above identity as
\begin{align}
\sum_{n=0}^{\infty}\frac{q^{n^2}}{(q^2;q^2)_{n}}=\frac{1}{J_1}\left(f_{1,0,1}(q^2,q^2,q^4)+qf_{1,0,1}(q^4,q^4,q^4) \right)=\frac{J_{2}^2}{J_{1}J_4}. \label{Liu-Eq412-simplify}
\end{align}
Here the first equality follows by \eqref{Liu-Eq412} and routine process. For the second equality, we note that
\begin{align}
f_{1,0,1}(q^2,q^2,q^4)=& \sum_{r,s\geq 0}(-1)^{r+s}q^{2r^2+2s^2}-\sum_{r,s<0}(-1)^{r+s}q^{2r^2+2s^2} \nonumber \\
=& \left(\sum_{r=0,s>0}+\sum_{r>0,s=0}+\sum_{r=s=0} \right)(-1)^{r+s}q^{2r^2+2s^2} \nonumber \\
=&1+2\sum_{r=1}^\infty (-1)^rq^{2r^2} \nonumber \\
=&\frac{J_2^2}{J_4}.
\end{align}
Similarly,
\begin{align}
f_{1,0,1}(q^4,q^4,q^4)=\sum_{r,s\geq 0}(-1)^{r+s}q^{2r^2+2r+2s^2+2s}-\sum_{r,s<0}(-1)^{r+s}q^{2r^2+2r+2s^2+2s}=0.
\end{align}
Here the last equality follows by replacing $(r,s)$ by $(-r-1,-s-1)$ in the second sum. Therefore \eqref{Liu-Eq412-simplify} holds.
\end{rem}
\begin{rem}
At this moment we are not able to  convert $f_{5,1,5}(x,y,q)$ to an expression in terms of $m(x,q,z)$, which might be possible. See \eqref{T11-H} for another Hecke-type series of this kind.
\end{rem}

\begin{theorem}\label{meq:3.2}
For  $\max \{|ab|, |aq^2|, |bq^2|\}<1$, we have
\begin{align}
&\frac{(q^2,ab;q^2)_\infty}{(aq^2,bq^2;q^2)_\infty}{}_3\phi_2\bigg(\genfrac{}{}{0pt}{}{q^2/a,q^2/b,q}{0,q^3};q^2,ab\bigg)\\
\nonumber
=&(1-q)\sum_{n=0}^\infty\sum_{j=-n}^n(-1)^n(1+q^{2n+1})\frac{(q^2/a,q^2/b;q^2)_n}{(aq^2,bq^2;q^2)_n}(ab)^nq^{n^2+2j^2-j}.
\end{align}
\end{theorem}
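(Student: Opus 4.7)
The plan is to follow the same two-step strategy that proves Theorems \ref{meq:1.1}, \ref{meq:1.2}, and \ref{meq:3.1}: first invoke Theorem \ref{thm-key} with parameters chosen to produce the claimed infinite-product prefactor and outer ${}_3\phi_2$ on the left, and then evaluate the resulting terminating inner ${}_3\phi_2$ using one of the lemmas from Section \ref{subsec-basic}.

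First I would apply Theorem \ref{thm-key} with the substitution $(q,m,\alpha,b_1,c_1,z)\to(q^2,1,q^2,q^{-1},q,1)$. The prefactor on the left then becomes $(q^4,ab;q^2)_\infty/(aq^2,bq^2;q^2)_\infty$ and the outer series reads exactly ${}_3\phi_2\big(q^2/a,q^2/b,q;0,q^3;q^2,ab\big)$. Multiplying both sides by $(1-q^2)$ converts $(q^4;q^2)_\infty$ into $(q^2;q^2)_\infty$ and clears the $(1-q^2)$ in the denominator of the outer sum on the right, yielding
\begin{align*}
&\frac{(q^2,ab;q^2)_\infty}{(aq^2,bq^2;q^2)_\infty}{}_3\phi_2\bigg(\genfrac{}{}{0pt}{}{q^2/a,q^2/b,q}{0,q^3};q^2,ab\bigg)\\
&=\sum_{n=0}^\infty\frac{(1-q^{4n+2})(q^2/a,q^2/b;q^2)_n(-ab)^nq^{n(n-1)}}{(aq^2,bq^2;q^2)_n}\,{}_3\phi_2\bigg(\genfrac{}{}{0pt}{}{q^{-2n},q^{2n+2},q}{0,q^3};q^2,q^2\bigg).
\end{align*}

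The inner ${}_3\phi_2$ matches the shape of Lemma \ref{meq:3} after $q\to q^2$, with $\alpha=1$ and $c=q$. Since $\alpha=1$ forces the indeterminate form $(1-\alpha q^{4j})(\alpha;q^2)_j/(1-\alpha)$, I would interpret this as the limit $\alpha\to 1$: the $j=0$ term contributes $1$, and for $j\geq1$ one has $(\alpha;q^2)_j/(1-\alpha)\to(q^2;q^2)_{j-1}$, producing $(1-q^{4j})/(1-q^{2j})\cdot q^j q^{2j^2-2j}=(1+q^{2j})q^{2j^2-j}$. The prefactor in Lemma \ref{meq:3} simplifies to $(1-q)q^n/(1-q^{2n+1})$, so the inner ${}_3\phi_2$ collapses to
\[
\frac{(1-q)q^n}{1-q^{2n+1}}\Bigg(1+\sum_{j=1}^n(1+q^{2j})q^{2j^2-j}\Bigg)=\frac{(1-q)q^n}{1-q^{2n+1}}\sum_{j=-n}^nq^{2j^2-j},
\]
where the last equality comes from splitting the desired sum at $j=0$ and sending $j\mapsto-j$ on the negative half.

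Plugging this back and using $(1-q^{4n+2})/(1-q^{2n+1})=1+q^{2n+1}$ together with $q^{n(n-1)+n}=q^{n^2}$ and $(-ab)^n=(-1)^n(ab)^n$ produces precisely the right-hand side of Theorem \ref{meq:3.2}. The only delicate step is the limiting evaluation $\alpha\to 1$ in Lemma \ref{meq:3}; everything else is bookkeeping entirely parallel to the proofs of Theorems \ref{meq:1.1} and \ref{meq:1.2}, so I expect no further obstacles.
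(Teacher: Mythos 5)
Your proposal is correct and is essentially the paper's own proof: the paper obtains the identical intermediate identity by taking $(q,c,d,u,v)\rightarrow(q^2,0,q^3,q^2,q)$ in Theorem \ref{thm-main} (itself just the $m=2$ case of Theorem \ref{thm-key}, so your $m=1$ application is the same computation), and then evaluates the inner series via Lemma \ref{meq:3} with $(q,\alpha,c)\rightarrow(q^2,1,q)$ exactly as you do. Your careful treatment of $\alpha\to 1$ is fine but not really a limit, since $(\alpha;q)_j/(1-\alpha)=(\alpha q;q)_{j-1}$ identically for $j\geq 1$, so the substitution $\alpha=1$ is exact.
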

\begin{proof}
Taking $(q,c,d,u,v)\rightarrow (q^2, 0, q^3, q^2, q)$ in Theorem \ref{thm-main}, we deduce that
\begin{align}
&\frac{(q^2,ab;q^2)_\infty}{(aq^2,bq^2;q^2)_\infty}{}_3\phi_2\bigg(\genfrac{}{}{0pt}{}{q^2/a,q^2/b,q}{0,q^3};q^2,ab\bigg) \nonumber \\
=&\sum_{n=0}^\infty(1-q^{4n+2})\frac{(q^2/a,q^2/b;q^2)_n}{(aq^2,bq^2;q^2)_n}(-ab)^nq^{n(n-1)}
{}_3\phi_2\bigg(\genfrac{}{}{0pt}{}{q^{-2n}, q^{2n+2},q}{0,q^3};q^2,q^2\bigg). \label{proof-new-2}
\end{align}
Taking $(q, \alpha, c) \rightarrow (q^2, 1, q)$ in Lemma \ref{meq:3}, we obtain
\begin{align}
{}_3\phi_2\bigg(\genfrac{}{}{0pt}{}{q^{-2n}, q^{2n+2},q}{0,q^3};q^2,q^2\bigg)
=\frac{(1-q)q^n}{1-q^{2n+1}}\sum_{j=-n}^nq^{2j^2-j}. \label{proof-new-1}
\end{align}
The theorem follows after substituting \eqref{proof-new-1} into \eqref{proof-new-2}.
\end{proof}
\begin{corollary}
We have
\begin{align}
&\sum_{n=0}^\infty\frac{q^{2n^2+2n}}{(1-q^{2n+1})(q^2;q^2)_{n}} \nonumber \\
=&\frac{1}{(q^2;q^2)_{\infty}}\sum_{n=0}^\infty\sum_{j=-n}^n(-1)^n(1+q^{2n+1})q^{3n^2+2n+2j^2-j} \label{T11} \\
=& \frac{1}{J_2}\left(f_{5,1,5}(q^6,q^8,q^2)-q^5f_{5,1,5}(q^{12},q^{14},q^2) \right), \label{T11-H} \\
&\sum_{n=0}^\infty(-1)^n\frac{q^{n(n+1)}}{1-q^{2n+1}} \nonumber \\
=&\sum_{n=0}^\infty\sum_{j=-n}^n(1+q^{2n+1})q^{2n^2+n+2j^2-j} \label{T12} \\
=&\overline{f}_{1,0,1}(-q^4,-q^6,q^8)+q^3\overline{f}_{1,0,1}(-q^8,-q^{10},q^8). \label{T12-bar-H}
\end{align}
\end{corollary}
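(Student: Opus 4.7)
The strategy is to derive all four claims from Theorem~\ref{meq:3.2} by specializing $(a,b)$, exactly in the pattern of the preceding corollaries. For \eqref{T11}, I would take $(a,b)\to(0,0)$, using the standard limit $\lim_{a\to 0}a^n(q^2/a;q^2)_n=(-1)^nq^{n(n+1)}$ for both numerator Pochhammer symbols (which combines with $(ab)^n$ to give $q^{2n(n+1)}$), together with the reduction $(q;q^2)_n/(q^3;q^2)_n=(1-q)/(1-q^{2n+1})$ that collapses the remaining $q$-products into the $1-q^{2n+1}$ denominator on the left. The prefactor $(q^2,ab;q^2)_\infty/(aq^2,bq^2;q^2)_\infty$ reduces to $(q^2;q^2)_\infty$, and dividing both sides by the common factor $(1-q)(q^2;q^2)_\infty$ produces \eqref{T11}. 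The derivation of \eqref{T12} is parallel, with $(a,b)\to(0,1)$: here $(q^2/b;q^2)_n\to(q^2;q^2)_n$ cancels the $(q^2;q^2)_n$ in the denominator, and only one $(q^2/a;q^2)_n$-type limit appears.

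To convert \eqref{T11} into \eqref{T11-H} I would split $(1+q^{2n+1})q^{3n^2+2n+2j^2-j}$ into its two summands and apply the shift $n\mapsto-n-1$ to the piece carrying the extra $q^{2n+1}$. Under this shift the exponent $3n^2+4n+1$ maps to $3n^2+2n$, the range $\{n\ge 0,\,|j|\le n\}$ maps to $\{n<0,\,|j|\le -n-1\}$, and the sign $(-1)^n$ picks up an additional $-1$, so the ``$+$'' in $(1+q^{2n+1})$ becomes a ``$-$'' between the two pieces. The combined domain is precisely the one produced by the substitution $(r,s)=(n+j,n-j)$ under the constraints $\sg(r)=\sg(s)$ and $r\equiv s\pmod 2$; the exponent transforms to $(5r^2+2rs+5s^2)/4+(r+3s)/2$, and splitting by the parity of $(r,s)$ and matching against \eqref{f-defn} yields $f_{5,1,5}(q^6,q^8,q^2)$ from the even-parity cell and $-q^5f_{5,1,5}(q^{12},q^{14},q^2)$ from the odd-parity cell (the $q^5$ is the constant part of the exponent on the odd cell, and the $-1$ comes from the signature $(-1)^{u+v+1}$ of that cell).

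The passage from \eqref{T12} to \eqref{T12-bar-H} uses the same reindexing but is structurally cleaner: because the outer sign is $+1$, the shift $n\mapsto-n-1$ glues the $n\ge 0$ and $n<0$ pieces without any relative sign, which is precisely the symptom that forces the false series $\overline{f}_{a,b,c}$ of \eqref{barf-defn} rather than $f_{a,b,c}$. The exponent in $(r,s)$-variables now collapses to $r^2+s^2+s$, and the even and odd parity cells yield $\overline{f}_{1,0,1}(-q^4,-q^6,q^8)$ and $q^3\overline{f}_{1,0,1}(-q^8,-q^{10},q^8)$ respectively. The main obstacle throughout is organizational rather than conceptual: keeping the shift $n\mapsto-n-1$, the parity split, and the $\sg$ convention aligned so that the correct arguments and powers of $q$ appear inside the building blocks. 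The substantive content of the corollary is really the choice of the two $(a,b)$ specializations; the rest is exactly the routine reindexing that Section~\ref{subsec-build} announces will henceforth be suppressed.
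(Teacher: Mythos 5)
Your proposal is correct and takes essentially the same approach as the paper: the paper's proof of \eqref{T11} and \eqref{T12} is precisely the specializations $(a,b)\to(0,0)$ and $(0,1)$ in Theorem \ref{meq:3.2}, and it suppresses the conversions to \eqref{T11-H} and \eqref{T12-bar-H} as routine. Your detailed reindexing (the shift $n\mapsto -n-1$, the substitution $(r,s)=(n+j,n-j)$, the exponents $(5r^2+2rs+5s^2)/4+(r+3s)/2$ and $r^2+s^2+s$, and the parity split yielding the stated arguments of $f_{5,1,5}$ and $\overline{f}_{1,0,1}$) checks out exactly.
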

\begin{proof}
Taking $(a,b)\rightarrow (0,0)$ and $(0,1)$ in Theorem \ref{meq:3.2}, we obtain identities \eqref{T11} and \eqref{T12}, respectively.
\end{proof}

\begin{theorem}\label{meq:3.3}
For  $\max\{|ab|, |aq^2|, |bq^2|\}<1$, we have
\begin{align}
&\frac{(q^2,ab;q^2)_\infty}{(aq^2,bq^2;q^2)_\infty}{}_3\phi_2\bigg(\genfrac{}{}{0pt}{}{q^2/a,q^2/b,q^2}{-q^2,-q^3};q^2,ab\bigg)\\
\nonumber
=&(1+q)\sum_{n=0}^\infty\sum_{j=-n}^n(-1)^j(1-q^{2n+1})\frac{(q^2/a,q^2/b;q^2)_n}{(aq^2,bq^2;q^2)_n}(ab)^nq^{2n^2+n-j^2}.
\end{align}
\end{theorem}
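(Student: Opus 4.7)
The plan is to follow the template of Theorems \ref{meq:1.1} and \ref{meq:3.2}: first apply Theorem \ref{thm-main} to express the left side as a sum involving a terminated ${}_3\phi_2$, then evaluate that ${}_3\phi_2$ in closed form. Specifically, I would take $(q,u,v,c,d)\to(q^2,q^2,q^2,-q^2,-q^3)$ in Theorem \ref{thm-main}; after multiplying both sides by $(1-q^2)$ to absorb the $(1-q^2)^{-1}$ in the summand and convert $(q^4;q^2)_\infty$ to $(q^2;q^2)_\infty$, the problem reduces to evaluating the inner terminated series
\begin{align*}
X_n:={}_3\phi_2\bigg(\genfrac{}{}{0pt}{}{q^{-2n},q^{2n+2},q^2}{-q^2,-q^3};q^2,q^2\bigg)
\end{align*}
and checking that $(1-q^{4n+2})(-ab)^n q^{n^2-n}X_n$ together with the Pochhammer prefactor matches the summand on the right.

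The new wrinkle is that $X_n$ does not directly fit Lemmas \ref{meq:1}--\ref{meq:3}: matching it to Lemma \ref{meq:1} would require $c=1$ from the top parameter $q^2$ yet $c=-q$ from the bottom parameter $-q^3$. My key step is therefore to first apply the Sears transformation \eqref{sears:32} with $(q,\alpha,\beta,c,d)\to(q^2,q^2,q^2,-q^2,-q^3)$; the Pochhammer ratio $(q^2\alpha/c;q^2)_n/(c;q^2)_n$ fortuitously collapses to $1$, giving $X_n=q^{n^2+n}Y_n$ where
\begin{align*}
Y_n:={}_3\phi_2\bigg(\genfrac{}{}{0pt}{}{q^{-2n},q^{2n+2},-q}{-q^2,-q^3};q^2,-q^2\bigg).
\end{align*}
The transformed $Y_n$ now exactly matches the left side of Lemma \ref{meq:1} with $(q,\alpha,c,d)\to(q^2,1,-q,-q^2)$.

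Applying this lemma at $\alpha=1$ (the apparent singularity $(1-\alpha)^{-1}$ is removable because $(\alpha;q^2)_j/(1-\alpha)\to(q^2;q^2)_{j-1}$), and using $(-1;q^2)_j=2(-q^2;q^2)_{j-1}$ together with $(-q;q^2)_n/(-q^3;q^2)_n=(1+q)/(1+q^{2n+1})$, the $j\ge 1$ terms collapse uniformly and one obtains $Y_n=(-1)^n q^n(1+q)/(1+q^{2n+1})\cdot\sum_{j=-n}^n(-1)^j q^{-j^2}$. Substituting $X_n=q^{n^2+n}Y_n$ back into the sum, the $(-1)^n$ cancels $(-ab)^n$, the factorization $1-q^{4n+2}=(1-q^{2n+1})(1+q^{2n+1})$ cancels the $(1+q^{2n+1})^{-1}$, and the $q$-powers combine as $q^{n^2-n}\cdot q^{n^2+n}\cdot q^n=q^{2n^2+n}$, recovering exactly the right-hand side.

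The main obstacle is recognizing that Sears' ${}_3\phi_2$-to-${}_3\phi_2$ transformation \eqref{sears:32} is needed as an intermediate step: unlike in the preceding theorems of this section, no single lemma from Section \ref{subsec-basic} applies directly to the terminated ${}_3\phi_2$ produced by Theorem \ref{thm-main}. Once this Sears step is identified, the remaining manipulations, including the handling of the $\alpha\to 1$ removable singularity in Lemma \ref{meq:1}, are routine.
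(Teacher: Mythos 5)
Your proposal is correct, but it takes a different route from the paper's proof of this particular theorem. The paper's proof of Theorem \ref{meq:3.3} also begins by specializing Theorem \ref{thm-main} with $(q,c,d,u,v)\rightarrow(q^2,-q^2,-q^3,q^2,q^2)$, but it then disposes of the terminated series ${}_3\phi_2\bigl(q^{-2n},q^{2n+2},q^2;-q^2,-q^3;q^2,q^2\bigr)$ in one stroke by quoting Andrews' evaluation \cite[Eq.\ (6.11)]{A12}, namely $\frac{1+q^{2n+1}}{1+q}\,{}_3\phi_2=(-1)^nq^{n^2+2n}\sum_{j=-n}^{n}(-1)^jq^{-j^2}$, rather than deriving it. Your two-step evaluation --- first the Sears transformation \eqref{sears:32} with $(q,\alpha,\beta,c,d)\to(q^2,q^2,q^2,-q^2,-q^3)$, under which the Pochhammer ratio collapses to $1$ and the argument shifts to $-q^2$, and then Lemma \ref{meq:1} at the removable singularity $\alpha=1$ --- is a valid self-contained derivation of exactly that formula, and all your bookkeeping (the telescoping of the $j\ge 1$ terms via $(-1;q^2)_j=2(-q^2;q^2)_{j-1}$, the cancellation $1-q^{4n+2}=(1-q^{2n+1})(1+q^{2n+1})$, and the exponent count $q^{n^2-n}\cdot q^{n^2+n}\cdot q^n=q^{2n^2+n}$) checks out. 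It is worth noting that the paper itself restates this identical parameterized identity as Theorem \ref{thm-ab-6-2} in Section \ref{sub-6-2} and proves it there by precisely your Sears-plus-Lemma-\ref{meq:1} argument (see \eqref{psi-proof-2} and \eqref{psi-proof-3}), so your route is in effect the paper's second proof of the same statement: yours buys independence from Andrews' external evaluation, while the paper's proof at this location is shorter at the cost of a citation.
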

\begin{proof}
Taking $(q, c, d, u, v)\rightarrow (q^2, -q^2, -q^3, q^2, q^2)$ in Theorem \ref{thm-main}, we deduce that
\begin{align}\label{eq:w1}
&\frac{(q^2,ab;q^2)_\infty}{(aq^2,bq^2;q^2)}{}_3\phi_2\bigg(\genfrac{}{}{0pt}{}{q^2/a,q^2/b,q^2}{-q^2,-q^3};q^2,ab\bigg) \nonumber \\
=&\sum_{n=0}^{\infty}(1-q^{4n+2})\frac{(q^2/a,q^2/b;q^2)_n}{(aq^2,bq^2;q^2)_n}(-ab)^nq^{n(n-1)}{}_3\phi_2\bigg(\genfrac{}{}{0pt}{}{q^{-2n},q^{2n+2},q^2}{-q^2,-q^3};q^2,q^2\bigg).
\end{align}
From \cite[Eq.\ (6.11)]{A12} we find
\begin{align}
\frac{1+q^{2n+1}}{1+q}{}_3\phi_2\bigg(\genfrac{}{}{0pt}{}{q^{2n},q^{2n+2},q^2}{-q^2,-q^3};q^2,q^2\bigg)
=(-1)^nq^{n^2+2n}\sum_{j=-n}^{n}(-1)^jq^{-j^2}. \label{proof-add-andrews}
\end{align}
The theorem follows after substituting \eqref{proof-add-andrews} into \eqref{eq:w1}.
\end{proof}
\begin{corollary}
We have
\begin{align}
&\sum_{n=0}^{\infty}\frac{q^{2n^2+2n}}{(-q;q)_{2n+1}}\nonumber \\
=&\frac{1}{(q^2;q^2)_{\infty}}\sum_{n=0}^{\infty}\sum_{j=-n}^{n}(-1)^j(1-q^{2n+1})q^{4n^2+3n-j^2} \label{T13} \\
=&\frac{1}{J_2}\left(f_{3,5,3}(q^6,q^6,q^2)+q^7f_{3,5,3}(q^{14},q^{14},q^2) \right) \label{T13-H-pre} \\
=&\frac{1}{J_2}f_{3,5,3}(q^{9/4},-q^{9/4},-q^{1/2}) \label{T13-H} \\
=&2q^{-1}m(-q^7,q^{24},-1)+2q^{-3}m(-q,q^{24},-1)+\frac{1}{J_2}\Phi_{3,2}(q^{9/4},-q^{9/4},-q^{1/2}), \label{T13-A}\\
&\sum_{n=0}^{\infty}(-1)^n\frac{(q^2;q^2)_{n}}{(-q;q)_{2n+1}}q^{n^2+n}\nonumber \\
=&\sum_{n=0}^{\infty}\sum_{j=-n}^{n}(-1)^{n+j}(1-q^{2n+1})q^{3n^2+2n-j^2} \label{T14} \\
=&\overline{f}_{1,2,1}(-q^4,-q^4,q^4)-q^5\overline{f}_{1,2,1}(-q^{10},-q^{10},q^4) \label{T14-barf-pre} \\
=&\overline{f}_{1,2,1}(q^{3/2},-q^{3/2},q). \label{T14-barf}
\end{align}
\end{corollary}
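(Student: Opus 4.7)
The plan is to derive both \eqref{T13} and \eqref{T14} by specializing $(a,b)$ in Theorem \ref{meq:3.3}, and then to convert the resulting Hecke sums into the compact $f_{3,5,3}$ and $\overline{f}_{1,2,1}$ forms \eqref{T13-H}, \eqref{T14-barf} as well as into the Appell-Lerch form \eqref{T13-A}.

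For \eqref{T13} I would set $(a,b)\to(0,0)$ in Theorem \ref{meq:3.3}. Using the standard asymptotic $(q^2/a;q^2)_n a^n \to (-1)^n q^{n^2+n}$ (and analogously for $b$), the quotient $(q^2/a,q^2/b;q^2)_n(ab)^n/(aq^2,bq^2;q^2)_n$ tends to $q^{2n^2+2n}$, while $(ab;q^2)_\infty$ and the extra $(aq^2,bq^2;q^2)_\infty$ on the left tend to $1$. After the identification $(-q^2;q^2)_n(-q^3;q^2)_n=(-q;q)_{2n+1}/(1+q)$, the factor $(1+q)$ from this split cancels with the explicit $(1+q)$ already on the right of Theorem \ref{meq:3.3}, and \eqref{T13} drops out. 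The parallel choice $(a,b)\to(0,1)$ produces \eqref{T14}: here $(q^2/b;q^2)_n=(q^2;q^2)_n$ cancels $(bq^2;q^2)_n$, only the $(q^2/a;q^2)_n a^n$-asymptotic intervenes, and the extra $(-1)^n q^{n^2+n}$ that it injects is exactly what one sees on both sides of \eqref{T14}.

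For the Hecke-type rewrites \eqref{T13-H-pre} and \eqref{T14-barf-pre}, I would follow the recipe used throughout this section: split the $(1-q^{2n+1})$ factor, extend the inner sums from $n\geq 0$ to $n\in\mathbb{Z}$ via the involution $n\mapsto -n-1$ (which merges $|j|\leq n$ for $n\geq 0$ with its reflected range for $n<0$), change variables by $r=n+j$, $s=n-j$ so that the two constraints become $\sg(r)=\sg(s)$, and finally decompose the resulting bilinear $q$-form according to the common parity of $r$ and $s$. The single-summand forms \eqref{T13-H} and \eqref{T14-barf} then follow by reading \eqref{f-id-1} and \eqref{barf-id-1} in reverse, applied with base $q$ replaced by $-q^{1/2}$ and $q$ respectively and with $(x,y)=(q^{9/4},-q^{9/4})$ and $(x,y)=(q^{3/2},-q^{3/2})$.

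The remaining conversion \eqref{T13-A} is the most delicate step. I would apply Theorem \ref{thm-fg-1} with $(n,p)=(3,2)$ to $f_{3,5,3}(q^{9/4},-q^{9/4},-q^{1/2})$, splitting it as $g_{3,5,3}(q^{9/4},-q^{9/4},-q^{1/2},-1,-1)+\Phi_{3,2}(q^{9/4},-q^{9/4},-q^{1/2})$. Unwinding \eqref{gabc-defn} produces six $m$-terms at base $q^{24}$; after normalizing their internal arguments via Lemma \ref{lem-m-prop} and simplifying the $j$-product prefactors through \eqref{j-id-1}, \eqref{j-id-2} and the $J_{a,m}$-relations listed after Jacobi's triple product, four of the $t$-contributions either vanish or pair off, and the remaining two combine to $2q^{-1}m(-q^7,q^{24},-1)+2q^{-3}m(-q,q^{24},-1)$, with the non-modular residue $\Phi_{3,2}(q^{9/4},-q^{9/4},-q^{1/2})/J_2$ surviving. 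The main obstacle I anticipate is precisely this bookkeeping of prefactors and their signs: tracking six $t$-contributions with the fractional-part indexing of Theorem \ref{thm-fg-1} is tedious, and I would cross-check the final formula numerically against the Eulerian side of \eqref{T13} to confirm that no spurious factor of $\pm q^{1/2}$ has been dropped.
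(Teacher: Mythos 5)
Your proposal follows exactly the paper's route: specialize $(a,b)\to(0,0)$ and $(0,1)$ in Theorem \ref{meq:3.3} to get \eqref{T13} and \eqref{T14}, convert to $f_{3,5,3}$ and $\overline{f}_{1,2,1}$ form via the standard $r=n+j$, $s=n-j$ substitution together with \eqref{f-id-1} and \eqref{barf-id-1}, and obtain \eqref{T13-A} from Theorem \ref{thm-fg-1} with $(n,p)=(3,2)$. The limiting computations and the identification $(-q^2;q^2)_n(-q^3;q^2)_n=(-q;q)_{2n+1}/(1+q)$ are all correct, so this is a sound proof matching the paper's.
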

\begin{proof}
Taking $(a,b)\rightarrow (0,0)$ and $(0,1)$ in Theorem \ref{meq:3.3}, we obtain \eqref{T13} and \eqref{T14}, respectively.

Using \eqref{f-id-1} with $(a,b,c,x,y,q)\rightarrow (3,5,3,q^{9/4},-q^{9/4},-q^{1/2})$, we get  \eqref{T13-H} from \eqref{T13-H-pre}. Using \eqref{barf-id-1} with $(a,b,c,x,y,q)\rightarrow (1,2,1,q^{3/2},-q^{3/2},q)$, we get \eqref{T14-barf} from \eqref{T14-barf-pre}.

The expression \eqref{T13-A} follows from \eqref{T13-H} after applying Theorem \ref{thm-fg-1} with $(n,p)=(3,2)$ and simplifications.
\end{proof}
\begin{rem}
The identity \eqref{T13} was found by Andrews \cite[Eq.\ (1.15)]{A12} and also reproved by Mortenson \cite[p.\ 303]{Mortenson-2013}. Mortenson \cite[p.\ 303]{Mortenson-2013} also gave the expression \eqref{T13-H}.
\end{rem}

\begin{theorem}\label{T:4.8}
For $\max\{|ab|, |aq^2|, |bq^2|\}<1$, we have
\begin{align}
&\frac{(q^2,ab;q^2)_\infty}{(aq^2,bq^2;q^2)_\infty}{}_3\phi_2\bigg(\genfrac{}{}{0pt}{}{q^2/a,q^2/b,-q}{q,-q^2};q^2,ab\bigg)\\
\nonumber
=&\sum_{n=0}^\infty\sum_{j=-n}^n(1-q^{4n+2})\frac{(q^2/a,q^2/b;q^2)_n}{(aq^2,bq^2;q^2)_n}(ab)^nq^{2n^2-j^2}.
\end{align}
\end{theorem}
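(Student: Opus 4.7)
The plan is to mirror the pattern used in the preceding parameterized-identity proofs: first strip off the infinite-product prefactor using Theorem \ref{thm-main}, then evaluate the resulting terminated ${}_3\phi_2$ in closed form, and finally simplify.

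\emph{Step 1 (apply Theorem \ref{thm-main}).} Choose the specialization $(q,c,d,u,v)\rightarrow (q^2,q,-q^2,q^2,-q)$. Using $(1-q^2)(q^4;q^2)_\infty=(q^2;q^2)_\infty$, this yields
\begin{align*}
&\frac{(q^2,ab;q^2)_\infty}{(aq^2,bq^2;q^2)_\infty}{}_3\phi_2\bigg(\genfrac{}{}{0pt}{}{q^2/a,q^2/b,-q}{q,-q^2};q^2,ab\bigg)\\
&=\sum_{n=0}^\infty(1-q^{4n+2})\frac{(q^2/a,q^2/b;q^2)_n}{(aq^2,bq^2;q^2)_n}(-ab)^nq^{n(n-1)}\,
{}_3\phi_2\bigg(\genfrac{}{}{0pt}{}{q^{-2n},q^{2n+2},-q}{q,-q^2};q^2,q^2\bigg).
\end{align*}
This matches exactly the shape seen in the proof of Theorem \ref{meq:1.2}.

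\emph{Step 2 (evaluate the terminated ${}_3\phi_2$).} The inner ${}_3\phi_2$ does not fit any of Lemmas \ref{meq:1}--\ref{meq:3} directly, so the key move is to first apply Sears' transformation \eqref{sears:32} with $(q,\alpha,\beta,c,d)\rightarrow(q^2,q^2,-q,q,-q^2)$. This gives
\begin{align*}
{}_3\phi_2\bigg(\genfrac{}{}{0pt}{}{q^{-2n},q^{2n+2},-q}{q,-q^2};q^2,q^2\bigg)
=(-q)^nq^{n(n-1)}\frac{(q^3;q^2)_n}{(q;q^2)_n}\,
{}_3\phi_2\bigg(\genfrac{}{}{0pt}{}{q^{-2n},q^{2n+2},q}{-q^2,q^3};q^2,-q^2\bigg).
\end{align*}
The ${}_3\phi_2$ on the right is precisely the one evaluated in equation \eqref{proof-eq2} of the proof of Theorem \ref{meq:1.1}, namely
\begin{align*}
{}_3\phi_2\bigg(\genfrac{}{}{0pt}{}{q^{-2n},q^{2n+2},q}{-q^2,q^3};q^2,-q^2\bigg)=\frac{(1-q)q^n}{1-q^{2n+1}}\sum_{j=-n}^nq^{-j^2}.
\end{align*}
Combining these and using the telescoping identity $(q^3;q^2)_n/(q;q^2)_n=(1-q^{2n+1})/(1-q)$ gives the clean evaluation
\begin{align*}
{}_3\phi_2\bigg(\genfrac{}{}{0pt}{}{q^{-2n},q^{2n+2},-q}{q,-q^2};q^2,q^2\bigg)=(-1)^nq^{n^2+n}\sum_{j=-n}^n q^{-j^2}.
\end{align*}

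\emph{Step 3 (assemble).} Substituting this evaluation back into the expression from Step 1, the signs $(-ab)^n\cdot(-1)^n=(ab)^n$ combine and the $q$-powers add as $n(n-1)+n^2+n=2n^2$, producing exactly the claimed Hecke-type sum
\begin{align*}
\sum_{n=0}^\infty\sum_{j=-n}^n(1-q^{4n+2})\frac{(q^2/a,q^2/b;q^2)_n}{(aq^2,bq^2;q^2)_n}(ab)^n q^{2n^2-j^2}.
\end{align*}

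The main obstacle here is recognizing that the required terminated ${}_3\phi_2$ is not a direct instance of any of the auxiliary lemmas collected in Section \ref{subsec-basic}; the trick is to notice that Sears' transformation converts it into one that was already handled in the proof of Theorem \ref{meq:1.1}. Once this connection is spotted, the remaining manipulations are routine cancellations.
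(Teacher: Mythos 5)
Your proposal is correct, and its overall architecture is identical to the paper's: the same specialization $(q,c,d,u,v)\rightarrow(q^2,q,-q^2,q^2,-q)$ of Theorem \ref{thm-main}, followed by a closed-form evaluation of the terminated ${}_3\phi_2$, which you correctly arrive at as $(-1)^nq^{n^2+n}\sum_{j=-n}^{n}q^{-j^2}$, matching \eqref{T4.8-id}. The only divergence is in how that evaluation is obtained. The paper applies \eqref{Liu-eq-313} directly with $(q,\alpha,c,d)\rightarrow(q^2,1,q,-q^2)$ — note that the inner series does fit that formula, since $\alpha cd/q=-q$, $\alpha c=q$, $\alpha d=-q^2$, contrary to your remark that none of the auxiliary results apply directly. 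You instead route through Sears' transformation \eqref{sears:32} to land on the ${}_3\phi_2$ already evaluated in \eqref{proof-eq2}; since that evaluation came from Lemma \ref{meq:1}, which is itself \eqref{sears:32} composed with \eqref{Liu-eq-313}, your chain is a slight detour that unwinds to the paper's one-step application. Both computations are valid and the bookkeeping in your Steps 1 and 3 (the sign cancellation $(-ab)^n(-1)^n=(ab)^n$ and the exponent sum $n(n-1)+n^2+n=2n^2$) is exactly right.
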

\begin{proof}
Taking $(q,c,d,u,v)\rightarrow (q^2, q, -q^2, q^2, -q)$ in Theorem \ref{thm-main}, we deduce that
\begin{align}
&\frac{(q^2,ab;q^2)_\infty}{(aq^2,bq^2;q^2)_\infty}{}_3\phi_2\bigg(\genfrac{}{}{0pt}{}{q^2/a,q^2/b,-q}{q,-q^2};q^2,ab\bigg) \nonumber\\
=&\sum_{n=0}^\infty (1-q^{4n+2})\frac{(q^2/a,q^2/b;q^2)_n}{(aq^2,bq^2;q^2)_n}(-ab)^nq^{n^2-n}{}_3\phi_2\bigg(\genfrac{}{}{0pt}{}{q^{-2n},q^{2n+2},-q}{q,-q^2};q^2,q^2\bigg). \label{T4.8-proof}
\end{align}
Taking $(q, \alpha, c, d)\rightarrow (q^2, 1, q, -q^2)$ in \eqref{Liu-eq-313}, we obtain
\begin{align}
{}_3\phi_2\bigg(\genfrac{}{}{0pt}{}{q^{-2n},q^{2n+2},-q}{q,-q^2};q^2,q^2\bigg)=(-1)^nq^{n^2+n}\sum_{j=-n}^{n}q^{-j^2}. \label{T4.8-id}
\end{align}
The theorem follows after substituting \eqref{T4.8-id} into \eqref{T4.8-proof}.
\end{proof}
\begin{corollary}
We have
\begin{align}
&\sum_{n=0}^{\infty}\frac{(-q;q^2)_nq^{2n^2+2n}}{(q,q^2)_n(q^4;q^4)_n}\nonumber \\
=&\frac{1}{(q^2;q^2)_\infty}\sum_{n=0}^{\infty}\sum_{j=-n}^{n}(1-q^{4n+2})q^{4n^2+2n-j^2} \label{T4.8-cor-1} \\
=&\frac{1}{J_2}\left(f_{3,5,3}(-q^5,-q^5,q^2)+q^6f_{3,5,3}(-q^{13},-q^{13},q^2) \right) \label{T4.8-cor-1-H} \\
=&2\frac{\overline{J}_{1,6}}{J_2}\left(m(-q^{44},q^{96},-1)+q^{-2}m(-q^{28},q^{96},-1)-q^{-4}m(-q^{20},q^{96},-1)\right.\nonumber \\
& \left. -q^{-10}m(-q^4,q^{96},-1) \right) -2\frac{\overline{J}_{3,6}}{J_2}\left(q^{-1}m(-q^{36},q^{96},-1)-q^{-7}m(-q^{12},q^{96},-1) \right)\nonumber \\
& +\frac{1}{J_2}\left(\Phi_{3,2}(-q^5,-q^5,q^2)+q^6\Phi_{3,2}(-q^{13},-q^{13},q^2) \right), \label{T4.8-cor-1-A}
\end{align}
\begin{align}
&\sum_{n=0}^{\infty}\frac{(q;q^2)_n q^{2n^2+2n}}{(-q;q)_{2n}(q^2;q^2)_n} \nonumber \\
=&\frac{1}{(q^2;q^2)_\infty}\sum_{n=0}^{\infty}\sum_{j=-n}^{n}(1-q^{4n+2})(-1)^jq^{4n^2+2n-j^2} \label{T4.8-cor-2} \\
=&\frac{1}{J_2}\left(f_{3,5,3}(q^5,q^5,q^2)+q^6f_{3,5,3}(q^{13},q^{13},q^2) \right) \label{T4.8-cor-2-H-pre} \\
=&\frac{1}{J_2}f_{3,5,3}(q^{7/4},-q^{7/4},-q^{1/2}) \label{T4.8-cor-2-H} \\
=&2\frac{J_{1,6}}{J_2}\left(m(-q^{10},q^{24},-1)+q^{-2}m(-q^2,q^{24},-1)  \right)+2q^{-1}\frac{J_{3,6}}{J_2}m(-q^6,q^{24},-1)\nonumber \\
&\quad +\frac{1}{J_2}\Phi_{3,2}(q^{7/4},-q^{7/4},-q^{1/2}), \label{T4.8-cor-2-A} \\
&\sum_{n=0}^{\infty}\frac{(-1)^n(-q;q^2)_nq^{n^2+n}}{(q;q^2)_n(-q^2;q^2)_n}\nonumber \\
=&\sum_{n=0}^{\infty}\sum_{j=-n}^{n}(1-q^{4n+2})(-1)^nq^{3n^2+n-j^2} \label{T4.8-cor-3} \\
=&\overline{f}_{1,2,1}(q^3,q^3,q^4)-q^4\overline{f}_{1,2,1}(q^9,q^9,q^4), \label{T4.8-cor-3-barf-exp} \\
&\sum_{n=0}^{\infty}\frac{(-1)^n(q;q^2)_nq^{n^2+n}}{(-q;q)_{2n}}\nonumber \\
=&\sum_{n=0}^{\infty}\sum_{j=-n}^{n}(1-q^{4n+2})(-1)^{n+j}q^{3n^2+n-j^2} \label{T4.8-cor-4} \\
=&\overline{f}_{1,2,1}(-q^3,-q^3,q^4)-q^4\overline{f}_{1,2,1}(-q^9,-q^9,q^4). \label{T4.8-cor-4-barf-exp}
\end{align}
\end{corollary}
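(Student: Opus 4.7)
The plan is to derive all eight identities from Theorem \ref{T:4.8} and then re-express each Hecke-type sum in terms of the building blocks $f_{a,b,c}$, $\overline{f}_{a,b,c}$, and $m$.

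For \eqref{T4.8-cor-1} and \eqref{T4.8-cor-3} I would specialize $(a,b)\to(0,0)$ and $(a,b)\to(0,1)$ in Theorem \ref{T:4.8}. Under $(a,b)\to(0,0)$ the limit $(q^2/a;q^2)_n a^n\to(-1)^n q^{n(n+1)}$ (and likewise for $b$) collapses the product $(q^2/a;q^2)_n(q^2/b;q^2)_n(ab)^n$ to $q^{2n(n+1)}$ and the prefactor to $(q^2;q^2)_\infty$; under $(a,b)\to(0,1)$ only the $a$-limit is needed, the $b$-ratio trivialises, and the prefactor equals $1$. Identities \eqref{T4.8-cor-2} and \eqref{T4.8-cor-4} then follow by applying the involution $q\mapsto-q$ to \eqref{T4.8-cor-1} and \eqref{T4.8-cor-3}: on the left this swaps $(-q;q^2)_n\leftrightarrow(q;q^2)_n$, and on the right the only change is the sign factor $(-q)^{-j^2}=(-1)^j q^{-j^2}$, since all other exponents appearing ($2n^2+2n$, $3n^2+n$, $4n^2+2n$, $4n+2$) are even.

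For the Hecke representations \eqref{T4.8-cor-1-H}, \eqref{T4.8-cor-2-H-pre}, \eqref{T4.8-cor-3-barf-exp}, \eqref{T4.8-cor-4-barf-exp}, I would apply the substitution $r=n+j$, $s=n-j$ as in \eqref{intro-Liu-eq-H} and the proofs of \eqref{T13-H-pre} and \eqref{T14-barf-pre}. The factor $(1-q^{4n+2})$ splits each sum as $A-B$; replacing $n\mapsto-n-1$ in $B$ moves the domain to $r,s\le -1$, so that the combined sum runs over pairs $(r,s)$ with $\sg(r)=\sg(s)$, $r+s$ even, weighted by $\sg(r)$. A further split by the common parity of $r,s$ produces the claimed two-term $f_{3,5,3}$ expressions (for cor-1, cor-2) or $\overline{f}_{1,2,1}$ expressions (for cor-3, cor-4) at the stated arguments. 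The single-$f$ form \eqref{T4.8-cor-2-H} then follows by applying \eqref{f-id-1} with $(a,b,c,x,y,q)\to(3,5,3,q^{7/4},-q^{7/4},-q^{1/2})$ and noting that the two mixed cross-terms cancel by the symmetry $f_{3,5,3}(u,v,q)=f_{3,5,3}(v,u,q)$ (since $a=c$).

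Finally, the Appell-Lerch representations \eqref{T4.8-cor-1-A} and \eqref{T4.8-cor-2-A} come from Theorem \ref{thm-fg-1} with $(n,p)=(3,2)$, so $n(b^2-ac)=3\cdot 16=48$ and the base of the resulting $m$-functions is $q^{2\cdot 48}=q^{96}$ for \eqref{T4.8-cor-1-A} (since the $f$-base there is $q^2$) and $q^{24}$ for \eqref{T4.8-cor-2-A} (since the $f$-base is $-q^{1/2}$). For \eqref{T4.8-cor-1-A} the theorem is applied to each of the two summands of \eqref{T4.8-cor-1-H} in turn, as the arguments $(-q^5,-q^5)$ and $(-q^{13},-q^{13})$ prevent a real-valued single-$f$ collapse; for \eqref{T4.8-cor-2-A} it is applied directly to the collapsed \eqref{T4.8-cor-2-H}. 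The six-term $g_{3,5,3}$ expansion from \eqref{gabc-defn} then produces the displayed linear combinations of $m(-q^{\ast},q^{N},-1)$; the theta coefficients $j(q^{bt}x;q^a)$ reduce to the $\overline{J}_{\ast,6}$ or $J_{\ast,6}$ quotients tabulated at the start of Section \ref{sec-pre} via \eqref{j-id-1}--\eqref{j-id-2}, while any $m$-value with a negative-power argument is brought to displayed form using the reflection $m(x,q,z)=x^{-1}m(x^{-1},q,z^{-1})$ of Lemma \ref{lem-m-prop}. The residual theta quotients remain as the explicit $\Phi_{3,2}$ pieces of \eqref{Phi-defn}. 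The main technical obstacle is this last bookkeeping step: in \eqref{T4.8-cor-1-A} twelve raw theta coefficients (six per $f$-term) must be combined into four $m$-values per parity class with the correct $\overline{J}_{\ast,6}/J_2$ prefactors, requiring careful alignment of exponents modulo $96$.
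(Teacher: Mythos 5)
Your proposal is correct and follows essentially the same route as the paper: the specializations $(a,b)\to(0,0)$ and $(0,1)$ in Theorem \ref{T:4.8}, the substitution $q\mapsto -q$ for \eqref{T4.8-cor-2} and \eqref{T4.8-cor-4}, the standard $r=n+j$, $s=n-j$ conversion to $f_{3,5,3}$ and $\overline{f}_{1,2,1}$ form, the collapse of \eqref{T4.8-cor-2-H-pre} to \eqref{T4.8-cor-2-H} via \eqref{f-id-1} with $(a,b,c,x,y,q)\to(3,5,3,q^{7/4},-q^{7/4},-q^{1/2})$, and Theorem \ref{thm-fg-1} with $(n,p)=(3,2)$ for the Appell--Lerch forms. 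The paper leaves these steps as "routine"; your account of the limit computations, the parity splitting, the cancellation of the cross-terms by the symmetry of $f_{3,5,3}$, and the base bookkeeping ($q^{96}$ versus $q^{24}$) is accurate.
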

\begin{proof}
Taking $(a,b)\rightarrow (0,0)$ and $(0,1)$ in Theorem \ref{T:4.8}, we obtain \eqref{T4.8-cor-1} and \eqref{T4.8-cor-3}, respectively.

Replacing $q$ by $-q$ in \eqref{T4.8-cor-1} and \eqref{T4.8-cor-3}, we obtain \eqref{T4.8-cor-2} and \eqref{T4.8-cor-4}, respectively.

Using \eqref{f-id-1} with $(a,b,c,x,y,q)\rightarrow (3,5,3,q^{7/4},-q^{7/4},-q^{1/2})$, we get \eqref{T4.8-cor-2-H} from \eqref{T4.8-cor-2-H-pre}. The expression \eqref{T4.8-cor-1-A} (resp.\ \eqref{T4.8-cor-2-A}) follows from \eqref{T4.8-cor-1-H} (resp.\ \eqref{T4.8-cor-2-H}) after applying Theorem \ref{thm-fg-1} with $(n,p)=(3,2)$ and simplifications.
\end{proof}
\begin{rem}
The identity \eqref{T4.8-cor-2} was found by Andrews \cite[Eq.\ (1.16)]{A12} using a different method.
\end{rem}
\section{Mock theta functions of order 2}\label{sec-order-2}
In \cite[p.\ 120, Eq.\ (5.1)]{Gordon-McIntosh-Survey} and \cite{McIntosh-2018Rama}  three mock theta functions of order 2 were defined. They are
\begin{align}
&A^{(2)}(q):=\sum_{n=0}^{\infty}\frac{q^{n+1}(-q^2;q^2)_{n}}{(q;q^2)_{n+1}}=\sum_{n=0}^{\infty}\frac{q^{(n+1)^2}(-q;q^2)_{n}}{(q;q^2)_{n+1}^2}, \label{mock-2-1-defn} \\
&B^{(2)}(q):=\sum_{n=0}^{\infty}\frac{q^{n}(-q;q^2)_{n}}{(q;q^2)_{n+1}}=\sum_{n=0}^{\infty}\frac{q^{n^2+n}(-q^2;q^2)_{n}}{(q;q^2)_{n+1}^2}, \label{mock-2-2-defn} \\
&\mu^{(2)}(q):=\sum_{n=0}^{\infty}\frac{(-1)^n(q;q^2)_{n}q^{n^2}}{(-q^2;q^2)_{n}^2}. \label{mock-2-3-defn}
\end{align}
These functions  appeared in Ramanujan's Lost Notebook \cite{lostnotebook}. In particular, the function $B^{(2)}(q)$ arises in the modular transformation formulas for $A^{(2)}(q)$ and $\mu^{(2)}(q)$.

McIntosh \cite{McIntosh-2007Canad} provided the following Appell-Lerch series representations for these functions.
\begin{theorem}\label{thm-ord-2}
We have
\begin{align}
&A^{(2)}(q)=q\frac{(-q;q^2)_{\infty}}{(q^2;q^2)_{\infty}}\sum_{n=0}^{\infty}\frac{(-1)^nq^{2n^2+3n}}{1-q^{2n+1}}, \label{mock-2-1} \\
&B^{(2)}(q)=\frac{(-q^2;q^2)_{\infty}}{(q^2;q^2)_{\infty}}\sum_{n=-\infty}^{\infty}\frac{(-1)^nq^{2n^2+2n}}{1-q^{2n+1}}, \label{mock-2-2} \\
&\mu^{(2)}(q)=2\frac{(q;q^2)_{\infty}}{(q^2;q^2)_{\infty}}\sum_{n=-\infty}^{\infty}\frac{q^{2n^2+n}}{1+q^{2n}}. \label{mock-2-3}
\end{align}
\end{theorem}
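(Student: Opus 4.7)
The plan is to derive Theorem~\ref{thm-ord-2} by the three-step strategy used throughout Section~\ref{sec:proof}: (i) set up an $(a,b)$-parameterized identity via Theorem~\ref{thm-main} with base $q$ replaced by $q^2$; (ii) evaluate the terminating inner ${}_{3}\phi_{2}$ on the right using one of Lemmas~\ref{meq:1}-\ref{meq:3} or \eqref{Liu-eq-313}; and (iii) specialize the parameters $(a,b)$ (typically via a limit $(0,0)$, $(0,1)$, or a similarly degenerate choice) so that the Eulerian left-hand side collapses onto one of the series \eqref{mock-2-1-defn}-\eqref{mock-2-3-defn}.

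For $A^{(2)}(q)$, I would choose the five free parameters $(c,d,u,v)$ in Theorem~\ref{thm-main} so that, after the limit $(a,b)\to(0,0)$, the left-hand ${}_{3}\phi_{2}$ reduces to the Eulerian series $\sum_n q^{n+1}(-q^2;q^2)_n/(q;q^2)_{n+1}$. Guided by the model provided by Theorems~\ref{meq:1.1}-\ref{T:4.8}, the factor $(q;q^2)_{n+1}$ should appear from a lower parameter of the form $q^3$ (with base $q^2$), while the factor $(-q^2;q^2)_n$ should come from an upper parameter $-q^2$. Under $(a,b)\to(0,0)$ the prefactor $(q^2,ab;q^2)_\infty/(aq^2,bq^2;q^2)_\infty$ simplifies to $(q^2;q^2)_\infty$, and combined with a $(-q;q^2)_\infty$-factor produced by the specialization it will give the correct $(-q;q^2)_\infty/(q^2;q^2)_\infty$ in \eqref{mock-2-1}. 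The terminating ${}_{3}\phi_{2}(q^{-2n},q^{2n+2},\ast;\ast,\ast;q^{2},q^{2})$ on the right should then evaluate, via Lemma~\ref{meq:1}, to a closed form proportional to $(-1)^n q^{2n^2+3n}/(1-q^{2n+1})$, yielding exactly the one-sided Appell-Lerch sum in \eqref{mock-2-1}.

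For $B^{(2)}(q)$ and $\mu^{(2)}(q)$ the target sums are two-sided, so after constructing the analogous parameterized identity and obtaining a one-sided sum $\sum_{n\geq 0}$, an additional symmetrization step is needed. For $B^{(2)}(q)$ I would use the substitution $n\mapsto -n-1$ together with \eqref{j-id-1}-\eqref{j-id-2} to show that the negative-$n$ tail, when added and subtracted, produces a theta quotient already present on the Eulerian side; alternatively, applying the partial-fraction expansion \eqref{reciprocal-Jacobi} directly converts the one-sided sum into the claimed two-sided form. For $\mu^{(2)}(q)$ the appropriate inner evaluation comes from Lemma~\ref{meq:3}, which naturally produces summands with denominator $1+q^{2n}$ (matching the factor $(-q^2;q^2)_n^2$ in \eqref{mock-2-3-defn}); the $(q;q^2)_\infty/(q^2;q^2)_\infty$ prefactor arises from an upper parameter $q$ in Theorem~\ref{thm-main}, responsible for the $(q;q^2)_n$ in the Eulerian form.

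The main obstacle is step (iii): for each target function, pinpointing the specific choice of $(c,d,u,v)$ and the correct limit in $(a,b)$ so that \emph{both} the Eulerian left-hand side matches the defining series exactly \emph{and} the infinite product prefactor simplifies to the shape demanded by \eqref{mock-2-1}-\eqref{mock-2-3}. A secondary difficulty, specific to $B^{(2)}$ and $\mu^{(2)}$, is the bookkeeping for the $n\mapsto -n-1$ symmetrization with the correct signs and $q$-powers; this should in principle reduce, after applying \eqref{reciprocal-Jacobi} and the standard compact identities for $J_{a,m}$ and $\overline{J}_{a,m}$ recalled in Section~\ref{sec-pre}, to a mechanical simplification, but experience with the analogous computations in Section~\ref{sec:proof} suggests that aligning the exponents may require several attempts before the correct parameterized identity is found.
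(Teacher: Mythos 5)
Your overall template is the right one -- it is exactly the three-step scheme the paper uses -- but two of the concrete choices you commit to would fail, and they are precisely the points where the work lies.

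First, for $A^{(2)}(q)$ you propose to recover the \emph{first} Eulerian form $\sum_n q^{n+1}(-q^2;q^2)_n/(q;q^2)_{n+1}$ by letting $(a,b)\to(0,0)$. This cannot work: in Theorem \ref{thm-main} the left-hand ${}_3\phi_2$ carries the factor $(q/a,q/b;q)_n(uab/q)^n$, and as $(a,b)\to(0,0)$ this tends to $u^nq^{n^2}$ (a quadratic exponent in $n$), whereas the first Eulerian form of $A^{(2)}(q)$ has only the linear power $q^{n+1}$; no choice of $v,c,d$ can cancel a $q^{n^2}$. The paper instead uses the \emph{second} Eulerian form $\sum_n q^{(n+1)^2}(-q;q^2)_n/(q;q^2)_{n+1}^2$ (quadratic exponent, and denominator squared), builds the identity with base $q^2$ and lower parameters $q^3,q^3$ (Theorem \ref{thm-ab-2-1-1st}), and specializes $(a,b)\to(-q,0)$, so the numerator is $(-q;q^2)_n$ coming from $q^2/a$, not $(-q^2;q^2)_n$ from an upper parameter as you suggest. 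The same identity with $(a,b)=(0,-1)$ then gives $B^{(2)}(q)$ from its second Eulerian form, so one parameterized identity covers both.

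Second, the evaluation of the inner terminating ${}_3\phi_2$: Lemmas \ref{meq:1} and \ref{meq:3}, which you name, generically produce an inner $j$-sum, hence Hecke-type \emph{double} sums -- not the single-term Appell--Lerch summands $(-1)^nq^{2n^2+3n}/(1-q^{2n+1})$ and $q^{2n^2+n}/(1+q^{2n})$ that \eqref{mock-2-1}--\eqref{mock-2-3} require. The key is to choose $(c,d,u,v)$ so that the inner ${}_3\phi_2$ is Saalsch\"utzian and apply the $q$-Pfaff--Saalsch\"utz formula \eqref{Pfaff}: for $A^{(2)},B^{(2)}$ this gives ${}_3\phi_2(q^{-2n},q^{2n+2},q^2;q^3,q^3;q^2,q^2)=(1-q)^2q^{2n}/(1-q^{2n+1})^2$, and for $\mu^{(2)}$ (with $(c,d,u,v)\to(-q,-q,1,q)$) it gives $4q^n/(1+q^n)^2$. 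Your symmetrization remarks for $B^{(2)}$ and $\mu^{(2)}$ are essentially correct, though \eqref{reciprocal-Jacobi} is not needed: the summand of the one-sided sum is invariant under $n\mapsto-n-1$ (resp.\ $n\mapsto-n$), and the leftover antisymmetric theta sum vanishes, which converts $\sum_{n\ge0}\frac{1+q^{2n+1}}{1-q^{2n+1}}(-1)^nq^{2n^2+2n}$ into $\sum_{n\in\mathbb{Z}}\frac{(-1)^nq^{2n^2+2n}}{1-q^{2n+1}}$ directly. As written, then, the proposal is a correct outline with two load-bearing steps misidentified; repairing them amounts to reproducing the paper's proof.
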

Note that \eqref{mock-2-1} appeared  in \cite[Eq.\ (3.7)$_{R}$]{Andrews-Mordell} and \cite[p.\ 8]{lostnotebook}. Identity \eqref{mock-2-2} can also be found in \cite{lostnotebook} and \cite[Eq.\ (4.3)]{Andrews-Mordell}. Identity \eqref{mock-2-3} appeared on pages 8 and 29 in \cite{lostnotebook}.

Using Bailey pairs, Cui, Gu and Hao \cite{CGH} provided Hecke-type series representations for mock theta functions of order 2. They also express these Hecke-type series in terms of $f_{a,b,c}(x,y,q)$. See \cite[Theorem 1.3]{CGH}.

We will reprove the identities in Theorem \ref{thm-ord-2} by regarding them as corollaries of some general $(a,b)$-parameterized identities. As byproducts, we find new representations for $A^{(2)}(q)$ and $B^{(2)}(q)$ (see \eqref{cor-A} and \eqref{cor-B}).

\subsection{Representations for $A^{(2)}(q)$}

\begin{theorem}\label{thm-ab-2-1-1st}
For $\max\{|ab|, |aq^2|, |bq^2|\}<1$, we have
\begin{align}
&\frac{(q^2,ab;q^2)_\infty}{(aq^2,bq^2;q^2)_\infty}
{}_3\phi_2\bigg(\genfrac{}{}{0pt}{}{q^2/a,q^2/b,q^2}{q^3,q^3};q^2,ab\bigg) \nonumber\\
=& (1-q)^2\sum_{n=0}^{\infty}\frac{1+q^{2n+1}}{1-q^{2n+1}}\frac{(q^2/a,q^2/b;q^2)_n}{(aq^2,bq^2;q^2)_n}(-ab)^nq^{n^2+n}.
\end{align}
\end{theorem}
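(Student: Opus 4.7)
The structure of this theorem mirrors Theorems \ref{meq:1.1}--\ref{T:4.8} above, so I would follow the same two-step pattern: first apply the master transformation (Theorem \ref{thm-main}) with carefully chosen parameters, and then evaluate the inner terminating ${}_3\phi_2$ in closed form.

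\textbf{Step 1: Specialize Theorem \ref{thm-main}.} Replace $q$ by $q^2$ and set $u=q^2$, $v=q^2$, $c=d=q^3$. The left-hand prefactor becomes
\[
\frac{(q^4,ab;q^2)_\infty}{(aq^2,bq^2;q^2)_\infty},
\]
the argument of the ${}_3\phi_2$ is $uab/q^2 = ab$, and the upper/lower parameters match those in the statement. On the right, the factor $(u,q^2/a,q^2/b;q^2)_n/(q^2,q^2 a,q^2 b;q^2)_n$ simplifies (the $(q^2;q^2)_n$'s cancel) and the $q$-power works out to $(-ab)^n q^{n^2-n}$, producing
\[
\sum_{n=0}^{\infty}\frac{1-q^{4n+2}}{1-q^2}\frac{(q^2/a,q^2/b;q^2)_n}{(aq^2,bq^2;q^2)_n}(-ab)^n q^{n^2-n}\,{}_3\phi_2\!\left(\genfrac{}{}{0pt}{}{q^{-2n},q^{2n+2},q^2}{q^3,q^3};q^2,q^2\right).
\]

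\textbf{Step 2: Evaluate the inner sum by $q$-Pfaff--Saalsch\"utz.} The inner ${}_3\phi_2$ is balanced: with base $q^2$, one checks $q^{-2n}\cdot q^{2n+2}\cdot q^2\cdot q^2 = q^3\cdot q^3$. Matching the pattern of \eqref{Pfaff} with $(a,b,c)=(q^2,q,q)$, I would obtain
\[
{}_3\phi_2\!\left(\genfrac{}{}{0pt}{}{q^{-2n},q^{2n+2},q^2}{q^3,q^3};q^2,q^2\right)
=\frac{(q;q^2)_n^{\,2}}{(q^3;q^2)_n^{\,2}}\,q^{2n}
=\frac{(1-q)^2\,q^{2n}}{(1-q^{2n+1})^2},
\]
using the telescoping $\frac{(q;q^2)_n}{(q^3;q^2)_n}=\frac{1-q}{1-q^{2n+1}}$.

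\textbf{Step 3: Algebraic cleanup.} Substituting this evaluation, use $\frac{1-q^{4n+2}}{(1-q^{2n+1})^2}=\frac{1+q^{2n+1}}{1-q^{2n+1}}$ and $\frac{1}{1-q^2}=\frac{1}{(1-q)(1+q)}$ to get the right-hand side in the form
\[
\frac{1-q}{1+q}\sum_{n=0}^{\infty}\frac{1+q^{2n+1}}{1-q^{2n+1}}\frac{(q^2/a,q^2/b;q^2)_n}{(aq^2,bq^2;q^2)_n}(-ab)^n q^{n^2+n}.
\]
Finally multiply both sides by $1-q^2=(1-q)(1+q)$, which converts the prefactor $(q^4,ab;q^2)_\infty$ on the left into the claimed $(q^2,ab;q^2)_\infty$ and converts $(1-q)/(1+q)$ on the right into $(1-q)^2$, yielding the theorem.

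\textbf{Anticipated obstacle.} Nothing deep is expected; the only real care-point is Step 2, verifying that the inner ${}_3\phi_2$ is genuinely balanced and that the Pfaff--Saalsch\"utz parameters $(a,b,c)=(q^2,q,q)$ are assigned consistently so that the ratio $(q;q^2)_n^2/(q^3;q^2)_n^2$ collapses to $(1-q)^2/(1-q^{2n+1})^2$. Once this identification is made, the rest is purely routine algebraic simplification of the same kind carried out in the proofs of Theorems \ref{meq:1.1}--\ref{T:4.8}.
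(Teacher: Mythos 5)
Your proposal is correct and follows essentially the same route as the paper: specialize Theorem \ref{thm-main} via $(q,c,d,u,v)\rightarrow(q^2,q^3,q^3,q^2,q^2)$ and evaluate the balanced inner ${}_3\phi_2$ by the $q$-Pfaff--Saalsch\"utz formula \eqref{Pfaff} with $(q,a,b,c)\rightarrow(q^2,q^2,q,q)$, obtaining $(1-q)^2q^{2n}/(1-q^{2n+1})^2$. The only cosmetic difference is that you carry the factor $1/(1-q^2)$ explicitly and cancel it at the end, whereas the paper absorbs it silently when writing the prefactor as $(q^2,ab;q^2)_\infty$; your bookkeeping is correct either way.
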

\begin{proof}
Taking $(q,c,d,u,v) \rightarrow (q^2,q^3,q^3,q^2,q^2)$ in Theorem \ref{thm-main}, we deduce that
\begin{align}
&\frac{(q^2,ab;q^2)_\infty}{(aq^2,bq^2;q^2)_\infty}
{}_3\phi_2\bigg(\genfrac{}{}{0pt}{}{q^2/a,q^2/b,q^2}{q^3,q^3};q^2,ab\bigg) \nonumber\\
=&\sum_{n=0}^{\infty}(1-q^{4n+2})\frac{(q^2/a,q^2/b;q^2)_n}{(aq^2,bq^2;q^2)_n}(-ab)^nq^{n^2-n}
{}_3\phi_2\bigg(\genfrac{}{}{0pt}{}{q^{-2n},q^{2n+2},q^2}{q^3,q^3};q^2,q^2\bigg). \label{ab-2-1-2nd-proof}
\end{align}
Taking $(q,a,b,c)\rightarrow (q^2, q^2,q,q)$ in \eqref{Pfaff}, we deduce that
\begin{align}\label{A-proof-3}
{}_3\phi_2\bigg(\genfrac{}{}{0pt}{}{q^{-2n}, q^{2n+2},q^2}{q^3,q^3};q^2,q^2\bigg)=\frac{(q,q;q^2)_{n}}{(q^3,q^3;q^2)_{n}}q^{2n}=\frac{(1-q)^2}{(1-q^{2n+1})^2}q^{2n}.
\end{align}
The theorem follows after substituting \eqref{A-proof-3} into \eqref{ab-2-1-2nd-proof}.
\end{proof}
\begin{corollary}
Identities \eqref{mock-2-1} and \eqref{mock-2-2} hold. In addition, we have
\begin{align}
&\sum_{n=0}^{\infty}\frac{q^{2n(n+1)}}{(q;q^2)_{n+1}^2}\nonumber \\
=&\frac{1}{(q^2;q^2)_\infty}\sum_{n=0}^{\infty}(-1)^n\frac{1+q^{2n+1}}{1-q^{2n+1}}q^{3n^2+3n} \label{2-1-cor-3} \\
=&-q^{-1}m(q,q^6,q^2)-q^{-1}m(q,q^6,q^4)\label{2-1-cor-3-m-exp-pre} \\
=&-2q^{-1}m(q,q^6,q^2)+\frac{J_6^3}{J_2J_{3,6}}, \label{2-1-cor-3-m-exp} \\
&\sum_{n=0}^{\infty}\frac{(-1)^nq^{n(n+1)}(q^2;q^2)_n}{(q;q^2)_{n+1}^2}\nonumber \\
=&\sum_{n=0}^{\infty}\frac{1+q^{2n+1}}{1-q^{2n+1}}q^{2n^2+2n} \label{2-1-cor-4} \\
=&\overline{m}(-q^2,q^4,-q^4)+q\overline{m}(-1,q^4,-q^6).  \label{2-1-cor-4-barm-exp}
\end{align}
\end{corollary}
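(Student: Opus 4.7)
The plan is to derive each of the five identities by specializing $(a,b)$ in Theorem \ref{thm-ab-2-1-1st} and then converting to the stated $m(x,q,z)$ or $\overline{m}(x,q,z)$ forms via standard manipulations. For the four series identities I would take the following limits of $(a,b)$: $(-q,0)$ for \eqref{mock-2-1}, $(-1,0)$ for \eqref{mock-2-2}, $(0,0)$ for \eqref{2-1-cor-3}, and $(0,1)$ for \eqref{2-1-cor-4}. The $a\to 0$ or $b\to 0$ limits are handled by the standard prescription $(q^2/a;q^2)_n a^n \to (-1)^n q^{n(n+1)}$. In \eqref{mock-2-1}, the prefactor $(q^2;q^2)_\infty/(-q^3;q^2)_\infty$ combines with the telescoping ratio $(-q;q^2)_n(1+q^{2n+1})/(-q^3;q^2)_n = 1+q$ to cancel the $(1+q^{2n+1})$ factor and produce the claimed Appell-Lerch representation; in \eqref{mock-2-2}, the ratio $(-q^2;q^2)_n/(-q^2;q^2)_n$ trivializes and the unilateral sum is promoted to bilateral via $n\mapsto -n-1$ (which leaves $2n^2+2n$ invariant).

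The main work is the derivation of the $m$-representation \eqref{2-1-cor-3-m-exp}. Denote $S := \sum_{n\in\mathbb{Z}}(-1)^n q^{3n^2+3n}/(1-q^{2n+1})$, so that the RHS of \eqref{2-1-cor-3} equals $S/J_2$. Applying the partial fraction $\frac{1}{1-q^{2n+1}} = \frac{1+q^{2n+1}+q^{4n+2}}{1-q^{6n+3}}$ splits $S = A + B + C$ with common denominator $1-q^{6n+3}$. I would evaluate the three pieces as follows. First, $A = \sum_n (-1)^n q^{3n^2+3n}/(1-q^{6n+3}) = J_6^3/J_{3,6}$ by the reciprocal Jacobi formula \eqref{reciprocal-Jacobi} with $(q,z) \to (q^6,q^3)$, using $6\binom{n+1}{2} = 3n^2+3n$. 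Second, the shift $n \to m-1$ identifies $B = \sum_n(-1)^n q^{3n^2+5n+1}/(1-q^{6n+3})$ with $-q^{-1}\sum_m(-1)^m q^{3m^2-m}/(1-q^{6m-3}) = -q^{-1}J_2\, m(q,q^6,q^2)$. Third, the substitution $n\to -n-1$ in $C = \sum_n(-1)^n q^{3n^2+7n+2}/(1-q^{6n+3})$ together with $\frac{1}{1-q^{-6n-3}} = -q^{6n+3}/(1-q^{6n+3})$ shows $C = B$. Hence $S/J_2 = J_6^3/(J_2 J_{3,6}) - 2q^{-1}\, m(q,q^6,q^2)$, which is \eqref{2-1-cor-3-m-exp}. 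The equivalent form \eqref{2-1-cor-3-m-exp-pre} then follows from Lemma \ref{lem-m-minus} applied with $(x,q,z_0,z_1)\to(q,q^6,q^2,q^4)$; evaluating the theta quotient using $j(q^2;q^6)=j(q^4;q^6)=J_2$, $j(q^3;q^6)=J_{3,6}$, $j(q^5;q^6)=J_{1,6}$, and $j(q^7;q^6)=-q^{-1}J_{1,6}$ yields $m(q,q^6,q^4)-m(q,q^6,q^2) = -qJ_6^3/(J_2 J_{3,6})$, and substituting this into \eqref{2-1-cor-3-m-exp} converts the single $m$ plus correction into the stated sum of two $m$'s.

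For the false Appell-Lerch expression \eqref{2-1-cor-4-barm-exp} I would expand both terms directly from definition \eqref{barm-defn}, obtaining $\overline{m}(-q^2,q^4,-q^4) = \sum_r \sg(r) q^{2r^2+2r}/(1-q^{4r+2})$ and $q\overline{m}(-1,q^4,-q^6) = \sum_r \sg(r) q^{2r^2+4r+1}/(1-q^{4r+2})$. Adding and factoring gives $\sum_r \sg(r) q^{2r^2+2r}(1+q^{2r+1})/(1-q^{4r+2}) = \sum_r \sg(r) q^{2r^2+2r}/(1-q^{2r+1})$, and pairing the $r \geq 0$ and $r < 0$ parts via $r\to -r-1$ recovers the unilateral sum on the RHS of \eqref{2-1-cor-4}.

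The main obstacle is the hidden symmetry $B = C$ in the partial-fraction decomposition of $S$: verifying it requires careful tracking of signs and exponents under $n\to -n-1$ together with the replacement $\frac{1}{1-q^{-6n-3}} = -q^{6n+3}/(1-q^{6n+3})$, and this symmetry is precisely what allows the three apparently unrelated pieces to collapse into the clean formula $A + 2B$, from which the single-$m$ representation can be read off and then converted to \eqref{2-1-cor-3-m-exp-pre} by a routine application of Lemma \ref{lem-m-minus}.
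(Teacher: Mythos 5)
Your proposal is correct and follows the paper's route: the same specializations $(a,b)\to(-q,0)$, $(0,-1)$ (equivalently $(-1,0)$ by symmetry of Theorem \ref{thm-ab-2-1-1st} in $a,b$), $(0,0)$ and $(0,1)$, the same unilateral-to-bilateral trick $n\mapsto -n-1$, and the same application of Lemma \ref{lem-m-minus} with $(x,q,z_1,z_0)\to(q,q^6,q^2,q^4)$ to pass between the two $m$-expressions. The only difference is one of direction and explicitness: the paper obtains \eqref{2-1-cor-3-m-exp-pre} first as a ``routine exercise'' and then deduces \eqref{2-1-cor-3-m-exp}, whereas you derive \eqref{2-1-cor-3-m-exp} directly via the partial fraction $1/(1-q^{2n+1})=(1+q^{2n+1}+q^{4n+2})/(1-q^{6n+3})$, the reciprocal-Jacobi formula \eqref{reciprocal-Jacobi}, and the symmetry $B=C$ under $n\to -n-1$, then recover \eqref{2-1-cor-3-m-exp-pre} from Lemma \ref{lem-m-minus}; all of your computations, including the expansion of the $\overline{m}$-terms for \eqref{2-1-cor-4-barm-exp}, check out.
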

\begin{proof}
We use the second expression of $A^{(2)}(q)$ to write it as
\begin{align}
A^{(2)}(q)=\frac{q}{(1-q)^2}\sum_{n=0}^{\infty}\frac{(-q;q^2)_{n}q^{n^2+2n}}{(q^3;q^2)_{n}^2}. \label{A-proof-add-1}
\end{align}
Taking $(a,b)\rightarrow (-q, 0)$ in Theorem \ref{thm-ab-2-1-1st}, we obtain \eqref{mock-2-1}.

We use the second expression of $B^{(2)}(q)$ in \eqref{mock-2-2-defn} to write it as
\begin{align}
B^{(2)}(q)=\frac{1}{(1-q)^2}\sum_{n=0}^{\infty}\frac{(-q^2;q^2)_{n}}{(q^3;q^2)_{n}^2}q^{n^2+n}.
\end{align}
Taking $(a,b)=(0,-1)$ in Theorem \ref{thm-ab-2-1-1st}, we obtain \eqref{mock-2-2}.

Taking $(a,b)\rightarrow (0,0)$ and $(0,1)$ in Theorem \ref{thm-ab-2-1-1st}, we obtain \eqref{2-1-cor-3} and \eqref{2-1-cor-4}, respectively.

Deducing \eqref{2-1-cor-3-m-exp-pre} and \eqref{2-1-cor-4-barm-exp} are routine exercises. In addition, to get \eqref{2-1-cor-3-m-exp} from \eqref{2-1-cor-3-m-exp-pre}, we apply Lemma \ref{lem-m-minus} with $(x,q,z_1,z_0)\rightarrow (q,q^6,q^2,q^4)$.
\end{proof}
\begin{rem}
The left side of \eqref{2-1-cor-3} is the third order mock theta function $\omega^{(3)}(q)$. See \eqref{mock-3-5-defn}. The expressions \eqref{2-1-cor-3-m-exp-pre} and \eqref{2-1-cor-3-m-exp} can be found in \cite[Eq.\ (5.8)]{Hickerson-Mortenson}.
\end{rem}

\begin{theorem}\label{thm-ab-2-1-2nd}
For $\max\{|abq|, |aq^2|, |bq^2|\}<1$, we have
\begin{align}
&\frac{(q^2,abq;q)_\infty}{(aq^2,bq^2;q)_\infty}{}_3\phi_2\bigg(\genfrac{}{}{0pt}{}{q/a, q/b,q}{q^{3/2},-q^{3/2}};q,abq\bigg) \nonumber \\
=& \sum_{n=0}^{\infty}(1-q^{2n+2})\frac{(q/a,q/b;q)_n}{(aq^2,bq^2;q)_n}(-ab)^nq^{n^2+2n}\sum_{j=0}^{n}q^{-j(j+1)/2}.
\end{align}
\end{theorem}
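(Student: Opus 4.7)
The proof follows the template established by the preceding theorems in Section~\ref{sec:proof}: apply Theorem~\ref{thm-main} with an appropriate specialization to match the left-hand side, then evaluate the resulting terminating ${}_3\phi_2$ series in closed form. Concretely, the plan is to take $(c,d,u,v)\to(q^{3/2},-q^{3/2},q^2,q)$ in Theorem~\ref{thm-main}: the argument $uab/q=abq$, the two outer Pochhammer parameters $ua=aq^2,\,ub=bq^2$, and all three upper parameters $q/a,q/b,q$ of the inner series match the statement. Multiplying both sides of the resulting identity by the factor $(1-q^2)$ absorbs the $(1-u)=(1-q^2)$ in the denominator on the right and converts $(uq;q)_\infty=(q^3;q)_\infty$ on the left into $(q^2;q)_\infty$, as required. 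After this normalization the theorem reduces to proving the single terminating evaluation
\begin{align*}
{}_3\phi_2\bigg(\genfrac{}{}{0pt}{}{q^{-n},q^{n+2},q}{q^{3/2},-q^{3/2}};q,q\bigg)=\frac{(1-q)\,q^{(n^2+3n)/2}}{1-q^{n+1}}\sum_{j=0}^{n}q^{-j(j+1)/2}.
\end{align*}

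None of the lemmas of Section~\ref{subsec-basic} applies to this ${}_3\phi_2$ on the nose, so the first step is a Sears transformation via~\eqref{sears:32} with $(\alpha,\beta,c,d)=(q^2,q,q^{3/2},-q^{3/2})$. Since $q\alpha/c=q^{3/2}=c$, the prefactor $(q\alpha/c;q)_n/(c;q)_n$ collapses to $1$, and Sears rewrites the series as
\begin{align*}
(-1)^nq^{n(n+2)/2}\,{}_3\phi_2\bigg(\genfrac{}{}{0pt}{}{q^{-n},q^{n+2},-q^{1/2}}{-q^{3/2},q^{3/2}};q,q^{1/2}\bigg).
\end{align*}
The transformed series fits exactly the hypotheses of Lemma~\ref{meq:1} with $(\alpha,c,d)=(q,-q^{1/2},q^{1/2})$: then $\alpha q^{n+1}=q^{n+2}$, $q/c=-q^{1/2}$, $\alpha d=q^{3/2}$, $q^2/c=-q^{3/2}$, and the argument is $d=q^{1/2}$. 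The coincidence $\alpha c=q^2/c=-q^{3/2}$ will drive the simplification in the next step.

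Applying Lemma~\ref{meq:1} and using the factorizations $(q^{1/2},-q^{1/2};q)_j=(q;q^2)_j$ and $(q^{3/2},-q^{3/2};q)_j=(q^3;q^2)_j$, the $j$-summand reduces to a constant multiple of $q^{-j(j+1)/2}$ thanks to the cancellation $(1-q^{2j+1})(q;q^2)_j/(q^3;q^2)_j=(q;q^2)_{j+1}/(q^3;q^2)_j=1-q$. Combining this with the prefactors — the $(-1)^nq^{n(n+2)/2}$ from Sears and $(q/c)^n(\alpha c,q;q)_n/(q^2/c,\alpha q;q)_n=(-1)^nq^{n/2}(1-q)/(1-q^{n+1})$ from Lemma~\ref{meq:1} — produces the promised evaluation. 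Feeding this back into the identity from Theorem~\ref{thm-main} and using the routine simplifications $1-q^{2n+2}=(1-q^{n+1})(1+q^{n+1})$ and $(q^2;q)_n/(q;q)_n=(1-q^{n+1})/(1-q)$ yields the claimed parameterized identity. The main obstacle is spotting the Sears transformation that aligns $S_n$ with Lemma~\ref{meq:1}; once that alignment is made, the even/odd pairing of the Pochhammer symbols does the rest of the work essentially automatically.
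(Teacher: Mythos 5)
Your proposal is correct and follows essentially the same route as the paper: the same specialization $(c,d,u,v)\rightarrow(q^{3/2},-q^{3/2},q^2,q)$ of Theorem \ref{thm-main}, the same Sears transformation \eqref{sears:32} with $(\alpha,\beta,c,d)=(q^2,q,q^{3/2},-q^{3/2})$, and the same application of Lemma \ref{meq:1} with $(\alpha,c,d)=(q,-q^{1/2},q^{1/2})$, with all the prefactor bookkeeping checking out. No gaps.
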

\begin{proof}
Taking $(c,d,u,v)\rightarrow (q^{3/2}, -q^{3/2}, q^2, q)$ in Theorem \ref{thm-main}, we deduce that
\begin{align}
&\frac{(q^2,abq;q)_\infty}{(aq^2,bq^2;q)_\infty}{}_3\phi_2\bigg(\genfrac{}{}{0pt}{}{q/a, q/b,q}{q^{3/2},-q^{3/2}};q,abq\bigg) \nonumber \\
=&\sum_{n=0}^{\infty}(1-q^{2n+2})\frac{1-q^{n+1}}{1-q}\frac{(q/a,q/b;q)_n}{(aq^2,bq^2;q)_n}(-abq^2)^nq^{(n^2-3n)/2}
{}_3\phi_2\bigg(\genfrac{}{}{0pt}{}{q^{-n}, q^{n+2},q}{q^{3/2},-q^{3/2}};q,q\bigg). \label{ab-2-1-proof}
\end{align}
Setting $(\alpha, \beta, c, d)=(q^2, q, q^{3/2}, -q^{3/2})$ in \eqref{sears:32}, we obtain
\begin{align}
{}_3\phi_2\bigg(\genfrac{}{}{0pt}{}{q^{-n}, q^{n+2},q}{q^{3/2},-q^{3/2}};q,q\bigg)=(-1)^nq^{(n^2+2n)/2}{}_3\phi_2\bigg(\genfrac{}{}{0pt}{}{q^{-n}, q^{n+2},-q^{1/2}}{q^{3/2},-q^{3/2}};q,q^{1/2}\bigg). \label{ab-2-1-proof-2}
\end{align}
Setting $(\alpha, c, d)=(q, -q^{1/2}, q^{1/2})$ in Lemma \ref{meq:1}, we deduce that
\begin{align}
{}_3\phi_2\bigg(\genfrac{}{}{0pt}{}{q^{-n}, q^{n+2},-q^{1/2}}{q^{3/2},-q^{3/2}};q,q^{1/2}\bigg)=(-1)^nq^{n/2}\frac{1-q}{1-q^{n+1}}\sum_{j=0}^{n}q^{-j(j+1)/2}. \label{ab-2-1-proof-3}
\end{align}
Substituting \eqref{ab-2-1-proof-3} into \eqref{ab-2-1-proof-2}, we obtain
\begin{align}
{}_3\phi_2\bigg(\genfrac{}{}{0pt}{}{q^{-n}, q^{n+2},q}{q^{3/2},-q^{3/2}};q,q\bigg)=q^{(n^2+3n)/2}\frac{1-q}{1-q^{n+1}}\sum_{j=0}^{n}q^{-j(j+1)/2}. \label{ab-2-1-proof-4}
\end{align}
The theorem follows after substituting \eqref{ab-2-1-proof-4} into \eqref{ab-2-1-proof}.
\end{proof}
\begin{corollary}\label{cor-2-1}
We have
\begin{align}
A^{(2)}(q)=&q\frac{(-q^2;q^2)_\infty}{(q;q)_{\infty}^2}\sum_{n=0}^{\infty}\frac{1-q^{2n+2}}{1+q^{2n+2}}(-1)^nq^{n^2+2n}\sum_{j=0}^{n}q^{-j(j+1)/2}, \label{cor-A}
\end{align}
\begin{align}
&\sum_{n=0}^{\infty}\frac{q^{n^2+2n}}{(q;q^2)_{n+1}}\nonumber \\
=&\frac{1}{(q;q)_\infty}\sum_{n=0}^{\infty}(-1)^n(1-q^{2n+2})q^{2n^2+3n}\sum_{j=0}^{n}q^{-j(j+1)/2} \label{2-1-cor-1} \\
=&\frac{1}{2J_1}\left(f_{3,5,3}(q^4,q^5,q)-q^5f_{3,5,3}(q^8,q^9,q)+j(q^4;q^3)  \right) \label{2-1-cor-1-H-pre} \\
=&\frac{1}{J_1}f_{3,5,3}(q^4,q^5,q) \label{2-1-cor-1-H} \\
=&-q^{-1}m(-q^{22},q^{48},-1)-q^{-2}m(-q^{14},q^{48},-1)-q^{-3}m(-q^{10},q^{48},-1)\nonumber \\
&-q^{-6}m(-q^2,q^{48},-1)+\frac{1}{J_1}\Phi_{3,2}(q^4,q^5,q) \label{2-1-cor-1-A} \\
=&-q^{-1}m(q,-q^3,q)+\frac{J_{12}^3}{J_{3,12}J_{4,12}}, \label{2-1-cor-1-final}
\end{align}
\begin{align}
&\sum_{n=0}^{\infty}\frac{(-1)^nq^{n(n+3)/2}(q;q)_n}{(q;q^2)_{n+1}}\nonumber \\
=&\sum_{n=0}^{\infty}(1+q^{n+1})q^{(3n^2+5n)/2}\sum_{j=0}^nq^{-j(j+1)/2} \label{2-1-cor-2} \\
=&\frac{1}{2}\left(\overline{f}_{1,2,1}(-q^3,-q^4,q^2)+q^4\overline{f}_{1,2,1}(-q^6,-q^7,q^2)-q^{-1} \right), \label{2-1-cor-2-barf-exp}
\end{align}
\begin{align}
&\sum_{n=0}^\infty \frac{q^{(n^2+3n)/2}(-q;q)_n}{(q;q^2)_{n+1}}\nonumber \\
=&\frac{(-q;q)_\infty}{(q;q)_\infty}\sum_{n=0}^\infty (-1)^n(1-q^{n+1})q^{(3n^2+5n)/2}\sum_{j=0}^n q^{-j(j+1)/2} \label{2-1-cor-add-sigma} \\
=&\frac{J_2}{2J_1^2}\left(f_{1,2,1}(q^3,q^4,q^2)-q^4f_{1,2,1}(q^6,q^7,q^2)+j(q^3;q^2)  \right) \label{2-1-cor-add-sigma-H-pre} \\
=&\frac{J_2}{J_1^2} f_{1,2,1}(q^3,q^4,q^2) \label{2-1-cor-add-sigma-H} \\
=& -q^{-1}m(q^2,q^6,-1)+\frac{1}{2}q^{-1}\frac{J_{2}^4J_{6}^5}{J_{1}^2J_{3}^2J_{4}^2J_{12}^2} \label{2-1-cor-add-sigma-A}\\
=&-q^{-1}m(q^2,q^6,q). \label{2-1-cor-add-sigma-A-simplify}
\end{align}
\end{corollary}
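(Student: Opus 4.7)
The plan is to obtain each of the four Eulerian identities \eqref{cor-A}, \eqref{2-1-cor-1}, \eqref{2-1-cor-2}, \eqref{2-1-cor-add-sigma} as a specialization of Theorem \ref{thm-ab-2-1-2nd}, in the style of the preceding corollaries, and then to derive the Hecke-type and Appell--Lerch expansions by the routine repackaging of Section \ref{subsec-build} together with applications of Lemma \ref{lem-f-prop} and Theorem \ref{thm-fg-1}. Using $(q^{3/2};q)_n(-q^{3/2};q)_n=(q^3;q^2)_n$, the ${}_3\phi_2$ on the left of the theorem reduces to $\sum_n(q/a,q/b;q)_n(abq)^n/(q^3;q^2)_n$, and I would match this against each target sum by an appropriate choice of $(a,b)$.

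Taking $(a,b)\to(0,0)$, the limits $(q/a;q)_n a^n$ and $(q/b;q)_n b^n$ both tending to $(-1)^nq^{n(n+1)/2}$ combine with the prefactor $(q^2;q)_\infty$ to yield \eqref{2-1-cor-1} after dividing by $(q;q)_\infty$. Taking $(a,b)\to(0,1)$, the factor $(q/b;q)_n=(q;q)_n$ produces the numerator of \eqref{2-1-cor-2} while the prefactor collapses to $1$. Taking $(a,b)\to(-1,0)$, the ratio $(-q;q)_n/(-q^2;q)_n=(1+q)/(1+q^{n+1})$ absorbs one half of $1-q^{2n+2}=(1-q^{n+1})(1+q^{n+1})$, and after tracking the prefactor $(q^2;q)_\infty/(-q^2;q)_\infty=(1+q)(q;q)_\infty/((1-q)(-q;q)_\infty)$ one recovers \eqref{2-1-cor-add-sigma}. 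For \eqref{cor-A} the key specialization is the imaginary pair $(a,b)\to(i,-i)$, which is admissible because $|abq|=|q|<1$: then $(q/a,q/b;q)_n=(-i,i;q)_n=(-q^2;q^2)_n$ and $(aq^2,bq^2;q)_n=(-q^4;q^2)_n$, so the ratio $(1+q^2)/(1+q^{2n+2})$ supplies the required factor, and the prefactor simplifies via $(-q^4;q^2)_\infty=(-q^2;q^2)_\infty/(1+q^2)$ together with $(q^2;q)_\infty(q;q)_\infty=(q;q)_\infty^2/(1-q)$ to reproduce the coefficient $q(-q^2;q^2)_\infty/(q;q)_\infty^2$.

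The Hecke-type reformulations \eqref{2-1-cor-1-H-pre}, \eqref{2-1-cor-2-barf-exp}, \eqref{2-1-cor-add-sigma-H-pre} then follow by the parity-split procedure illustrated in the deductions of \eqref{Liu-eq-simplify}, \eqref{pre-example-1}, \eqref{ADH-id-barf}: extend the outer sum to negative $n$ via $n\mapsto -n-1$, change variables to $(r,s)=(n+j,n-j)$, and split according to the parities of $r$ and $s$. The contracted single-$f$ versions \eqref{2-1-cor-1-H} and \eqref{2-1-cor-add-sigma-H} come by applying Lemma \ref{lem-f-prop} (specifically \eqref{f-id-3} together with the symmetry $f_{a,b,a}(x,y,q)=f_{a,b,a}(y,x,q)$) to identify the two Hecke-sum terms plus the $j$-term as a single $f_{3,5,3}(q^4,q^5,q)$ or $f_{1,2,1}(q^3,q^4,q^2)$. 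The Appell--Lerch expansions \eqref{2-1-cor-1-A} and \eqref{2-1-cor-add-sigma-A} are then immediate consequences of Theorem \ref{thm-fg-1} at $(n,p)=(3,2)$ and $(1,1)$ respectively, once one substitutes the definition \eqref{gabc-defn} of $g_{n,n+p,n}$ and keeps the $\Phi_{n,p}$ factor unexpanded.

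The main expected obstacle is the pair of final simplifications \eqref{2-1-cor-1-final} and \eqref{2-1-cor-add-sigma-A-simplify}. For \eqref{2-1-cor-1-final}, the four Appell--Lerch terms $m(-q^{*},q^{48},-1)$ together with the explicit $\Phi_{3,2}(q^4,q^5,q)/J_1$ must collapse into the single term $-q^{-1}m(q,-q^3,q)$ plus the clean theta quotient $J_{12}^3/(J_{3,12}J_{4,12})$; I would attempt this by repeated use of Lemma \ref{lem-m-decompose} (to rescale the modulus from $q^{48}$ stepwise down to $-q^3$), followed by Lemma \ref{lem-m-minus} to merge the residual differences, and finally evaluate the leftover theta quotient via Jacobi's triple product. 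The milder simplification \eqref{2-1-cor-add-sigma-A-simplify} should follow from a single application of Lemma \ref{lem-m-minus} with $(x,q,z_0,z_1)=(q^2,q^6,-1,q)$, reducing the task to verifying that the resulting theta quotient matches $\tfrac{1}{2}q^{-1}J_2^4J_6^5/(J_1^2J_3^2J_4^2J_{12}^2)$.
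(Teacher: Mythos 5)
Your proposal is correct and follows essentially the same route as the paper: the four Eulerian identities come from Theorem \ref{thm-ab-2-1-2nd} at $(a,b)\to(0,0)$, $(0,1)$, $(0,-1)$ and $(i,-i)$, the single-$f$ contractions use \eqref{f-id-3} together with the $x\leftrightarrow y$ symmetry of $f_{a,b,a}$, the Appell--Lerch forms come from Theorem \ref{thm-fg-1} with $(n,p)=(3,2)$ and $(1,1)$, and the last simplifications use Lemmas \ref{lem-m-decompose} and \ref{lem-m-minus}. The only cosmetic points: $(q/i;q)_n(q/(-i);q)_n=(-iq,iq;q)_n$ rather than $(-i,i;q)_n$, and for \eqref{2-1-cor-1-final} the paper makes a single application of Lemma \ref{lem-m-decompose} with $(x,q,z,z',n)\to(q,-q^3,q,-1,4)$ to the low-modulus side (raising $-q^3$ to $(-q^3)^{16}=q^{48}$) rather than stepping the modulus down from $q^{48}$.
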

\begin{proof}
We use the first expression of $A^{(2)}(q)$ to write it as
\begin{align}
A^{(2)}(q)=\frac{q}{1-q}\sum_{n=0}^{\infty}\frac{(-q^2;q^2)_{n}q^n}{(q^3;q^2)_{n}}. \label{A-proof-1}
\end{align}
Taking $(a,b)\rightarrow (i,-i)$ in Theorem \ref{thm-ab-2-1-2nd}, we obtain \eqref{cor-A}.

Taking $(a,b)\rightarrow (0,0)$, $(0,1)$ and $(0,-1)$ in Theorem \ref{thm-ab-2-1-2nd}, we obtain \eqref{2-1-cor-1} and \eqref{2-1-cor-2}, respectively.

Setting $(a,b,c,x,y,q)\rightarrow (3,5,3,q^4,q^5,q)$ in \eqref{f-id-3}, we get \eqref{2-1-cor-1-H} from \eqref{2-1-cor-1-H-pre}. Similarly, setting $(a,b,c,x,y,q)\rightarrow (1,2,1,q^3,q^4,q^2)$ in \eqref{f-id-3}, we get \eqref{2-1-cor-add-sigma-H} from \eqref{2-1-cor-add-sigma-H-pre}.

The expression \eqref{2-1-cor-1-A} follows from \eqref{2-1-cor-1-H} and Theorem \ref{thm-fg-1} with $(n,p)=(3,2)$. The expression \eqref{2-1-cor-add-sigma-A} follows from \eqref{2-1-cor-add-sigma-H} and Theorem \ref{thm-fg-1} with $(n,p)=(1,1)$. The equivalence of \eqref{2-1-cor-add-sigma-A} and \eqref{2-1-cor-add-sigma-A-simplify} can be seen from \eqref{m-minus} with $(x,q,z_1,z_0)\rightarrow (q^2,q^6,-1,q)$.

Taking $(x,q,z,z',n)\rightarrow (q,-q^3,q,-1,4)$ in Lemma \ref{lem-m-decompose} and using the method in \cite{Garvan-Liang} to prove theta function identities, we obtain \eqref{2-1-cor-1-final} from \eqref{2-1-cor-1-A}.
\end{proof}
\begin{rem}\label{rem-mock-2-A}
(1) After multiplying by a factor $q$, the left side of \eqref{2-1-cor-1} becomes the third order mock theta function $\psi^{(3)}(q)$. See \eqref{mock-3-3-defn}. The expression \eqref{2-1-cor-1-final} can be found in \cite[Eq.\ (5.6)]{Hickerson-Mortenson} with a typo that $J_4$ should be $J_{4,12}$. \\
(2) After multiplying by a factor $q$, the left side of \eqref{2-1-cor-add-sigma} becomes the sixth order mock theta function $\sigma^{(6)}(q)$. See \eqref{mock-6-4-defn}. The expression \eqref{2-1-cor-add-sigma-A-simplify} appears in \cite[Eq.\ (5.26)]{Hickerson-Mortenson}. \\
(3) It is unclear whether \eqref{cor-A} can be converted to a Hecke-type series in terms of $f_{a,b,c}(x,y,q)$ or $\overline{f}_{a,b,c}(x,y,q)$. Moreover, from \cite[Eq.\ (5.1)]{Hickerson-Mortenson} we find that
\begin{align}
A^{(2)}(q)=-m(q,q^4,q^2). \label{cor-A-m-exp}
\end{align}
We are not sure whether this can be deduced from \eqref{cor-A} or not. A similar problem exists for \eqref{cor-B}.
\end{rem}

\subsection{Representations for $B^{(2)}(q)$}
Note that \eqref{mock-2-2} has already been proved in Corollary \ref{cor-2-1}. Now we establish a new representation for $B^{(2)}(q)$.
\begin{theorem}\label{thm-ab-2-2}
For $\max\{|ab|, |aq|, |bq|\}<1$, we have
\begin{align}
&\frac{(q,ab;q)_\infty}{(aq,bq;q)_\infty}
{}_3\phi_2\bigg(\genfrac{}{}{0pt}{}{q/a,q/b,q}{q^{3/2},-q^{3/2}};q,ab\bigg) \nonumber\\
=&~ (1-q)\sum_{n=0}^{\infty}\sum_{j=-n}^{n}\frac{(q/a,q/b;q)_n}{(aq,bq;q)_n}(-ab)^nq^{n^2+n-\binom{j+1}{2}}. \label{revise-thm-ab-2-2-eq}
\end{align}
\end{theorem}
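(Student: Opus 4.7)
The plan is to specialize Theorem~\ref{thm-main} and then evaluate the resulting terminating ${}_3\phi_2$ in closed form, exactly as in the proofs of Theorems~\ref{meq:1.1}--\ref{thm-ab-2-1-2nd}. Taking $(c,d,u,v) \to (q^{3/2},-q^{3/2},q,q)$ in Theorem~\ref{thm-main} produces the prefactor $(q^2,ab;q)_\infty/(aq,bq;q)_\infty$; multiplying through by $(1-q)$ and using $(q;q)_\infty = (1-q)(q^2;q)_\infty$ reproduces the left-hand side of Theorem~\ref{thm-ab-2-2}. Matching powers of $q$ and $ab$, what remains to prove is
\[
T_n := {}_3\phi_2\bigg(\genfrac{}{}{0pt}{}{q^{-n},q^{n+1},q}{q^{3/2},-q^{3/2}};q,q\bigg) = \frac{(1-q)\,q^{(n^2+3n)/2}}{1-q^{2n+1}}\sum_{j=-n}^{n} q^{-j(j+1)/2}.
\]

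To evaluate $T_n$, I would first apply \eqref{sears:32} with $(\alpha,\beta,c,d)=(q,q,q^{3/2},-q^{3/2})$, which rewrites $T_n$ as
\[
(-1)^n q^{(n^2+2n)/2}\,\frac{(q^{1/2};q)_n}{(q^{3/2};q)_n}\,{}_3\phi_2\bigg(\genfrac{}{}{0pt}{}{q^{-n},q^{n+1},-q^{1/2}}{-q^{3/2},q^{1/2}};q,q^{1/2}\bigg).
\]
The inner ${}_3\phi_2$ now matches the shape of Lemma~\ref{meq:1} with $(\alpha,c,d)=(1,-q^{1/2},q^{1/2})$; for these parameters one has $cd/q=-1$ and $(q/c,q/d;q)_j = (c,d;q)_j$, so most of the summand in Lemma~\ref{meq:1} collapses.

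The main obstacle is that $\alpha=1$ is precisely the value where the factor $(1-\alpha q^{2j})/(1-\alpha)$ in Lemma~\ref{meq:1} is indeterminate, so the lemma cannot be quoted as stated. The remedy is to pass to the $\alpha \to 1$ limit in Lemma~\ref{meq:1} (both sides are rational in $\alpha$ with a removable singularity, so the limit is legitimate), in the same spirit as Lemma~\ref{lem-limit}, which records the $\alpha \to q^{-1}$ limit. The $j=0$ term contributes $1$, while for $j \geq 1$ one has $\lim_{\alpha \to 1}(1-\alpha q^{2j})(\alpha;q)_j/((1-\alpha)(q;q)_j) = (1-q^{2j})/(1-q^j) = 1+q^j$, so the resulting sum is $1+\sum_{j=1}^n (1+q^j)\,q^{-j(j+1)/2}$. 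The elementary identity $j - j(j+1)/2 = -j(j-1)/2$ together with the substitution $j \mapsto -j$ folds the $q^j$-contribution onto negative indices and converts this into the symmetric sum $\sum_{j=-n}^n q^{-j(j+1)/2}$. The leftover prefactor telescopes via $(1-q^{1/2})(1+q^{1/2})=1-q$ and $(1-q^{n+1/2})(1+q^{n+1/2}) = 1-q^{2n+1}$, yielding the claimed closed form for $T_n$; substituting back into the Theorem~\ref{thm-main} specialization and cancelling common factors completes the proof. I expect the authors either to record the $\alpha \to 1$ limit of Lemma~\ref{meq:1} as a separate preliminary lemma (paralleling Lemma~\ref{lem-limit}), or to dispatch it inline during the evaluation of $T_n$.
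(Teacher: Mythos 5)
Your proposal is correct and follows essentially the same route as the paper: specialize Theorem \ref{thm-main} with $(c,d,u,v)\to(q^{3/2},-q^{3/2},q,q)$, transform the terminating ${}_3\phi_2$ via \eqref{sears:32} with $(\alpha,\beta,c,d)=(q,q,q^{3/2},-q^{3/2})$, and evaluate the result by Lemma \ref{meq:1} at $(\alpha,c,d)=(1,-q^{1/2},q^{1/2})$, folding the one-sided sum into $\sum_{j=-n}^{n}q^{-j(j+1)/2}$. The only difference is that you explicitly justify the $\alpha\to1$ specialization of Lemma \ref{meq:1} (the singularity is removable since $(\alpha;q)_j/(1-\alpha)=(\alpha q;q)_{j-1}$), whereas the paper simply sets $\alpha=1$ without comment; your extra care is harmless and the computation agrees with the paper's.
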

\begin{proof}
Taking $(c,d,u,v) \rightarrow (q^{3/2}, -q^{3/2}, q,q)$ in Theorem \ref{thm-main}, we deduce that
\begin{align}
&\frac{(q,ab;q)_\infty}{(aq,bq;q)_\infty}
{}_3\phi_2\bigg(\genfrac{}{}{0pt}{}{q/a,q/b,q}{q^{3/2},-q^{3/2}};q,ab\bigg) \nonumber\\
=&\sum_{n=0}^{\infty}(1-q^{2n+1})\frac{(q/a,q/b;q)_n}{(aq,bq;q)_n}(-ab)^nq^{\frac{n^2-n}{2}}
{}_3\phi_2\bigg(\genfrac{}{}{0pt}{}{q^{-n},q^{n+1},q}{q^{3/2},-q^{3/2}};q,q\bigg). \label{ab-6-3-proof}
\end{align}
Setting $(\alpha, \beta, c, d)=(q,q,q^{3/2},-q^{3/2})$ in \eqref{sears:32}, we deduce that
\begin{align}
{}_{3}\phi_{2}\bigg(\genfrac{}{}{0pt}{}{q^{-n}, q^{n+1},q}{q^{3/2},-q^{3/2}};q,q\bigg)=(-1)^nq^{(n^2+2n)/2}\frac{1-q^{\frac{1}{2}}}{1-q^{n+\frac{1}{2}}}{}_{3}\phi_{2}\bigg(\genfrac{}{}{0pt}{}{q^{-n}, q^{n+1},-q^{1/2}}{q^{1/2},-q^{3/2}};q,q^{1/2}\bigg). \label{6-rho-proof-2}
\end{align}
Setting $(\alpha, c, d)=(1, -q^{1/2}, q^{1/2})$ in Lemma \ref{meq:1}, we obtain
\begin{align}
{}_{3}\phi_{2}\bigg(\genfrac{}{}{0pt}{}{q^{-n}, q^{n+1},-q^{1/2}}{q^{1/2},-q^{3/2}};q,q^{1/2}\bigg)=(-1)^nq^{n/2}\frac{1+q^{\frac{1}{2}}}{1+q^{n+\frac{1}{2}}} \sum_{j=-n}^{n}q^{-j(j+1)/2}. \label{6-rho-proof-3}
\end{align}
Substituting \eqref{6-rho-proof-3} into \eqref{6-rho-proof-2}, we obtain
\begin{align}
{}_{3}\phi_{2}\bigg(\genfrac{}{}{0pt}{}{q^{-n}, q^{n+1},q}{q^{3/2},-q^{3/2}};q,q\bigg)=q^{(n^2+3n)/2}\frac{1-q}{1-q^{2n+1}} \sum_{j=-n}^{n}q^{-j(j+1)/2}. \label{revise-6-rho-proof-4}
\end{align}
Substituting \eqref{revise-6-rho-proof-4} into \eqref{ab-6-3-proof}, we arrive at \eqref{revise-thm-ab-2-2-eq}.
\end{proof}
\begin{corollary}
We have
\begin{align}
B^{(2)}(q)=\frac{(-q;q^2)_\infty}{(q;q)_\infty^2}\sum_{n=0}^{\infty}\frac{(-1)^nq^{n^2+2n}}{1+q^{2n+1}}\sum_{j=-n}^{n}q^{-j(j+1)/2}. \label{cor-B}
\end{align}
\end{corollary}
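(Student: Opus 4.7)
The plan is to derive \eqref{cor-B} as a specialization of Theorem \ref{thm-ab-2-2} with a well-chosen complex pair $(a,b)$. First I would rewrite the defining Eulerian expression for $B^{(2)}(q)$ so that it matches the shape of the ${}_{3}\phi_{2}$ on the left-hand side of Theorem \ref{thm-ab-2-2}. Using $(q;q^2)_{n+1}=(1-q)(q^3;q^2)_n$ and the factorization $(q^3;q^2)_n=(q^{3/2};q)_n(-q^{3/2};q)_n$, the first representation in \eqref{mock-2-2-defn} becomes
\begin{align*}
B^{(2)}(q)=\frac{1}{1-q}\sum_{n=0}^{\infty}\frac{(-q;q^2)_n\,q^n}{(q^{3/2};q)_n(-q^{3/2};q)_n}.
\end{align*}
This is exactly the shape of the ${}_{3}\phi_{2}$ in Theorem \ref{thm-ab-2-2} (once the $(q;q)_n$ in numerator and denominator cancel) provided we can choose $(a,b)$ so that $(q/a;q)_n(q/b;q)_n(ab)^n=(-q;q^2)_n\,q^n$.

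The key step is the specialization $(a,b)=(iq^{1/2},-iq^{1/2})$. With this choice $ab=q$, and the pairing
\begin{align*}
(q/a;q)_n(q/b;q)_n=(-iq^{1/2};q)_n(iq^{1/2};q)_n=\prod_{k=0}^{n-1}(1+q^{2k+1})=(-q;q^2)_n
\end{align*}
produces exactly the factor needed. The same trick on the shifted products gives $(aq;q)_n(bq;q)_n=(-q^3;q^2)_n$ and $(aq;q)_\infty(bq;q)_\infty=(-q^3;q^2)_\infty$. Substituting into Theorem \ref{thm-ab-2-2}, the left-hand side becomes
\begin{align*}
\frac{(q;q)_\infty^2(1-q)}{(-q^3;q^2)_\infty}\,B^{(2)}(q),
\end{align*}
while on the right-hand side one uses $\frac{(-q;q^2)_n}{(-q^3;q^2)_n}=\frac{1+q}{1+q^{2n+1}}$ together with $(-ab)^n=(-q)^n$ to get
\begin{align*}
(1-q^2)\sum_{n=0}^{\infty}\frac{(-1)^n q^{n^2+2n}}{1+q^{2n+1}}\sum_{j=-n}^{n}q^{-j(j+1)/2}.
\end{align*}

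Equating the two sides and simplifying using $(1+q)(-q^3;q^2)_\infty=(-q;q^2)_\infty$ yields \eqref{cor-B} directly. There is no genuine obstacle: the entire deduction consists of recognizing the correct $(a,b)$-specialization and carrying out elementary product manipulations with infinite and finite $q$-Pochhammer symbols. The only mildly delicate point is verifying that the convergence hypothesis $\max\{|ab|,|aq|,|bq|\}<1$ of Theorem \ref{thm-ab-2-2} still holds at $(a,b)=(iq^{1/2},-iq^{1/2})$, where $|ab|=|q|<1$ and $|aq|=|bq|=|q|^{3/2}<1$, so the specialization is legitimate.
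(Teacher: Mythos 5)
Your proposal is correct and is essentially identical to the paper's own proof: the paper likewise rewrites the first Eulerian form of $B^{(2)}(q)$ as $\frac{1}{1-q}\sum_{n\geq 0}\frac{(iq^{1/2},-iq^{1/2};q)_n}{(q^{3/2},-q^{3/2};q)_n}q^n$ and then sets $(a,b)\rightarrow (iq^{1/2},-iq^{1/2})$ in Theorem \ref{thm-ab-2-2}. Your write-up simply makes explicit the product manipulations and the convergence check that the paper leaves to the reader.
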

\begin{proof}
We use the first expression of $B^{(2)}(q)$ in \eqref{mock-2-2-defn} to write it as
\begin{align}
B^{(2)}(q)=\frac{1}{1-q}\sum_{n=0}^{\infty}\frac{(iq^{1/2},-iq^{1/2};q)_{n}}{(q^{3/2},-q^{3/2};q)_{n}}q^n. \label{B-proof-1}
\end{align}
Taking $(a,b)\rightarrow (iq^{1/2}, -iq^{1/2})$ in Theorem \ref{thm-ab-2-2}, we obtain \eqref{cor-B}.
\end{proof}

\begin{corollary}
We have
\begin{align}
&\sum_{n=0}^{\infty}\frac{q^{n(n+1)}}{(q;q^2)_{n+1}}\nonumber \\
=&\frac{1}{(q;q)_\infty}\sum_{n=0}^{\infty}\sum_{j=-n}^{n}(-1)^nq^{2n^2+2n-j(j+1)/2} \label{2-2-cor-1} \\
=&\frac{1}{2J_1}\left(f_{3,5,3}(q^3,q^4,q)-q^4f_{3,5,3}(q^7,q^8,q)  \right) \label{2-2-cor-1-H-pre} \\
=&\frac{1}{J_1}f_{3,5,3}(q^3,q^4,q) \label{2-2-cor-1-H} \\
=&-q^{-1}m(-q^{16},q^{48},q^3)-q^{-1}m(-q^{16},q^{48},q^{-3})-q^{-3}m(-q^8,q^{48},q^{-3}) \nonumber \\
& -q^{-3}m(-q^8,q^{48},q^3) -\frac{1}{J_1}\Theta_{3,2}(q^3,q^4,q) \label{2-2-cor-1-A} \\
=&-2q^{-1}m(-q^{16},q^{48},q^3)-2q^{-3}m(-q^8,q^{48},q^3)+q^{-4}\frac{J_{48}^3J_{6,48}\overline{J}_{16,48}}{J_{-3,48}J_{3,48}\overline{J}_{13,48}\overline{J}_{19,48}} \nonumber \\
&+q^{-6}\frac{J_{48}^3J_{6,48}\overline{J}_{8,48}}{J_{-3,48}J_{3,48}\overline{J}_{5,48}\overline{J}_{11,48}}-\frac{1}{J_1}\Theta_{3,2}(q^3,q^4,q).\label{2-2-cor-1-A-simplify} \\
&\sum_{n=0}^{\infty}\frac{(-1)^nq^{n(n+1)/2}(q;q)_n}{(q;q^2)_{n+1}} \nonumber \\
=&\sum_{n=0}^\infty\sum_{j=-n}^nq^{(3n^2+3n)/2-j(j+1)/2} \label{2-2-cor-2} \\
=&\frac{1}{2}\left(\overline{f}_{1,2,1}(-q^2,-q^3,q^2)+q^3\overline{f}_{1,2,1}(-q^5,-q^6,q^2) \right), \label{2-2-cor-2-simplify} \\
&\sum_{n=0}^{\infty}\frac{q^{n(n+1)/2}(-q;q)_n} {(q;q^2)_{n+1}} \nonumber \\
=&\frac{(-q;q)_\infty}{(q;q)_\infty}\sum_{n=0}^{\infty}(-1)^nq^{(3n^2+3n)/2}\sum_{j=-n}^nq^{-j(j+1)/2} \label{2-2-cor-3-rho} \\
=&\frac{J_{2}}{2J_{1}^2}\left(f_{1,2,1}(q^2,q^3,q^2)-q^3f_{1,2,1}(q^5,q^6,q^2)  \right)  \label{2-2-cor-3-rho-H-pre} \\
=&\frac{J_{2}}{J_{1}^2}f_{1,2,1}(q^2,q^3,q^2) \label{2-2-cor-3-rho-H}\\
=&-q^{-1}m(1,q^6,-1)+\frac{1}{4}q^{-1}\frac{J_{2}^6J_{3}^4}{J_{1}^4J_{4}^2J_{6}J_{12}^2}. \label{2-2-cor-3-rho-A}
\end{align}
\end{corollary}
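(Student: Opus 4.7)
The plan is to derive the three Eulerian--Hecke identities \eqref{2-2-cor-1}, \eqref{2-2-cor-2}, \eqref{2-2-cor-3-rho} by specializing the parameters in Theorem \ref{thm-ab-2-2}, and then to express the resulting Hecke sums in terms of the building blocks $f_{a,b,c}$, $\overline{f}_{a,b,c}$ and $m(x,q,z)$ using the machinery of Subsection \ref{subsec-build}. The initial observation that makes the ${}_3\phi_2$ tractable is $(q^{3/2},-q^{3/2};q)_n=(q^3;q^2)_n=(q;q^2)_{n+1}/(1-q)$, so the left side of Theorem \ref{thm-ab-2-2} collapses to $\sum_{n\ge 0}\frac{(q/a,q/b;q)_n}{(q^3;q^2)_n}(ab)^n$ up to the given infinite-product prefactor --- exactly the form matching the left-hand sides of the corollary after specialization.

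For \eqref{2-2-cor-1}, I will take $(a,b)\to(0,0)$: each of $(q/a;q)_n$ and $(q/b;q)_n$ contributes $(-1)^n (ab)^{-n/2\cdot 2}q^{n(n+1)/2}$-type asymptotics, so the summand tends to $q^{n(n+1)}$ on the left and $(-1)^n q^{n(n+1)}$ on the right (the sign coming from $(-ab)^n$), the prefactor becomes $(q;q)_\infty$, and the common factor $(1-q)$ cancels. For \eqref{2-2-cor-2}, I take $(a,b)\to(0,1)$, so $(q/b;q)_n/(bq;q)_n=1$, the $a$-factor supplies $q^{n(n+1)/2}$, and the double sum on the right carries no alternating sign. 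For \eqref{2-2-cor-3-rho}, I take $(a,b)\to(0,-1)$: this produces the $(-q;q)_n$ factor on the left and leaves a prefactor $(q;q)_\infty/(-q;q)_\infty=J_1^2/J_2$, which I then transpose to the right-hand side as $(-q;q)_\infty/(q;q)_\infty$.

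The transition to \eqref{2-2-cor-1-H-pre}, \eqref{2-2-cor-2-simplify} and \eqref{2-2-cor-3-rho-H-pre} is the routine symmetrization recipe from Subsection \ref{subsec-build}: extend the $n$-sum via $n\mapsto -n-1$, change variables $(r,s)=(n+j,n-j-1)$, split by the parities of $r$ and $s$, and assemble as $f_{3,5,3}$ (with $\sg(r)$, when the Hecke sum has the alternating $(-1)^n$) or $\overline{f}_{1,2,1}$ (without $\sg(r)$, for \eqref{2-2-cor-2}). Applying \eqref{f-id-3} with $(a,b,c,x,y,q)\to(3,5,3,q^3,q^4,q)$ and $(1,2,1,q^2,q^3,q^2)$ then collapses the two $f_{a,b,c}$ terms into one, since $j(q^3;q^3)=j(q^2;q^2)=0$ and $f_{n,b,n}(x,y,q)=f_{n,b,n}(y,x,q)$ by symmetry, yielding \eqref{2-2-cor-1-H} and \eqref{2-2-cor-3-rho-H}.

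The Appell--Lerch expressions \eqref{2-2-cor-1-A} and \eqref{2-2-cor-3-rho-A} are then obtained by applying Theorem \ref{thm-fg-2} with $n=3$ (which produces the $\Theta_{3,2}$ error term and four $m$-summands from $g_{3,5,3}$) and Theorem \ref{thm-fg-1} with $(n,p)=(1,1)$, respectively, followed by Lemma \ref{lem-m-prop} to normalize the $z$-variables. The main technical obstacle will be the further simplification \eqref{2-2-cor-1-A-simplify}: the four $m$-terms from Theorem \ref{thm-fg-2} occur in pairs differing only by a shift $z\to q^{-1}z^{-1}$, and merging each pair into a single $m$-term plus a theta-quotient residue requires Lemma \ref{lem-m-minus} and the delicate theta-function manipulations of the type used in \cite{Garvan-Liang}. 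This yields the two explicit quotients in $J_{48}$ and $\overline{J}_{*,48}$ displayed in \eqref{2-2-cor-1-A-simplify}, and mirrors the reduction used for \eqref{2-1-cor-1-final} in the third-order case.
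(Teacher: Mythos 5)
Your proposal is correct and follows essentially the same route as the paper: the same specializations $(a,b)\to(0,0),(0,1),(0,-1)$ of Theorem \ref{thm-ab-2-2}, Theorem \ref{thm-fg-2} with $n=3$ and Theorem \ref{thm-fg-1} with $(n,p)=(1,1)$ for the Appell--Lerch forms, and Lemma \ref{lem-m-minus} applied to the pairs $m(-q^{16},q^{48},q^{\pm3})$ and $m(-q^{8},q^{48},q^{\pm3})$ to reach \eqref{2-2-cor-1-A-simplify}. The only cosmetic deviations are that you halve the two-term $f$-expressions via \eqref{f-id-3} together with $j(q^3;q^3)=0$ and the symmetry $f_{a,b,a}(x,y,q)=f_{a,b,a}(y,x,q)$ where the paper uses \eqref{f-id-2} (both yield $f_{3,5,3}(q^3,q^4,q)=-q^4f_{3,5,3}(q^8,q^7,q)$), plus two small slips in routine details: the symmetrizing substitution should be $(r,s)=(n+j,n-j)$ followed by the parity split, not $(n+j,n-j-1)$, and the paired $z$-arguments in \eqref{2-2-cor-1-A} differ by $z\to z^{-1}$ (i.e.\ $q^{3}$ versus $q^{-3}$), not by $z\to q^{-1}z^{-1}$.
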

\begin{proof}
Taking $(a,b)\rightarrow (0,0)$, $(0,1)$  and $(0,-1)$ in Theorem \ref{thm-ab-2-2}, we obtain \eqref{2-2-cor-1}, \eqref{2-2-cor-2} and \eqref{2-2-cor-3-rho}, respectively.

Setting $(a,b,c,x,y,q)\rightarrow (3,5,3,q^3,q^4,q)$ in \eqref{f-id-2}, we get \eqref{2-2-cor-1-H} from \eqref{2-2-cor-1-H-pre}. Similarly, setting $(a,b,c,x,y,q)\rightarrow (1,2,1,q^2,q^3,q^2)$ in \eqref{f-id-2}, we get \eqref{2-2-cor-3-rho-H} from \eqref{2-2-cor-3-rho-H-pre}.

The expression \eqref{2-2-cor-1-A} follows from \eqref{2-2-cor-1-H} and Theorem \ref{thm-fg-2} with $n=3$. Similarly, \eqref{2-2-cor-3-rho-A} follows from \eqref{2-2-cor-3-rho-H} and Theorem \ref{thm-fg-1} with $(n,p)=(1,1)$.

Taking $(x,q,z_1,z_0)\rightarrow (-q^{16},q^{48},q^3,q^{-3})$ in Lemma \ref{lem-m-minus}, we obtain
\begin{align}
m(-q^{16},q^{48},q^3)-m(-q^{16},q^{48},q^{-3})=q^{-3}\frac{J_{48}^3J_{6,48}\overline{J}_{16,48}}{J_{-3,48}J_{3,48}\overline{J}_{13,48}\overline{J}_{19,48}}. \label{revise-Cor4.8-1}
\end{align}
Taking $(x,q,z_1,z_0)\rightarrow (-q^8,q^{48},q^3,q^{-3})$ in Lemma \ref{lem-m-minus}, we obtain
\begin{align}
m(-q^8,q^{48},q^3)-m(-q^8,q^{48},q^{-3})=q^{-3}\frac{J_{48}^3J_{6,48}\overline{J}_{8,48}}{J_{-3,48}J_{3,48}\overline{J}_{5,48}\overline{J}_{11,48}}.\label{revise-Cor4.8-2}
\end{align}
Substituting \eqref{revise-Cor4.8-1} and \eqref{revise-Cor4.8-2} into \eqref{2-2-cor-1-A}, we obtain \eqref{2-2-cor-1-A-simplify}.
\end{proof}
\begin{rem}
(1) After replacing $q$ by $-q$, the left side of \eqref{2-2-cor-1} becomes the third order mock theta function $\nu^{(3)}(q)$. See \eqref{mock-3-6-defn}. Identity \eqref{2-2-cor-1} was also found by Mortenson \cite[Eq.\ (2.6)]{Mortenson-2013}. See \eqref{Mortenson-nu}.
From \cite[Eq.\ 5.9]{Hickerson-Mortenson} we find
\begin{align}\label{HM-nu3-m}
\nu^{(3)}(q)=2q^{-1}m(q^2,q^{12},-q^3)+\frac{J_1J_{3,12}}{J_2}.
\end{align}
Taking $(x,q,z,z',n)\rightarrow (q^2,q^{12},-q^3,-q^3,2)$ in Lemma \ref{lem-m-decompose} and using the method in \cite{Garvan-Liang} to prove theta function identities, we see that \eqref{HM-nu3-m} is equivalent to \eqref{2-2-cor-1-A-simplify}.\\
(2) The left side of \eqref{2-2-cor-3-rho} is the sixth order mock theta function $\rho^{(6)}(q)$. See \eqref{mock-6-3-defn}. From \cite[Eq.\ (5.25)]{Hickerson-Mortenson} we find
\begin{align}\label{HM-rho6-m}
\rho^{(6)}(q)=-q^{-1}m(1,q^6,q).
\end{align}
Taking $(x,q,z_1,z_0)\rightarrow (1,q^6,-1,q)$ in Lemma \ref{lem-m-minus}, we see that \eqref{HM-rho6-m} is equivalent to \eqref{2-2-cor-3-rho-A}.
\end{rem}

\subsection{Representations for $\mu^{(2)}(q)$}
\begin{theorem}\label{thm-ab-2-3}
For $\max\{|ab/q|, |a|, |b|\}<1$, we have
\begin{align}
&\frac{(q,ab/q;q)_\infty}{(a,b;q)_\infty}
{}_3\phi_2\bigg(\genfrac{}{}{0pt}{}{q/a,q/b,q}{-q,-q};q,ab/q\bigg) \nonumber\\
=&~ 1+4\sum_{n=1}^{\infty}\frac{1}{1+q^{n}}\frac{(q/a,q/b;q)_n}{(a,b;q)_n}(-ab)^nq^{(n^2-n)/2}.
\end{align}
\end{theorem}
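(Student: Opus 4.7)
\textbf{Proof plan for Theorem \ref{thm-ab-2-3}.}

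The strategy is to apply Theorem \ref{thm-main} with the specialization $(c,d,v)=(-q,-q,q)$ and take the limit $u\to 1$, since the infinite product $(q,ab/q;q)_\infty/(a,b;q)_\infty$ on the LHS is precisely the $u=1$ case of $(uq,uab/q;q)_\infty/(ua,ub;q)_\infty$. On the right side of \eqref{thm-main-eq}, the $n=0$ term must be handled separately (it has the $0/0$ form $(1-u)/(1-u)$), while for $n\geq 1$ one uses the limit
\begin{align*}
\lim_{u\to 1}\frac{(1-uq^{2n})(u;q)_n}{(1-u)(q;q)_n}
=\frac{(1-q^{2n})(q;q)_{n-1}}{(q;q)_n}=1+q^n.
\end{align*}
For $n=0$, the truncated factor $(1-u)/(1-u)$ limits to $1$, and the inner ${}_3\phi_2(1,u,q;-q,-q;q,q)$ tends to ${}_3\phi_2(1,1,q;-q,-q;q,q)=1$ because $(1;q)_k=0$ for $k\geq 1$. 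Thus the $n=0$ contribution is exactly the constant $1$ appearing on the RHS of the theorem.

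For $n\geq 1$ the main computation is to evaluate the inner truncated series
\begin{align*}
{}_3\phi_2\bigg(\genfrac{}{}{0pt}{}{q^{-n},q^n,q}{-q,-q};q,q\bigg).
\end{align*}
This does not directly match any of the lemmas of Section \ref{subsec-basic}, so I would proceed in two steps. First apply the Sears transformation \eqref{sears:32} with $(\alpha,\beta,c,d)=(1,q,-q,-q)$. A quick check shows that this brings the series into the form
\begin{align*}
q^{n(n+1)/2}\,\frac{(-1;q)_n}{(-q;q)_n}\,
{}_3\phi_2\bigg(\genfrac{}{}{0pt}{}{q^{-n},q^n,-1}{-q,-1};q,-q\bigg),
\end{align*}
and the $(-1;q)_k$ factor that appears in both the upper and lower parameters of the inner ${}_3\phi_2$ cancels, leaving a genuine ${}_2\phi_1(q^{-n},q^n;-q;q,-q)$. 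Second, invoke the $q$-Chu--Vandermonde summation ${}_2\phi_1(q^{-n},b;c;q,cq^n/b)=(c/b;q)_n/(c;q)_n$ with $b=q^n$, $c=-q$; the argument $cq^n/b=-q$ matches perfectly and yields $(-q^{1-n};q)_n/(-q;q)_n$.

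Using the standard simplifications $(-1;q)_n=2(-q;q)_{n-1}$ and $(-q^{1-n};q)_n=2q^{-n(n-1)/2}(-q;q)_{n-1}$ for $n\geq 1$, both ratios collapse to $2/(1+q^n)$ (up to the power of $q$), giving
\begin{align*}
{}_3\phi_2\bigg(\genfrac{}{}{0pt}{}{q^{-n},q^n,q}{-q,-q};q,q\bigg)=\frac{4q^n}{(1+q^n)^2}.
\end{align*}
Inserting this back, the explicit weight $(1+q^n)$ cancels one factor of $(1+q^n)$ in the denominator, and $q^{n(n-3)/2+n}=q^{n(n-1)/2}$, yielding the stated RHS $1+4\sum_{n\geq 1}\frac{1}{1+q^n}\frac{(q/a,q/b;q)_n}{(a,b;q)_n}(-ab)^nq^{n(n-1)/2}$.

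The main obstacle is the limit $u\to 1$: one must justify interchanging limit and summation (straightforward under the stated convergence hypotheses $\max\{|ab/q|,|a|,|b|\}<1$) and carefully extract the $n=0$ constant from the seemingly singular factor $(1-uq^{2n})(u;q)_n/(1-u)$. The secondary difficulty is the Sears-plus-Chu--Vandermonde chain used to evaluate the inner ${}_3\phi_2$; unlike the earlier proofs in this section, no single lemma from Section \ref{subsec-basic} delivers this closed form in one stroke.
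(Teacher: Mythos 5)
Your proof is correct and its overall architecture is exactly that of the paper: specialize Theorem \ref{thm-main} with $(c,d,u,v)\rightarrow(-q,-q,1,q)$, evaluate the terminated inner ${}_3\phi_2$ in closed form, and substitute back; your careful treatment of the $u\to 1$ limit (the $n=0$ term giving the constant $1$, the weight $(1-q^{2n})(q;q)_{n-1}/(q;q)_n=1+q^n$ for $n\geq 1$) is exactly what the paper's specialization implicitly performs. The one place you diverge is the evaluation of ${}_3\phi_2(q^{-n},q^n,q;-q,-q;q,q)$, and there your premise is slightly off: you assert that no single lemma of Section \ref{subsec-basic} delivers this closed form, but the series is balanced (the product of the denominator parameters is $q^2$, which is $q$ times the product of the numerator parameters), so the $q$-Pfaff--Saalsch\"utz summation \eqref{Pfaff} with $(a,b,c)\rightarrow(1,-1,-1)$ gives $\frac{(-1,-1;q)_n}{(-q,-q;q)_n}q^n=\frac{4q^n}{(1+q^n)^2}$ in one stroke — and this is precisely what the paper does. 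Your two-step route (Sears \eqref{sears:32} with $(\alpha,\beta,c,d)=(1,q,-q,-q)$, cancellation of the $(-1;q)_k$ factors, then $q$-Chu--Vandermonde with $b=q^n$, $c=-q$) is a valid derivation of the same value $4q^n/(1+q^n)^2$; indeed it is essentially a proof of the relevant case of Pfaff--Saalsch\"utz, so nothing is lost beyond a little efficiency. The final substitution and the cancellation of one factor of $1+q^n$ are handled correctly.
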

\begin{proof}
Taking $(c,d,u,v) \rightarrow (-q,-q,1,q)$ in Theorem \ref{thm-main}, we deduce that
\begin{align}
&\frac{(q,ab/q;q)_\infty}{(a,b;q)_\infty}
{}_3\phi_2\bigg(\genfrac{}{}{0pt}{}{q/a,q/b,q}{-q,-q};q,ab/q\bigg)\nonumber\\
=&1+\sum_{n=1}^{\infty}(1+q^{n})\frac{(q/a,q/b;q)_n}{(a,b;q)_n}(-ab)^nq^{(n^2-3n)/2}
{}_3\phi_2\bigg(\genfrac{}{}{0pt}{}{q^{-n},q^{n},q}{-q,-q};q,q\bigg). \label{ab-2-3-proof}
\end{align}
Taking $(a,b,c) \rightarrow (1,-1, -1)$  in \eqref{Pfaff}, we obtain
\begin{align}
{}_3\phi_2\bigg(\genfrac{}{}{0pt}{}{q^{-n}, q^{n},q}{-q,-q};q,q\bigg)=\frac{4q^{n}}{(1+q^{n})^2}. \label{mu-proof-2}
\end{align}
The theorem then follows after substituting \eqref{mu-proof-2} into \eqref{ab-2-3-proof}.
\end{proof}
\begin{corollary}
Identity \eqref{mock-2-3} holds. In addition, we have
\begin{align}
&\sum_{n=0}^\infty\frac{q^{n^2}}{(-q;q)_n^2}\nonumber \\
=&\frac{2}{(q;q)_\infty}\sum_{n=-\infty}^\infty \frac{(-1)^nq^{(3n^2+n)/2}}{1+q^{n}} \label{2-3-cor-1} \\
=&4m(-q,q^3,q^2)-\frac{J_3^4}{J_1J_6^2}, \label{2-3-cor-1-simplify} \\
&\sum_{n=0}^\infty\frac{q^{(n^2+n)/2}}{(1+q^{n})(-q;q)_n}=2\frac{(-q;q)_\infty}{(q;q)_\infty}\sum_{n=-\infty}^{\infty}\frac{(-1)^nq^{n^2+n}}{(1+q^{n})^2}, \label{2-3-cor-2} \\
&\sum_{n=0}^{\infty}\frac{q^{(n^2-n)/2}}{(-q;q)_n}=2. \label{2-3-cor-3-unusual}
\end{align}
\end{corollary}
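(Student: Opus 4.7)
The plan is to prove all four identities as specializations of Theorem \ref{thm-ab-2-3}, following the pattern of the preceding corollaries in this section. In each case I choose $(a,b)$ so that the ${}_3\phi_2$ on the left collapses to the desired Eulerian series, and then a symmetry $n\mapsto -n$ on the right converts $1+4\sum_{n\geq 1}[\cdot]$ into $2\sum_{n\in\mathbb{Z}}[\cdot]$, producing the stated Appell-Lerch form.

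For \eqref{mock-2-3}, I first replace $q$ by $q^2$ in Theorem \ref{thm-ab-2-3} and then specialize $(a,b)\to(q,0)$: the prefactor becomes $(q^2;q^2)_\infty/(q;q^2)_\infty$, and the ${}_3\phi_2$ collapses to the second Eulerian form of $\mu^{(2)}(q)$ in \eqref{mock-2-3-defn} because $(q^2/b;q^2)_n(b/q)^n \to (-1)^n q^{n^2}$ as $b\to 0$; the right-hand summand reduces to $q^{2n^2+n}/(1+q^{2n})$, which bilateralizes to the target. Identity \eqref{2-3-cor-1} comes from the direct specialization $(a,b)\to(0,0)$, which after dividing by the surviving prefactor $(q;q)_\infty$ turns the ${}_3\phi_2$ into $\sum_{n\geq 0} q^{n^2}/(-q;q)_n^2 = f^{(3)}(q)$. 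To upgrade to \eqref{2-3-cor-1-simplify}, I combine the two expressions $f^{(3)}(q) = 2m(-q,q^3,q) + 2m(-q,q^3,q^2) = 4m(-q,q^3,q) + J_{3,6}^2/J_1$ in \eqref{intro-f(q)-m} to eliminate $m(-q,q^3,q)$, obtaining $f^{(3)}(q) = 4m(-q,q^3,q^2) - J_{3,6}^2/J_1$; the identity $J_{3,6} = J_3^2/J_6$ then gives the stated form.

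Identity \eqref{2-3-cor-2} follows from $(a,b)\to(0,-q)$: the factor $(q/b;q)_n = (-1;q)_n = 2(-q;q)_{n-1}$ for $n\geq 1$ produces the $[(1+q^n)(-q;q)_n]^{-1}$ pattern on the left, while on the right it combines with the existing $(1+q^n)^{-1}$ from the theorem to square the denominator. For \eqref{2-3-cor-3-unusual} I specialize $(a,b)\to(-1,0)$; the left side becomes $\frac{(q;q)_\infty}{2(-q;q)_\infty}\sum_{n\geq 0} q^{(n^2-n)/2}/(-q;q)_n$, while on the right the summand reduces to $(-1)^n q^{n^2}/2$ for $n\geq 1$, so the total sum equals $\sum_{n\in\mathbb{Z}}(-1)^n q^{n^2} = (q;q)_\infty/(-q;q)_\infty$ by Jacobi's triple product, cancelling the prefactor and leaving the constant $2$. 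The main obstacle is the delicate bookkeeping of the limits $a,b\to 0$ in factors such as $(q/a;q)_n(-ab)^n$, where the asymptotic $(q/a;q)_n \sim (-1)^n a^{-n} q^{n(n+1)/2}$ must balance the vanishing powers of $a,b$ with correct signs; moreover $(a,b)=(-1,0)$ sits on the boundary of the region $|a|<1$ of Theorem \ref{thm-ab-2-3}, but since $|ab/q|=0$ and all denominators remain nonzero, the identity extends by continuity in $a$.
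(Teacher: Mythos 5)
Your proposal is correct and follows essentially the same route as the paper: the four identities are obtained from Theorem \ref{thm-ab-2-3} by the specializations $(q,a,b)\to(q^2,0,q)$, $(0,0)$, $(0,-q)$ and $(0,-1)$ (yours differ only by the $a\leftrightarrow b$ symmetry of the theorem), followed by the same bilateralization and, for \eqref{2-3-cor-3-unusual}, the evaluation $\sum_{n}(-1)^nq^{n^2}=(q;q)_\infty/(-q;q)_\infty$. Your derivation of \eqref{2-3-cor-1-simplify} by eliminating $m(-q,q^3,q)$ between the two expressions in \eqref{intro-f(q)-m} is a harmless variant of the paper's appeal to Lemma \ref{lem-m-minus}, and your limit computations and the boundary remark about $|a|=1$ are all sound.
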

\begin{proof}
Taking $(q, a,b)\rightarrow (q^2, 0,q)$, $(a,b)\rightarrow (0,0)$ and $(-q, 0)$ in Theorem \ref{thm-ab-2-3}, we obtain \eqref{mock-2-3}, \eqref{2-3-cor-1} and \eqref{2-3-cor-2}, respectively.

Taking $(a,b)\rightarrow (0,-1)$ in Theorem \ref{thm-ab-2-3}, we obtain
\begin{align}
&\sum_{n=0}^{\infty}\frac{q^{(n^2-n)/2}}{(-q;q)_n}=2\frac{(-q;q)_\infty}{(q;q)_\infty}\sum_{n=-\infty}^{\infty}(-1)^nq^{n^2}. \label{add-cor5.3-eq}
\end{align}
Note that
\begin{align*}
\sum_{n=-\infty}^{\infty}(-1)^nq^{n^2}=\frac{(q;q)_\infty^2}{(q^2;q^2)_\infty}.
\end{align*}
Identity \eqref{2-3-cor-3-unusual} follows immediately from \eqref{add-cor5.3-eq} and the above identity.
\end{proof}
\begin{rem}
(1) The left side of \eqref{2-3-cor-1} is the third order mock theta function $f^{(3)}(q)$. See \eqref{mock-3-1-defn}. From \cite[Eq.\ (5.4)]{Hickerson-Mortenson} we find
\begin{align}\label{HM-f3-m}
f^{(3)}(q)=4m(-q,q^3,q)+\frac{J_{3,6}^2}{J_1}.
\end{align}
Taking $(x,q,z_1,z_0)\rightarrow (-q,q^3,q^2,q)$ in Lemma \ref{lem-m-minus}, we see that \eqref{HM-f3-m} is equivalent to \eqref{2-3-cor-1-simplify}. \\
(2) The curious identity \eqref{2-3-cor-3-unusual} appeared in the work of Andrews \cite[Eq.\ (4.4)]{Andrews-GJM}, where it was proved combinatorially. Our proof of this identity is new. \\
(3) It seems that \eqref{2-3-cor-2} cannot be expressed as a nice form in terms of $m(x,q,z)$.
\end{rem}

\section{Mock theta functions of order 3}\label{sec-order-3}
In his last letter to Hardy and lost notebook \cite{lostnotebook}, Ramanujan gave seven mock theta functions of order 3. They are defined as
\begin{align}
&f^{(3)}(q):=\sum_{n=0}^{\infty}\frac{q^{n^2}}{(-q;q)_{n}^2}, \label{mock-3-1-defn} \\
&\phi^{(3)}(q):=\sum_{n=0}^{\infty}\frac{q^{n^2}}{(-q^2;q^2)_{n}},  \label{mock-3-2-defn} \\
&\psi^{(3)}(q):=\sum_{n=1}^{\infty}\frac{q^{n^2}}{(q;q^2)_{n}}, \label{mock-3-3-defn} \\
&\chi^{(3)}(q):=\sum_{n=0}^{\infty}\frac{q^{n^2}(-q;q)_{n}}{(-q^3;q^3)_{n}}, \label{mock-3-4-defn} \\
&\omega^{(3)}(q):=\sum_{n=0}^{\infty}\frac{q^{2n(n+1)}}{(q;q^2)_{n+1}^2}, \label{mock-3-5-defn} \\
&\nu^{(3)}(q):=\sum_{n=0}^{\infty}\frac{q^{n(n+1)}}{(-q;q^2)_{n+1}}, \label{mock-3-6-defn}\\
&\rho^{(3)}(q):=\sum_{n=0}^{\infty}\frac{q^{2n(n+1)}(q;q^2)_{n+1}}{(q^3;q^6)_{n+1}}\label{mock-3-7-defn}.
\end{align}
Watson \cite{Watson} provided Appell-Lerch series representations for these functions as follows.
\begin{theorem}\label{thm-ord-3}
We have
\begin{align}
&f^{(3)}(q)=\frac{2}{(q;q)_{\infty}}\sum_{n=-\infty}^{\infty}\frac{(-1)^nq^{n(3n+1)/2}}{1+q^n}, \label{mock-3-1} \\
&\phi^{(3)}(q)=\frac{1}{(q;q)_{\infty}}\sum_{n=-\infty}^{\infty}\frac{(-1)^n(1+q^n)q^{n(3n+1)/2}}{1+q^{2n}}, \label{mock-3-2} \\
&\psi^{(3)}(q)=\frac{q}{(q^4;q^4)_{\infty}}\sum_{n=-\infty}^{\infty}\frac{(-1)^nq^{6n(n+1)}}{1-q^{4n+1}}, \label{mock-3-3} \\
&\chi^{(3)}(q)=\frac{1}{2(q;q)_{\infty}}\sum_{n=-\infty}^{\infty}\frac{(-1)^n(1+q^n)q^{n(3n+1)/2}}{1-q^n+q^{2n}}, \label{mock-3-4} \\
&\omega^{(3)}(q)=\frac{1}{(q^2;q^2)_{\infty}}\sum_{n=0}^{\infty}(-1)^nq^{3n(n+1)}\frac{1+q^{2n+1}}{1-q^{2n+1}}, \label{mock-3-5} \\
&\nu^{(3)}(q)=\frac{1}{(q;q)_{\infty}}\sum_{n=0}^{\infty}(-1)^nq^{3n(n+1)/2}\frac{1-q^{2n+1}}{1+q^{2n+1}}, \label{mock-3-6}\\
&\rho^{(3)}(q)=\frac{1}{(q^2;q^2)_{\infty}}\sum_{n=0}^{\infty}(-1)^n(1-q^{4n+2})\frac{q^{3n^2+3n}}{1+q^{2n+1}+q^{4n+2}}. \label{mock-3-7}
\end{align}
\end{theorem}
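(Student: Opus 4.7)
My plan is to derive each of the seven identities in Theorem \ref{thm-ord-3} as a specialization of an $(a,b)$-parameterized identity built from Theorem \ref{thm-main}, exactly paralleling the order-2 treatment in Section \ref{sec-order-2}. For a given mock theta function I will choose $(c,d,u,v)$ in Theorem \ref{thm-main} so that (i) the Eulerian defining series appears on the left after sending $(a,b)$ to a simple limit such as $(0,0)$, $(0,\pm1)$ or a pair of imaginary conjugates, and (ii) the terminated ${}_3\phi_2$ on the right collapses in closed form via one of Lemmas \ref{meq:1}--\ref{meq:3} or the Pfaff--Saalsch\"utz sum \eqref{Pfaff}, leaving a single rational factor that matches Watson's Appell-Lerch summand.

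Two cases are already in hand from the preceding text: \eqref{mock-3-1} for $f^{(3)}$ is exactly \eqref{2-3-cor-1}, obtained from Theorem \ref{thm-ab-2-3} at $(a,b)\to(0,0)$, and \eqref{mock-3-5} for $\omega^{(3)}$ is exactly \eqref{2-1-cor-3}, obtained from Theorem \ref{thm-ab-2-1-1st}. For $\psi^{(3)}$ and $\nu^{(3)}$ the Hecke-type companions \eqref{2-1-cor-1} and \eqref{2-2-cor-1} are derived, so only the Appell-Lerch conversion remains; the cleanest route is to rebuild each underlying parameterized identity using \eqref{Pfaff} in place of Lemma \ref{meq:1}, so that the inner ${}_3\phi_2$ simplifies directly to the factor $1/(1-q^{4n+1})$ for $\psi^{(3)}$ and $(1-q^{2n+1})/(1+q^{2n+1})$ for $\nu^{(3)}$, yielding \eqref{mock-3-3} and \eqref{mock-3-6} in one step. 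The three remaining functions $\phi^{(3)}$, $\chi^{(3)}$, $\rho^{(3)}$ require fresh parameterized identities whose $(c,d)$ are dictated by the cyclotomic denominators in Watson's formulas: the natural choices are $(c,d)=(iq,-iq)$ for $\phi^{(3)}$, since $(1-iq^n)(1+iq^n)=1+q^{2n}$; $(c,d)=(\zeta_6 q,\zeta_6^{-1}q)$ for $\chi^{(3)}$, since $\zeta_6+\zeta_6^{-1}=1$ gives $(1-\zeta_6 q^n)(1-\zeta_6^{-1}q^n)=1-q^n+q^{2n}$; and the analogous third-root-of-unity choice in base $q^2$ for $\rho^{(3)}$, producing $1+q^{2n+1}+q^{4n+2}=(1+\zeta_3 q^{2n+1})(1+\zeta_3^{-1} q^{2n+1})$. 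In each of these three, specializing $(a,b)\to(0,0)$ matches the Eulerian series on the left.

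The main obstacle is the cyclotomic evaluation for $\chi^{(3)}$ and $\rho^{(3)}$: with complex $(c,d)$, the intermediate summands after applying \eqref{Pfaff} are non-real, and one must verify that conjugate pairs recombine into the real, symmetric numerators $(1+q^n)$ appearing in \eqref{mock-3-4} and $(1-q^{4n+2})$ in \eqref{mock-3-7}. A secondary, more mechanical issue arises for $\phi^{(3)}$: the Eulerian form $\sum q^{n^2}/(-q^2;q^2)_n$ calls for the base-$q^2$ version of Theorem \ref{thm-main}, so I must first rescale $q\to q^2$, then track which limit of $(a,b)$ produces exactly $(-q^2;q^2)_n$ in the denominator without picking up extraneous Pochhammer factors that would corrupt the Appell-Lerch shape of the right-hand side.
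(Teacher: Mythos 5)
Your strategy is essentially the one the paper follows for six of the seven identities: specialize Theorem \ref{thm-main} with cyclotomic denominator parameters, evaluate the terminated ${}_3\phi_2$ by the $q$-Pfaff--Saalsch\"utz sum \eqref{Pfaff}, and let $(a,b)\to(0,0)$. Your choices match the paper's: $(c,d)=(iq,-iq)$ for $\phi^{(3)}$ (Theorem \ref{thm-ab-3-2}), $(c,d)=(-\zeta_3q,-\zeta_3^2q)=(\zeta_6^{-1}q,\zeta_6q)$ for $\chi^{(3)}$ (Theorem \ref{thm-ab-3-4}), $(c,d)=(iq^{3/2},-iq^{3/2})$ for $\nu^{(3)}$ (Theorem \ref{thm-ab-3-6}), and $(c,d)=(\zeta_3q^3,\zeta_3^2q^3)$ in base $q^2$ for $\rho^{(3)}$ (Theorem \ref{thm-ab-3-7}); $f^{(3)}$ and $\omega^{(3)}$ are indeed \eqref{2-3-cor-1} and \eqref{2-1-cor-3}. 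Two of your stated worries are non-issues: Pfaff--Saalsch\"utz delivers the real rational factor $q^n/(1-q^n+q^{2n})$ directly, so no recombination of conjugate summands is required; and $\phi^{(3)}$ needs no passage to base $q^2$, since $(-q^2;q^2)_n=(iq;q)_n(-iq;q)_n$ is realized by the two base-$q$ denominator parameters. One small error: $(1+\zeta_3x)(1+\zeta_3^{-1}x)=1-x+x^2$, so the factorization you want for $\rho^{(3)}$ is $1+q^{2n+1}+q^{4n+2}=(1-\zeta_3q^{2n+1})(1-\zeta_3^2q^{2n+1})$, forcing $(c,d)=(\zeta_3q^3,\zeta_3^2q^3)$ without the minus signs.

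The genuine gap is $\psi^{(3)}$. Its Eulerian form $\sum_{n\ge1}q^{n^2}/(q;q^2)_n$ is not in a shape to which Theorem \ref{thm-main} applies so as to produce the base-$q^4$ Appell--Lerch sum in \eqref{mock-3-3}: no choice of $(c,d,u,v)$ puts $(q;q^2)_n$ in the denominator while the outer sum runs over $q^{6n(n+1)}$ with pole $1-q^{4n+1}$. The paper first performs an even/odd-index splitting of the defining series to prove
\begin{align*}
\psi^{(3)}(q)=-1+\frac{1}{1-q}\sum_{n=0}^{\infty}\frac{q^{4n^2}}{(q^3,q^5;q^4)_n}=qg(q,q^4),
\end{align*}
and only then applies the parameterized identity (Theorem \ref{thm-ab-3-3-add}, with Pfaff--Saalsch\"utz giving \eqref{g-2}) to the universal mock theta function $g(x,q)$. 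Moreover, the resulting summand has the two-pole factor $\bigl((1-xq^n)(1-q^{n+1}/x)\bigr)^{-1}$, not the single pole $1/(1-q^{4n+1})$ of Watson's formula; a further partial-fraction rearrangement into a bilateral sum (the passage from \eqref{g-final} to \eqref{g-simple}) is needed. Your plan omits both the reduction to $g(q,q^4)$ and this rearrangement, so the claimed one-step derivation of \eqref{mock-3-3} would not go through as written.
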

We will provide new proofs for these formulas by establishing more general $(a,b)$-parameterized identities. As byproducts, we find a new Hecke-type series representation for $\psi^{(3)}(q)$ (see \eqref{3-3-cor-psi}), which is different from the following two given by Andrews \cite[Eq.\ (1.10)]{A12} and Mortenson \cite[Eq.\ (2.5)]{Mortenson-2013}, respectively:
\begin{align}
\psi^{(3)}(q)&=\frac{1}{(q;q)_\infty}\sum_{n=0}^\infty (-1)^nq^{2n^2+n}(1-q^{6n+6})\sum_{j=0}^nq^{-(j+1)j/2}-1, \label{Andrews-psi} \\
\psi^{(3)}(q)&=\frac{1}{2(q;q)_\infty}\sum_{n=0}^\infty (-1)^nq^{2n^2+n}(1+q^{2n+1})\sum_{j=-n}^{n}q^{-\binom{j+1}{2}}-\frac{1}{2}. \label{Mortenson-psi}
\end{align}
Applying routine arguments, we can show that these two representations are equivalent. In fact, both of them can be expressed directly as
\begin{align}
\psi^{(3}(q)=-\frac{1}{2}+\frac{1}{2J_1}\left(f_{3,5,3}(q^2,q^3,q)-q^3f_{3,5,3}(q^6,q^7,q) \right). \label{A-M-psi-3-equivalent}
\end{align}
In contrast, our new representation for $\psi^{(3)}(q)$ has different form in $f_{a,b,c}(x,y,q)$ (see \eqref{3-3-cor-psi-H-pre}). However, after converting \eqref{A-M-psi-3-equivalent} and our expression to forms in $m(x,q,z)$ and using properties of $m(x,q,z)$ we can show they can be deduced from each other.

Mortenson \cite[Eq.\ (2.6)]{Mortenson-2013} also found that
\begin{align}
\nu^{(3)}(-q)=\frac{1}{(q;q)_\infty}\sum_{n=0}^\infty (-1)^nq^{2n^2+2n}\sum_{j=-n}^nq^{-\binom{j+1}{2}}, \label{Mortenson-nu}
\end{align}
which coincides with \eqref{2-2-cor-1}. We will provide a different Hecke-type series representation for $\nu^{(3)}(q)$ in \eqref{6-2-cor-2-nu}, and we will also show that it is equivalent to \eqref{Mortenson-nu} using their forms in $m(x,q,z)$.

It is worthy mention that Garvan \cite{Garvan-2015} established two-variable Hecke-Rogers identities for three universal mock theta functions. As a direct consequence of one of these identities, he showed that each of Ramanujan's third-order mock theta function has a Hecke-type series representation. See \cite[Theorem 1.1, Corollary 1.2]{Garvan-2015} for details.
\subsection{Representations for $f^{(3)}(q)$}
Identity \eqref{mock-3-1} has  been established in \eqref{2-3-cor-1}.

\subsection{Representations for $\phi^{(3)}(q)$}
\begin{theorem}\label{thm-ab-3-2}
For $\max\{|ab/q|, |a|, |b|\}<1$, we have
\begin{align}
&\frac{(q,ab/q;q)_\infty}{(a,b;q)_\infty}
{}_3\phi_2\bigg(\genfrac{}{}{0pt}{}{q/a,q/b,q}{iq,-iq};q,ab/q\bigg)\\
\nonumber
=& 1+2\sum_{n=1}^{\infty}\frac{1+q^n}{1+q^{2n}}\frac{(q/a,q/b;q)_n}{(a,b;q)_n}(-ab)^nq^{(n^2-n)/2}.
\end{align}
\end{theorem}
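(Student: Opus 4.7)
The plan closely parallels the proof of Theorem \ref{thm-ab-2-3}: we specialize Theorem \ref{thm-main} and then evaluate the resulting terminated ${}_3\phi_2$ in closed form via the $q$-Pfaff--Saalsch\"utz formula.

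First, I would set $(c,d,u,v)\to(iq,-iq,1,q)$ in Theorem \ref{thm-main}. As in the proof of Theorem \ref{thm-ab-2-3}, this requires interpreting the factor $(1-uq^{2n})(u;q)_n/(1-u)$ at $u=1$ as a limit. The $n=0$ summand then contributes $1$, while for $n\geq 1$ one has
\begin{align*}
\lim_{u\to 1}\frac{(1-uq^{2n})(u;q)_n}{(1-u)(q;q)_n}=\frac{(1-q^{2n})(q;q)_{n-1}}{(q;q)_n}=1+q^n.
\end{align*}
Hence Theorem \ref{thm-main} yields
\begin{align*}
&\frac{(q,ab/q;q)_\infty}{(a,b;q)_\infty}
{}_3\phi_2\bigg(\genfrac{}{}{0pt}{}{q/a,q/b,q}{iq,-iq};q,\tfrac{ab}{q}\bigg)\\
&\qquad=1+\sum_{n=1}^{\infty}(1+q^n)\frac{(q/a,q/b;q)_n}{(a,b;q)_n}(-ab)^n q^{(n^2-3n)/2}\,
{}_3\phi_2\bigg(\genfrac{}{}{0pt}{}{q^{-n},q^n,q}{iq,-iq};q,q\bigg).
\end{align*}

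Next, I would evaluate the inner terminated ${}_3\phi_2$ via the $q$-Pfaff--Saalsch\"utz formula \eqref{Pfaff} with $(a,b,c)\to(1,i,-i)$, noting that $aq/b=-iq$, $aq/c=iq$, and $aq/(bc)=q$, so the series is balanced. This gives
\begin{align*}
{}_3\phi_2\bigg(\genfrac{}{}{0pt}{}{q^{-n},q^n,q}{iq,-iq};q,q\bigg)
=\frac{(i,-i;q)_n}{(iq,-iq;q)_n}\,q^n
=\frac{2q^n}{1+q^{2n}},
\end{align*}
where the last step uses $(i,-i;q)_n=\prod_{k=0}^{n-1}(1+q^{2k})$ and $(iq,-iq;q)_n=\prod_{k=1}^{n}(1+q^{2k})$, so that the entire product telescopes to $2/(1+q^{2n})$.

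Substituting this back produces exactly the right-hand side of the theorem, since $q^{(n^2-3n)/2}\cdot q^n=q^{(n^2-n)/2}$. There is no real obstacle here; the only minor subtlety is the $u\to 1$ limit, handled identically to the proof of Theorem \ref{thm-ab-2-3}, and the recognition that the inner ${}_3\phi_2$ is Saalsch\"utzian with the pleasant choice $(b,c)=(i,-i)$ designed precisely to generate the denominator $1+q^{2n}$.
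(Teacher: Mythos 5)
Your proposal is correct and follows exactly the paper's own route: specialize Theorem \ref{thm-main} with $(c,d,u,v)\to(iq,-iq,1,q)$ and evaluate the terminated ${}_3\phi_2$ by the $q$-Pfaff--Saalsch\"utz formula with $(a,b,c)=(1,i,-i)$, giving $2q^n/(1+q^{2n})$. The extra details you supply (the $u\to1$ limit producing $1+q^n$ and the telescoping of $(i,-i;q)_n/(iq,-iq;q)_n$) are accurate and consistent with what the paper leaves implicit.
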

\begin{proof}
Taking $(c,d,u,v)\rightarrow (iq,-iq,1,q)$ in Theorem \ref{thm-main}, we deduce that
\begin{align}
&\frac{(q,ab/q;q)_\infty}{(a,b;q)_\infty}
{}_3\phi_2\bigg(\genfrac{}{}{0pt}{}{q/a,q/b,q}{iq,-iq};q,ab/q\bigg) \nonumber \\
=& 1+\sum_{n=1}^{\infty}(1+q^{n})\frac{(q/a,q/b;q)_n}{(a,b;q)_n}(-ab)^nq^{(n^2-3n)/2}
{}_3\phi_2\bigg(\genfrac{}{}{0pt}{}{q^{-n},q^{n},q}{iq,-iq};q,q\bigg). \label{ab-3-2-proof-1}
\end{align}
Setting $(a,b,c)=(1,i,-i)$ in \eqref{Pfaff}, we obtain
\begin{align}
{}_3\phi_2\bigg(\genfrac{}{}{0pt}{}{q^{-n}, q^{n},q}{iq,-iq};q,q\bigg)=\frac{2q^n}{1+q^{2n}}. \label{phi-proof-2}
\end{align}
Substituting \eqref{phi-proof-2} into \eqref{ab-3-2-proof-1}, we obtain the theorem.
\end{proof}

\begin{corollary}
Identity \eqref{mock-3-2} holds. In addition, we have
\begin{align}
&\sum_{n=0}^{\infty}\frac{(-q;q)_nq^{(n^2-n)/2}}{(-q^2;q^2)_n}\nonumber \\
=&\frac{(-q;q)_\infty}{(q;q)_\infty}\sum_{n=-\infty}^{\infty}\frac{(1+q^n)^2}{1+q^{2n}}(-1)^nq^{n^2} \label{3-2-cor-1} \\
=&1+\frac{J_2^5}{J_1^2J_4^2}, \label{3-2-cor-1-simplify} \\
&\sum_{n=0}^{\infty}\frac{q^{(n^2+n)/2}(-1;q)_n}{(-q^2;q^2)_n}\nonumber \\
=&2\frac{(-q;q)_\infty}{(q;q)_\infty}\sum_{n=-\infty}^{\infty}\frac{(-1)^nq^{n^2+n}}{1+q^{2n}} \label{3-2-cor-2} \\
=&\frac{J_2^5}{J_1^2J_4^2}, \label{3-2-cor-2-simplify} \\
&\sum_{n=0}^\infty \frac{q^{n^2}(-q;q^2)_n}{(-q^4;q^4)_n} \nonumber \\ =&\frac{(-q;q^2)_{\infty}}{(q^2;q^2)_{\infty}}\sum_{n=-\infty}^{\infty}\frac{1+q^{2n}}{1+q^{4n}}(-1)^{n}q^{2n^2+n} \label{3-2-cor-3} \\
=&m(-q,q^4,q^{-1})-q^{-1}m(-q^{-1},q^4,q)=2m(-q,q^4,q^3). \label{3-2-cor-3-simplify}
\end{align}
\end{corollary}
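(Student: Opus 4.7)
The plan is to derive each of the four stated identities as a specialization of Theorem \ref{thm-ab-3-2}, and then to simplify the resulting one-sided sums into the bilateral Appell-Lerch forms and their product or $m$-function evaluations. The key observation is that the factor $(1+q^n)/(1+q^{2n})$ appearing on the right side of Theorem \ref{thm-ab-3-2} is symmetric in the right way to make each resulting summand invariant under $n\mapsto -n$, so a one-sided sum automatically becomes half of a bilateral sum.

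Concretely, first I would specialize the parameters $(a,b)$ in Theorem \ref{thm-ab-3-2} as follows. Taking $(a,b)\to(0,0)$ yields \eqref{mock-3-2}: the ${}_{3}\phi_2$ on the left collapses, via $(q/a;q)_n\sim(-q/a)^nq^{\binom{n}{2}}$, to $\sum_{n\geq 0}q^{n^2}/(-q^2;q^2)_n = \phi^{(3)}(q)$, while the right side becomes $1+2\sum_{n\geq 1}\frac{(1+q^n)(-1)^nq^{(3n^2+n)/2}}{1+q^{2n}}$. Next, taking $(a,b)\to(-1,0)$ gives \eqref{3-2-cor-1} since $(q/a;q)_n=(-q;q)_n$ and the limit in $b$ produces the desired $q^{(n^2-n)/2}$; the ratio $(-q;q)_n/(-1;q)_n=(1+q^n)/2$ then combines with the prefactor $(1+q^n)/(1+q^{2n})$ to produce $(1+q^n)^2/(1+q^{2n})$. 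Taking $(a,b)\to(-q,0)$ gives \eqref{3-2-cor-2} in a similar way, with $(-1;q)_n/(-q;q)_n=2/(1+q^n)$ now cancelling the numerator factor. Finally, replacing $q$ by $q^2$ in Theorem \ref{thm-ab-3-2} and then taking $(a,b)\to(0,-q)$ gives \eqref{3-2-cor-3}, where $(q^2/b;q^2)_n=(-q;q^2)_n$ and $(-q^2;q^2)_n$ gets upgraded to $(-q^4;q^4)_n$.

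To pass to bilateral sums in each case, I would verify that the summand on the right is invariant under $n\mapsto -n$, using that $(1+q^{-n})/(1+q^{-2n})=q^n(1+q^n)/(1+q^{2n})$ and that each exponent of the form $n(3n+1)/2$, $n^2$, or $2n^2+n$ shifts by an amount that is absorbed by this factor. The bilateral sum then equals $2\sum_{n\geq 0}-\bigl(\text{the }n{=}0\text{ term}\bigr)$, which matches the constant term ``$1$'' coming from Theorem \ref{thm-ab-3-2}. This reproduces the forms \eqref{mock-3-2}, \eqref{3-2-cor-1}, \eqref{3-2-cor-2}, and \eqref{3-2-cor-3}.

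The remaining step is the closed-form simplifications. For \eqref{3-2-cor-1-simplify}, write $(1+q^n)^2/(1+q^{2n})=1+2q^n/(1+q^{2n})$ and split the bilateral sum: the first piece is $\sum_n(-1)^nq^{n^2}=J_1^2/J_2$ by Jacobi's triple product, while the second is evaluated by setting $q\mapsto q^2$, $z=-1$ in \eqref{reciprocal-Jacobi}, giving $J_2^4/J_4^2$; multiplying by $(-q;q)_\infty/(q;q)_\infty=J_2/J_1^2$ yields $1+J_2^5/(J_1^2J_4^2)$. The identity \eqref{3-2-cor-2-simplify} is the second piece of this calculation on its own. For \eqref{3-2-cor-3-simplify}, I would split $(1+q^{2n})/(1+q^{4n})$ into $1/(1+q^{4n})+q^{2n}/(1+q^{4n})$ and recognize the resulting two bilateral sums as $m(-q,q^4,q^{-1})$ and $-q^{-1}m(-q^{-1},q^4,q)$ respectively. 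The equivalence $m(-q,q^4,q^{-1})-q^{-1}m(-q^{-1},q^4,q)=2m(-q,q^4,q^3)$ then follows from \eqref{m-id-2} (which gives $m(-q^{-1},q^4,q)=-q\,m(-q,q^4,q^{-1})$) combined with \eqref{m-id-1} ($m(-q,q^4,q^{-1})=m(-q,q^4,q^3)$). The most delicate point throughout is book-keeping the constant term ``$1$'' against the $n=0$ contribution when symmetrizing; the rest is a routine application of the classical theta identities and Lemmas \ref{lem-m-prop}, \ref{lem-m-minus} already recorded in Section \ref{subsec-build}.
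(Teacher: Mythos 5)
Your proposal is correct and follows essentially the same route as the paper: the same specializations of Theorem \ref{thm-ab-3-2} (up to the harmless swap of $a$ and $b$), the same symmetrization of the one-sided sums into bilateral ones, and the same use of \eqref{reciprocal-Jacobi} with $(q,z)\to(q^2,-1)$ together with the split $(1+q^n)^2/(1+q^{2n})=1+2q^n/(1+q^{2n})$ for the product evaluations. Your explicit derivation of \eqref{3-2-cor-3-simplify} via \eqref{m-id-1} and \eqref{m-id-2} correctly supplies a step the paper leaves as routine.
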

\begin{proof}
Taking $(a,b)\rightarrow (0,0)$, $(0, -1)$ and $(0, -q)$ in Theorem \ref{thm-ab-3-2}, we obtain \eqref{mock-3-2}, \eqref{3-2-cor-1} and \eqref{3-2-cor-2}, respectively.

Taking $(q,a,b)\rightarrow (q^2, 0, -q)$ in Theorem \ref{thm-ab-3-2}, we obtain \eqref{3-2-cor-3}.

Taking $(q,z)\rightarrow (q^2,-1)$ in \eqref{reciprocal-Jacobi}, we obtain
\begin{align}
\sum_{n=-\infty}^\infty \frac{(-1)^nq^{n^2+n}}{1+q^{2n}}=\frac{J_2^4}{2J_4^2}. \label{reciprocal-apply-1}
\end{align}
Substituting this identity into \eqref{3-2-cor-1} (resp.\ \eqref{3-2-cor-2}), we get \eqref{3-2-cor-1-simplify} (resp.\ \eqref{3-2-cor-2-simplify}).
\end{proof}
\begin{rem}
The left side of \eqref{3-2-cor-3} is the eighth order mock theta function $U_0^{(8)}(q)$. See \eqref{mock-8-5-defn}. From \cite[Eq.\ (5.39)]{Hickerson-Mortenson} we find
\begin{align}\label{HM-U08-m}
U_0^{(8)}(q)=2m(-q,q^4,-1).
\end{align}
Taking $(x,q,z_1,z_0)\rightarrow (-q,q^4,q^3,-1)$ in Lemma \ref{lem-m-minus}, we see that \eqref{HM-U08-m} is equivalent to \eqref{3-2-cor-3-simplify}.
\end{rem}

\subsection{Representations for $\psi^{(3)}(q)$}
\begin{theorem}\label{thm-ab-3-3-add}
For $\max \{|ab|, |qa|, |qb|\}<1$ and $x$ being neither zero nor an integral powers of $q$, we have
\begin{align}
&\frac{(q,ab;q)_\infty}{(aq,bq;q)_\infty}{}_3\phi_2\bigg(\genfrac{}{}{0pt}{}{q/a,q/b,q}{xq,q^2/x};q,ab\bigg)\nonumber \\
=&\sum_{n=0}^\infty(1-q^{2n+1})\frac{(1-x)(1-q/x)}{(1-xq^n)(1-q^{n+1}/x)}\frac{(q/a,q/b;q)_n}{(aq,bq;q)_n}(-ab)^nq^{(n^2+n)/2}.
\end{align}
\end{theorem}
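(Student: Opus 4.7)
The plan is to derive Theorem~\ref{thm-ab-3-3-add} by the same two-step strategy used throughout Section~\ref{sec:proof}: first specialize the master formula in Theorem~\ref{thm-main} so that the outer ${}_3\phi_2$ on the left takes the required shape, and then evaluate the inner terminated ${}_3\phi_2(\cdot;q,q)$ in closed form using the $q$-Pfaff-Saalsch\"utz summation \eqref{Pfaff}.

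Concretely, I would set $(c,d,u,v)\to(xq,\,q^2/x,\,q,\,q)$ in Theorem~\ref{thm-main}. Since $(uq;q)_\infty=(q^2;q)_\infty=(q;q)_\infty/(1-q)$ while $(ua,ub;q)_\infty=(aq,bq;q)_\infty$, the prefactor on the left becomes $\tfrac{1}{1-q}\cdot\tfrac{(q,ab;q)_\infty}{(aq,bq;q)_\infty}$, and the outer series on the left is exactly the ${}_3\phi_2$ appearing in the statement. On the right, the factor $(u;q)_n/(q;q)_n=1$ collapses, $(1-uq^{2n})/(1-u)=(1-q^{2n+1})/(1-q)$, and $(-uab/q)^n q^{n(n-1)/2}=(-ab)^nq^{n(n-1)/2}$. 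Multiplying both sides by $1-q$ absorbs the stray denominator, so that the identity reduces to
\begin{align*}
&\frac{(q,ab;q)_\infty}{(aq,bq;q)_\infty}\,{}_3\phi_2\!\bigg(\genfrac{}{}{0pt}{}{q/a,q/b,q}{xq,\,q^2/x};q,ab\bigg)\\
&\quad =\sum_{n=0}^\infty(1-q^{2n+1})\frac{(q/a,q/b;q)_n}{(aq,bq;q)_n}(-ab)^n q^{n(n-1)/2}\,{}_3\phi_2\!\bigg(\genfrac{}{}{0pt}{}{q^{-n},q^{n+1},q}{xq,\,q^2/x};q,q\bigg).
\end{align*}

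The second step is to evaluate the inner ${}_3\phi_2$. In \eqref{Pfaff} take $(a,b,c)\to(q,\,q/x,\,x)$: then $aq^n=q^{n+1}$, $aq/(bc)=q^2/(q/x\cdot x)=q$, and $\{aq/b,aq/c\}=\{xq,q^2/x\}$, matching the summand exactly. The $q$-Pfaff-Saalsch\"utz formula then gives
\begin{align*}
{}_3\phi_2\!\bigg(\genfrac{}{}{0pt}{}{q^{-n},q^{n+1},q}{xq,\,q^2/x};q,q\bigg)
=\frac{(q/x,\,x;q)_n}{(xq,\,q^2/x;q)_n}\,q^n
=\frac{(1-x)(1-q/x)}{(1-xq^n)(1-q^{n+1}/x)}\,q^n.
\end{align*}
Substituting this into the display above and combining $q^{n(n-1)/2}\cdot q^n = q^{(n^2+n)/2}$ yields the claimed identity.

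I do not anticipate any real obstacle; the key technical point is matching the Pfaff-Saalsch\"utz parameters so that the four possible poles collapse to the simple rational factor $(1-x)(1-q/x)/[(1-xq^n)(1-q^{n+1}/x)]$. The convergence requirement $\max\{|ab|,|qa|,|qb|\}<1$ is inherited from Theorem~\ref{thm-main} (the conditions $|c|,|d|<1$ in that theorem are only imposed to avoid vanishing denominators, as the footnote there indicates), and the single hypothesis that $x$ is not an integral power of $q$ is exactly what is needed so that none of the factors $(1-xq^n)$, $(1-q^{n+1}/x)$ in the summand vanish.
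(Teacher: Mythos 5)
Your proposal is correct and follows essentially the same route as the paper: the same specialization $(c,d,u,v)\to(xq,q^2/x,q,q)$ in Theorem~\ref{thm-main}, followed by the $q$-Pfaff--Saalsch\"utz evaluation \eqref{Pfaff} of the inner terminated ${}_3\phi_2$ (your parameter choice $(a,b,c)\to(q,q/x,x)$ is the paper's $(q,x,q/x)$ with $b$ and $c$ swapped, which is immaterial). All the bookkeeping — the $(1-q)$ factors, the exponent $q^{(n^2-n)/2}\cdot q^n=q^{(n^2+n)/2}$, and the collapse of the Pochhammer ratios to $(1-x)(1-q/x)/[(1-xq^n)(1-q^{n+1}/x)]$ — checks out.
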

\begin{proof}
Taking $(c,d, u,v)\rightarrow (xq, q^2/x, q,q)$ in Theorem \ref{thm-main}, we deduce that
\begin{align}
&\frac{(q,ab;q)_\infty}{(aq,bq;q)_\infty}{}_3\phi_2\bigg(\genfrac{}{}{0pt}{}{q/a,q/b,q}{xq,q^2/x};q,ab\bigg)\nonumber \\
=& \sum_{n=0}^\infty(1-q^{2n+1})\frac{(q/a,q/b;q)_n}{(aq,bq;q)_n}(-ab)^nq^{(n^2-n)/2}{}_3\phi_2\bigg(\genfrac{}{}{0pt}{}{q^{-n},q^{n+1},q}{xq,q^2/x};q,q\bigg). \label{ab-3-3-add-proof}
\end{align}
Setting $(a,b,c)=(q,x,q/x)$ in \eqref{Pfaff}, we deduce that
\begin{align}
{}_{3}\phi_{2}\bigg(\genfrac{}{}{0pt}{}{q^{-n}, q^{n+1},q}{xq,q^2/x};q,q\bigg)=\frac{(1-x)(1-q/x)q^n}{(1-xq^n)(1-q^{n+1}/x)}. \label{g-2}
\end{align}
The theorem follows after substituting \eqref{g-2} into \eqref{ab-3-3-add-proof}.
\end{proof}

Theorem \ref{thm-ab-3-3-add} allows us to give alternative representations for the universal mock theta function
\begin{align}\label{g-defn}
g(x,q):=x^{-1}\left(-1+\sum_{n=0}^{\infty}\frac{q^{n^2}}{(x;q)_{n+1}(q/x;q)_{n}}\right)=\sum_{n=0}^{\infty}\frac{q^{n(n+1)}}{(x;q)_{n+1}(q/x;q)_{n+1}}, \end{align}
where $x$ is neither 0 nor an integral power of $q$. This function was defined by Hickerson \cite[Definition 2.0]{Hickerson-proof}.

We can write
\begin{align}
g(x,q)=\frac{1}{(1-x)(1-q/x)}\sum_{n=0}^{\infty}\frac{q^{n(n+1)}}{(xq,q^2/x;q)_n}. \label{g-exp}
\end{align}
Taking $(a,b)\rightarrow (0,0)$ in Theorem \ref{thm-ab-3-3-add}, we get
\begin{align}
g(x,q)&=\frac{1}{(q;q)_{\infty}}\sum_{n=0}^{\infty}\frac{(-1)^nq^{(3n^2+3n)/2}(1-q^{2n+1})}{(1-xq^n)(1-q^{n+1}/x)}. \label{g-last}
\end{align}
We may further write
\begin{align}
g(x,q)&=\frac{1}{(q;q)_{\infty}}\left(\sum_{n=0}^{\infty}\frac{(-1)^nq^{(3n^2+3n)/2}}{(1-xq^n)(1-q^{n+1}/x)}
+\sum_{n=0}^{\infty}\frac{(-1)^{n+1}q^{(3n^2+3n)/2+2n+1}}{(1-xq^n)(1-q^{n+1}/x)}\right) \nonumber \\
&=\frac{1}{(q;q)_{\infty}}\left(\sum_{n=0}^{\infty}\frac{(-1)^nq^{(3n^2+3n)/2}}{(1-xq^n)(1-q^{n+1}/x)}
+\sum_{n=-\infty}^{-1}\frac{(-1)^{n}q^{(3n^2+3n)/2-2n-1}}{(1-xq^{-n-1})(1-q^{-n}/x)}\right) \nonumber \\
&=\frac{1}{(q;q)_{\infty}}\sum_{n=-\infty}^{\infty}\frac{(-1)^nq^{3n(n+1)/2}}{(1-xq^n)(1-q^{n+1}/x)}. \label{g-final}
\end{align}
Moreover,  we can also write
\begin{align}
g(x,q)&=\frac{1}{(q;q)_{\infty}}\sum_{n=0}^{\infty}\frac{(-1)^nq^{(3n^2+3n)/2}\left((1-q^{n+1}/x)+(1-xq^n)q^{n+1}/x\right)}{(1-xq^n)(1-q^{n+1}/x)} \nonumber \\
&=\frac{1}{(q;q)_{\infty}}\left(\sum_{n=0}^{\infty}\frac{(-1)^nq^{(3n^2+3n)/2}}{1-xq^n}+\sum_{n=0}^{\infty}\frac{(-1)^nq^{(3n^2+3n)/2}\cdot q^{n+1}/x}{1-q^{n+1}/x} \right)\nonumber \\
&=\frac{1}{(q;q)_{\infty}}\left(\sum_{n=0}^{\infty}\frac{(-1)^nq^{(3n^2+3n)/2}}{1-xq^n}+\sum_{n=-\infty}^{-1}\frac{(-1)^{-n-1}q^{(3n^2+3n)/2}\cdot q^{-n}/x}{1-q^{-n}/x} \right)\nonumber \\
&=\frac{1}{(q;q)_\infty}\sum_{n=-\infty}^{\infty}\frac{(-1)^nq^{3n(n+1)/2}}{1-xq^n}. \label{g-simple}
\end{align}
Identity \eqref{g-simple} appears as Lemma 7.9 in the work of Garvan \cite{Garvan}.
\begin{corollary}
Identity \eqref{mock-3-3} holds.
\end{corollary}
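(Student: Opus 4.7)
The plan is to recognise $\psi^{(3)}(q)$ as a specialisation of the universal mock theta function $g(x,q)$ defined in \eqref{g-defn}, and then invoke the Appell-Lerch representation \eqref{g-simple} already derived from Theorem \ref{thm-ab-3-3-add}.

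First I would prove the Eulerian identity $\psi^{(3)}(q)=q\,g(q,q^4)$ by pairing consecutive summands. Group the terms of $\psi^{(3)}(q)=\sum_{m\geq 1}q^{m^2}/(q;q^2)_m$ with $m=2k-1$ and $m=2k$ over the common denominator $(q;q^2)_{2k}=(q;q^2)_{2k-1}(1-q^{4k-1})$; using $(2k-1)^2+4k-1=4k^2$, the numerator collapses as
\[
q^{(2k-1)^2}(1-q^{4k-1})+q^{4k^2}=q^{(2k-1)^2}.
\]
Together with the factorisation $(q;q^2)_{2k}=(q;q^4)_k(q^3;q^4)_k$, this yields
\[
\psi^{(3)}(q)=\sum_{k\geq 1}\frac{q^{(2k-1)^2}}{(q;q^4)_k(q^3;q^4)_k}=q\sum_{n\geq 0}\frac{q^{4n(n+1)}}{(q;q^4)_{n+1}(q^3;q^4)_{n+1}}=q\,g(q,q^4),
\]
where the second equality follows by shifting $k\mapsto n+1$ and comparing with the second Eulerian form of $g(x,q)$ in \eqref{g-defn}.

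Second, I would specialise \eqref{g-simple} at $x=q$ with the base $q$ replaced by $q^4$. Under this replacement, the exponent $3n(n+1)/2$ becomes $6n(n+1)$, the prefactor becomes $1/(q^4;q^4)_\infty$, and the denominator $1-xq^n$ becomes $1-q^{4n+1}$. Multiplying the resulting identity by $q$ and invoking the identification $\psi^{(3)}(q)=q\,g(q,q^4)$ reproduces exactly the right-hand side of \eqref{mock-3-3}.

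The only real obstacle is the telescoping trick that compresses two consecutive Eulerian summands of $\psi^{(3)}(q)$ into a single summand matching the expansion of $g(q,q^4)$; this is elementary but must be done carefully, since the naive term-by-term comparison fails. Once the identification $\psi^{(3)}(q)=q\,g(q,q^4)$ is established, the Appell-Lerch form \eqref{mock-3-3} is an immediate corollary of \eqref{g-simple}, and no further transformation formula is required.
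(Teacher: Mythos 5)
Your proposal is correct and follows essentially the same route as the paper: both establish the identification $\psi^{(3)}(q)=q\,g(q,q^4)$ by an elementary telescoping of consecutive Eulerian summands and then specialise \eqref{g-simple}. The only (cosmetic) difference is that you pair the term of index $2k-1$ with the following term of index $2k$ and match the second Eulerian form of $g$ in \eqref{g-defn}, whereas the paper pairs index $2n$ with index $2n+1$ (leaving the $m=1$ term aside) and matches the first form; both computations check out.
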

\begin{proof}
We have
\begin{align}
\psi^{(3)}(q)&=\sum_{n=0}^{\infty}\frac{q^{(2n+1)^2}}{(q;q^2)_{2n+1}}+\sum_{n=1}^\infty\frac{q^{(2n)^2}}{(q;q^2)_{2n}} \nonumber \\
&=\frac{q}{1-q}+\sum_{n=1}^\infty\frac{q^{4n^2+4n+1}+q^{4n^2}(1-q^{4n+1})}{(q;q^2)_{2n+1}} \nonumber \\
&=\frac{q}{1-q}+\frac{1}{1-q}\sum_{n=1}^\infty\frac{q^{4n^2}}{(q^3;q^2)_{2n}} \nonumber \\
&=-1+\frac{1}{1-q}\sum_{n=0}^{\infty}\frac{q^{4n^2}}{(q^3,q^5;q^4)_n}.
\end{align}
Therefore, by \eqref{g-defn} we have $\psi^{(3)}(q)=qg(q,q^4)$. Identity \eqref{mock-3-3} then follows from \eqref{g-simple}.
\end{proof}

We can give another Hecke-type series representation for $\psi^{(3)}(q)$. For this we establish the following parameterized identity.
\begin{theorem}\label{thm-ab-3-3}
For $\max\{|ab/q|, |a|, |b|\}<1$, we have
\begin{align}
&\frac{(q,ab/q;q)_\infty}{(a,b;q)_\infty}
{}_3\phi_2\bigg(\genfrac{}{}{0pt}{}{q/a,q/b,q}{q^{1/2},-q^{1/2}};q,ab/q\bigg) \nonumber\\
= &  1+\sum_{n=1}^{\infty}(1+q^{n})q^{n^2-2n}\frac{(q/a,q/b;q)_n}{(a,b;q)_n}(-ab)^n \left(q^{-n(n+1)/2}-(1-q^n)\sum_{j=0}^{n}q^{-j(j+1)/2} \right). \label{ab-3-3}
\end{align}
\end{theorem}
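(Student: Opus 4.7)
The plan is to specialize Theorem \ref{thm-main} with $(c,d,u,v)\to(q^{1/2},-q^{1/2},1,q)$, handled as a limit $u\to 1$. Since $\lim_{u\to 1}(u;q)_n/(1-u)=(q;q)_{n-1}$ for $n\ge 1$, the coefficient $(1-uq^{2n})(u;q)_n/((1-u)(q;q)_n)$ collapses to $1$ at $n=0$ (producing the leading ``$1$'' on the right) and to $1+q^n$ for $n\ge 1$. Matching prefactors with the claimed identity reduces the proof to establishing the closed-form evaluation
\begin{align*}
E_n := {}_3\phi_2\!\bigg(\genfrac{}{}{0pt}{}{q^{-n},q^n,q}{q^{1/2},-q^{1/2}};q,q\bigg) = q^{(n^2-n)/2}\bigg(q^{-n(n+1)/2}-(1-q^n)\sum_{j=0}^n q^{-j(j+1)/2}\bigg),
\end{align*}
since the stated sum is exactly $(1+q^n)(q/a,q/b;q)_n/((a,b;q)_n)(-ab)^n q^{(n^2-3n)/2}\cdot E_n$ summed over $n\ge 1$.

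To evaluate $E_n$, I would first apply the Sears transformation \eqref{sears:32} with $(\alpha,\beta,c,d)=(1,q,q^{1/2},-q^{1/2})$. Here $q\alpha/c=q^{1/2}$ makes the ratio $(q\alpha/c;q)_n/(c;q)_n$ equal to $1$, giving
\begin{align*}
E_n=(-q^{1/2})^n q^{\binom{n}{2}}\,{}_3\phi_2\!\bigg(\genfrac{}{}{0pt}{}{q^{-n},q^n,-q^{-1/2}}{-q^{1/2},q^{1/2}};q,q^{3/2}\bigg).
\end{align*}
Next, I would apply Lemma \ref{lem-limit} with $(c,d)\to(-q^{3/2},q^{3/2})$: one checks $q/c=-q^{-1/2}$, $d/q=q^{1/2}$, $q^2/c=-q^{1/2}$, and the series argument is $d=q^{3/2}$, matching all five slots. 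The ratio $(c/q;q)_n/(q^2/c;q)_n=(-q^{1/2};q)_n/(-q^{1/2};q)_n$ equals $1$, so the output reduces to $(-q^{-1/2})^n(1-q^n)$ times a constant $q/(1-q)$ plus a $\sum_{j=2}^n$ body.

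Using $(q^{-1/2},-q^{-1/2};q)_j=(1-q^{-1})(q;q^2)_{j-1}$ and $(q^{1/2},-q^{1/2};q)_j=(q;q^2)_j$, the ratio inside Lemma \ref{lem-limit} simplifies and each $j$-term becomes $-(1-q)q^{-(j-1)(j-2)/2}/\bigl((1-q^j)(1-q^{j-1})\bigr)$. The partial fraction
\begin{align*}
\frac{-(1-q)}{(1-q^j)(1-q^{j-1})}=\frac{q}{1-q^j}-\frac{1}{1-q^{j-1}}
\end{align*}
splits the body into two pieces, and a reindexing $k=j-1$ in the second piece combined with the elementary identity $q^{-n(n+1)/2}(q^{2n}-1)/(1-q^n)=-q^{-n(n-1)/2}-q^{-n(n+1)/2}$ produces the desired right-hand side after the prefactors $(-q^{1/2})^n q^{\binom{n}{2}}\cdot(-q^{-1/2})^n(1-q^n)=q^{n(n-1)/2}(1-q^n)$ are folded in.

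The main obstacle is precisely this final algebraic reorganization: the raw output of Lemma \ref{lem-limit} does not visibly resemble $\sum_{j=0}^n q^{-j(j+1)/2}$, and matching the two requires careful accounting of the $j=0,1$ boundary terms (absorbed into $q/(1-q)$) versus the $j\ge 2$ body, together with the reindexing step above. Sanity-checking at $n=1,2$ confirms the closure, after which combining the evaluation of $E_n$ with the Theorem \ref{thm-main} identity from the first paragraph yields the claimed $(a,b)$-parameterized identity.
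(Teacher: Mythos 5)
Your proposal is correct and follows essentially the same route as the paper: the same specialization $(c,d,u,v)\to(q^{1/2},-q^{1/2},1,q)$ of Theorem \ref{thm-main}, the same Sears transformation \eqref{sears:32} with $(\alpha,\beta,c,d)=(1,q,q^{1/2},-q^{1/2})$, and the same application of Lemma \ref{lem-limit} with $(c,d)\to(-q^{3/2},q^{3/2})$, followed by the identical partial-fraction split and reindexing. The algebraic details you flag (the $j$-term simplifying to $-(1-q)q^{-(j-1)(j-2)/2}/((1-q^j)(1-q^{j-1}))$ and the telescoping) all check out against the paper's computation.
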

\begin{proof}
Taking $(q,c,d,u,v)\rightarrow (q,q^{1/2},-q^{1/2},1,q)$ in Theorem \ref{thm-main}, we deduce that
\begin{align}
&\frac{(q,ab/q;q)_\infty}{(a,b;q)_\infty}
{}_3\phi_2\bigg(\genfrac{}{}{0pt}{}{q/a,q/b,q}{q^{1/2},-q^{1/2}};q,ab/q\bigg) \nonumber\\
=&1+\sum_{n=1}^{\infty}(1+q^{n})\frac{(q/a,q/b;q)_n}{(a,b;q)_n}(-ab)^nq^{(n^2-3n)/2}
{}_3\phi_2\bigg(\genfrac{}{}{0pt}{}{q^{-n},q^{n},q}{q^{1/2},-q^{1/2}};q,q\bigg). \label{ab-3-3-proof}
\end{align}
Setting $(\alpha, \beta, c, d)=(1, q, q^{1/2}, -q^{1/2})$ in \eqref{sears:32}, we obtain
\begin{align}
{}_3\phi_2\bigg(\genfrac{}{}{0pt}{}{q^{-n},q^{n},q}{q^{1/2},-q^{1/2}};q,q\bigg)
=(-1)^nq^{n^2/2}{}_3\phi_2\bigg(\genfrac{}{}{0pt}{}{q^{-n},q^{n},-q^{-1/2}}{q^{1/2},-q^{1/2}};q,q^{3/2}\bigg). \label{ab-3-3-proof-1}
\end{align}
Taking $(c, d)\rightarrow (-q^{3/2}, q^{3/2})$ in Lemma \ref{lem-limit}, we deduce that
\begin{align}
&{}_3\phi_2\bigg(\genfrac{}{}{0pt}{}{q^{-n},q^{n},-q^{-1/2}}{q^{1/2},-q^{1/2}};q,q^{3/2}\bigg) \nonumber \\
=&~(-1)^nq^{-n/2}(1-q^n)\left(\frac{q}{1-q}+\sum_{j=2}^{n}\frac{(1-q^{-1})q^{2j-j(j+1)/2}}{(1-q^{j-1})(1-q^j)}\right). \label{ab-3-3-proof-2}
\end{align}
Note that
\begin{align*}
(1-q^{-1})q^{j}=(1-q^{j-1})-(1-q^j).
\end{align*}
We deduce that
\begin{align}
&\sum_{j=2}^{n}\frac{(1-q^{-1})q^{2j-j(j+1)/2}}{(1-q^{j-1})(1-q^j)}\nonumber \\
=& \sum_{j=2}^{n}q^{(j-j^2)/2}\left(\frac{1}{1-q^j}-\frac{1}{1-q^{j-1}} \right) \nonumber \\
=& \sum_{j=2}^{n}\frac{q^{(j-j^2)/2}}{1-q^j}-\sum_{j=1}^{n-1}\frac{q^{-j(j+1)/2}}{1-q^{j}}  \nonumber \\
=&-\sum_{j=1}^{n}\frac{q^{-j(j+1)/2}(1-q^j)}{1-q^j}-\frac{1}{1-q}+\frac{q^{-n(n+1)/2}}{1-q^n} \nonumber \\
=&~-\sum_{j=1}^{n}q^{-j(j+1)/2}-\frac{1}{1-q}+\frac{q^{-n(n+1)/2}}{1-q^n}. \label{ab-3-3-proof-3}
\end{align}
Substituting \eqref{ab-3-3-proof-3} into \eqref{ab-3-3-proof-2}, we conclude that
\begin{align}
&{}_3\phi_2\bigg(\genfrac{}{}{0pt}{}{q^{-n},q^{n},-q^{-1/2}}{q^{1/2},-q^{1/2}};q,q^{3/2}\bigg) \nonumber \\
=&~(-1)^nq^{-n/2}\left(q^{-n(n+1)/2}-(1-q^n)\sum_{j=0}^{n}q^{-j(j+1)/2} \right). \label{ab-3-3-proof-4}
\end{align}
Substituting \eqref{ab-3-3-proof-4} into \eqref{ab-3-3-proof-1}, we obtain
\begin{align}
{}_3\phi_2\bigg(\genfrac{}{}{0pt}{}{q^{-n},q^{n},q}{q^{1/2},-q^{1/2}};q,q\bigg)=q^{(n^2-n)/2}\left(q^{-n(n+1)/2}-(1-q^n)\sum_{j=0}^{n}q^{-j(j+1)/2} \right). \label{ab-3-3-proof-final}
\end{align}
The theorem follows after substituting \eqref{ab-3-3-proof-final} into \eqref{ab-3-3-proof}.
\end{proof}
\begin{corollary}
We have
\begin{align}
\psi^{(3)}(q)=&~ -\frac{1}{(q;q)_\infty}\sum_{n=1}^\infty(-1)^n(1-q^{2n})q^{2n^2-n}\sum_{j=0}^{n-1}q^{-j(j+1)/2} \label{3-3-cor-psi}\\
=&\frac{1}{2J_1}\left(qf_{3,5,3}(q^4,q^5,q)-f_{3,5,3}(1,q,q) \right) \label{3-3-cor-psi-H-pre} \\
=&-\frac{1}{J_1}f_{3,5,3}(1,q,q) \label{3-3-cor-psi-H} \\
=&-m(-q^{22},q^{48},q^3)-q^{-1}m(-q^{14},q^{48},q^{-3})-q^{-2}m(-q^{10},q^{48},q^{-3})\nonumber \\
&-q^{-5}m(-q^2,q^{48},q^3)+\frac{1}{J_1}\Theta_{3,2}(1,q,q) \label{3-3-cor-psi-A}\\
=&-m(q,-q^3,-q)+q\frac{J_{12}^3}{J_{4,12}J_{3,12}}, \label{3-3-cor-psi-final}
\end{align}
\begin{align}
&\sum_{n=0}^{\infty}\frac{q^{(n^2+n)/2}(-1;q)_n}{(q;q^2)_n}\nonumber \\
=&~\frac{(-q;q)_\infty}{(q;q)_\infty} \Big(1+2\sum_{n=1}^{\infty}(-1)^nq^{(3n^2-n)/2}\Big(q^{-n(n+1)/2}-(1-q^n)\sum_{j=0}^nq^{-j(j+1)/2}\Big)\Big) \label{3-3-cor-1} \\
=& 1-2\frac{J_2}{J_1^2}f_{1,2,1}(q,1,q^2) \label{3-3-cor-1-H} \\
=&1-2m(q^2,q^6,q). \label{3-3-cor-1-A}
\end{align}
Furthermore, \eqref{3-3-cor-psi} can be deduced from \eqref{Andrews-psi} or \eqref{Mortenson-psi} and vice versa.
\end{corollary}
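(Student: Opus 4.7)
The plan is to specialize Theorem \ref{thm-ab-3-3} at two limiting choices of $(a,b)$, obtain the Hecke-type series identities \eqref{3-3-cor-psi} and \eqref{3-3-cor-1}, then recast them successively in the $f_{a,b,c}$ and $m(x,q,z)$ building blocks by reindexing and by invoking Theorems \ref{thm-fg-1} and \ref{thm-fg-2}. The equivalence with \eqref{Andrews-psi} and \eqref{Mortenson-psi} will then be extracted from a short computation at the $f_{3,5,3}$ level.

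Setting $(a,b)\to(0,0)$ in Theorem \ref{thm-ab-3-3}, the limits $(q/a;q)_n a^n\to(-1)^n q^{n(n+1)/2}$ convert the left-hand side into $J_1\sum_{n\ge 0}q^{n^2}/(q;q^2)_n=J_1(1+\psi^{(3)}(q))$. On the right-hand side, the elementary rearrangement
\[
q^{-n(n+1)/2}-(1-q^n)\sum_{j=0}^n q^{-j(j+1)/2}=q^{-n(n-1)/2}-(1-q^n)\sum_{j=0}^{n-1}q^{-j(j+1)/2}
\]
splits the sum into a constant part which, by Euler's pentagonal number theorem, produces exactly $J_1$, and a remainder that becomes $-J_1\psi^{(3)}(q)$ after the $J_1$'s on the two sides cancel; this is \eqref{3-3-cor-psi}. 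Taking instead $(a,b)\to(0,-q)$, one has $(q/b;q)_n=(-1;q)_n$, and since $(-1;q)_n=2(-q;q)_{n-1}$ for $n\ge 1$ the combined prefactor $(1+q^n)(-1;q)_n/(-q;q)_n$ collapses to $2$, which yields \eqref{3-3-cor-1}.

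To go from \eqref{3-3-cor-psi} to \eqref{3-3-cor-psi-H-pre} I rewrite the single sum over $(n,j)$ with $n\ge 1$ and $0\le j\le n-1$ as a sum over signed pairs $(r,s)$ with $\sg(r)=\sg(s)$, exactly along the lines illustrated in \eqref{intro-Liu-eq-H} and in the proof of \eqref{ADH-id-barf}; splitting the factor $(1-q^{2n})$ delivers the two $f_{3,5,3}$ pieces. To reduce \eqref{3-3-cor-psi-H-pre} to \eqref{3-3-cor-psi-H} I invoke Lemma \ref{lem-f-prop}: applying \eqref{f-id-3} at $(x,y)=(1,q)$ with $j(1;q^3)=0$ gives $qf_{3,5,3}(q^4,q^5,q)=-f_{3,5,3}(1,q,q)$, which immediately collapses \eqref{3-3-cor-psi-H-pre} to $-f_{3,5,3}(1,q,q)/J_1$. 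The $m$-expression \eqref{3-3-cor-psi-A} then follows by applying Theorem \ref{thm-fg-2} with $n=3$ at $(x,y)=(1,q)$. For the final compact form \eqref{3-3-cor-psi-final} I apply Lemma \ref{lem-m-decompose} to $m(q,-q^3,-q)$ with $n=4$ and $z'=-1$, use Lemma \ref{lem-m-minus} to shift the $z$-argument of each of the four $m(\cdot,q^{48},\cdot)$ summands in \eqref{3-3-cor-psi-A} from $q^{\pm 3}$ to $-1$, and match term by term; the residual Jacobi theta identity among the $\Theta_{3,2}(1,q,q)/J_1$ term, the theta-quotient output of Lemma \ref{lem-m-decompose}, and the prefactor $qJ_{12}^3/(J_{4,12}J_{3,12})$ is then verified by the method of \cite{Garvan-Liang}. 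The chain \eqref{3-3-cor-1}$\to$\eqref{3-3-cor-1-H}$\to$\eqref{3-3-cor-1-A} is handled identically, with Theorem \ref{thm-fg-1} invoked at $(n,p)=(1,1)$ on $f_{1,2,1}(q,1,q^2)$.

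Finally, the equivalence with \eqref{Andrews-psi} and \eqref{Mortenson-psi} reduces to a short computation at the $f_{3,5,3}$ level, since both known formulas collapse by routine reindexing to \eqref{A-M-psi-3-equivalent}. Iterating \eqref{f-id-2}, \eqref{f-id-3} and \eqref{f-id-4} at the parameters $(q^{2},q^{3})$ and $(q^{6},q^{7})$ and using $j(q^{2};q^{3})=J_{1}$ together with $j(q^{6};q^{3})=0$, one derives
\[
f_{3,5,3}(q^{2},q^{3},q)+f_{3,5,3}(1,q,q)=J_{1},\qquad q^{3}f_{3,5,3}(q^{6},q^{7},q)=f_{3,5,3}(1,q,q),
\]
so that $f_{3,5,3}(q^{2},q^{3},q)-q^{3}f_{3,5,3}(q^{6},q^{7},q)=J_{1}-2f_{3,5,3}(1,q,q)$; substituting into \eqref{A-M-psi-3-equivalent} gives $-f_{3,5,3}(1,q,q)/J_{1}=\psi^{(3)}(q)$, which is exactly \eqref{3-3-cor-psi-H}. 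The hardest step will be the $m$-level matching for \eqref{3-3-cor-psi-final}: the Lemma \ref{lem-m-decompose} expansion produces four separate $m(\cdot,q^{48},-1)$ terms plus a non-trivial theta-quotient, and verifying the residual Jacobi triple-product identity requires careful bookkeeping even with the Garvan-Liang method in hand; the remainder of the derivation is a mechanical sequence of specializations and applications of the identities gathered in Section \ref{subsec-build}.
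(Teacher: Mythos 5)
Your derivation of \eqref{3-3-cor-psi} and \eqref{3-3-cor-1} (the specializations $(a,b)\to(0,0)$ and $(0,-q)$, the rearrangement of $q^{-n(n+1)/2}-(1-q^n)\sum_{j=0}^n q^{-j(j+1)/2}$, the pentagonal-number cancellation of the constant part, and the collapse $(1+q^n)(-1;q)_n/(-q;q)_n=2$) is exactly the paper's route, as are the passages to the $f$- and $m$-forms via the symmetrization reindexing, \eqref{f-id-3}/\eqref{f-id-4}, and Theorems \ref{thm-fg-1} and \ref{thm-fg-2}; your choice of $z'=-1$ rather than $z'=q^3$ in Lemma \ref{lem-m-decompose} for \eqref{3-3-cor-psi-final} is an immaterial variant. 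Where you genuinely depart from the paper is the equivalence with \eqref{Andrews-psi} and \eqref{Mortenson-psi}: the paper pushes \eqref{A-M-psi-3-equivalent} through Theorem \ref{thm-fg-2} with $n=3$ into eight $m(\cdot,q^{48},q^{\pm3})$ terms and then needs five applications of Lemma \ref{lem-m-minus} plus a Garvan--Liang verification that the residual theta quotients vanish, whereas you stay at the $f_{3,5,3}$ level. Your two claimed identities do check out: \eqref{f-id-3} applied to $f_{3,5,3}(q^2,q^3,q)$ gives $f_{3,5,3}(q^2,q^3,q)=-q^3f_{3,5,3}(q^6,q^7,q)+j(q^2;q^3)$ with $j(q^2;q^3)=J_1$, and \eqref{f-id-2} combined with \eqref{f-id-3} at $(1,q)$ (using $j(1;q^3)=0$ and the evident symmetry $f_{3,5,3}(x,y,q)=f_{3,5,3}(y,x,q)$) gives $q^3f_{3,5,3}(q^6,q^7,q)=-qf_{3,5,3}(q^4,q^5,q)=f_{3,5,3}(1,q,q)$; substituting into \eqref{A-M-psi-3-equivalent} collapses it immediately to $-f_{3,5,3}(1,q,q)/J_1$. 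This is a cleaner and essentially computation-free proof of the equivalence, at the cost of not exhibiting the $m$-level identity \eqref{revise-psi-equiv-3} that the paper records along the way; the only cosmetic slip is that the shift identities as you iterate them actually produce the theta constants $j(q^2;q^3)$ and $j(1;q^3)$ (rather than $j(q^6;q^3)$), but all of these evaluate as you claim and the argument is unaffected.
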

\begin{proof}
Taking $(a,b)\rightarrow (0,0)$ and $(0,-q)$ in Theorem \ref{thm-ab-3-3}, we obtain \eqref{3-3-cor-psi} and \eqref{3-3-cor-1}, respectively.

Now we write the Hecke-type series in terms of $f_{a,b,c}(x,y,q)$. We only give a proof for \eqref{3-3-cor-1-H}. The proof of \eqref{3-3-cor-psi-H} is similar and easier.

Note that
\begin{align}
\sum_{j=0}^nq^{-j(j+1)/2}=\frac{1}{2}\sum_{j=-n}^nq^{-j(j+1)/2}+\frac{1}{2}q^{-n(n+1)/2}.
\end{align}
After rearrangements and simplifications, the right side of \eqref{3-3-cor-1} can be written as
\begin{align}
&\frac{J_2}{J_1^2}\left(1+2\sum_{n=1}^\infty (-1)^nq^{n^2} -\sum_{n=0}^\infty \sum_{j=-n}^n (-1)^nq^{(3n^2-n)/2-j(j+1)/2} \right. \nonumber \\
&\quad \quad \left. +\sum_{n=-\infty}^{-1} \sum_{j=-n}^n (-1)^nq^{(3n^2-n)/2-j(j+1)/2} \right) \nonumber \\
=&1-\frac{J_2}{J_1^2}\left(\sum_{\substack{n+j\geq 0\\ n-j\geq 0}}-\sum_{\substack{n+j<0\\ n-j<0}} \right)(-1)^nq^{(3n^2-n)/2-j(j+1)/2}  \nonumber \\
=& 1-\frac{J_2}{J_1^2} \left(f_{1,2,1}(q,1,q^2)-qf_{1,2,1}(q^3,q^4,q^2) \right).  \label{3-3-cor-1-revise-1}
\end{align}
Here the last equality follows by setting $n+j=r$ and $n-j=s$ and  routine arguments. Now by \eqref{f-id-4} with $(a,b,c,x,y,q)\rightarrow (1,2,1,q,1,q^2)$ we deduce that
\begin{align}
f_{1,2,1}(q,1,q^2)=-qf_{1,2,1}(q^3,q^4,q^2).
\end{align}
Substituting this identity into \eqref{3-3-cor-1-revise-1}, we arrive at \eqref{3-3-cor-1-H}.

The expression \eqref{3-3-cor-psi-A} (resp.\ \eqref{3-3-cor-1-A}) follows from \eqref{3-3-cor-psi-H} (resp.\ \eqref{3-3-cor-1-H}) and Theorem \ref{thm-fg-2} (resp.\ Theorem \ref{thm-fg-1}) with $n=3$ (resp.\ $(n,p)=(1,1)$).

Taking $(x,q,z,z',n)\rightarrow (q,-q^3,-q,q^3,4)$ in Lemma \ref{lem-m-decompose}, applying Lemma \ref{lem-m-minus} and using the method in \cite{Garvan-Liang} to prove theta function identities, we see that \eqref{3-3-cor-psi-final} follows from \eqref{3-3-cor-psi-A}.

To show that the representation \eqref{3-3-cor-psi} is equivalent to the representations of Andrews and Mortenson given in \eqref{Andrews-psi} and \eqref{Mortenson-psi}, we first convert the expression in \eqref{A-M-psi-3-equivalent} to a form in terms of $m(x,q,z)$.

Applying Theorem \ref{thm-fg-2} with $n=3$, we obtain
\begin{align}
f_{3,5,3}(q^2,q^3,q)=&J_1\left(m(-q^{26},q^{48},q^{-3})-q^{-2}m(-q^{10},q^{48},q^{-3})\right. \nonumber \\
&\left.-q^{-5}m(-q^2,q^{48},q^3)-q^{-1}m(-q^{14},q^{48},q^{-3}) \right)-\Theta_{3,2}(q^2,q^3,q) \label{revise-psi-equiv-1}
\end{align}
and
\begin{align}\label{revise-psi-equiv-2}
f_{3,5,3}(q^6,q^7,q)=&J_1\left(q^{-8}m(-q^2,q^{48},q^{-3})+q^{-4}m(-q^{14},q^{48},q^3)\right. \nonumber \\
&\left. +q^{-5}m(-q^{10},q^{48},q^3)+q^{-3}m(-q^{22},q^{48},q^{-3})\right)-\Theta_{3,2}(q^6,q^7,q).
\end{align}
Substituting \eqref{revise-psi-equiv-1} and \eqref{revise-relation-2} into \eqref{A-M-psi-3-equivalent}, we see that \eqref{A-M-psi-3-equivalent} can be written as
\begin{align}\label{revise-psi-equiv-3}
\psi^{(3)}(q)=&-\frac{1}{2}+\frac{1}{2}\left(m(-q^{26},q^{48},q^{-3})-q^{-2}m(-q^{10},q^{48},q^{-3})\right. \nonumber \\
&-q^{-5}m(-q^2,q^{48},q^3)-q^{-1}m(-q^{14},q^{48},q^{-3})-q^{-5}m(-q^2,q^{48},q^{-3}) \nonumber \\
&\left.-q^{-1}m(-q^{14},q^{48},q^3)-q^{-2}m(-q^{10},q^{48},q^3)-m(-q^{22},q^{48},q^{-3}) \right)\nonumber \\
&-\frac{1}{2J_1}\left(\Theta_{3,2}(q^2,q^3,q)-q^3\Theta_{3,2}(q^6,q^7,q) \right).
\end{align}
Now we simplify the above identity. Taking $(x,q,z_1,z_0)\rightarrow (-q^2,q^{48},q^3,q^{-3})$ in Lemma \ref{lem-m-minus}, we deduce that
\begin{align}\label{revise-psi-equiv-4}
m(-q^2,q^{48},q^3)-m(-q^2,q^{48},q^{-3})
=q^{-3}\frac{J_{48}^3J_{6,48}\overline{J}_{2,48}}{J_{-3,48}J_{3,48}\overline{J}_{-1,48}\overline{J}_{5,48}}.
\end{align}
Taking $(x,q,z_1,z_0)\rightarrow (-q^{14},q^{48},q^3,q^{-3})$ in Lemma \ref{lem-m-minus}, we deduce that
\begin{align}\label{revise-psi-equiv-5}
m(-q^{14},q^{48},q^3)-m(-q^{14},q^{48},q^{-3})
=q^{-3}\frac{J_{48}^3J_{6,48}\overline{J}_{14,48}}{J_{-3,48}J_{3,48}\overline{J}_{11,48}\overline{J}_{17,48}}.
\end{align}
Taking $(x,q,z_1,z_0)\rightarrow (-q^{10},q^{48},q^3,q^{-3})$ in Lemma \ref{lem-m-minus}, we deduce that
\begin{align}\label{revise-psi-equiv-6}
m(-q^{10},q^{48},q^3)-m(-q^{10},q^{48},q^{-3})
=q^{-3}\frac{J_{48}^3J_{6,48}\overline{J}_{10,48}}{J_{-3,48}J_{3,48}\overline{J}_{7,48}\overline{J}_{13,48}}.
\end{align}
Taking $(x,q,z_1,z_0)\rightarrow (-q^{22},q^{48},q^3,q^{-3})$ in Lemma \ref{lem-m-minus}, we deduce that
\begin{align}\label{revise-psi-equiv-7}
m(-q^{22},q^{48},q^{3})-m(-q^{22},q^{48},q^{-3})
=q^{-3}\frac{J_{48}^3J_{6,48}\overline{J}_{22,48}}{J_{-3,48}J_{3,48}\overline{J}_{19,48}\overline{J}_{25,48}}.
\end{align}
Finally, setting $(x,q,z)\rightarrow (-q^{26},q^{48},q^{-3})$ in \eqref{m-id-2} and then using \eqref{m-id-3}, we deduce that
\begin{align}\label{revise-psi-equiv-8}
m(-q^{26},q^{48},q^{-3})=-q^{-26}m(-q^{-26},q^{48},q^3)=1-m(-q^{22},q^{48},q^3).
\end{align}
Substituting \eqref{revise-psi-equiv-4}--\eqref{revise-psi-equiv-8} into \eqref{revise-psi-equiv-3}, we see that \eqref{revise-psi-equiv-3} differs from \eqref{3-3-cor-psi-A} only by some infinite products. But showing that these infinite products sum to zero is a routine exercise, which may also be verified using the method in \cite{Garvan-Liang}.
\end{proof}

\begin{rem}
(1) The expression \eqref{3-3-cor-psi-final} can be found in \cite[Eq.\ (5.6)]{Hickerson-Mortenson}. See Remark \ref{rem-mock-2-A} (1).\\
(2) From \cite[Eq.\ (5.26)]{Hickerson-Mortenson} we know that
\begin{align}
\sigma^{(6)}(q)=-m(q^2,q^6,q).
\end{align}
Comparing this with \eqref{3-3-cor-1-A}, we deduce that
\begin{align}
\sum_{n=0}^{\infty}\frac{q^{(n^2+n)/2}(-1;q)_n}{(q;q^2)_n}=1+2\sigma^{(6)}(q). \label{revise-relation-3}
\end{align}
This relation can also be deduced directly from the definition of $\sigma^{(6)}(q)$ given in \eqref{mock-6-4-defn}.
\end{rem}

\begin{theorem}\label{thm-ab-3-4}
For $\max\{|ab/q|, |a|, |b|\}<1$, we have
\begin{align}
&\frac{(q,ab/q;q)_\infty}{(a,b;q)_\infty}
{}_3\phi_2\bigg(\genfrac{}{}{0pt}{}{q/a,q/b,q}{-\zeta_3q,-\zeta_3^2q};q,ab/q\bigg)\\
\nonumber
=&1+\sum_{n=1}^{\infty}\frac{1+q^{n}}{1-q^{n}+q^{2n}}\frac{(q/a,q/b;q)_n}{(a,b;q)_n}(-ab)^nq^{(n^2-n)/2}.
\end{align}
\end{theorem}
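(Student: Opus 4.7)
The statement of Theorem \ref{thm-ab-3-4} is structurally identical to Theorems \ref{thm-ab-3-2} and \ref{thm-ab-3-3}: an application of Theorem \ref{thm-main} with $u=1$ followed by a Pfaff-Saalsch\"utz evaluation of a terminating ${}_3\phi_2$. The plan is to specialize Theorem \ref{thm-main} by taking $(c,d,u,v) \to (-\zeta_3 q, -\zeta_3^2 q, 1, q)$ and then evaluate the inner ${}_3\phi_2$ using \eqref{Pfaff}.

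First I would perform the substitution. As in the proof of Theorem \ref{thm-ab-3-2}, the limit $u \to 1$ requires care: the factor $\frac{(1-uq^{2n})(u;q)_n}{(1-u)(q;q)_n}$ equals $1$ when $n=0$ and reduces to $\frac{1-q^{2n}}{1-q^n} = 1 + q^n$ when $n \geq 1$. This brings the identity into the form
\begin{align*}
&\frac{(q,ab/q;q)_\infty}{(a,b;q)_\infty} {}_3\phi_2\bigg(\genfrac{}{}{0pt}{}{q/a,q/b,q}{-\zeta_3 q,-\zeta_3^2 q};q,ab/q\bigg) \\
&= 1 + \sum_{n=1}^\infty (1+q^n) \frac{(q/a,q/b;q)_n}{(a,b;q)_n}(-ab)^n q^{(n^2-3n)/2}\, {}_3\phi_2\bigg(\genfrac{}{}{0pt}{}{q^{-n},q^n,q}{-\zeta_3 q,-\zeta_3^2 q};q,q\bigg).
\end{align*}

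Next I would evaluate the terminating ${}_3\phi_2$ by applying \eqref{Pfaff} with $(a,b,c) \to (1,-\zeta_3^2,-\zeta_3)$, which gives $aq/b = -\zeta_3 q$, $aq/c = -\zeta_3^2 q$, and $aq/(bc) = q$, so that
\begin{align*}
{}_3\phi_2\bigg(\genfrac{}{}{0pt}{}{q^{-n},q^n,q}{-\zeta_3 q,-\zeta_3^2 q};q,q\bigg) = \frac{(-\zeta_3^2,-\zeta_3;q)_n}{(-\zeta_3 q,-\zeta_3^2 q;q)_n}\, q^n.
\end{align*}
The key algebraic observation is the factorization $(1+\zeta_3 x)(1+\zeta_3^2 x) = 1 - x + x^2$, which identifies $(-\zeta_3,-\zeta_3^2;q)_n = \prod_{k=0}^{n-1}(1-q^k+q^{2k})$ and $(-\zeta_3 q,-\zeta_3^2 q;q)_n = \prod_{k=1}^n(1-q^k+q^{2k})$. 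The quotient telescopes to $\frac{1}{1-q^n+q^{2n}}$, so the inner sum equals $\frac{q^n}{1-q^n+q^{2n}}$. Substituting back and combining $q^n \cdot q^{(n^2-3n)/2} = q^{(n^2-n)/2}$ produces the claimed identity.

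There is no substantial obstacle beyond correctly identifying the substitution and exploiting the cubic-root-of-unity factorization; once these are in place the computation is mechanical and parallels the proofs of Theorems \ref{thm-ab-3-2} and \ref{thm-ab-3-3}.
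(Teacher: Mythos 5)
Your proposal is correct and follows essentially the same route as the paper: specialize Theorem \ref{thm-main} with $(c,d,u,v)\rightarrow(-\zeta_3q,-\zeta_3^2q,1,q)$ and evaluate the terminating ${}_3\phi_2$ by the $q$-Pfaff--Saalsch\"utz formula (the paper uses $(a,b,c)=(1,-\zeta_3,-\zeta_3^2)$, which is the same as your choice up to the symmetry in $b$ and $c$). The telescoping via $(1+\zeta_3x)(1+\zeta_3^2x)=1-x+x^2$ and the final exponent bookkeeping are exactly as in the paper.
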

\begin{proof}
Taking $(c,d,u,v)\rightarrow (-\zeta_3q,-\zeta_3^2q,1,q)$ in  Theorem \ref{thm-main}, we deduce that
\begin{align}
&\frac{(q,ab/q;q)_\infty}{(a,b;q)_\infty}
{}_3\phi_2\bigg(\genfrac{}{}{0pt}{}{q/a,q/b,q}{-\zeta_3q,-\zeta_3^2q};q,ab/q\bigg) \nonumber\\
=&1+\sum_{n=1}^{\infty}(1+q^{n})\frac{(q/a,q/b;q)_n}{(a,b;q)_n}(-ab)^nq^{(n^2-3n)/2}
{}_3\phi_2\bigg(\genfrac{}{}{0pt}{}{q^{-n},q^{n},q}{-\zeta_3q,-\zeta_3^2q};q,q\bigg).\label{ab-3-4-proof}
\end{align}
Setting $(a,b,c)=(1, -\zeta_3, -\zeta_{3}^2)$ in \eqref{Pfaff}, we obtain
\begin{align}
{}_3\phi_2\bigg(\genfrac{}{}{0pt}{}{q^{-n}, q^{n},q}{-\zeta_{3}q,-\zeta_{3}^2q};q,q\bigg)=\frac{q^n}{1-q^n+q^{2n}}. \label{chi-proof-2}
\end{align}
The theorem follows after substituting \eqref{chi-proof-2} into \eqref{ab-3-4-proof}.
\end{proof}
\begin{corollary}
Identity \eqref{mock-3-4} holds. In addition, we have
\begin{align}
&\sum_{n=0}^\infty\frac{q^{(n^2+n)/2}(-1,-q;q)_n}{(-q^3;q^3)_n}\nonumber \\
=& \frac{(-q;q)_\infty}{(q;q)_\infty}\sum_{n=-\infty}^\infty\frac{(-1)^nq^{n^2+n}}{1-q^n+q^{2n}} \label{3-4-cor-1} \\
= & \frac{1-\zeta_{6}}{1+\zeta_6}\left(m(\zeta_6^2q,q^2,-\zeta_6^2)-m(-\zeta_6q,q^2,\zeta_6) \right), \label{3-4-cor-1-simplify} \\
&\sum_{n=0}^\infty\frac{q^{(n^2-n)/2}(-q;q)_n^2}{(-q^3;q^3)_n}\nonumber \\
=& \frac{1}{2}\frac{(-q;q)_\infty}{(q;q)_\infty}\sum_{n=-\infty}^\infty\frac{(-1)^nq^{n^2}(1+q^n)^2}{1-q^n+q^{2n}} \label{3-4-cor-2} \\
=& \frac{1}{2}+\frac{3(1-\zeta_6)}{2(1+\zeta_6)}\left(m(\zeta_6^2q,q^2,-\zeta_6^2)-m(-\zeta_6q,q^2,\zeta_6) \right). \label{3-4-cor-2-simplify}
\end{align}
\end{corollary}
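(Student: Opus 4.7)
The plan is to derive the three series identities in parallel by specialising $(a,b)$ in Theorem \ref{thm-ab-3-4} to $(0,0)$, $(0,-q)$, and $(0,-1)$, respectively, then handle the two closed-form simplifications together by a single partial-fraction calculation. First, I will use the elementary identity $1+q^{3k}=(1+q^k)(1-q^k+q^{2k})$, which gives
\begin{align*}
\frac{(-q;q)_n}{(-q^3;q^3)_n}=\frac{1}{(-\zeta_3 q,-\zeta_3^2 q;q)_n},
\end{align*}
so the denominators $(-\zeta_3 q,-\zeta_3^2 q;q)_n$ appearing in Theorem \ref{thm-ab-3-4} are exactly what shows up in $\chi^{(3)}(q)$ and in the sums on the left of \eqref{3-4-cor-1} and \eqref{3-4-cor-2}, after writing $(-1;q)_n=2(-q;q)_{n-1}$ and $(-q;q)_n^2/(-q^3;q^3)_n = (-q;q)_n/(-\zeta_3 q,-\zeta_3^2 q;q)_n$.

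For each target, I will then read off the limits $(q/a;q)_n\sim(-1)^nq^{n(n+1)/2}a^{-n}$ and recover the precise power of $q$ on the summand of the left-hand side ($q^{n^2}$, $q^{n(n+1)/2}$, $q^{n(n-1)/2}$ respectively). The resulting half-line sum on the right of Theorem \ref{thm-ab-3-4} will be folded into a bilateral sum by checking that each summand $(-1)^n\frac{1+q^n}{1-q^n+q^{2n}}q^{n(3n+1)/2}$, $\frac{(-1)^nq^{n^2+n}}{1-q^n+q^{2n}}$, and $\frac{(-1)^nq^{n^2}(1+q^n)^2}{1-q^n+q^{2n}}$ is invariant under $n\mapsto -n$; this is a direct verification after clearing the $q^{-2n}$ factor hidden in the denominator. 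Comparing the $n=0$ term with the constant picked up by the $n=0$ term on the LHS then gives \eqref{mock-3-4}, \eqref{3-4-cor-1}, \eqref{3-4-cor-2}.

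For the simplifications \eqref{3-4-cor-1-simplify} and \eqref{3-4-cor-2-simplify}, the core step is the partial-fraction
\begin{align*}
\frac{1}{1-q^n+q^{2n}}=\frac{1}{1+\zeta_6}\cdot\frac{1}{1-\zeta_6 q^n}+\frac{1}{(1-\zeta_6)(1+\zeta_6)}\cdot\frac{1}{1-\zeta_6^{-1}q^n},
\end{align*}
valid since $(1-\zeta_6 x)(1-\zeta_6^{-1}x)=1-x+x^2$. Multiplying by $(-1)^nq^{n(n+1)}$ and summing over $n\in\mathbb{Z}$, each piece becomes $j(q;q^2)\,h(\zeta_6^{\pm1},q)$ by \eqref{h-defn}; applying \eqref{h-m} and the identities $\zeta_6^{-2}=-\zeta_6$, $\zeta_6^{-1}=-\zeta_6^2$ converts these to $m(-\zeta_6 q,q^2,\zeta_6)$ and $m(\zeta_6^2 q,q^2,-\zeta_6^2)$ with explicit complex coefficients. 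I then divide by $J_{1,2}$ using $(-q;q)_\infty/(q;q)_\infty=1/J_{1,2}$ to match the prefactor in \eqref{3-4-cor-1}. The coefficient clean-up reduces to the single algebraic identity $\zeta_6^2-\zeta_6+1=0$, which shows $\frac{-\zeta_6^{-1}}{1+\zeta_6}=-\frac{1-\zeta_6}{1+\zeta_6}$ and $\frac{-\zeta_6}{1-\zeta_6^2}=\frac{1-\zeta_6}{1+\zeta_6}$; this yields \eqref{3-4-cor-1-simplify}.

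Finally, \eqref{3-4-cor-2-simplify} follows painlessly from \eqref{3-4-cor-1-simplify} by using $(1+q^n)^2=(1-q^n+q^{2n})+3q^n$ to write
\begin{align*}
\sum_{n=-\infty}^\infty\frac{(-1)^nq^{n^2}(1+q^n)^2}{1-q^n+q^{2n}}=\sum_{n=-\infty}^\infty(-1)^nq^{n^2}+3\sum_{n=-\infty}^\infty\frac{(-1)^nq^{n^2+n}}{1-q^n+q^{2n}},
\end{align*}
together with $\sum_{n=-\infty}^\infty(-1)^nq^{n^2}=J_{1,2}$ and the product identity $\frac{(-q;q)_\infty J_{1,2}}{2(q;q)_\infty}=\tfrac12$, giving exactly the extra $\tfrac12$ and the $\tfrac32$ multiple of the RHS of \eqref{3-4-cor-1-simplify}. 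I expect the main obstacle to be the bookkeeping of the $\zeta_6$-coefficients in the partial fraction/$m$-function conversion, but as sketched above the entire identity collapses to the minimal polynomial $\zeta_6^2-\zeta_6+1=0$.
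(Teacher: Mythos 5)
Your proposal is correct and follows essentially the same route as the paper: the same specializations $(a,b)\rightarrow(0,0),(0,-q),(0,-1)$ of Theorem \ref{thm-ab-3-4}, the same $\zeta_6$ partial-fraction decomposition of $1/(1-q^n+q^{2n})$ converted to $m(x,q,z)$ via \eqref{h-defn} and \eqref{h-m}, and the same reduction of \eqref{3-4-cor-2-simplify} to \eqref{3-4-cor-1-simplify} through $(1+q^n)^2=(1-q^n+q^{2n})+3q^n$. Your partial fraction coefficients agree with the paper's after applying $\zeta_6(1-\zeta_6)=1$, so the two write-ups coincide up to this algebraic normalization.
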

\begin{proof}
Taking $(a,b)\rightarrow (0,0)$, $(0,-q)$ and $(0,-1)$ in Theorem \ref{thm-ab-3-4}, we obtain \eqref{mock-3-4}, \eqref{3-4-cor-1} and \eqref{3-4-cor-2}, respectively.

To write the Appell-Lerch series in terms of $m(x,q,z)$, we use the fact that
\begin{align}
\frac{1}{1-q^n+q^{2n}}=\frac{1}{1+\zeta_6}\left(\frac{1}{1-\zeta_6q^n}+\frac{\zeta_6}{1+\zeta_6^2q^n} \right).
\end{align}
Then we can rewrite the Appell-Lerch series on the right side of \eqref{3-4-cor-1} as
\begin{align}
\sum_{n=-\infty}^\infty\frac{(-1)^nq^{n^2+n}}{1-q^n+q^{2n}} =& \frac{1}{1+\zeta_6}\left(\sum_{n=-\infty}^\infty \frac{(-1)^nq^{n^2+n}}{1-\zeta_6q^n}+\zeta_6\sum_{n=-\infty}^\infty \frac{(-1)^nq^{n^2+n}}{1+\zeta_6^2q^n} \right)\nonumber \\
=& \frac{1}{1+\zeta_6}j(q;q^2)\left(h(\zeta_6,q)+\zeta_6h(-\zeta_6^2,q)\right).
\end{align}
Here we used \eqref{h-defn}. Now applying \eqref{h-m} we arrive at \eqref{3-4-cor-1-simplify}.

Observe that
\begin{align}
\frac{(-1)^nq^{n^2}(1+q^n)^2}{1-q^n+q^{2n}}=(-1)^nq^{n^2}+3\frac{(-1)^nq^{n^2+n}}{1-q^n+q^{2n}}.
\end{align}
The identity \eqref{3-4-cor-2-simplify} then follows from \eqref{3-4-cor-1-simplify}.
\end{proof}
\begin{rem}
Comparing \eqref{3-4-cor-1-simplify} with \eqref{3-4-cor-2-simplify} we deduce the following identity:
\begin{align}
\sum_{n=0}^\infty\frac{q^{(n^2-n)/2}(-q;q)_n^2}{(-q^3;q^3)_n}=\frac{3}{2}\sum_{n=0}^\infty\frac{q^{(n^2+n)/2}(-1,-q;q)_n}{(-q^3;q^3)_n}+\frac{1}{2}. \label{revise-relation-2}
\end{align}
\end{rem}
\subsection{Representations for $\omega^{(3)}(q)$}
As for $\omega^{(3)}(q)$, a two parameter generalization of \eqref{mock-3-5} has already been given in Theorem \ref{thm-ab-2-1-1st}. Identity \eqref{mock-3-5} already appears as \eqref{2-1-cor-3}.

\subsection{Representations for $\nu^{(3)}(q)$}
\begin{theorem}\label{thm-ab-3-6}
For $\max\{|ab|, |aq|, |bq|\}<1$, we have
\begin{align}
&\frac{(q,ab;q)_\infty}{(aq,bq;q)_\infty}
{}_3\phi_2\bigg(\genfrac{}{}{0pt}{}{q/a,q/b,q}{iq^{3/2},-iq^{3/2}};q,ab\bigg)\\
\nonumber
=&(1+q)\sum_{n=0}^{\infty}\frac{1-q^{2n+1}}{1+q^{2n+1}}\frac{(q/a,q/b;q)_n}{(aq,bq;q)_n}(-ab)^nq^{(n^2+n)/2}.
\end{align}
\end{theorem}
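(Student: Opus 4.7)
The plan is to follow the template of Theorems \ref{thm-ab-2-1-1st} and \ref{thm-ab-3-2}: apply the master transformation formula in Theorem \ref{thm-main} with a suitable specialization and then evaluate the resulting terminating ${}_{3}\phi_{2}$ in closed form.

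First, I would specialize $(c,d,u,v)\to(iq^{3/2},-iq^{3/2},q,q)$ in Theorem \ref{thm-main}. Checking the constraints, $uab/q=ab$ and $ua=aq$, $ub=bq$, so the convergence hypothesis $\max\{|ab|,|aq|,|bq|\}<1$ in the statement matches exactly. The left-hand side becomes
\begin{equation*}
\frac{(q,ab;q)_\infty}{(aq,bq;q)_\infty}{}_3\phi_2\bigg(\genfrac{}{}{0pt}{}{q/a,q/b,q}{iq^{3/2},-iq^{3/2}};q,ab\bigg),
\end{equation*}
while the right-hand side is
\begin{equation*}
\sum_{n=0}^{\infty}(1-q^{2n+1})\frac{(q/a,q/b;q)_n}{(aq,bq;q)_n}(-ab)^n q^{(n^2-n)/2}\cdot{}_3\phi_2\bigg(\genfrac{}{}{0pt}{}{q^{-n},q^{n+1},q}{iq^{3/2},-iq^{3/2}};q,q\bigg).
\end{equation*}
So the task reduces to evaluating this inner terminating ${}_{3}\phi_{2}$.

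The key observation is that this inner series is exactly of $q$-Pfaff--Saalschütz type \eqref{Pfaff}. Setting $(a,b,c)\to(q,-iq^{1/2},iq^{1/2})$ in \eqref{Pfaff} gives $aq/b=iq^{3/2}$, $aq/c=-iq^{3/2}$ and $aq/(bc)=q^2/q=q$, matching the numerator and denominator parameters of the inner ${}_{3}\phi_{2}$ exactly. Hence
\begin{equation*}
{}_3\phi_2\bigg(\genfrac{}{}{0pt}{}{q^{-n},q^{n+1},q}{iq^{3/2},-iq^{3/2}};q,q\bigg)=\frac{(-iq^{1/2},iq^{1/2};q)_n}{(iq^{3/2},-iq^{3/2};q)_n}q^n.
\end{equation*}
Using $(iq^{1/2};q)_n(-iq^{1/2};q)_n=(-q;q^2)_n$ and $(iq^{3/2};q)_n(-iq^{3/2};q)_n=(-q^3;q^2)_n$, the ratio telescopes to $\frac{1+q}{1+q^{2n+1}}q^n$.

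Substituting this evaluation back into the right-hand side yields
\begin{equation*}
(1+q)\sum_{n=0}^{\infty}\frac{1-q^{2n+1}}{1+q^{2n+1}}\frac{(q/a,q/b;q)_n}{(aq,bq;q)_n}(-ab)^n q^{(n^2+n)/2},
\end{equation*}
which is the claimed identity. There is no real obstacle here; the only subtle point is recognizing that the inner ${}_{3}\phi_{2}$ is balanced and Saalschützian, so that \eqref{Pfaff} applies directly without needing the heavier Sears transformation \eqref{sears:32} or the non-Saalschützian evaluations in Lemmas \ref{meq:1}--\ref{meq:3}.
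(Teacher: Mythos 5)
Your proposal is correct and follows essentially the same route as the paper: the same specialization $(c,d,u,v)\to(iq^{3/2},-iq^{3/2},q,q)$ in Theorem \ref{thm-main}, followed by the same evaluation of the inner terminating ${}_{3}\phi_{2}$ via the $q$-Pfaff--Saalsch\"utz formula \eqref{Pfaff} with $(a,b,c)=(q,iq^{1/2},-iq^{1/2})$ (up to the harmless swap of $b$ and $c$). The telescoping of $(\mp iq^{1/2};q)_n/(\pm iq^{3/2};q)_n$ to $(1+q)/(1+q^{2n+1})$ is exactly as in the paper's \eqref{nu-proof-2}.
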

\begin{proof}
Taking $(c,d,u,v)\rightarrow (iq^{3/2},-iq^{3/2},q,q)$ in Theorem \ref{thm-main}, we deduce that
\begin{align}
&\frac{(q,ab;q)_\infty}{(aq,bq;q^2)_\infty}
{}_3\phi_2\bigg(\genfrac{}{}{0pt}{}{q/a,q/b,q}{iq^{3/2},-iq^{3/2}};q,ab\bigg) \nonumber \\
=&\sum_{n=0}^{\infty}(1-q^{2n+1})\frac{(q/a,q/b;q)_n}{(aq,bq;q)_n}(-ab)^nq^{(n^2-n)/2}
{}_3\phi_2\bigg(\genfrac{}{}{0pt}{}{q^{-n},q^{n+1},q}{iq^{3/2},-iq^{3/2}};q,q\bigg). \label{ab-3-6-proof}
\end{align}
Setting $(a,b,c)=(q,iq^{1/2}, -iq^{1/2})$ in \eqref{Pfaff}, we deduce that
\begin{align}
{}_3\phi_2\bigg(\genfrac{}{}{0pt}{}{q^{-n}, q^{n+1},q}{iq^{3/2},-iq^{3/2}};q,q\bigg)=\frac{(1+q)q^n}{1+q^{2n+1}}. \label{nu-proof-2}
\end{align}
The theorem follows after substituting \eqref{nu-proof-2} into \eqref{ab-3-6-proof}.
\end{proof}
\begin{corollary}
Identity \eqref{mock-3-6} holds. In addition, we have
\begin{align}
&\sum_{n=0}^\infty\frac{(-1)^nq^{n(n+1)/2}(q;q)_n}{(-q;q^2)_{n+1}}\nonumber \\
=& \sum_{n=0}^\infty\frac{1-q^{2n+1}}{1+q^{2n+1}}q^{n^2+n} \label{3-6-cor-1} \\
=&\overline{m}(q,q^2,-q^2), \label{3-6-cor-1-A} \\
&\sum_{n=0}^\infty\frac{q^{n(n+1)/2}(-q;q)_n}{(-q;q^2)_{n+1}}\nonumber \\
=&\frac{(-q;q)_\infty}{(q;q)_\infty}\sum_{n=0}^\infty(-1)^n\frac{1-q^{2n+1}}{1+q^{2n+1}}q^{n^2+n} \label{3-6-cor-2} \\
=&\frac{J_{4}^2}{J_{2}}, \label{3-6-cor-2-simplify} \\
&\sum_{n=0}^\infty \frac{q^{(n+1)^2}(-q;q^2)_n}{(-q^2;q^4)_{n+1}}\nonumber \\
=& q\frac{(-q;q^2)_{\infty}}{(q^2;q^2)_{\infty}}\sum_{n=-\infty}^{\infty}(-1)^n\frac{q^{2n^2+3n}}{1+q^{4n+2}}  \label{3-6-cor-3} \\
=&-m(-q,q^4,q), \label{3-6-cor-3-simplify} \\
&\sum_{n=0}^\infty \frac{q^{n+1}(-q;q)_{2n}}{(-q^2;q^4)_{n+1}} \nonumber \\ =&q\frac{(-q;q)_\infty}{(q;q)_\infty}\sum_{n=0}^\infty (-1)^n\frac{1-q^{2n+1}}{1+q^{4n+2}}q^{n^2+2n} \label{3-6-cor-4} \\
=&-\frac{1}{2}(1+i)m(i,q^2,q). \label{3-6-cor-4-simplify}
\end{align}
\end{corollary}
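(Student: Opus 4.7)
The plan is to derive each identity as a specialization of Theorem \ref{thm-ab-3-6} (or its $q\mapsto q^2$ version), followed by simplifications that put the resulting series into the stated forms. The recipe is to choose $(a,b)$ so that the prefactor $(q,ab;q)_\infty/(aq,bq;q)_\infty$ and the $_3\phi_2$ summand collapse to the left-hand Eulerian series, using the two workhorses $\lim_{a\to 0}(q/a;q)_n a^n=(-1)^n q^{n(n+1)/2}$ and $(-q;q^2)_{n+1}=(1+q)(-q^3;q^2)_n$. Concretely, I would take $(a,b)\to(0,0)$ for \eqref{mock-3-6}, $(a,b)\to(1,0)$ for \eqref{3-6-cor-1}, $(a,b)\to(-1,0)$ for \eqref{3-6-cor-2}, and, after replacing $q$ by $q^2$ throughout, $(a,b)\to(-q,0)$ for \eqref{3-6-cor-3} and $(a,b)\to(-q,-1)$ for \eqref{3-6-cor-4}. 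In the last case the key observation is that $(q^2/a,q^2/b;q^2)_n=(-q,-q^2;q^2)_n=(-q;q)_{2n}$, and the prefactor simplifies via $(1+q)(-q^2,-q^3;q^2)_\infty=(-q;q)_\infty$.

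For \eqref{3-6-cor-3} and \eqref{3-6-cor-4} the one-sided right-hand sum is rewritten as a bilateral sum over $n\in\mathbb{Z}$: for \eqref{3-6-cor-3} one splits the factor $1-q^{2n+1}$ and applies $n\mapsto -n-1$ to the subtracted piece; for \eqref{3-6-cor-4} the cleaner observation is that the summand $A_n$ satisfies $A_{-n-1}=A_n$, so $\sum_{n\geq 0}A_n=\tfrac12\sum_{n\in\mathbb{Z}}A_n$. For the building-block conversions, \eqref{3-6-cor-1-A} follows by matching the definition \eqref{barm-defn} with $(x,q,z)=(q,q^2,-q^2)$ and splitting the $r$-sum according to $\sg(r)$, while \eqref{3-6-cor-3-simplify} is an index shift $r\mapsto r+1$ inside the definition \eqref{m-defn} of $m(-q,q^4,q)$. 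Identity \eqref{3-6-cor-2-simplify} is obtained by writing $\tfrac{1-q^{2n+1}}{1+q^{2n+1}}=1-\tfrac{2q^{2n+1}}{1+q^{2n+1}}$; after bilateralizing, the tail $\sum_{n\in\mathbb{Z}}(-1)^n q^{n^2+n}$ vanishes by the symmetry $n\mapsto -n-1$ and the remaining sum is evaluated by the partial-fraction identity \eqref{reciprocal-Jacobi} with $(q,z)\mapsto(q^2,-q)$, yielding $J_2^2/(-q;q^2)_\infty^2$; combined with $(-q;q)_\infty/(q;q)_\infty=J_2/J_1^2$ this collapses to $J_4^2/J_2$.

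The main technical obstacle is the complex specialization \eqref{3-6-cor-4-simplify}. After the bilateralization above I would apply the partial fraction $\tfrac{1}{1+q^{4n+2}}=\tfrac{1}{2}\bigl[\tfrac{1}{1-iq^{2n+1}}+\tfrac{1}{1+iq^{2n+1}}\bigr]$; an index shift $r=n+1$ then recognizes $\sum_n (-1)^n q^{n^2+2n}/(1\mp iq^{2n+1})$ as $\mp q^{-1}j(q;q^2)\,m(\pm i,q^2,q)$, up to an extra contribution $\sum_{n\in\mathbb{Z}}(-1)^n q^{n^2}=J_1^2/J_2$ arising when rewriting $q^{2n}/(1\pm iq^{2n-1})$ in terms of $1/(1\pm iq^{2n-1})$. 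The two copies of $m$ are then fused into one via $m(-i,q^2,q)=i\,m(i,q^2,q)$, which follows by combining \eqref{m-id-1} and \eqref{m-id-2}; together with the product identity $j(q;q^2)(-q;q)_\infty/(q;q)_\infty=1$, this produces the coefficient $-\tfrac12(1+i)$ in front of $m(i,q^2,q)$. Tracking the complex phases and the theta-constant contributions cleanly through the rearrangement is what makes this step the most delicate.
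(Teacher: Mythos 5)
Your proposal is correct and follows essentially the same route as the paper: the same specializations of Theorem \ref{thm-ab-3-6} (up to the $a\leftrightarrow b$ symmetry), namely $(0,0)$, $(0,1)$, $(0,-1)$ and, after $q\mapsto q^2$, $(0,-q)$ and $(-q,-1)$, followed by the same bilateralization tricks and the partial-fraction identity \eqref{reciprocal-Jacobi}. Your partial-fraction derivation of \eqref{3-6-cor-4-simplify} via $m(-i,q^2,q)=i\,m(i,q^2,q)$ checks out and supplies a detail the paper only outsources to a remark citing Mortenson.
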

\begin{proof}
Taking $(a,b)\rightarrow (0,0)$, $(0,1)$ and $(0,-1)$ in Theorem \ref{thm-ab-3-6}, we obtain \eqref{mock-3-6}, \eqref{3-6-cor-1} and \eqref{3-6-cor-2}, respectively.

Taking $(q,a,b)\rightarrow (q^2, 0, -q)$ and $(q^2, -q, -1)$ in Theorem \ref{thm-ab-3-6}, we obtain \eqref{3-6-cor-3} and \eqref{3-6-cor-4}, respectively.

To get \eqref{3-6-cor-2-simplify}, we first rewrite the sum in \eqref{3-6-cor-2} as
\begin{align}
\sum_{n=0}^\infty(-1)^n\frac{1-q^{2n+1}}{1+q^{2n+1}}q^{n^2+n}=\sum_{n=-\infty}^\infty(-1)^n\frac{q^{n^2+n}}{1+q^{2n+1}}. \label{revise-rewrite-1}
\end{align}
Setting $(q,z)\rightarrow (q^2,-q)$ in \eqref{reciprocal-Jacobi}, we obtain
\begin{align}
\sum_{n=-\infty}^\infty(-1)^n\frac{q^{n^2+n}}{1+q^{2n+1}}=\frac{J_1^2J_4^2}{J_2^2}. \label{reciprocal-apply-3}
\end{align}
Substituting this identity into \eqref{revise-rewrite-1}, we get \eqref{3-6-cor-2-simplify} immediately.
\end{proof}
\begin{rem}
(1) In \eqref{6-2-cor-2-nu} we will provide the following Hecke-type series representation for $\nu^{(3)}(q)$:
\begin{align}
\nu^{(3)}(q)=\frac{(-q^2;q^2)_\infty}{(q^2;q^2)_\infty}\sum_{n=0}^\infty (1-q^{2n+1})q^{3n^2+2n}\sum_{j=-n}^n(-1)^jq^{-j^2}. \label{nu-Hecke-pre}
\end{align}
(2) The left sides of \eqref{3-6-cor-3} and \eqref{3-6-cor-4} are the eighth order mock theta functions $U_1^{(8)}(q)$ and $V_1^{(8)}(q)$, respectively. See \eqref{mock-8-6-defn} and \eqref{mock-8-8-defn}. From \cite[Eq.\ (5.40)]{Hickerson-Mortenson} we find
\begin{align}\label{HM-U1-m}
U_1^{(8)}(q)=-m(-q,q^4,-q^2).
\end{align}
Taking $(x,q,z_1,z_0)\rightarrow (-q,q^4,q,-q^2)$ in Lemma \ref{lem-m-minus}, we see that \eqref{HM-U1-m} is equivalent to \eqref{3-6-cor-3-simplify}.
 From \cite[Eq.\ (5.42)]{Hickerson-Mortenson} we find that
\begin{align}\label{HM-V1-m}
V_1^{(8)}(q)=-m(q^2,q^8,q).
\end{align}
This can be shown to be equivalent to \eqref{3-6-cor-4-simplify} by using Lemmas \ref{lem-m-minus} and \ref{lem-m-decompose}. We omit the details.\\
(3) If we set $x=i$ in \cite[Eq.\ (2.12)]{Mortenson-2014AIM}, we get \eqref{3-6-cor-4-simplify}.
\end{rem}

\subsection{Representations for $\rho^{(3)}(q)$}
\begin{theorem}\label{thm-ab-3-7}
For $\max \{|ab|, |aq^2|, |bq^2|\}<1$, we have
\begin{align}
&\frac{(q^2,ab;q^2)_\infty}{(aq^2,bq^2;q^2)_\infty}
{}_3\phi_2\bigg(\genfrac{}{}{0pt}{}{q^2/a,q^2/b,q^2}{\zeta_3q^3,\zeta_3^2q^3};q^2,ab\bigg)\\
\nonumber
=&(1+q+q^{2})\sum_{n=0}^{\infty}\frac{1-q^{4n+2}}{1+q^{2n+1}+q^{4n+2}}\frac{(q^2/a,q^2/b;q^2)_n}{(aq^2,bq^2;q^2)_n}(-ab)^nq^{n^2+n}.
\end{align}
\end{theorem}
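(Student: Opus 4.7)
The proof will follow the same template used for Theorems \ref{thm-ab-3-2}, \ref{thm-ab-3-4}, and \ref{thm-ab-3-6}: specialize Theorem \ref{thm-main} to reduce the left-hand side to a sum involving a terminated ${}_3\phi_2$ series, then evaluate that terminated series in closed form using the $q$-Pfaff--Saalsch\"utz summation \eqref{Pfaff}. The upper parameters $q^2/a, q^2/b, q^2$ and lower parameters $\zeta_3 q^3, \zeta_3^2 q^3$ dictate the substitution $(q, c, d, u, v) \to (q^2, \zeta_3 q^3, \zeta_3^2 q^3, q^2, q^2)$ in Theorem \ref{thm-main}, which produces
\begin{align*}
&\frac{(q^2, ab; q^2)_\infty}{(aq^2, bq^2; q^2)_\infty}
{}_3\phi_2\bigg(\genfrac{}{}{0pt}{}{q^2/a, q^2/b, q^2}{\zeta_3 q^3, \zeta_3^2 q^3}; q^2, ab\bigg) \\
&\qquad = \sum_{n=0}^\infty (1-q^{4n+2}) \frac{(q^2/a, q^2/b; q^2)_n}{(aq^2, bq^2; q^2)_n} (-ab)^n q^{n^2-n}
{}_3\phi_2\bigg(\genfrac{}{}{0pt}{}{q^{-2n}, q^{2n+2}, q^2}{\zeta_3 q^3, \zeta_3^2 q^3}; q^2, q^2\bigg).
\end{align*}

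Next, I will apply \eqref{Pfaff} with $(q, a, b, c) \to (q^2, q^2, \zeta_3^2 q, \zeta_3 q)$; the balance condition $aq^2/bc = q^2$ is met since $\zeta_3 \cdot \zeta_3^2 = 1$, and one checks $aq^2/b = \zeta_3 q^3$ and $aq^2/c = \zeta_3^2 q^3$ as required. This gives
\begin{align*}
{}_3\phi_2\bigg(\genfrac{}{}{0pt}{}{q^{-2n}, q^{2n+2}, q^2}{\zeta_3 q^3, \zeta_3^2 q^3}; q^2, q^2\bigg)
= \frac{(\zeta_3^2 q, \zeta_3 q; q^2)_n}{(\zeta_3 q^3, \zeta_3^2 q^3; q^2)_n} \, q^{2n}.
\end{align*}

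The key simplification is the cyclotomic identity $(1-\zeta_3 x)(1-\zeta_3^2 x) = 1 + x + x^2$, which collapses the paired $q$-Pochhammer symbols into real products:
\begin{align*}
(\zeta_3^2 q, \zeta_3 q; q^2)_n &= \prod_{k=0}^{n-1}(1 + q^{2k+1} + q^{4k+2}), \\
(\zeta_3 q^3, \zeta_3^2 q^3; q^2)_n &= \prod_{k=0}^{n-1}(1 + q^{2k+3} + q^{4k+6}).
\end{align*}
After telescoping the shifted indices, the quotient simplifies to $(1+q+q^2)/(1+q^{2n+1}+q^{4n+2})$. Substituting back into the series and combining $q^{n^2-n} \cdot q^{2n} = q^{n^2+n}$ yields the claimed identity. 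The main (and only) obstacle is keeping the roots-of-unity bookkeeping straight when invoking Pfaff--Saalsch\"utz, but the cyclotomic factorization makes the final simplification essentially automatic, in complete parallel with how Theorem \ref{thm-ab-3-4} was handled.
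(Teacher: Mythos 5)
Your proposal is correct and follows exactly the paper's own proof: the same specialization $(q,c,d,u,v)\rightarrow(q^2,\zeta_3q^3,\zeta_3^2q^3,q^2,q^2)$ of Theorem \ref{thm-main}, followed by the same application of the $q$-Pfaff--Saalsch\"utz formula \eqref{Pfaff} (your choice $(b,c)\to(\zeta_3^2q,\zeta_3q)$ versus the paper's $(\zeta_3q,\zeta_3^2q)$ is immaterial by symmetry). The cyclotomic simplification and the exponent bookkeeping are both carried out correctly.
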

\begin{proof}
Taking $(q,c,d,u,v)\rightarrow (q^2,\zeta_3q^3,\zeta_3^2q^3,q^2,q^2)$ in Theorem \ref{thm-main}, we deduce that
\begin{align}
&\frac{(q^2,ab;q^2)_\infty}{(aq^2,bq^2;q^2)_\infty}
{}_3\phi_2\bigg(\genfrac{}{}{0pt}{}{q^2/a,q^2/b,q^2}{\zeta_3q^3,\zeta_3^2q^3};q^2,ab\bigg) \nonumber\\
=&\sum_{n=0}^{\infty}(1-q^{4n+2})\frac{(q^2/a,q^2/b;q^2)_n}{(aq^2,bq^2;q^2)_n}(-ab)^nq^{n^2-n}
{}_3\phi_2\bigg(\genfrac{}{}{0pt}{}{q^{-2n},q^{2n+2},q^2}{\zeta_3q^3,\zeta_3^2q^3};q^2,q^2\bigg). \label{ab-3-7-proof}
\end{align}
Taking $(q,a,b,c) \rightarrow (q^2, q^2,\zeta_{3}q, \zeta_{3}^2q)$ in \eqref{Pfaff}, we obtain
\begin{align}
{}_{3}\phi_{2}\bigg(\genfrac{}{}{0pt}{}{q^{-2n}, q^{2n+2},q^2}{\zeta_{3}q^{3},\zeta_{3}^2q^{3}};q^2,q^2\bigg)=\frac{(1+q+q^2)q^{2n}}{1+q^{2n+1}+q^{4n+2}}. \label{rho-proof-2}
\end{align}
The theorem follows after substituting \eqref{rho-proof-2} into \eqref{ab-3-7-proof}.
\end{proof}
\begin{corollary}
Identity \eqref{mock-3-7} holds. In addition, we have
\begin{align}
&\sum_{n=0}^\infty\frac{(-1)^nq^{n(n+1)}(q;q^2)_{n+1}(q^2;q^2)_n}{(q^3;q^6)_{n+1}} \nonumber \\
=&\sum_{n=0}^\infty\frac{1-q^{4n+2}}{1+q^{2n+1}+q^{4n+2}}q^{2n^2+2n} \label{3-7-cor-1} \\
=&\frac{1}{1-\zeta_3}\left(\overline{m}(-\zeta_3^2q^2,q^4,-q^4)+\zeta_3q\overline{m}(-\zeta_3^2,q^4,-q^6)\right. \nonumber \\
&\quad \quad \left.-\zeta_3\overline{m}(-\zeta_3q^2,q^4,-q^4)-q\overline{m}(-\zeta_3,q^4,-q^6) \right), \label{3-7-cor-1-simplify} \\
&\sum_{n=0}^\infty\frac{(-1)^nq^{n^2+2n}(q;q^2)_n(q;q^2)_{n+1}}{(q^3;q^6)_{n+1}}\nonumber \\
=&\frac{(q;q^2)_\infty}{(q^2;q^2)_\infty}\sum_{n=0}^\infty\frac{1+q^{2n+1}}{1+q^{2n+1}+q^{4n+2}}q^{2n^2+3n} \label{3-7-cor-2} \\
=&\frac{1}{1-\zeta_3}q^{-1}\left(m(-\zeta_3q,q^4,-q)-\zeta_3m(-\zeta_{3}^2q,q^4,-q) \right). \label{3-7-cor-2-simplify}
\end{align}
\end{corollary}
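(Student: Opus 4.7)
I would first note the factorization $(\zeta_3q^3;q^2)_n(\zeta_3^2q^3;q^2)_n=\prod_{k=1}^n(1+q^{2k+1}+q^{4k+2})$, coming from $(1-\zeta_3 y)(1-\zeta_3^2 y)=1+y+y^2$, which together with $(q^9;q^6)_n=(q^3;q^2)_n\prod_{k=1}^n(1+q^{2k+1}+q^{4k+2})$ aligns the left sides of \eqref{mock-3-7}, \eqref{3-7-cor-1}, and \eqref{3-7-cor-2} with the ${}_3\phi_2$ side of Theorem \ref{thm-ab-3-7}. The three identities then fall out as specializations: $(a,b)\to(0,0)$ gives \eqref{mock-3-7}, $(a,b)\to(0,1)$ gives \eqref{3-7-cor-1}, and $(a,b)\to(0,q)$ gives \eqref{3-7-cor-2}, each using the standard limit $(q^2/a;q^2)_n a^n\to (-1)^n q^{n^2+n}$ as $a\to 0$ to absorb vanishing parameters, and the elementary product identity $(q^2;q^2)_\infty=(q;q)_\infty/(q;q^2)_\infty$ to simplify the infinite-product prefactors.

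For the $\overline{m}$-rewriting \eqref{3-7-cor-1-simplify}, the key tools are the partial-fraction identity $\frac{1}{1-\zeta_3 y}-\frac{\zeta_3}{1-\zeta_3^2 y}=\frac{1-\zeta_3}{1+y+y^2}$ and the quadratic factorization $1-\zeta_3^{2j}q^{4r+2}=(1-\zeta_3^j q^{2r+1})(1+\zeta_3^j q^{2r+1})$ (for $j=1,2$). I would pair the four $\overline{m}$-terms as $\overline{m}(-\zeta_3^2q^2,q^4,-q^4)+\zeta_3 q\,\overline{m}(-\zeta_3^2,q^4,-q^6)$ and $-\zeta_3\,\overline{m}(-\zeta_3q^2,q^4,-q^4)-q\,\overline{m}(-\zeta_3,q^4,-q^6)$; computing each $\overline{m}$ directly from \eqref{barm-defn}, each pair collapses via the above factorization together with the expansion $\zeta_3+q^{2r+1}=\zeta_3(1+\zeta_3^2 q^{2r+1})$ into $\sum_r\sg(r)q^{2r^2+2r}/(1-\zeta_3^{j}q^{2r+1})$ for $j=1$ and $j=2$ respectively. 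Applying the partial-fraction identity and dividing by $1-\zeta_3$ then produces $\sum_r\sg(r)q^{2r^2+2r}/(1+q^{2r+1}+q^{4r+2})$; splitting this bilateral sum by the sign of $r$ and applying $r\mapsto -r-1$ to the negative tail yields the factor $(1-q^{4r+2})$ in the one-sided sum appearing in the second line of \eqref{3-7-cor-1}.

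For \eqref{3-7-cor-2-simplify} the analogous computation uses $m$ in place of $\overline{m}$: I would apply the same partial-fraction identity to $m(-\zeta_3 q,q^4,-q)-\zeta_3 m(-\zeta_3^2 q,q^4,-q)$, expanded directly from \eqref{m-defn}, to obtain $\frac{1-\zeta_3}{j(-q;q^4)}\sum_{r\in\mathbb{Z}}q^{2r^2-r}/(1+q^{4r-2}+q^{8r-4})$. Reindexing the positive tail by $r=n+1$ and the negative tail by $r=-n$ and fusing via $1+q^{6n+3}=(1+q^{2n+1})(1-q^{2n+1}+q^{4n+2})$ and the parallel factorization $1+q^{4n+2}+q^{8n+4}=(1+q^{2n+1}+q^{4n+2})(1-q^{2n+1}+q^{4n+2})$ yields $(1-\zeta_3)q\sum_{n\geq 0}(1+q^{2n+1})q^{2n^2+3n}/(1+q^{2n+1}+q^{4n+2})$; together with the theta-function evaluation $j(-q;q^4)=(q^2;q^2)_\infty/(q;q^2)_\infty$ this matches the second line of \eqref{3-7-cor-2}. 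The main obstacle throughout is the combinatorial bookkeeping in these pairings: one must select the correct coefficients and arguments of the four (resp.\ two) $\overline{m}$- (resp.\ $m$-) terms so that the cyclotomic factorization and partial-fraction steps align precisely with the one-sided factors $1-q^{4n+2}$ and $1+q^{2n+1}$ appearing on the right of \eqref{3-7-cor-1} and \eqref{3-7-cor-2}.
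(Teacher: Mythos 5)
Your proposal is correct and follows the same route as the paper: the paper's proof is exactly the three specializations $(a,b)\to(0,0)$, $(0,1)$, $(0,q)$ of Theorem \ref{thm-ab-3-7} for \eqref{mock-3-7}, \eqref{3-7-cor-1} and \eqref{3-7-cor-2}, and it declares the building-block rewritings routine and omits them. The details you supply for \eqref{3-7-cor-1-simplify} and \eqref{3-7-cor-2-simplify} — the partial fraction $\frac{1}{1-\zeta_3 y}-\frac{\zeta_3}{1-\zeta_3^2 y}=\frac{1-\zeta_3}{1+y+y^2}$, the pairing of the $\overline{m}$- and $m$-terms via $1-\zeta_3^{2j}q^{4r+2}=(1-\zeta_3^jq^{2r+1})(1+\zeta_3^jq^{2r+1})$, the reindexings $r\mapsto -r-1$ and $r=n+1$, $r=-n$, and the evaluation $j(-q;q^4)=(q^2;q^2)_\infty/(q;q^2)_\infty$ — all check out.
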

\begin{proof}
Taking $(a,b)\rightarrow (0,0)$, $(0,1)$ and $(0,q)$ in Theorem \ref{thm-ab-3-7}, we obtain \eqref{mock-3-7}, \eqref{3-7-cor-1} and \eqref{3-7-cor-2}, respectively.
\end{proof}

\section{Mock theta functions of order 5}\label{sec-order-5}
In his last letter to Hardy, Ramanujan gave ten mock theta functions of order 5. They are defined as
\begin{align}
&f_0^{(5)}(q):=\sum_{n= 0}^{\infty} \frac{q^{n^2}}{(-q;q)_{n}}, \label{mock-5-1-defn} \\
&\phi_{0}^{(5)}(q):=\sum_{n=0}^{\infty}q^{n^2}(-q;q^2)_{n}, \label{mock-5-2-defn} \\
&\psi_0^{(5)}(q):=\sum_{n=0}^\infty q^{(n+2)(n+1)/2}(-q;q)_n, \label{mock-5-3-defn} \\
&F_0^{(5)}(q):=\sum_{n= 0}^{\infty} \frac{q^{2n^2}}{(q;q^2)_{n}}, \label{mock-5-4-defn} \\
&f_1^{(5)}(q):=\sum_{n=0}^{\infty} \frac{q^{n(n+1)}}{(-q;q)_{n}}, \label{mock-5-6-defn} \\
&\phi_1^{(5)}(q):=\sum_{n=0}^{\infty} q^{(n+1)^2}(-q;q^2)_{n}, \label{mock-5-7-defn} \\
&\psi_1^{(5)}(q):=\sum_{n=0}^{\infty}q^{\binom{n+1}{2}}(-q;q)_{n}, \label{mock-5-8-defn} \\
&F_1^{(5)}(q):=\sum_{n=0}^{\infty} \frac{q^{2n(n+1)}}{(q;q^2)_{n+1}}, \label{mock-5-9-defn} \\
&\chi_0^{(5)}(q):=\sum_{n=0}^{\infty} \frac{q^n}{(q^{n+1};q)_{n}}, \label{mock-5-5-defn} \\
&\chi_1^{(5)}(q)=\sum_{n=0}^{\infty} \frac{q^n}{(q^{n+1};q)_{n+1}}.  \label{mock-5-10}
\end{align}
Thanks to the work of Andrews \cite{Andrews-TAMS}, except for $\chi_0^{(5)}(q)$ and $\chi_1^{(5)}(q)$, we know that there are single Hecke-type series representations for mock theta functions of order 5.
\begin{theorem}\label{thm-ord-5}
We have
\begin{align}
&f_0^{(5)}(q)=\frac{1}{(q;q)_{\infty}}\sum_{n=0}^{\infty}\sum_{j=-n}^{n}(-1)^j(1-q^{4n+2})q^{n(5n+1)/2-j^2}, \label{mock-5-1} \\
&\phi_{0}^{(5)}(q)=\frac{(-q;q^2)_{\infty}}{(q^2;q^2)_{\infty}}\sum_{n=0}^{\infty}\sum_{|j|\leq n}(-1)^jq^{5n^2+2n-3j^2-j}(1-q^{6n+3}), \label{mock-5-2} \\
&\psi_0^{(5)}(q)=-\frac{(-q;q)_\infty}{(q;q)_\infty}\sum_{n=1}^\infty \sum_{j=-n}^{n-1}(1-q^n)(-1)^jq^{n(5n-1)/2-j(3j+1)/2}, \label{mock-5-3} \\
&F_0^{(5)}(q)=\frac{1}{(q^2;q^2)_{\infty}}\sum_{n=0}^{\infty}\sum_{j=0}^{2n}(-1)^nq^{5n^2+2n-\binom{j+1}{2}} (1+q^{6n+3}), \label{mock-5-4} \\
&f_1^{(5)}(q)=\frac{1}{(q;q)_{\infty}}\sum_{n=0}^{\infty}\sum_{|j|\leq n}(-1)^jq^{n(5n+3)/2-j^2}(1-q^{2n+1}), \label{mock-5-6} \\
&\phi_1^{(5)}(q)=q\frac{(-q;q^2)_{\infty}}{(q^2;q^2)_{\infty}}\sum_{n\geq 0}\sum_{|j|\leq n}(-1)^jq^{5n^2+4n-3j^2-j}(1-q^{2n+1}), \label{mock-5-7} \\
&\psi_1^{(5)}(q)=\frac{(-q;q)_{\infty}}{(q;q)_{\infty}}\sum_{n\geq 0}\sum_{|j|\leq n}(-1)^jq^{n(5n+3)/2-j(3j+1)/2}(1-q^{2n+1}), \label{mock-5-8} \\
&F_1^{(5)}(q)=\frac{1}{(q^2;q^2)_{\infty}}\sum_{n\geq 0}\sum_{j=0}^{2n}(-1)^nq^{5n^2+4n-\binom{j+1}{2}}(1+q^{2n+1}). \label{mock-5-9}
\end{align}
\end{theorem}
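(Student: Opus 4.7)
The plan is to prove each of the eight identities by first establishing an $(a,b)$-parameterized identity as a specialization of Theorem \ref{thm-main} (with base $q$ or $q^2$ as appropriate), and then setting $(a,b)$ equal to a specific limiting pair (most often $(0,0)$ or $(0,1)$) to recover the mock theta function in its Eulerian form. This is the same template used in the proofs of Theorems \ref{meq:1.1}--\ref{T:4.8}; in particular, the identity \eqref{example} displayed in the introduction already proves \eqref{mock-5-9} for $F_1^{(5)}(q)$ upon specializing $(a,b)\to(0,1)$. The remaining seven identities should follow analogously, each with its own choice of the free parameters $(c,d,u,v)$ in Theorem \ref{thm-main}.

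For each mock theta function the parameters $(c,d)$ are chosen to match the Pochhammer denominators present in the Eulerian form (for example, $(c,d)\to(q^{3/2},-q^{3/2})$ to produce a factor $1/(1-q^{2n+1})$, or $(c,d)\to(0,q^3)$ to produce a $(q;q^2)_{n+1}$ denominator after base change $q\to q^2$), while $(u,v)$ are chosen so that the prefactor $(-uab)^n q^{(n^2-3n)/2}$ on the right of Theorem \ref{thm-main} combines with the limiting behaviour $(q/a;q)_n(q/b;q)_n\sim q^{n(n+1)}/(ab)^n$ as $a,b\to 0$ to reproduce the correct quadratic exponent on the right of \eqref{mock-5-1}--\eqref{mock-5-9}. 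The inner terminated ${}_3\phi_2$ is then reduced to a short finite $j$-sum of the form $\sum_{j=-n}^n (\pm1)^j q^{Q(j)}$ by one of Lemmas \ref{meq:1}--\ref{meq:3}, possibly preceded by the Sears transformation \eqref{sears:32}; for $\phi_0^{(5)}(q)$ and $\phi_1^{(5)}(q)$ the correct reduction is the Andrews evaluation \eqref{proof-add-andrews} that already appears in the proof of Theorem \ref{meq:3.3}, which produces the characteristic summand $q^{-3j^2-j}$. For $\phi_j^{(5)}(q)$ I would first rewrite $(-q;q^2)_n=(q^2;q^4)_n/(q;q^2)_n$ so that the Eulerian form fits the format of Theorem \ref{thm-main} with base $q^2$; for $\psi_j^{(5)}(q)$, whose numerators carry a factor $(-q;q)_n$ rather than a reciprocal, I would specialize $(a,b)$ so that the prefactor $(ab;q)_\infty/((aq,bq;q)_\infty)$ reproduces the $(-q;q)_\infty/(q;q)_\infty$ visible on the right of \eqref{mock-5-3} and \eqref{mock-5-8}.

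The main obstacle is the ansatz step: Theorem \ref{thm-main} has four free parameters, and for each of the eight cases one must locate the particular $(c,d,u,v)$ whose inner ${}_3\phi_2$ telescopes via an existing summation lemma to the precise $j$-sum displayed on the right-hand side, with exactly the correct signs, the correct outer factor $1\pm q^{2n+1}$ (or $1\pm q^{6n+3}$ in the $\phi_j^{(5)}$ cases), and the correct quadratic exponent. Tracking these signs and $q$-shifts is delicate. However, the parameterized identities already proved in Section \ref{sec:proof} collectively exhibit essentially every pattern of inner collapse that the reduction lemmas in Section \ref{subsec-basic} can produce, so each of the eight order-$5$ identities should match one of those templates. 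Once the correct template is identified, the remaining work is the routine specialization $(a,b)\to(0,0)$ or $(0,1)$ and the matching of the infinite-product prefactors, performed in the same way as in the corollaries to Theorems \ref{meq:1.1}--\ref{T:4.8}.
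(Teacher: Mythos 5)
Your overall strategy is exactly the paper's: each of the eight identities is obtained by specializing Theorem \ref{thm-main} to an $(a,b)$-parameterized identity (Theorems \ref{thm-ab-5-1}, \ref{thm-ab-5-2}, \ref{thm-ab-5-4}, \ref{meq:2.1}, \ref{thm-ab-5-6} and \ref{thm-ab-5-8}), reducing the inner terminated ${}_3\phi_2$ by one of the lemmas of Section \ref{subsec-basic}, and then letting $(a,b)$ tend to a limiting pair. Two of your concrete choices, however, are wrong and would derail the computation. First, \eqref{mock-5-9} follows from Theorem \ref{thm-ab-5-8} (i.e.\ \eqref{example}) at $(a,b)\to(0,0)$, not $(0,1)$: with $b=1$ the factor $(q^2/b;q^2)_n$ becomes $(q^2;q^2)_n$ instead of contributing $q^{n(n+1)}b^{-n}$, and one lands on the companion identity \eqref{example-T8} rather than on $F_1^{(5)}(q)$. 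Second, the Andrews evaluation \eqref{proof-add-andrews} produces the summand $q^{-j^2}$ (already in base $q^2$) and can never yield the exponent $-3j^2-j$ of \eqref{mock-5-2} and \eqref{mock-5-7}. The reduction that actually produces $\sum_{|j|\le n}(-1)^jq^{-j(3j+1)/2}$ (which becomes $q^{-3j^2-j}$ under $q\to q^2$) is the doubly confluent case $c=d=0$ of the inner sum, i.e.\ the evaluation \eqref{revise-5-2-proof-3} underlying Theorems \ref{thm-ab-5-2} and \ref{thm-ab-5-6}. Also, no rewriting of $(-q;q^2)_n$ as a quotient is needed for the $\phi$'s: that numerator factor arises directly from $(q^2/b;q^2)_n$ at $b=-q$, which is how the prefactor $(-q;q^2)_\infty/(q^2;q^2)_\infty$ appears.

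There is a further unacknowledged step in the four cases $f_0^{(5)}$, $\phi_0^{(5)}$, $\psi_0^{(5)}$, $F_0^{(5)}$. For these the inner ${}_3\phi_2$ does \emph{not} collapse to a clean theta-like $j$-sum: the limiting lemmas (Lemma \ref{lem-limit-2} and its variants) give it as a boundary term plus a truncated sum, as in \eqref{ab-5-1-proof-2} and \eqref{revise-5-2-proof-3}, and after specializing $(a,b)$ one must reindex $n\to n+1$ in one of the two resulting double sums to assemble the factors $1-q^{4n+2}$, $1-q^{6n+3}$, $1-q^n$ appearing in \eqref{mock-5-1}--\eqref{mock-5-4}. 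This recombination is where those right-hand sides actually come from, and it is what distinguishes these four functions from the $f_1^{(5)}$, $\phi_1^{(5)}$, $\psi_1^{(5)}$, $F_1^{(5)}$ family, where Lemma \ref{meq:2} and Theorem \ref{thm-ab-5-6} deliver the $j$-sum with the factor $1-q^{2n+1}$ directly. Calling all of this ``routine specialization'' passes over the only genuinely delicate part of the proof.
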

As for the remaining two functions, from \cite{Watson-2} we know that
\begin{align}
\chi_{0}^{(5)}(q)=2F_0^{(5)}(q)-\phi_0^{(5)}(-q), \quad \chi_1^{(5)}(q)=2F_1^{(5)}(q)+q^{-1}\phi_1^{(5)}(-q).
\end{align}
This means that $\chi_0^{(5)}(q)$ and $\chi_1^{(5)}(q)$ can be expressed as combinations of two Hecke-type series. In addition, Zagier \cite{Zagier} stated indefinite theta series identities for these two functions. Using Bailey pairs,  Zwegers \cite{Zwegers-Rama} found triple sum identities for $\chi_0^{(5)}(q)$ and $\chi_1^{(5)}(q)$. In a recent work, Garvan \cite{Garvan-arXiv} gave some Hecke-type series representations for them.

We will provide new proofs for the Hecke-type series representations in Theorem \ref{thm-ord-5}.
\subsection{Representations for $f_0^{(5)}(q)$}
\begin{theorem}\label{thm-ab-5-1}
For $\max \{|ab/q|, |a|, |b|\}<1$, we have
\begin{align}
&\frac{(q,ab/q;q)_\infty}{(a,b;q)_\infty}
{}_3\phi_2\bigg(\genfrac{}{}{0pt}{}{q/a,q/b,q}{0,-q};q,ab/q\bigg) \nonumber\\
=&1+\sum_{n=1}^{\infty}\frac{(q/a,q/b;q)_n}{(a,b;q)_n}(-ab)^nq^{\frac{n^2-3n}{2}}
\left(2q^n+(-1)^{n-1}(1-q^n)\sum_{|j|<n}(-1)^jq^{n^2-j^2}\right).
\end{align}
\end{theorem}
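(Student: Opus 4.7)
The plan is to invoke Theorem \ref{thm-main} with the specialization $(c,d,u,v)\to(0,-q,1,q)$, exactly as in the proofs of Theorem \ref{thm-ab-3-2} and other $u=1$ cases above. The limit $u\to 1$ must be handled by writing $(u;q)_n/(1-u)=(uq;q)_{n-1}$, so that the $n=0$ summand collapses to $1$ while for $n\geq 1$ the prefactor $(1-uq^{2n})(u;q)_n/[(1-u)(q;q)_n]$ becomes $(1-q^{2n})(q;q)_{n-1}/(q;q)_n=1+q^n$. This reduces the theorem to the single identity
\begin{equation*}
(1+q^n)\,{}_{3}\phi_{2}\bigg(\genfrac{}{}{0pt}{}{q^{-n},q^{n},q}{0,-q};q,q\bigg)=2q^n+(-1)^{n-1}(1-q^n)\sum_{|j|<n}(-1)^jq^{n^2-j^2},\qquad(\ast)
\end{equation*}
to be verified for every $n\geq 1$.

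To evaluate the inner $_3\phi_2$, I would apply Lemma \ref{lem-limit-2} with $c=-q$, so that $q^2/c=-q$ matches the lower parameter. Under this choice $(c/q;q)_n/(q^2/c;q)_n=(-1;q)_n/(-q;q)_n=2/(1+q^n)$, the ratio $(q/c;q)_j/(c/q;q)_j$ collapses to $1$, the factor $c^{-n}q^{n^2+n}$ becomes $(-1)^n q^{n^2}$, and the explicit constant $(q-2c+cq)/[(1-q)(c-q)]$ simplifies to $(q-3)/[2(1-q)]$. Lemma \ref{lem-limit-2} then expresses the $_3\phi_2$ as $\frac{2(-1)^n(1-q^n)q^{n^2}}{1+q^n}$ multiplied by a bracket containing that constant plus the finite sum $S_n:=\sum_{j=2}^{n}\frac{(-1)^jq^{j-j^2}(1-q^{2j-1})}{(1-q^{j-1})(1-q^j)}$.

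The crux of the proof is to simplify $S_n$ by telescoping. From the elementary identity $1-q^{2j-1}=(1-q^{j-1})+q^{j-1}(1-q^j)$ I obtain the decomposition
\begin{equation*}
\frac{1-q^{2j-1}}{(1-q^{j-1})(1-q^j)}=\frac{1}{1-q^j}+\frac{q^{j-1}}{1-q^{j-1}},
\end{equation*}
which splits $S_n$ into two sums. Shifting $j\mapsto j+1$ in the second sum rewrites its summand as $(-1)^{j+1}q^{-j^2}/(1-q^j)$, and then the two sums partially cancel for $2\leq j\leq n-1$, leaving
\begin{equation*}
S_n=\frac{(-1)^nq^{n-n^2}}{1-q^n}+\frac{q^{-1}}{1-q}-\sum_{j=2}^{n-1}(-1)^jq^{-j^2}.
\end{equation*}
Combining the rational pieces $q^{-1}/(1-q)+(q-3)/[2(1-q)]=1/q-1/2$, and then invoking $\sum_{|j|<n}(-1)^jq^{-j^2}=1-2q^{-1}+2\sum_{j=2}^{n-1}(-1)^jq^{-j^2}$, the bracket simplifies to $(-1)^nq^{n-n^2}/(1-q^n)-\tfrac{1}{2}\sum_{|j|<n}(-1)^jq^{-j^2}$. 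Multiplying by the normalising prefactor (and using $-(-1)^n=(-1)^{n-1}$) gives $(\ast)$.

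The main obstacle will be the careful telescoping bookkeeping: one must separately track the $j=n$ contribution from the first sum, the $j=1$ contribution from the shifted second sum, and the constant term from Lemma \ref{lem-limit-2}, and verify that the leftover rational fragments collapse precisely so as to produce the symmetric Hecke-type tail $\sum_{|j|<n}(-1)^jq^{-j^2}$. Everything else—the application of Theorem \ref{thm-main}, the choice of Lemma \ref{lem-limit-2}, and the partial-fraction split—is routine.
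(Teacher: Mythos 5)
Your proposal is correct and follows essentially the same route as the paper: specialize Theorem \ref{thm-main} at $(c,d,u,v)\to(0,-q,1,q)$, evaluate the terminated ${}_3\phi_2$ via Lemma \ref{lem-limit-2} with $c=-q$, and telescope using the split $1-q^{2j-1}=(1-q^{j-1})+q^{j-1}(1-q^j)$. Your bookkeeping (keeping the cancelled range $2\le j\le n-1$ with explicit boundary terms at $j=1$ and $j=n$) is just a cosmetic variant of the paper's (extending the sum to $1\le j\le n$ with correction terms), and all the intermediate simplifications you record check out.
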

\begin{proof}
Taking $(c,d,u,v)\rightarrow (0,-q,1,q)$ in Theorem \ref{thm-main}, we deduce that
\begin{align}
&\frac{(q,ab/q;q)_\infty}{(a,b;q)_\infty}
{}_3\phi_2\bigg(\genfrac{}{}{0pt}{}{q/a,q/b,q}{0,-q};q,ab/q\bigg) \nonumber\\
=&1+\sum_{n=1}^{\infty}(1+q^{n})\frac{(q/a,q/b;q)_n}{(a,b;q)_n}(-ab)^nq^{(n^2-3n)/2}
{}_3\phi_2\bigg(\genfrac{}{}{0pt}{}{q^{-n},q^{n},q}{0,-q};q,q\bigg). \label{ab-5-1-proof}
\end{align}
Setting $c=-q$ in Lemma \ref{lem-limit-2}, we deduce that
\begin{align}
&{}_3\phi_2\bigg(\genfrac{}{}{0pt}{}{q^{-n},q^{n},q}{0,-q};q,q\bigg) \nonumber\\
=&~ 2(-1)^nq^{n^2}\frac{1-q^n}{1+q^n}\left(-\frac{3-q}{2(1-q)}+\sum_{j=2}^n\frac{1-q^{2j-1}}{(1-q^{j-1})(1-q^j)}(-1)^jq^{j-j^2} \right). \label{revise-5-1-proof-1}
\end{align}
We have
\begin{align}
&\sum_{j=2}^n\frac{1-q^{2j-1}}{(1-q^{j-1})(1-q^j)}(-1)^jq^{j-j^2} \nonumber \\
=&~ \sum_{j=2}^n\frac{(1-q^{j-1})+q^{j-1}(1-q^j)}{(1-q^{j-1})(1-q^j)}(-1)^jq^{j-j^2} \nonumber \\
=&~ \sum_{j=2}^n \frac{(-1)^jq^{j-j^2}}{1-q^j}+\sum_{j=2}^n \frac{(-1)^jq^{2j-1-j^2}}{1-q^{j-1}}  \quad \textrm{(replace $j$ by $j+1$ in the second sum)}\nonumber \\
=&~ \sum_{j=2}^n \frac{(-1)^jq^{j-j^2}}{1-q^j}+\sum_{j=1}^{n-1} \frac{(-1)^{j+1}q^{-j^2}}{1-q^{j}} \nonumber \\
=&~ \sum_{j=1}^n \frac{(-1)^jq^{-j^2}(q^j-1)}{1-q^j} +\frac{1}{1-q}+\frac{(-1)^nq^{-n^2}}{1-q^n} \nonumber \\
=&~ -\sum_{j=1}^n(-1)^jq^{-j^2} + \frac{1}{1-q}+\frac{(-1)^nq^{-n^2}}{1-q^n}. \label{revise-5-1-proof-2}
\end{align}
Substituting \eqref{revise-5-1-proof-2} into \eqref{revise-5-1-proof-1}, we deduce that
\begin{align}
&{}_3\phi_2\bigg(\genfrac{}{}{0pt}{}{q^{-n},q^{n},q}{0,-q};q,q\bigg) \nonumber\\
=&~ \frac{2}{1+q^n}-(-1)^nq^{n^2}\frac{1-q^n}{1+q^n}\sum_{j=-n}^n (-1)^jq^{-j^2} \nonumber \\
=&~ \frac{2q^n}{1+q^n}+(-1)^{n-1}\frac{1-q^n}{1+q^n} \sum_{|j|<n}(-1)^jq^{n^2-j^2}. \label{ab-5-1-proof-2}
\end{align}
The theorem follows after substituting this identity into \eqref{ab-5-1-proof}.
\end{proof}
\begin{rem}
Identity \eqref{ab-5-1-proof-2} appears as \cite[Eq.\ (3.10)]{Liu2013IJNT}. Our proof for this identity is new.
\end{rem}

\begin{corollary}
Identity \eqref{mock-5-1} holds. In addition, we have
\begin{align}
&\sum_{n=0}^\infty\frac{q^{n^2}(-q;q^2)_n}{(-q^2;q^2)_n} \nonumber \\
=& \frac{(-q;q^2)_{\infty}}{(q^2;q^2)_{\infty}}\sum_{n=0}^{\infty}q^{4n^2+n}(1-q^{6n+3})\sum_{j=-n}^{n}(-1)^jq^{-2j^2} \label{5-1-cor-1} \\
=&\frac{J_2}{J_1J_4}\left(f_{1,3,1}(q^3,q^3,q^4)+q^5f_{1,3,1}(q^{11},q^{11},q^4) \right)\label{5-1-cor-1-H-pre} \\
=&\frac{J_2}{J_1J_4} f_{1,3,1}(q,-q,-q) \label{5-1-cor-1-H} \\
=&2m(-q^3,q^8,-1)+q\frac{\overline{J}_{1,8}J_{2,8}^2}{J_{3,8}^2}. \label{5-1-cor-1-A}
\end{align}
\end{corollary}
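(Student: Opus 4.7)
The plan is to derive all four identities in this corollary by specializing Theorem \ref{thm-ab-5-1} in two different ways, then converting the resulting Hecke-type series into the building blocks $f_{a,b,c}(x,y,q)$ and $m(x,q,z)$ via the standard procedures outlined in Section \ref{subsec-build}.

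First, for the representation \eqref{mock-5-1} of $f_0^{(5)}(q)$, I would take the double limit $(a,b)\to(0,0)$ in Theorem \ref{thm-ab-5-1}. The asymptotic $(q/a;q)_n \sim (-1)^n q^{n(n+1)/2}a^{-n}$ as $a\to 0$ collapses the left-hand side to $(q;q)_\infty f_0^{(5)}(q)$, since the ${}_3\phi_2$ reduces to $\sum_{n\ge 0}q^{n^2}/(-q;q)_n$. On the right-hand side, each quotient $\frac{(q/a,q/b;q)_n}{(a,b;q)_n}(-ab)^n$ limits to $(-1)^n q^{n(n+1)}$, producing
\begin{align*}
(q;q)_\infty f_0^{(5)}(q) = 1 + 2\sum_{n=1}^\infty(-1)^n q^{(3n^2+n)/2} - \sum_{n=1}^\infty(1-q^n)q^{(5n^2-n)/2}\sum_{|j|<n}(-1)^j q^{-j^2}.
\end{align*}
The remaining step is a combinatorial reconciliation: split the target Hecke sum according to the factor $(1-q^{4n+2})$, reindex the $q^{4n+2}$-portion by $n\mapsto n-1$, and extract the $j=\pm n$ contributions from $\sum_{j=-n}^n$. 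A short calculation then shows the two forms agree term by term.

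Second, for identity \eqref{5-1-cor-1}, I would specialize $(q,a,b)\to(q^2,0,-q)$ in Theorem \ref{thm-ab-5-1}. Under this substitution the infinite products simplify to $(q^2;q^2)_\infty/(-q;q^2)_\infty$, the ${}_3\phi_2$ becomes precisely $\sum_{n\ge 0} q^{n^2}(-q;q^2)_n/(-q^2;q^2)_n$, and the right-hand side acquires the asymptotic factor $(-1)^n q^{n^2+2n}$ in place of $(-1)^n q^{n(n+1)}$. The rearrangement is the direct analogue of the previous one, with $(1-q^{6n+3})$ now playing the role of $(1-q^{4n+2})$; reindexing the $-q^{6n+3}$ piece by $n\mapsto n-1$ and combining yields the claimed Hecke form.

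Finally, the $f_{a,b,c}$ representation \eqref{5-1-cor-1-H-pre} follows by the standard procedure of splitting the sum by the factor $(1-q^{6n+3})$, reindexing, and substituting $(r,s)=(n+j,n-j)$ to cast each half into the form of $f_{1,3,1}$. The compact form \eqref{5-1-cor-1-H} is obtained from \eqref{5-1-cor-1-H-pre} by applying \eqref{f-id-1} with $(a,b,c,x,y,q)\to(1,3,1,q,-q,-q)$, which collapses the four $f_{1,3,1}$ pieces into a single one. The passage to the $m(x,q,z)$ expression \eqref{5-1-cor-1-A} follows from Theorem \ref{thm-fg-1} with $(n,p)=(1,2)$ applied to $f_{1,3,1}(q,-q,-q)$, together with simplification of the resulting theta-quotient piece to $q\overline{J}_{1,8}J_{2,8}^2/J_{3,8}^2$. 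The principal technical obstacle throughout is the combinatorial bookkeeping in the first two steps; the Hickerson--Mortenson conversions are routine once the Hecke form is in hand.
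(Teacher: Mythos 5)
Your proposal is correct and follows essentially the same route as the paper: the specializations $(a,b)\to(0,0)$ and $(q,a,b)\to(q^2,0,-q)$ in Theorem \ref{thm-ab-5-1}, the same reindexing/splitting to reconcile the limit with \eqref{mock-5-1} and \eqref{5-1-cor-1}, the standard $(r,s)=(n+j,n-j)$ conversion to reach \eqref{5-1-cor-1-H-pre}, and \eqref{f-id-1} with $(a,b,c,x,y,q)\to(1,3,1,q,-q,-q)$ to collapse it to \eqref{5-1-cor-1-H}. The only (immaterial) divergence is the final step, where the paper applies Theorem \ref{thm-fg-2} with $n=1$ rather than Theorem \ref{thm-fg-1} with $(n,p)=(1,2)$; both are admissible for $f_{1,3,1}(q,-q,-q)$ and, since $y/x=x/y=-1$ here, both yield the same Appell--Lerch part $2m(-q^3,q^8,-1)$, differing only in whether the theta-quotient correction appears as the single product $-\Theta_{1,2}$ or the four-term sum $\Phi_{1,2}$ that must then be simplified to $q\,\overline{J}_{1,8}J_{2,8}^2/J_{3,8}^2$.
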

\begin{proof}
Taking $(a,b)\rightarrow (0,0)$  in Theorem \ref{thm-ab-5-1}, we deduce that
\begin{align}
f_0^{(5)}(q)&=\frac{1}{(q;q)_\infty}\left(1+2\sum_{n=1}^\infty(-1)^nq^{(3n^2+n)/2}-\sum_{n=1}^\infty(1-q^n)q^{(5n^2-n)/2}\sum_{|j|<n}(-1)^jq^{-j^2} \right) \nonumber \\
&=\frac{1}{(q;q)_\infty}\Big(1+2\sum_{n=1}^\infty(-1)^nq^{(3n^2+n)/2}+\sum_{n=1}^\infty q^{(5n^2+n)/2}\sum_{j=-n}^{n}(-1)^jq^{-j^2}\nonumber \\
&-2\sum_{n=1}^\infty q^{(5n^2+n)/2}\cdot (-1)^nq^{-n^2} -\sum_{n=0}^\infty q^{\big(5(n+1)^2-(n+1)\big)/2}\sum_{j=-n}^{n}(-1)^jq^{-j^2}\Big).
\end{align}
After simplifications, we obtain \eqref{mock-5-1}.

Similarly, taking $(q, a, b)\rightarrow (q^2, 0,-q)$ in Theorem \ref{thm-ab-5-1}, after rearrangements and simplifications, we obtain \eqref{5-1-cor-1}.

To get \eqref{5-1-cor-1-H} from \eqref{5-1-cor-1-H-pre}, we apply \eqref{f-id-1} with $(a,b,c,x,y,q)\rightarrow (1,3,1,q,-q,-q)$.

The expression \eqref{5-1-cor-1-A}  follows from \eqref{5-1-cor-1-H} and Theorem \ref{thm-fg-2} with $n=1$.
\end{proof}
\begin{rem}
(1) Theorem \ref{thm-ab-5-1} appears as Proposition 6.14 in \cite{Liu2013Rama} in slightly different form. Identity \eqref{mock-5-1} was also obtained by Liu as Proposition 6.15 in \cite{Liu2013Rama}.\\
(2) The left side of \eqref{5-1-cor-1} is the eighth order mock theta function $S_0^{(8)}(q)$. See \eqref{mock-8-1-defn}. The expression \eqref{5-1-cor-1-A} appears in \cite[Eq.\ (5.35)]{Hickerson-Mortenson}.\\
(3) Identity \eqref{5-1-cor-1}  was also obtained by Cui, Gu and Hao \cite[Eq.\ (1.9)]{CGH} using Bailey pairs.
\end{rem}
\subsection{Representations for $\phi_0^{(5)}(q)$ and  $\psi_0^{(5)}(q)$}
\begin{theorem}\label{thm-ab-5-2}
For $\max \{|ab/q|, |a|, |b|\}<1$, we have
\begin{align}
&\frac{(q,ab/q;q)_{\infty}}{(a,b;q)_{\infty}}\sum_{n=0}^{\infty}(q/a,q/b;q)_n(ab/q)^n \nonumber \\
=&~ \sum_{n=0}^{\infty}\frac{(q/a,q/b;q)_n(ab)^nq^{2n^2}}{(a,b;q)_n}\left( 1-\frac{(a-q^{n+1})(b-q^{n+1})q^{2n}}{(1-aq^n)(1-bq^n)} \right)\sum_{|j|\leq n}(-1)^jq^{-j(3j+1)/2} \label{Liu-Prop6.10-original} \\
=&~ 1+\sum_{n=1}^\infty(1+q^n)\frac{(q/a,q/b;q)_n}{(a,b;q)_n}(-ab)^nq^{(n^2-3n)/2}\nonumber \\
&\quad \times \left(q^n+(-1)^{n-1}(1-q^n)q^{(3n^2-n)/2}\sum_{|j|<n}(-1)^jq^{-j(3j+1)/2} \right). \label{ab-5-2-new}
\end{align}
\end{theorem}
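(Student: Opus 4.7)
My plan is to prove the two equalities separately. The first equality between the left-hand side and \eqref{Liu-Prop6.10-original} is Proposition~6.10 of Liu~\cite{Liu2013Rama}, which I cite directly. The substantive content is therefore the equality between the left-hand side and \eqref{ab-5-2-new}, which I derive from Theorem~\ref{thm-main} following the general template used in Theorems~\ref{meq:1.1}--\ref{thm-ab-3-7}: specialize the parameters so that the LHS ${}_3\phi_2$ reduces to the Eulerian series on the left, and then evaluate the inner ${}_3\phi_2$ on the right in closed form.

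Specifically, I apply Theorem~\ref{thm-main} with $u=1$ (treated as a limit), $c=d=0$, and $v=q$. Since $v=q$ contributes a factor $(q;q)_n$ to the numerator of the LHS ${}_3\phi_2$ that cancels the $(q;q)_n$ in its denominator, and since $(0;q)_n=1$, the LHS ${}_3\phi_2$ reduces to $\sum_{n=0}^\infty(q/a,q/b;q)_n(ab/q)^n$, matching the left-hand side of the theorem. The limit $u\to 1$ on the right-hand side is handled via $\lim_{u\to 1}(1-uq^{2n})(u;q)_n/(1-u)=(1+q^n)(q;q)_n$ for $n\geq 1$, producing exactly the coefficient pattern $(1+q^n)(q/a,q/b;q)_n(-ab)^n q^{(n^2-3n)/2}/(a,b;q)_n$ appearing in \eqref{ab-5-2-new}. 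What remains is to show that the inner ${}_3\phi_2$, which equals the finite sum $\sum_{k=0}^n(q^{-n},q^n;q)_k q^k$, is given by the closed form $q^n+(-1)^{n-1}(1-q^n)q^{(3n^2-n)/2}\sum_{|j|<n}(-1)^j q^{-j(3j+1)/2}$.

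For this evaluation I apply Lemma~\ref{lem-limit-2} and let $c\to\infty$. The prefactor $(c/q;q)_n\,c^{-n}q^{n^2+n}/(q^2/c;q)_n$ tends to $(-1)^n q^{(3n^2-n)/2}$, and each summand $(q/c;q)_j\,c^j q^{-j^2}/(c/q;q)_j$ tends to $(-1)^j q^{-3j(j-1)/2}$, yielding
\[
{}_3\phi_2\bigg(\genfrac{}{}{0pt}{}{q^{-n},q^n,q}{0,0};q,q\bigg)=(-1)^n(1-q^n)q^{(3n^2-n)/2}\bigg(\frac{q-2}{1-q}+\sum_{j=2}^n\frac{(-1)^j(1-q^{2j-1})q^{-3j(j-1)/2}}{(1-q^{j-1})(1-q^j)}\bigg).
\]
Matching this with the target form reduces to the scalar identity
\[
\frac{q-2}{1-q}+\sum_{j=2}^n\frac{(-1)^j(1-q^{2j-1})q^{-3j(j-1)/2}}{(1-q^{j-1})(1-q^j)}+\sum_{|j|<n}(-1)^j q^{-j(3j+1)/2}=\frac{(-1)^n q^{-3n(n-1)/2}}{1-q^n},
\]
which I prove by induction on $n$. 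The base case $n=1$ is immediate, and the inductive step reduces, after using $(1-q^{2n})/(1-q^n)=1+q^n$ and a short exponent computation, to the elementary identity $q^{-n(3n+1)/2}+q^{-n(3n-1)/2}=(q^{-n(3n+1)/2}-q^{-3n(n-1)/2})/(1-q^n)$.

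The main technical obstacle lies in the careful bookkeeping for the double limit ($u\to 1$ combined with $c\to\infty$) and in the exponent arithmetic for the inductive verification. Both steps are routine but require attention to detail; the overall strategy parallels the proofs of Theorems~\ref{thm-ab-5-1} and \ref{thm-ab-3-3} earlier in the paper.
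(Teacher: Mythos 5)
Your proposal is correct and follows essentially the same route as the paper: cite Liu's Proposition 6.10 for the first equality, specialize Theorem \ref{thm-main} with $(c,d,u,v)\rightarrow(0,0,1,q)$, and evaluate the terminated ${}_3\phi_2\bigl(q^{-n},q^n,q;0,0;q,q\bigr)=\sum_{k=0}^n(q^{-n},q^n;q)_kq^k$ via the $c\to\infty$ limit of Lemma \ref{lem-limit-2}. The only cosmetic difference is that you verify the resulting elementary identity by induction on $n$ (and your stated inductive reduction checks out), whereas the paper telescopes the finite $j$-sum directly using $1-q^{2j-1}=(1-q^{j-1})+q^{j-1}(1-q^j)$; these are the same computation organized differently.
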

Identity \eqref{Liu-Prop6.10-original} appears as \cite[Proposition 6.10]{Liu2013Rama}. However, its equivalent form \eqref{ab-5-2-new} was not presented there. Here we follow Liu's arguments to prove \eqref{ab-5-2-new}.
\begin{proof}
Taking $(c,d,u,v)\rightarrow (0,0, 1, q)$ in Theorem \ref{thm-main}, we deduce that
\begin{align}
&\frac{(q,ab/q;q)_{\infty}}{(a,b;q)_{\infty}}\sum_{n=0}^{\infty}(q/a,q/b;q)_n(ab/q)^n \nonumber \\
=&~ 1+\sum_{n=1}^\infty (1+q^n)\frac{(q/a,q/b;q)_n}{(a,b;q)_n}(-ab)^nq^{(n^2-3n)/2}{}_3\phi_2\bigg(\genfrac{}{}{0pt}{}{q^{-n},q^{n},q}{0,0};q,q\bigg). \label{5-2-add-proof}
\end{align}
Taking $c\rightarrow \infty$ in Lemma \ref{lem-limit-2}, we deduce that
\begin{align}
&{}_3\phi_2\bigg(\genfrac{}{}{0pt}{}{q^{-n},q^{n},q}{0,0};q,q\bigg) \nonumber \\
=&~(-1)^n(1-q^n)q^{(3n^2-n)/2}\left(\frac{q-2}{1-q}+\sum_{j=2}^n\frac{1-q^{2j-1}}{(1-q^{j-1})(1-q^{j})}(-1)^jq^{-(3j^2-3j)/2} \right). \label{revise-5-2-proof-1}
\end{align}
We have
\begin{align}
&\sum_{j=2}^n\frac{1-q^{2j-1}}{(1-q^{j-1})(1-q^{j})}(-1)^jq^{-(3j^2-3j)/2} \nonumber \\
=&~\sum_{j=2}^n\frac{(1-q^{j-1})+q^{j-1}(1-q^{j})}{(1-q^{j-1})(1-q^{j})}(-1)^jq^{-(3j^2-3j)/2}  \nonumber \\
=&~\sum_{j=2}^n\frac{(-1)^jq^{-(3j^2-3j)/2}}{1-q^j}+\sum_{j=1}^{n-1}\frac{(-1)^{j+1}q^{-(3j^2+j)/2}}{1-q^j} \nonumber \\
=&~ -\sum_{j=1}^n\frac{(-1)^j(1-q^{2j})}{1-q^j}q^{-(3j^2+j)/2}+\frac{1}{1-q}+\frac{(-1)^nq^{-(3n^2+n)/2}}{1-q^n} \nonumber \\
=&~ -\sum_{j=1}^n (-1)^j(1+q^j)q^{-(3j^2+j)/2}+\frac{1}{1-q}+\frac{(-1)^nq^{-(3n^2+n)/2}}{1-q^n}. \label{revise-5-2-proof-2}
\end{align}
Substituting \eqref{revise-5-2-proof-2} into \eqref{revise-5-2-proof-1}, we obtain
\begin{align}
&{}_3\phi_2\bigg(\genfrac{}{}{0pt}{}{q^{-n},q^{n},q}{0,0};q,q\bigg) \nonumber \\
=&~q^{-n}-(-1)^nq^{(3n^2-n)/2}(1-q^n)\sum_{j=-n}^n(-1)^jq^{-(3j^2+j)/2} \nonumber \\
=&~q^n+(-1)^{n-1}(1-q^n)q^{(3n^2-n)/2}\sum_{|j|<n}(-1)^jq^{-j(3j+1)/2}. \label{revise-5-2-proof-3}
\end{align}

Substituting \eqref{revise-5-2-proof-3} into \eqref{5-2-add-proof}, we arrive at \eqref{ab-5-2-new}.
\end{proof}
\begin{rem}
We may also use the following arguments to deduce \eqref{revise-5-2-proof-3}.

From \cite[Lemma 4.1]{Liu2013Rama} we find
\begin{align}
\sum_{k=0}^{n}\frac{(q^{-n},aq^n;q)_kq^k}{(cq;q)_k}=a^{n}q^{n^2}\frac{(q;q)_n}{(cq;q)_n}\sum_{j=0}^n\frac{(c;q)_ja^{-j}q^{j(1-n)}}{(q;q)_j}. \label{Liu-Lemma4.1}
\end{align}
Setting $(a,c)=(1,0)$ in \eqref{Liu-Lemma4.1}, we obtain
\begin{align}
\sum_{k=0}^{n}(q^{-n},q^{n};q)_kq^k=q^{n^2}(q;q)_n\sum_{j=0}^{n}\frac{q^{j(1-n)}}{(q;q)_j}. \label{5-psi-2}
\end{align}
From \cite[(6.2)]{Liu2013Rama} we find
\begin{align}
\sum_{j=0}^{n}\frac{q^{-jn}}{(q;q)_j}=(-1)^n\frac{q^{n(n+1)/2}}{(q;q)_n}\sum_{j=-n}^{n}(-1)^jq^{-j(3j+1)/2}. \label{5-psi-3}
\end{align}
Substituting \eqref{5-psi-3} with $n$ replaced by $n-1$ into \eqref{5-psi-2}, we obtain
\begin{align*}
\sum_{k=0}^{n}(q^{-n},q^{n};q)_kq^k=q^{n}+(-1)^{n-1}(1-q^n)q^{(3n^2-n)/2}\sum_{|j|<n}(-1)^jq^{-j(3j+1)/2},
\end{align*}
and this is \eqref{revise-5-2-proof-3}.
\end{rem}

\begin{corollary}
Identities \eqref{mock-5-2} and \eqref{mock-5-3} hold.
\end{corollary}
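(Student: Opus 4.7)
The plan is to apply Theorem~\ref{thm-ab-5-2} twice with the parameter choice $(a,b)\to(-q,0)$, interpreting $b\to 0$ as a limit. The essential analytic fact that makes this limit well defined is
\begin{equation*}
\lim_{b\to 0}(q/b;q)_n\,b^n=(-1)^n q^{n(n+1)/2},
\end{equation*}
so every $b$-dependent factor in the theorem has a finite limiting value.

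For \eqref{mock-5-2}, I would apply \eqref{Liu-Prop6.10-original} after first replacing $q$ by $q^2$ and then sending $b\to 0$ with $a=-q$. On the left, $(q^2/a;q^2)_n=(-q;q^2)_n$ and the remaining factors collapse to give $(q^2/a,q^2/b;q^2)_n(ab/q^2)^n\to(-q;q^2)_n q^{n^2}$, so the left-hand side becomes $\frac{(q^2;q^2)_\infty}{(-q;q^2)_\infty}\phi_0^{(5)}(q)$. On the right, the prefactor ratio $(q^2/a;q^2)_n/(a;q^2)_n$ equals $1$ at $a=-q$, while the bracket $1-\frac{(a-q^{2n+2})(b-q^{2n+2})q^{4n}}{(1-aq^{2n})(1-bq^{2n})}$ collapses to $1-q^{6n+3}$ since the factor $1+q^{2n+1}$ cancels between numerator and denominator. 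The remaining $q$-powers combine to $q^{5n^2+2n}$, the inner sum becomes $\sum_{|j|\leq n}(-1)^j q^{-3j^2-j}$, and rearranging yields \eqref{mock-5-2}.

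For \eqref{mock-5-3}, I would use the equivalent form \eqref{ab-5-2-new} with the original base $q$ and the same $(a,b)=(-q,0)$ limit. The identity $(-1;q)_n=2(-q;q)_{n-1}$ for $n\geq 1$ together with an index shift gives $\sum_{n=0}^\infty (-1;q)_n q^{n(n+1)/2}=1+2\psi_0^{(5)}(q)$, so the left-hand side is $\frac{(q;q)_\infty}{(-q;q)_\infty}\bigl(1+2\psi_0^{(5)}(q)\bigr)$. On the right, using $\frac{(-1;q)_n}{(-q;q)_n}=\frac{2}{1+q^n}$, the $(a,b)$-dependent prefactor simplifies to $2(-1)^n q^{n^2}$, yielding
\begin{align*}
1+2\sum_{n\geq 1}(-1)^n q^{n^2+n}-2\sum_{n\geq 1}(1-q^n)q^{(5n^2-n)/2}\sum_{|j|<n}(-1)^j q^{-j(3j+1)/2}.
\end{align*}
To reach the target, I would extend the inner sum from $|j|<n$ to $-n\leq j\leq n-1$ by adding and subtracting the $j=-n$ contribution; the correction terms, combined with $2\sum_{n\geq 1}(-1)^n q^{n^2+n}$, telescope into $\sum_{n=-\infty}^\infty(-1)^n q^{n^2}=(q^2;q^2)_\infty(q;q^2)_\infty^2$ by Jacobi's triple product. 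The identity
\begin{equation*}
\frac{(-q;q)_\infty}{(q;q)_\infty}(q^2;q^2)_\infty(q;q^2)_\infty^2=1,
\end{equation*}
which follows from $(-q^2;q^2)_\infty=1/(q^2;q^4)_\infty$, then causes this theta contribution to cancel the leading $1$ from the left side, producing exactly \eqref{mock-5-3}.

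The hard part lies entirely in \eqref{mock-5-3}: the careful sign tracking in the $b\to 0$ limit, the extension of the $j$-range, and the recognition that the residual theta sum collapses by a classical infinite-product identity. Once these three steps are in place, \eqref{mock-5-2} falls out almost mechanically, while \eqref{mock-5-3} emerges after the described simplification.
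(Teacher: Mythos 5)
Your proposal is correct and follows essentially the same route as the paper: it specializes Theorem~\ref{thm-ab-5-2} at $(a,b)\to(-q,0)$, which by the $a\leftrightarrow b$ symmetry of \eqref{Liu-Prop6.10-original} and \eqref{ab-5-2-new} is identical to the paper's choice $(0,-q)$ (with $q\to q^2$ for \eqref{mock-5-2}). Your extension of the $j$-range and the Jacobi triple product cancellation correctly supply the ``rearrangements and simplifications'' that the paper leaves implicit for \eqref{mock-5-3}.
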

\begin{proof}
Taking $(q,a,b)\rightarrow (q^2, 0, -q)$ in \eqref{Liu-Prop6.10-original}, we obtain \eqref{mock-5-2}.

From \eqref{mock-5-3-defn} we have
\begin{align}
1+2\psi_{0}^{(5)}(q)=\sum_{n=0}^{\infty} (-1;q)_nq^{\binom{n+1}{2}}. \label{mock-5-3-new-defn}
\end{align}
Taking $(a,b)\rightarrow (0, -q)$ in \eqref{ab-5-2-new}, we obtain
\begin{align}
1+2\psi_{0}^{(5)}(q)=&~ -\frac{(-q;q)_{\infty}}{(q;q)_{\infty}}\Big(1+2\sum_{n=1}^{\infty}(-1)^nq^{n^2+n} \nonumber \\
& -2\sum_{n=1}^{\infty}\sum_{|j|<n}(1-q^n)(-1)^jq^{n(5n-1)/2-j(3j+1)/2}\Big).
\end{align}
After rearrangements and simplifications, we arrive at  \eqref{mock-5-3}.
\end{proof}
For more identities deduced from Theorem \ref{thm-ab-5-2}, see \cite[Propositions 6.11, 6.12, 6.13]{Liu2013Rama}.

\subsection{Representations for $F_0^{(5)}(q)$}
\begin{theorem}\label{thm-ab-5-4}
For $\max\{|ab/q^2|, |a|, |b|\}<1$, we have
\begin{align}
&\frac{(q^2,ab/q^2;q^2)_\infty}{(a,b;q^2)_\infty}
{}_3\phi_2\bigg(\genfrac{}{}{0pt}{}{q^2/a,q^2/b,q^2}{0,q};q^2,ab/q^2\bigg) \nonumber\\
=&~ 1+\sum_{n=1}^{\infty}\frac{(q^2/a,q^2/b;q^2)_n}{(a,b;q^2)_n}(-ab)^nq^{3n^2-4n}\left(q^{4n}\sum_{j=0}^{2n}q^{-\binom{j+1}{2}}-\sum_{j=0}^{2n-2}q^{-\binom{j+1}{2}}\right).
\end{align}
\end{theorem}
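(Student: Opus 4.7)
The plan is to follow the uniform template already used for Theorems \ref{thm-ab-3-3}, \ref{thm-ab-5-1}, and \ref{thm-ab-5-2}: first reduce to an $n$-sum of terminated ${}_3\phi_2$'s via the master transformation, then evaluate the inner ${}_3\phi_2$ by one of the lemmas of Section \ref{subsec-basic}, and finally telescope the resulting finite sum to match the target closed form.

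Concretely, I would first apply Theorem \ref{thm-main} with $(q,c,d,u,v)\to(q^2,0,q,1,q^2)$. The prefactor on the left side becomes $(q^2,ab/q^2;q^2)_\infty/(a,b;q^2)_\infty$ and the ${}_3\phi_2$ becomes exactly the one in the statement. On the right, one splits off $n=0$ (which contributes $1$) and uses the standard limiting interpretation $\lim_{u\to 1}(1-uq^{2n})(u;q^2)_n/((1-u)(q^2;q^2)_n)=1+q^{2n}$ (compare the proof of Theorem \ref{thm-ab-5-1}). This yields
\begin{align*}
\textrm{RHS}=1+\sum_{n=1}^{\infty}(1+q^{2n})\frac{(q^2/a,q^2/b;q^2)_n}{(a,b;q^2)_n}(-ab)^n q^{n^2-3n}\,{}_3\phi_2\!\left(\genfrac{}{}{0pt}{}{q^{-2n},q^{2n},q^2}{0,\,q};q^2,q^2\right).
\end{align*}

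Next, the inner series is evaluated by Lemma \ref{lem-limit-2} applied with $q\mapsto q^2$ and $c=q^3$ (so that $q^4/c=q$, the missing lower parameter). This gives a closed form $(1-q^{2n})q^{2n^2-n}$ times a finite sum indexed $j=2,\ldots,n$ whose summand contains the factor $(q^{-1};q^2)_j/(q;q^2)_j\cdot q^{3j-2j^2}$ together with a $(1-q^{4j-2})/((1-q^{2j-2})(1-q^{2j}))$ weight. Exactly as in the telescoping step \eqref{revise-5-1-proof-2} (and its variant \eqref{ab-3-3-proof-3}), one writes $1-q^{4j-2}=(1-q^{2j-2})+q^{2j-2}(1-q^{2j})$, splits the sum, re-indexes the second piece, and lets most of the interior cancel. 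The surviving boundary and constant contributions rearrange to precisely $q^{4n}\sum_{j=0}^{2n}q^{-\binom{j+1}{2}}-\sum_{j=0}^{2n-2}q^{-\binom{j+1}{2}}$, divided by the factor $(1+q^{2n})$. Combining with the $q^{n^2-3n}$ from the prefactor produces $q^{3n^2-4n}$ and cancels the $1+q^{2n}$, giving the claimed identity.

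The main obstacle is Step 3: the telescoping must be handled with the asymmetric parameter choice $c=q^3$, which breaks the $j\leftrightarrow -j$ symmetry familiar from nearby theorems (compare the ranges $[-n,n]$ in Theorems \ref{meq:1.1}--\ref{meq:3.2}) and forces the summation index to run over the one-sided range $[0,2n]$ with the half-integer triangular exponent $\binom{j+1}{2}$. The bookkeeping of where the extra $q^{4n}$ comes from (it traces back to the $q^{-n(n+1)/2}/(1-q^n)$ boundary term of the telescope, rescaled by $(c/q^2)^n=q^n$) will need to be done carefully, but the underlying mechanism is identical to that already worked out in \eqref{revise-5-1-proof-2} and \eqref{ab-3-3-proof-3}. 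Once that bookkeeping is correct, substituting back into the Theorem \ref{thm-main} reduction finishes the proof.
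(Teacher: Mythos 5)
Your proposal follows the paper's proof exactly: the same specialization $(q,c,d,u,v)\to(q^2,0,q,1,q^2)$ of Theorem \ref{thm-main}, the same evaluation of the inner ${}_3\phi_2$ via Lemma \ref{lem-limit-2} with $q\mapsto q^2$, $c=q^3$, the same telescoping, and the same conversion of the symmetric $j$-sum to $\sum_{j=0}^{2n}q^{-\binom{j+1}{2}}$. One small correction: before splitting you should first cancel $(q^{-1};q^2)_j/(q;q^2)_j=(1-q^{-1})/(1-q^{2j-1})$ against $1-q^{4j-2}$ to get $(1-q^{-1})(1+q^{2j-1})=(1-q^{2j-2})-q^{-1}(1-q^{2j})$, since splitting $1-q^{4j-2}$ directly leaves a stray $(1-q^{2j-1})$ denominator that blocks the reindexing.
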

\begin{proof}
Taking $(q, c, d, u, v)\rightarrow (q^2, 0, q, 1, q^2)$ in Theorem \ref{thm-main}, we deduce that
\begin{align}
&\frac{(q^2,ab/q^2;q^2)_\infty}{(a,b;q^2)_\infty}
{}_3\phi_2\bigg(\genfrac{}{}{0pt}{}{q^2/a,q^2/b,q^2}{0,q};q^2,ab/q^2\bigg) \nonumber\\
=&1+\sum_{n=1}^{\infty}(1+q^{2n})\frac{(q^2/a,q^2/b;q^2)_n}{(a,b;q^2)_n}(-ab)^nq^{n^2-3n}
{}_3\phi_2\bigg(\genfrac{}{}{0pt}{}{q^{-2n},q^{2n},q^2}{0,q};q^2,q^2\bigg). \label{add-5-4-proof}
\end{align}
Taking $(q,c)\rightarrow (q^2, q^3)$ in Lemma \ref{lem-limit-2}, we deduce that
\begin{align}
&{}_3\phi_2\bigg(\genfrac{}{}{0pt}{}{q^{-2n},q^{2n},q^2}{0,q};q^2,q^2\bigg)\nonumber \\
=&~ (1-q^{2n})q^{2n^2-n}\left(-\frac{1-q-q^2}{1-q^2}+\sum_{j=2}^{n}\frac{(1+q^{2j-1})(1-q^{-1})}{(1-q^{2j})(1-q^{2j-2})}q^{3j-2j^2} \right). \label{revise-5-4-proof-1}
\end{align}
Note that
\begin{align}
(1-q^{-1})(1+q^{2j-1})=(1-q^{2j-2})-q^{-1}(1-q^{2j}).
\end{align}
We have
\begin{align}
&\sum_{j=2}^{n}\frac{(1+q^{2j-1})(1-q^{-1})}{(1-q^{2j})(1-q^{2j-2})}q^{3j-2j^2}\nonumber \\
=&~\sum_{j=2}^n\left(\frac{1}{1-q^{2j}}-\frac{q^{-1}}{1-q^{2j-2}} \right)q^{3j-2j^2} \nonumber \\
=&~\sum_{j=2}^n\frac{q^{3j-2j^2}}{1-q^{2j}}-\sum_{j=1}^{n-1}\frac{q^{-2j^2-j}}{1-q^{2j}} \nonumber \\
=&~\sum_{j=1}^n\frac{q^{-2j^2-j}(q^{4j}-1)}{1-q^{2j}}+\frac{q^{-2n^2-n}}{1-q^{2n}}-\frac{q}{1-q^2} \nonumber \\
=&~\frac{q^{-2n^2-n}}{1-q^{2n}}-\frac{q}{1-q^2}-\sum_{j=1}^nq^{-2j^2-j}(1+q^{2j}). \label{revise-5-4-proof-2}
\end{align}
Substituting \eqref{revise-5-4-proof-2} into \eqref{revise-5-4-proof-1}, we deduce that
\begin{align}
{}_3\phi_2\bigg(\genfrac{}{}{0pt}{}{q^{-2n},q^{2n},q^2}{0,q};q^2,q^2\bigg)&=q^{-2n}-(1-q^{2n})q^{2n^2-n}\left(1+\sum_{j=1}^nq^{-2j^2-j}(1+q^{2j})\right) \nonumber \\
&=q^{-2n}-(1-q^{2n})q^{2n^2-n}\sum_{j=-n}^nq^{-j(2j+1)}. \label{revise-5-4-proof-3}
\end{align}
Now we substitute \eqref{revise-5-4-proof-3} back into \eqref{add-5-4-proof}. Note that
\begin{align}\label{revise-j-convert}
\sum_{j=-n}^nq^{-j(2j+1)}=\sum_{j=0}^nq^{-2j(2j+1)/2}+\sum_{j=1}^nq^{-((2j-1)\cdot 2j)/2}=\sum_{j=0}^{2n}q^{-j(j+1)/2}
\end{align}
and
\begin{align}
&(1+q^{2n})q^{n^2-3n}\left(q^{-2n}-(1-q^{2n})q^{2n^2-n}\sum_{j=-n}^nq^{-j(2j+1)}\right) \nonumber \\ =&~q^{3n^2-4n}\left(q^{4n}\sum_{j=0}^{2n}q^{-\binom{j+1}{2}}-\sum_{j=0}^{2n-2}q^{-\binom{j+1}{2}}\right).
\end{align}
We eventually arrive at the desired identity.
\end{proof}
\begin{rem}
We may also use different arguments based on a result of Andrews.
Replacing $q$ by $q^2$ and setting $(a,c)=(1, q^{-1})$ in \eqref{Liu-Lemma4.1}, we deduce that
\begin{align}
\sum_{k=0}^{n}\frac{(q^{-2n};q^2)_k(q^{2n};q^2)_kq^{2k}}{(q;q^2)_k}=q^{2n^2}\frac{(q^2;q^2)_n}{(q;q^2)_n}\sum_{j=0}^{n}\frac{(q^{-1};q^2)_jq^{2j(1-n)}}{(q^2;q^2)_j}.\label{mock-5-F0-2}
\end{align}
The sum on the right side has been evaluated by Andrews \cite[Eq.\ (5.8)]{Andrews-TAMS}. Indeed, from \cite[Eq.\ (5.8)]{Andrews-TAMS} we deduce that
\begin{align}
q^{n}(1+q^{2n})\frac{(q^2;q^2)_n}{(q;q^2)_n}\sum_{j=0}^{n}\frac{(q^{-1};q^2)_jq^{2j(1-n)}}{(q^2;q^2)_j}
=q^{4n}\sum_{j=0}^{2n}q^{-\binom{j+1}{2}}-\sum_{j=0}^{2n-2}q^{-\binom{j+1}{2}}.\label{mock-5-F0-3}
\end{align}
The theorem follows after substituting \eqref{mock-5-F0-3} into \eqref{mock-5-F0-2} and then applying to \eqref{add-5-4-proof}.
\end{rem}

\begin{corollary}
Identity \eqref{mock-5-4} holds. In addition, we have
\begin{align}
&\sum_{n=0}^\infty\frac{q^{n^2}(-q;q^2)_n}{(q;q^2)_n}\nonumber \\
=&\frac{(-q;q^2)_\infty}{(q^2;q^2)_\infty}\sum_{n=0}^\infty(-1)^nq^{4n^2+2n}(1+q^{4n+2})\sum_{j=0}^{2n}q^{-j(j+1)/2} \label{5-4-cor-1} \\
=&\frac{J_2}{J_1J_4}\left(f_{1,3,1}(q^3,q^5,q^4)-q^6f_{1,3,1}(q^{11},q^{13},q^4) \right) \label{5-4-cor-1-H} \\
=&m(q^4,q^8,q^{-1/2})-q^{-1}m(1,q^8,q^{1/2})-q^{-1/2}\frac{J_{\frac{3}{2},8}^2J_{\frac{5}{2},8}J_{4,8}J_{8}^3}{J_{\frac{1}{2},8}^2J_{1,8}J_{2,8}J_{3,8}J_{\frac{7}{2},8}}, \label{5-4-cor-1-A} \\
&\sum_{n=0}^\infty \frac{q^{n^2+n}(-1;q^2)_n}{(q;q^2)_n}=\frac{(-q^2;q^2)_\infty}{(q^2;q^2)_\infty} \nonumber \\
&\quad \quad \times \left(1+2\sum_{n=1}^\infty (-1)^nq^{2n^2-2n}-2\sum_{n=1}^\infty (-1)^nq^{4n^2-n}(1-q^{2n})\sum_{j=0}^{2n}q^{-j(j+1)/2} \right) \label{5-4-cor-2} \\
=&1-2\frac{J_4}{J_2^2}\left(f_{1,3,1}(1,q^2,q^4)-q^3f_{1,3,1}(q^8,q^{10},q^4) \right) \label{5-4-cor-2-H} \\
=&1-2m(q^3,q^8,q^{1/2})-2q^{1/2}\frac{J_{\frac{3}{2},4}J_{4}J_{8}^2}{J_{\frac{1}{2},4}J_{3,8}J_2}. \label{5-4-cor-2-A}
\end{align}
\end{corollary}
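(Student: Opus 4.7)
The plan is to derive each of \eqref{mock-5-4}, \eqref{5-4-cor-1}, and \eqref{5-4-cor-2} by specializing the parameters in Theorem \ref{thm-ab-5-4}: take $(a,b)\to(0,0)$ for \eqref{mock-5-4}, $(a,b)\to(-q,0)$ for \eqref{5-4-cor-1}, and $(a,b)\to(-q^2,0)$ for \eqref{5-4-cor-2}. The essential limits are $(q^2/a;q^2)_n\,a^n\to(-1)^n q^{n(n+1)}$ as $a\to 0$, together with $(q^2/(-q);q^2)_n=(-q;q^2)_n$ and $(q^2/(-q^2);q^2)_n=(-1;q^2)_n$. Under each specialization the LHS of Theorem \ref{thm-ab-5-4} collapses to the Eulerian series of the corresponding corollary identity, multiplied by an infinite-product prefactor which simplifies to $(q^2;q^2)_\infty$, $(q^2;q^2)_\infty/(-q;q^2)_\infty=J_1J_4/J_2$, or $(q^2;q^2)_\infty/(-q^2;q^2)_\infty=J_2^2/J_4$ respectively (so the reciprocals $J_2/(J_1J_4)$ and $J_4/J_2^2$ appear on the RHS of the final forms).

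The simplification of the theorem's RHS is immediate for \eqref{mock-5-4} and \eqref{5-4-cor-1}: shifting $n\mapsto n-1$ in the $\sum_{j=0}^{2n-2}q^{-\binom{j+1}{2}}$ sub-sum pairs it with the other bracketed piece to produce the factor $(1+q^{6n+3})$ or $(1+q^{4n+2})$ respectively. The case \eqref{5-4-cor-2} is the main obstacle, because $(-1;q^2)_n/(-q^2;q^2)_n=2/(1+q^{2n})$ for $n\geq 1$ introduces a denominator $1+q^{2n}$ in each summand. The decisive step is the elementary identity
\[
\frac{q^{4n}A_n-B_n}{1+q^{2n}}\;=\;-(1-q^{2n})A_n+q^{-2n^2-n},\qquad A_n:=\sum_{j=0}^{2n}q^{-\binom{j+1}{2}},\ \ B_n:=\sum_{j=0}^{2n-2}q^{-\binom{j+1}{2}},
\]
which follows from $A_n-B_n=q^{-n(2n-1)}+q^{-n(2n+1)}$ together with $(q^{4n}-1)/(1+q^{2n})=q^{2n}-1$. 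Substituting it into the specialized theorem RHS and rearranging reproduces the shape stated in \eqref{5-4-cor-2}.

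The $f_{1,3,1}$-forms \eqref{5-4-cor-1-H} and \eqref{5-4-cor-2-H} then follow from \eqref{5-4-cor-1} and \eqref{5-4-cor-2} by the routine procedure used in the derivation of \eqref{Liu-eq-simplify}: extend the $n$-range to all integers via the reflection $n\mapsto -n-1$ (which exchanges the two parts of the factor $(1+q^{4n+2})$), then set $r=n+j$, $s=n-j$ and split by the parities of $r,s$, matching the four resulting blocks with the four pieces of $f_{1,3,1}(\cdot,\cdot,q^4)$. Finally, the Appell-Lerch forms \eqref{5-4-cor-1-A} and \eqref{5-4-cor-2-A} are obtained by applying Theorem \ref{thm-fg-1} with $(n,p)=(1,2)$ (noting $\gcd(1,2)=1$) to each $f_{1,3,1}(\cdot,\cdot,q^4)$, which yields a sum of $m(\cdot,q^{32},-1)$ terms plus a $\Phi_{1,2}$ theta remainder; the $m(\cdot,q^{32},\cdot)$ entries are then collapsed to $m(\cdot,q^{8},q^{\pm 1/2})$ via Lemma \ref{lem-m-add}, and the residual theta products assemble into the displayed infinite-product coefficients following the same pattern of $\theta$-function manipulations employed in the reduction of \eqref{2-2-cor-1-A} to \eqref{2-2-cor-1-A-simplify}.
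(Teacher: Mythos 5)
Your proposal is correct and follows essentially the same route as the paper: specialize Theorem \ref{thm-ab-5-4} at $(a,b)\to(0,0)$, $(0,-q)$, $(0,-q^2)$ (your choices $(-q,0)$, $(-q^2,0)$ are equivalent by the manifest $a\leftrightarrow b$ symmetry of that theorem), then convert to $f_{1,3,1}$ form by the standard $r=n+j$, $s=n-j$ substitution. In fact your treatment of \eqref{5-4-cor-2} is more explicit than the paper's, which dismisses that case with ``after rearrangements and simplifications'': your identity $\frac{q^{4n}A_n-B_n}{1+q^{2n}}=-(1-q^{2n})A_n+q^{-2n^2-n}$ is exactly the computation needed, and I have checked it produces the stated right-hand side of \eqref{5-4-cor-2}, including the extra term $2\sum(-1)^nq^{2n^2-2n}$. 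The one place you diverge is the final passage to \eqref{5-4-cor-1-A} and \eqref{5-4-cor-2-A}: the paper applies Theorem \ref{thm-fg-2} with $n=1$, which hands you $m(\cdot,q^{32},z)$ with $z=x/y,\,y/x=q^{\pm2}$, and these are precisely the fourth powers $(q^{\pm1/2})^4$ needed to match Lemma \ref{lem-m-add} applied to $m(\cdot,q^8,q^{\pm1/2})$. Your choice of Theorem \ref{thm-fg-1} with $(n,p)=(1,2)$ instead yields $m(\cdot,q^{32},-1)$, and Lemma \ref{lem-m-add} cannot be matched to those directly since $-1$ is not of the form $(q^{\pm1/2})^4$; you would first need to shift the $z$-argument from $-1$ to $q^{\pm2}$ via Lemma \ref{lem-m-minus}, at the cost of additional theta quotients to be absorbed into the final product term. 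That extra step is routine but should be acknowledged; with it, your route also closes.
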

\begin{proof}
Taking $(a,b)\rightarrow (0,0)$ in Theorem \ref{thm-ab-5-4},  we deduce that
\begin{align}
&F_0^{(5)}(q)\nonumber \\
=&~\frac{1}{(q^2;q^2)_\infty}\left(\sum_{n=0}^{\infty}(-1)^nq^{5n^2+2n}\sum_{j=0}^{2n}q^{-\binom{j+1}{2}}-\sum_{n=1}^{\infty}(-1)^nq^{5n^2-2n}\sum_{j=0}^{2n-2}q^{-\binom{j+1}{2}}  \right) \nonumber \\
=&~\frac{1}{(q^2;q^2)_\infty}\left(\sum_{n=0}^{\infty}(-1)^nq^{5n^2+2n}\sum_{j=0}^{2n}q^{-\binom{j+1}{2}}-\sum_{n=0}^{\infty}(-1)^{n+1}q^{5n^2+8n+3}\sum_{j=0}^{2n}q^{-\binom{j+1}{2}}  \right) \nonumber \\
=&~\frac{1}{(q^2;q^2)_\infty}\sum_{n=0}^{\infty}(-1)^nq^{5n^2+2n}\sum_{j=0}^{2n}q^{-\binom{j+1}{2}}(1+q^{6n+3}).
\end{align}
This proves \eqref{mock-5-4}.

Similarly, taking $(a,b)\rightarrow (0,-q)$ and $(0,-q^2)$ in Theorem \ref{thm-ab-5-4}, after rearrangements and simplifications, we obtain \eqref{5-4-cor-1} and \eqref{5-4-cor-2}, respectively.

Substituting \eqref{revise-j-convert} into \eqref{5-4-cor-1} and \eqref{5-4-cor-2} and applying routine arguments, we get \eqref{5-4-cor-1-H} and \eqref{5-4-cor-2-H}, respectively.

The expression \eqref{5-4-cor-1-A} (resp. \eqref{5-4-cor-2-A}) follows from \eqref{5-4-cor-1-H} (resp.\ \eqref{5-4-cor-2-H}) and Theorem \ref{thm-fg-2} with $n=1$. Here we only give the details for \eqref{5-4-cor-1-A}.

By Theorem \ref{thm-fg-2} we deduce that
\begin{align}
&f_{1,3,1}(q^3,q^5,q^4)-q^6f_{1,3,1}(q^{11},q^{13},q^4) \nonumber \\\
=& J_{1,4}\left(m(-q^{16},q^{32},q^{-2})-q^{-1}m(-q^8,q^{32},q^2)-q^{-4}m(-1,q^{32},q^{-2})\right.\nonumber \\
&\left. +q^{-9}m(-q^{-8},q^{32},q^2)  \right)-\Theta_{1,2}(q^3,q^5,q^4)+q^6\Theta_{1,2}(q^{11},q^{13},q^4). \label{revise-f131-proof-1}
\end{align}
Setting $(x,q,z)\rightarrow (q^4,q^8,q^{-1/2})$ in Lemma \ref{lem-m-add}, we get
\begin{align}
m(-q^{16},q^{32},q^{-2})-q^{-4}m(-1,q^{32},q^{-2})=m(q^4,q^8,q^{-1/2})+ q^{-4}\frac{J_{16}J_{32}\overline{J}_{3,8}\overline{J}_{\frac{5}{2},8}}{J_{\frac{7}{2},8}J_{-2,32}\overline{J}_{14,16}}. \label{revise-f131-proof-2}
\end{align}
Setting $(x,q,z)\rightarrow (1,q^8,q^{1/2})$ in Lemma \ref{lem-m-add}, we get
\begin{align}
m(-q^8,q^{32},q^2)-q^{-8}m(-q^{-8},q^{32},q^2)=m(1,q^8,q^{1/2})+\frac{J_{16}J_{32}\overline{J}_{1,8}\overline{J}_{\frac{3}{2},8}}{J_{\frac{1}{2},8}J_{2,32}\overline{J}_{10,16}}  . \label{revise-f131-proof-3}
\end{align}
Now substituting \eqref{revise-f131-proof-2} and \eqref{revise-f131-proof-3} into \eqref{revise-f131-proof-1}, we get the desired $m(x,q,z)$'s. Showing that the remaining infinite products add up to the desired one is a routine exercise, which may also be verified directly using the computer package and method in \cite{Garvan-Liang}.
\end{proof}
\begin{rem}
(1) The left side of \eqref{5-4-cor-1} is essentially the eighth order mock theta function $V_0^{(8)}(q)$. See \eqref{mock-8-7-defn}. From \cite[Eq.\ (5.41)]{Hickerson-Mortenson} we find
\begin{align}\label{HM-V08-m}
V_0^{(8)}(q)=-2q^{-1}m(1,q^8,q)-\frac{\overline{J}_{1,4}^2}{J_{2,8}}.
\end{align}
Using Lemma \ref{lem-m-minus} and the following fact \cite[Eq.\ (3.3)]{Hickerson-Mortenson}:
\begin{align}\label{eq-fact}
m(q,q^2,-1)=\frac{1}{2},
\end{align}
we can show that \eqref{HM-V08-m} is equivalent to \eqref{5-4-cor-1-A}. We omit the details.
 \\
(2) It is not difficult to see that the left side of \eqref{5-4-cor-2} is equal to $1+2T_0^{(8)}(-q)$, where $T_0^{(8)}(q)$ is an eighth order mock theta function. See \eqref{mock-8-3-defn}. From \cite[Eq.\ (5.37)]{Hickerson-Mortenson} we know that
\begin{align}\label{HM-T0-m}
T_0^{(8)}(q)=-m(-q^3,q^8,q^2).
\end{align}
Applying Lemma \ref{lem-m-minus} with $(x,q,z_1,z_0)\rightarrow (q^3,q^8,q^{1/2},q^2)$, we see that \eqref{HM-T0-m} can be deduced from \eqref{5-4-cor-2-A} and vice versa.
\end{rem}

\subsection{Representations for $f_1^{(5)}(q)$}
\begin{theorem}\label{meq:2.1}
(Cf.\ \cite[Proposition 6.6]{Liu2013Rama}.) For  $\max \{|ab|, |aq|,|bq|\}<1$,  we have
\begin{align}
&\frac{(q,ab;q)_\infty}{(aq,bq;q)_\infty}{}_3\phi_2\bigg(\genfrac{}{}{0pt}{}{q/a,q/b,q}{0,-q};q,ab\bigg) \nonumber\\
=&~ \sum_{n=0}^\infty\sum_{j=-n}^n(-1)^{j}(1-q^{2n+1})\frac{(q/a,q/b;q)_n}{(aq,bq;q)_n}(ab)^nq^{n(3n+1)/2-j^2}.
\end{align}
\end{theorem}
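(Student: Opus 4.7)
The plan is to realize Theorem \ref{meq:2.1} as a direct specialization of the master transformation Theorem \ref{thm-main}. Setting $(c,d,u,v) \to (0,-q,q,q)$ converts the left-hand prefactor to $\frac{(q^2,ab;q)_\infty}{(qa,qb;q)_\infty} = \frac{1}{1-q}\cdot\frac{(q,ab;q)_\infty}{(qa,qb;q)_\infty}$ and leaves the inner ${}_3\phi_2$ exactly matching the one stated, while the right-hand side of Theorem \ref{thm-main}, after cancellation of a common $(q;q)_n$, reduces to
\begin{align*}
\sum_{n=0}^\infty \frac{(1-q^{2n+1})(q/a,q/b;q)_n}{(1-q)(qa,qb;q)_n}(-qab)^n q^{(n^2-3n)/2}\;{}_3\phi_2\!\left(\genfrac{}{}{0pt}{}{q^{-n},q^{n+1},q}{0,-q};q,q\right).
\end{align*}
After clearing the common factor $1/(1-q)$ on both sides, the identity reduces to the single evaluation
\begin{align*}
\Phi_n := {}_3\phi_2\!\left(\genfrac{}{}{0pt}{}{q^{-n},q^{n+1},q}{0,-q};q,q\right) = (-1)^n q^{n^2+n}\sum_{j=-n}^n (-1)^j q^{-j^2},
\end{align*}
since the exponent bookkeeping $n + (n^2-3n)/2 + (n^2+n) = n(3n+1)/2$ then matches the target sum and the signs $(-qab)^n(-1)^n = (ab)^n q^n$ combine with $q^{-n}$ to leave $(ab)^n$.

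To evaluate $\Phi_n$ I would specialize Lemma \ref{meq:2} at $c = -q$ (so that $q^2/c = -q$) and pass to the limit $\alpha \to 1$, in the same spirit in which Lemma \ref{lem-limit-2} is obtained from Lemma \ref{meq:2} via $\alpha \to q^{-1}$. In that limit the $j = 0$ summand contributes $1$; for $j \ge 1$ the pole $1/(1-\alpha)$ cancels against the vanishing $(1-\alpha)$ factor inside $(\alpha;q)_j$, leaving $(q;q)_{j-1}$, and together with $(-1;q)_j = 2(-q;q)_{j-1}$ and the telescoping $\frac{1-q^{2j}}{(1-q^j)(1+q^j)} = 1$ each summand collapses to $2(-1)^j q^{-j^2}$. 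The prefactor $\frac{(-\alpha q,q;q)_n}{(-q,\alpha q;q)_n}(\alpha/c)^n q^{n^2+2n}$ tends to $(-1)^n q^{n^2+n}$, giving $\Phi_n = (-1)^n q^{n^2+n}\bigl(1 + 2\sum_{j=1}^n (-1)^j q^{-j^2}\bigr)$, which telescopes to the desired symmetric sum over $|j|\le n$.

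The one delicate point is exactly this $\alpha \to 1$ limit, where one must track the $1/(1-\alpha)$ singularity and the vanishing $(1-\alpha)$ inside $(\alpha;q)_j$ together carefully enough to extract the clean $(q;q)_{j-1}$ factor; but this is a direct analogue of the derivation already carried out to produce Lemma \ref{lem-limit-2} in the paper, so no new technique is required. Once $\Phi_n$ is in hand, the rest of the proof is a mechanical substitution and a collection of powers of $q$, yielding exactly the double sum on the right-hand side of Theorem \ref{meq:2.1}.
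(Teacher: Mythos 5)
Your proposal is correct and follows essentially the same route as the paper: the same specialization $(c,d,u,v)\to(0,-q,q,q)$ of Theorem \ref{thm-main}, followed by the evaluation of the terminated ${}_3\phi_2$ via Lemma \ref{meq:2} with $(\alpha,c)\to(1,-q)$, where the $1/(1-\alpha)$ pole cancels against $(\alpha;q)_j$ exactly as you describe. The limit analysis and exponent bookkeeping both check out, so no gap remains.
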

For the sake of completeness, we provide its proof here.
\begin{proof}
Taking $(c,d,u,v)\rightarrow (0, -q, q, q)$ in Theorem \ref{thm-main}, we have
\begin{align}\label{3chen12}
&\frac{(q,ab;q)_\infty}{(aq,bq;q)_\infty}{}_3\phi_2\bigg(\genfrac{}{}{0pt}{}{q/a,q/b,q}{0,-q};q,ab\bigg)\\
\nonumber
=&\sum_{n=0}^\infty(1-q^{2n+1})\frac{(q/a,q/b;q)_n}{(aq,bq;q)_n}(-ab)^nq^{n(n-1)/2}
{}_3\phi_2\bigg(\genfrac{}{}{0pt}{}{q^{-n},q^{n+1},q}{0,-q};q,q\bigg).
\end{align}
Taking $(\alpha,c) \rightarrow (1,-q)$ in Lemma \ref{meq:2} and simplifying, we find
\begin{eqnarray}
{}_3\phi_2\bigg(\genfrac{}{}{0pt}{}{q^{-n}, q^{n+1},q}{0,-q};q,q\bigg)
=(-1)^nq^{n^2+n}\sum_{j=-n}^n(-1)^jq^{-j^2}.
\end{eqnarray}
Combining the above two equations, we complete the proof of the Theorem \ref{meq:2.1}.
\end{proof}
\begin{corollary}
Identity \eqref{mock-5-6} holds. In addition, we have
\begin{align}
&\sum_{n=0}^\infty(-1)^n\frac{(q;q)_n}{(-q;q)_{n}}q^{n(n+1)/2} \nonumber \\
=&\sum_{n=0}^\infty\sum_{j=-n}^n(-1)^{n+j}(1-q^{2n+1})q^{2n^2+n-j^2} \label{T6} \\
=&\overline{f}_{1,3,1}(-q^2,-q^2,q^2)-q^3\overline{f}_{1,3,1}(-q^6,-q^6,q^2), \label{T6-simplify} \\
&\sum_{n=0}^\infty \frac{(-1)^nq^n(q;q^2)_n}{(-q;q)_n} \nonumber \\ =&\frac{(q;q^2)_{\infty}}{(q^2;q^2)_{\infty}}\sum_{n=0}^{\infty}(-1)^nq^{3n(n+1)/2}\sum_{|j|\leq n}(-1)^jq^{-j^2} \label{5-5-cor-2-lambda}\\
=& \frac{J_{1}}{2J_{2}^2}\left(f_{1,5,1}(-q^2,-q^2,q)-q^3f_{1,5,1}(-q^5,-q^5,q)  \right) \label{5-5-cor-2-lambda-H-pre} \\
=& \frac{J_{1}}{J_{2}^2}f_{1,5,1}(-q^2,-q^2,q) \label{5-5-cor-2-lambda-H} \\
=&4q^{-1}m(-q^6,q^{24},-1)+\frac{J_1}{J_2^2}\Phi_{1,4}(-q^2,-q^2,q), \label{5-5-cor-2-lambda-A} \\
&\sum_{n=0}^\infty \frac{(-1)^nq^{n^2+2n}(q;q^2)_n}{(-q^2;q^2)_n} \nonumber \\
=&\frac{(q;q^2)_\infty}{(q^2;q^2)_\infty}\sum_{n=0}^\infty (-1)^n(1+q^{2n+1})q^{4n^2+3n}\sum_{j=-n}^n(-1)^jq^{-2j^2} \label{5-5-cor-3} \\
=& \frac{J_1}{J_{2}^2}\left(f_{1,3,1}(-q^5,-q^5,q^4)-q^7f_{1,3,1}(-q^{13},-q^{13},q^4)  \right) \label{5-5-cor-3-H-pre} \\
=&\frac{J_1}{J_{2}^2}f_{1,3,1}(q^2,-q^2,q) \label{5-5-cor-3-H} \\
=&2q^{-1}m(q,q^8,-1)-q^{-1}\frac{J_1^2J_8J_{6,16}^2}{J_2^2J_{3,16}J_{5,16}}. \label{5-5-cor-3-A}
\end{align}
\end{corollary}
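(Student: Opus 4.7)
The plan is to derive each identity in this corollary by specializing the parameters $(a,b)$ in Theorem \ref{meq:2.1} (possibly after a base change $q\mapsto q^2$), then passing routinely to the building-block expressions. For \eqref{mock-5-6} and \eqref{T6}, the specializations $(a,b)\to(0,0)$ and $(a,b)\to(0,1)$ are immediate and parallel those used earlier in the paper; the limits $(q/a;q)_n a^n\to(-1)^nq^{n(n+1)/2}$ match the Eulerian forms $f_1^{(5)}(q)$ and $\sum(-1)^n(q;q)_n q^{n(n+1)/2}/(-q;q)_n$ on the left, while the ${}_3\phi_2$ on the right yields the Hecke sum directly.

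The more delicate specializations are for \eqref{5-5-cor-2-lambda} and \eqref{5-5-cor-3}, where the $(q;q^2)_n$-factor forces a ``splitting'' choice. Here I would use the factorization $(q^{1/2};q)_n(-q^{1/2};q)_n=(q;q^2)_n$: taking $(a,b)\to(q^{1/2},-q^{1/2})$ in Theorem \ref{meq:2.1} produces \eqref{5-5-cor-2-lambda} after observing that $ab=-q$, $(aq;q)_\infty(bq;q)_\infty=(q^3;q^2)_\infty$, and the ${}_3\phi_2$ supplies the $1/(-q;q)_n$ from its bottom parameter $-q$. For \eqref{5-5-cor-3} I would first apply the base change $q\mapsto q^2$ and then let $(a,b)\to(q,0)$, so that $(q^2/a;q^2)_n=(q;q^2)_n$ and the limit $(q^2/b;q^2)_n b^n\to(-1)^nq^{n^2+n}$ combines with the $a^n=q^n$ factor to reproduce the exponent $q^{n^2+2n}$ on the left, while the denominator $(-q^2;q^2)_n$ comes from the ${}_3\phi_2$'s bottom parameter.

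Once these four base identities are in hand, the Hecke-type simplifications \eqref{T6-simplify}, \eqref{5-5-cor-2-lambda-H-pre}, and \eqref{5-5-cor-3-H-pre} will follow by the same parity-splitting routine displayed in \eqref{intro-Liu-eq-H}: reflect via $(n,j)\mapsto(-n-1,-j)$ to extend the Hecke range into the $\sg(r)=\sg(s)$ regime, substitute $(r,s)=(n+j,n-j)$, and split by $r\equiv s\pmod 2$ to recognize $f_{a,b,c}$ or $\overline{f}_{a,b,c}$. The collapsed forms \eqref{5-5-cor-2-lambda-H} and \eqref{5-5-cor-3-H} then come from a single application of \eqref{f-id-1}, with the data $(a,b,c,x,y,q)=(1,5,1,-q^2,-q^2,q)$ and $(1,3,1,q^2,-q^2,q)$ respectively.

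Finally, the Appell--Lerch representations \eqref{5-5-cor-2-lambda-A} and \eqref{5-5-cor-3-A} should come from Theorem \ref{thm-fg-1} applied with $(n,p)=(1,4)$ and $(n,p)=(1,2)$: since $n=1$, the two $t$-sums defining $g_{n,n+p,n}$ each reduce to a single term, and after consolidating via \eqref{m-id-1}--\eqref{m-id-3} and evaluating the theta prefactor $j(-q^2;q)=2q^{-1}J_2^2/J_1$ one obtains the isolated $m(-q^6,q^{24},-1)$ and $m(q,q^8,-1)$ displayed on the right. The main obstacle I anticipate is the reduction of the $\Phi_{1,p}$ correction in the $f_{1,3,1}$ case to the clean theta quotient $-q^{-1}J_1^2J_8J_{6,16}^2/(J_2^2J_{3,16}J_{5,16})$ of \eqref{5-5-cor-3-A}: evaluating the $p\times p$ double sum of \eqref{Phi-defn} and collapsing the resulting Jacobi-theta factors via the triple-product identity and the shifts \eqref{j-id-1}--\eqref{j-id-2} is algorithmic but bookkeeping-heavy, and is the step I would expect to verify by the Garvan--Liang package, as is done in analogous simplifications elsewhere in the paper.
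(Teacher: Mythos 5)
Your route is essentially the paper's: the same four specializations of Theorem \ref{meq:2.1} (your $(q,a,b)\to(q^2,q,0)$ is the paper's $(q^2,0,q)$ up to the $a\leftrightarrow b$ symmetry), the same parity-splitting to reach \eqref{T6-simplify}, \eqref{5-5-cor-2-lambda-H-pre}, \eqref{5-5-cor-3-H-pre}, and the same conversion to $m(x,q,z)$ via the $g_{a,b,c}$ machinery. One correction: the collapse of \eqref{5-5-cor-2-lambda-H-pre} to \eqref{5-5-cor-2-lambda-H} cannot come from \eqref{f-id-1} with $(1,5,1,-q^2,-q^2,q)$ — that identity rewrites $f_{1,5,1}(-q^2,-q^2,q)$ as a combination of $f_{1,5,1}$'s at base $q^4$, whereas what is needed is the equality of the two base-$q$ terms, $f_{1,5,1}(-q^2,-q^2,q)=-q^3f_{1,5,1}(-q^5,-q^5,q)$, which is exactly the reflection identity \eqref{f-id-2} with that data (the \eqref{f-id-1} citation is correct only for the $f_{1,3,1}$ collapse in \eqref{5-5-cor-3-H}). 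Finally, for \eqref{5-5-cor-3-A} you invoke Theorem \ref{thm-fg-1} with $(n,p)=(1,2)$ where the paper uses Theorem \ref{thm-fg-2} with $n=1$; since $y/x=x/y=-1$ for $(x,y)=(q^2,-q^2)$, both give the same $g_{1,3,1}$ term $2q^{-1}m(q,q^8,-1)$ and differ only in whether the theta correction is packaged as $\Phi_{1,2}$ or $-\Theta_{1,2}$, so this alternative is fine.
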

\begin{proof}
Taking $(a,b)\rightarrow (0,0)$, $(0,1)$ and $(q^{1/2}, -q^{1/2})$ in Theorem \ref{meq:2.1}, we obtain \eqref{mock-5-6}, \eqref{T6} and \eqref{5-5-cor-2-lambda}, respectively. Similarly, taking $(q,a,b)\rightarrow (q^2, 0, q)$ in Theorem \ref{meq:2.1}, we obtain \eqref{5-5-cor-3}.

Setting $(a,b,c,x,y,q)\rightarrow (1,5,1,-q^2,-q^2,q)$ in \eqref{f-id-2}, we obtain \eqref{5-5-cor-2-lambda-H} from \eqref{5-5-cor-2-lambda-H-pre}. Similarly, setting $(a,b,c,x,y,q)\rightarrow (1,3,1,q^2,-q^2,q)$ in \eqref{f-id-1}, we obtain \eqref{5-5-cor-3-H} from \eqref{5-5-cor-3-H-pre}.

The expression \eqref{5-5-cor-2-lambda-A} (resp.\ \eqref{5-5-cor-3-A}) follows from \eqref{5-5-cor-2-lambda-H} (resp.\ \eqref{5-5-cor-3-H}) and Theorem \ref{thm-fg-1} (resp.\ Theorem \ref{thm-fg-2}) with $(n,p)=(1,4)$ (resp.\ $n=1$).
\end{proof}
\begin{rem}
(1) Identity \eqref{mock-5-6} was proved by Liu using Theorem \ref{meq:2.1}. See \cite[Proposition 6.9]{Liu2013Rama}. \\
(2) The left side of \eqref{5-5-cor-2-lambda} is the sixth order mock theta function $\lambda^{(6)}(q)$. See \eqref{mock-6-5-defn}. From \cite[Eq.\ (5.27)]{Hickerson-Mortenson} we find
\begin{align}\label{HM-lambda6-m}
\lambda^{(6)}(q)=2q^{-1}m(1,q^6,-q^2)+\frac{J_{1,2}\overline{J}_{3,12}}{\overline{J}_{1,4}}.
\end{align}
Taking $(x,q,z,z',n)\rightarrow (1,q^6,-q^2,-1,2)$ in Lemma \ref{lem-m-decompose}, and using the method in \cite{Garvan-Liang} to prove theta function identities, we can show that \eqref{HM-lambda6-m} is equivalent to \eqref{5-5-cor-2-lambda-A}.
 \\
(3) After replacing $q$ by $-q$, the left side of \eqref{5-5-cor-3} becomes the eighth order mock theta function $S_1^{(8)}(q)$. See \eqref{mock-8-2-defn}. From \cite[Eq.\ (5.36)]{Hickerson-Mortenson} we find
\begin{align}\label{HM-S18-m}
S_1^{(8)}(q)=-2q^{-1}m(-q,q^8,-1)+q^{-1}\frac{\overline{J}_{3,8}J_{2,8}^2}{J_{1,8}^2}.
\end{align}
This is the same as \eqref{5-5-cor-3-A} upon replacing $q$ by $-q$.
\end{rem}

\subsection{Representations for $\phi_1^{(5)}(q)$ and $\psi_1^{(5)}(q)$}
\begin{theorem}\label{thm-ab-5-6}
(Cf.\ \cite[Proposition 6.1]{Liu2013Rama}.) For $\max\{|ab|, |aq|, |bq|\}<1$, we have
\begin{align}
&\frac{(q,ab;q)_{\infty}}{(aq,bq;q)_{\infty}}\sum_{n=0}^{\infty}(q/a,q/b;q)_{n}(ab)^n \nonumber \\
=&\sum_{n=0}^{\infty}\sum_{j=-n}^{n}(-1)^j\frac{(1-q^{2n+1})(q/a,q/b;q)_n(ab)^nq^{2n^2+n-j(3j+1)/2}}{(aq,bq;q)_n}.
\end{align}
\end{theorem}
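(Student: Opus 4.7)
The plan is to derive Theorem \ref{thm-ab-5-6} as a specialization of Theorem \ref{thm-main}, following the same template that worked for Theorems \ref{thm-ab-5-1} and \ref{thm-ab-5-2}. Specifically, I would set $(c,d,u,v)=(0,0,q,q)$. Because $(0;q)_n = 1$ and $(q;q)_n/(q;q)_n = 1$, the terminating ${}_3\phi_2$ on the left of Theorem \ref{thm-main} collapses to the plain sum $\sum_{n=0}^\infty (q/a,q/b;q)_n (ab)^n$, while the infinite-product prefactor becomes
\begin{align*}
\frac{(q^2,ab;q)_\infty}{(aq,bq;q)_\infty} = \frac{1}{1-q}\cdot\frac{(q,ab;q)_\infty}{(aq,bq;q)_\infty},
\end{align*}
which matches the desired left-hand side up to an overall factor of $1/(1-q)$.

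The same substitution on the right-hand side of Theorem \ref{thm-main} produces the inner truncated series
\begin{align*}
{}_3\phi_2\bigg(\genfrac{}{}{0pt}{}{q^{-n},q^{n+1},q}{0,0};q,q\bigg) = \sum_{k=0}^{n}(q^{-n},q^{n+1};q)_k\, q^k.
\end{align*}
The key step is to evaluate this sum in closed form. For this I would invoke Liu's Lemma 4.1, equation \eqref{Liu-Lemma4.1}, with $a = q$ and $c = 0$, which yields
\begin{align*}
\sum_{k=0}^{n}(q^{-n},q^{n+1};q)_k q^k = q^{n^2+n}(q;q)_n\sum_{j=0}^{n}\frac{q^{-jn}}{(q;q)_j},
\end{align*}
and then apply the companion identity \eqref{5-psi-3} to rewrite the inner sum as $(-1)^n q^{n(n+1)/2}/(q;q)_n \cdot \sum_{j=-n}^{n}(-1)^j q^{-j(3j+1)/2}$. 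This collapses the whole thing to $(-1)^n q^{3n(n+1)/2}\sum_{j=-n}^{n}(-1)^j q^{-j(3j+1)/2}$. This is exactly the computation used in the Remark after Theorem \ref{thm-ab-5-2}, so no new identity is needed.

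Finally, substituting this closed form back into the right-hand side of Theorem \ref{thm-main}, the $(-1)^n$ cancels the sign in $(-ab)^n$, the $(q;q)_n$ in the numerator cancels the $(q;q)_n$ in $(q,aq,bq;q)_n$, and the exponents combine via $n(n-1)/2 + 3n(n+1)/2 = 2n^2+n$. Multiplying through by $(1-q)$ to absorb the $1/(1-q)$ factor on both sides yields precisely the right-hand side of Theorem \ref{thm-ab-5-6}. The main obstacle is the evaluation of the truncated ${}_3\phi_2$ at step two, but since it reduces by Liu's Lemma 4.1 plus \eqref{5-psi-3} to a sum already handled in this excerpt, the argument is largely bookkeeping.
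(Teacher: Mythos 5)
Your proposal is correct, and it is exactly the route the paper intends: the paper states Theorem \ref{thm-ab-5-6} with only a citation to Liu, but the specialization $(c,d,u,v)=(0,0,q,q)$ in Theorem \ref{thm-main}, followed by the closed-form evaluation of $\sum_{k=0}^{n}(q^{-n},q^{n+1};q)_k q^k$ via \eqref{Liu-Lemma4.1} with $(a,c)=(q,0)$ and then \eqref{5-psi-3}, is precisely the template used for all the sibling parameterized identities, and your exponent bookkeeping ($q^{(n^2-n)/2}\cdot q^{3n(n+1)/2}=q^{2n^2+n}$, with $(-1)^n$ absorbing the sign of $(-ab)^n$) checks out. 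One cosmetic remark: the computation in the Remark after Theorem \ref{thm-ab-5-2} is the $u=1$ analogue (it takes $(a,c)=(1,0)$ and shifts $n\to n-1$ in \eqref{5-psi-3}), whereas here you apply \eqref{5-psi-3} directly with $(a,c)=(q,0)$ — a slightly different, in fact simpler, instance of the same two ingredients.
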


\begin{corollary}
Identities \eqref{mock-5-7} and \eqref{mock-5-8} hold.
\end{corollary}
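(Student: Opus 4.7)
The plan is to derive both identities as corollaries of Theorem \ref{thm-ab-5-6} by choosing appropriate specializations of the parameters $(a,b)$, following the same pattern used throughout the paper (e.g.\ in the proofs of \eqref{mock-5-1} and \eqref{mock-5-6}). The key observation is that the Eulerian series defining $\phi_1^{(5)}(q)$ and $\psi_1^{(5)}(q)$ can be recovered by sending $a\to 0$ in the left side of Theorem \ref{thm-ab-5-6}, using the limit
\begin{align*}
\lim_{a\to 0}(q/a;q)_n(ab)^n=(-b)^nq^{n(n+1)/2},
\end{align*}
and its $q\mapsto q^2$ analogue.

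For \eqref{mock-5-8}, apply Theorem \ref{thm-ab-5-6} directly with $(a,b)\to(0,-1)$. The left side becomes
\begin{align*}
\frac{(q;q)_\infty}{(-q;q)_\infty}\sum_{n=0}^{\infty}(-q;q)_n q^{n(n+1)/2}=\frac{(q;q)_\infty}{(-q;q)_\infty}\psi_1^{(5)}(q),
\end{align*}
while on the right side the factors of $(-q;q)_n$ cancel and the exponent collects as $2n^2+n+n(n+1)/2=n(5n+3)/2$, producing \eqref{mock-5-8} immediately.

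For \eqref{mock-5-7}, first replace $q$ by $q^2$ in Theorem \ref{thm-ab-5-6} and then take $(a,b)\to(0,-q)$. Using $(q^2/a;q^2)_n(ab)^n\to q^{n^2+2n}$ and $(-q;q^2)_n/(-q^3;q^2)_n=(1+q)/(1+q^{2n+1})$, the left side reduces to
\begin{align*}
\frac{(q^2;q^2)_\infty(1+q)}{(-q;q^2)_\infty}\cdot\frac{\phi_1^{(5)}(q)}{q},
\end{align*}
since $\phi_1^{(5)}(q)=q\sum_{n\ge 0}(-q;q^2)_n q^{n^2+2n}$. On the right side the ratio $(1-q^{4n+2})/(1+q^{2n+1})$ simplifies to $(1-q^{2n+1})$, the exponent of $q$ becomes $5n^2+4n-j(3j+1)=5n^2+4n-3j^2-j$, and the overall factor of $(1+q)$ cancels with the one on the left. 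Rearranging yields \eqref{mock-5-7}.

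There is no genuine obstacle here: the main step is simply the careful bookkeeping of the $a\to 0$ limit of $(q/a;q)_n(ab)^n$ (and its $q^2$-analogue) and the cancellation of $(-q;q^2)_n$-type factors; once these are handled, both identities drop out directly.
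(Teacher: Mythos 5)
Your proposal is correct and follows exactly the paper's own route: identity \eqref{mock-5-8} via $(a,b)\to(0,-1)$ and identity \eqref{mock-5-7} via $(q,a,b)\to(q^2,0,-q)$ in Theorem \ref{thm-ab-5-6}, with the same limit $\lim_{a\to 0}(q/a;q)_n(ab)^n=(-b)^nq^{n(n+1)/2}$ implicitly used there. The only difference is that you spell out the bookkeeping that the paper leaves to the reader; the computations check out.
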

\begin{proof}
Taking $(q,a,b)\rightarrow (q^2, 0, -q)$ in Theorem \ref{thm-ab-5-6}, we obtain \eqref{mock-5-7}. Taking $(a,b)\rightarrow (0,-1)$ in Theorem \ref{thm-ab-5-6}, we obtain \eqref{mock-5-8}.
\end{proof}
As before, identity \eqref{mock-5-8} appears as \cite[Proposition 6.4]{Liu2013Rama}. But it was not pointed out there that \eqref{mock-5-7} also follows from Theorem \ref{thm-ab-5-6}.

\subsection{Representations for $F_1^{(5)}(q)$}
\begin{theorem}\label{thm-ab-5-8}
For $\max \{|ab|, |aq^2|, |bq^2|\}<1$, we have
\begin{align}
&\frac{(q^2,ab;q^2)_\infty}{(aq^2,bq^2;q^2)_\infty}
{}_3\phi_2\bigg(\genfrac{}{}{0pt}{}{q^2/a,q^2/b,q^2}{0,q^3};q^2,ab\bigg) \nonumber\\
=&(1-q)\sum_{n=0}^{\infty}(1+q^{2n+1})\frac{(q^2/a,q^2/b;q^2)_n}{(aq^2,bq^2;q^2)_n}(-ab)^nq^{3n^2+2n}\sum_{j=-n}^{n}q^{-2j^2-j}.
\end{align}
\end{theorem}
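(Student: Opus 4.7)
\medskip

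\noindent\textbf{Proof proposal.} The identity has exactly the shape of every $(a,b)$-parameterized identity established so far in Sections~\ref{sec:proof}--\ref{sec-order-5}, so I will follow the same two-step template: first specialize Theorem~\ref{thm-main} to put the left-hand side on top, then evaluate the resulting terminated ${}_3\phi_2$ in closed form using one of the ${}_3\phi_2\to$ sum-over-$j$ lemmas from Section~\ref{subsec-basic}.

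For the first step, take $(q,c,d,u,v)\to(q^2,0,q^3,q^2,q^2)$ in Theorem~\ref{thm-main} (the convergence hypothesis $\max\{|uab/q|,|ua|,|ub|,|c|,|d|\}<1$ collapses to $\max\{|ab|,|aq^2|,|bq^2|\}<1$). After multiplying both sides by $1-q^2$ to turn the prefactor $(q^4;q^2)_\infty/(1-q^2)$ into $(q^2;q^2)_\infty$, the identity becomes
\begin{align*}
&\frac{(q^2,ab;q^2)_\infty}{(aq^2,bq^2;q^2)_\infty}\,{}_3\phi_2\bigg(\genfrac{}{}{0pt}{}{q^2/a,q^2/b,q^2}{0,q^3};q^2,ab\bigg)\\
&\quad=\sum_{n=0}^{\infty}(1-q^{4n+2})\frac{(q^2/a,q^2/b;q^2)_n}{(aq^2,bq^2;q^2)_n}(-ab)^n q^{n^2-n}\,{}_3\phi_2\bigg(\genfrac{}{}{0pt}{}{q^{-2n},q^{2n+2},q^2}{0,q^3};q^2,q^2\bigg).
\end{align*}

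For the second step I plan to apply Lemma~\ref{meq:2} with $(q,\alpha,c)\to(q^2,1,q)$. This matches the top parameters $q^{-2n},\alpha q^{2n+2}=q^{2n+2},q^2$ and the bottom $0,q^4/c=q^3$ exactly. The only subtlety is that $(1-\alpha)$ sits in the denominator of the summand, but the combination $(1-\alpha q^{4j})(\alpha;q^2)_j/(1-\alpha)$ has a removable singularity at $\alpha=1$: it equals $1$ when $j=0$ and $(1-q^{4j})(q^2;q^2)_{j-1}$ when $j\geq 1$, so the ratio with $(q^2;q^2)_j$ collapses to $1+q^{2j}$. After cancelling the $(q;q^2)_j$ that appears in both numerator (as $(q^2/c;q^2)_j$) and denominator (as $(\alpha c;q^2)_j$), the evaluation reads
\begin{align*}
{}_3\phi_2\bigg(\genfrac{}{}{0pt}{}{q^{-2n},q^{2n+2},q^2}{0,q^3};q^2,q^2\bigg)=\frac{(1-q)\,q^{2n^2+3n}}{1-q^{2n+1}}\Bigg(1+\sum_{j=1}^{n}(1+q^{2j})q^{-2j^2-j}\Bigg),
\end{align*}
and folding the $q^{2j}$ piece by $j\mapsto -j$ turns the inner bracket into the symmetric sum $\sum_{j=-n}^{n}q^{-2j^2-j}$.

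Substituting this evaluation into the first step and using $(1-q^{4n+2})/(1-q^{2n+1})=1+q^{2n+1}$ together with $q^{n^2-n}\cdot q^{2n^2+3n}=q^{3n^2+2n}$ yields exactly the right-hand side of Theorem~\ref{thm-ab-5-8}. I do not expect any genuine obstacle: the proof is completely parallel to that of Theorem~\ref{meq:3.2} (which treats the same ${}_3\phi_2$ but with third numerator parameter $q$ in place of $q^2$, and correspondingly invokes Lemma~\ref{meq:3} rather than Lemma~\ref{meq:2}), and the only care needed is verifying the $\alpha\to 1$ limit in Lemma~\ref{meq:2}.
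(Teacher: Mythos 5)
Your proposal is correct and follows exactly the paper's own route: the same specialization $(q,c,d,u,v)\to(q^2,0,q^3,q^2,q^2)$ of Theorem~\ref{thm-main}, followed by Lemma~\ref{meq:2} with $(q,\alpha,c)\to(q^2,1,q)$ to evaluate the terminated ${}_3\phi_2$. The only difference is that you spell out the removable singularity at $\alpha=1$ and the $j\mapsto -j$ folding, which the paper compresses into ``and simplifying''; your computation of that limit is accurate.
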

\begin{proof}
Taking $(q,c,d,u,v) \rightarrow (q^2, 0, q^3,q^2, q^2)$ in Theorem \ref{thm-main}, we deduce that
\begin{align}
&\frac{(q^2,ab;q^2)_\infty}{(aq^2,bq^2;q^2)_\infty}
{}_3\phi_2\bigg(\genfrac{}{}{0pt}{}{q^2/a,q^2/b,q^2}{0,q^3};q^2,ab\bigg) \nonumber\\
=&\sum_{n=0}^{\infty}(1-q^{4n+2})\frac{(q^2/a,q^2/b;q^2)_n}{(aq^2,bq^2;q^2)_n}(-ab)^nq^{n^2-n}
{}_3\phi_2\bigg(\genfrac{}{}{0pt}{}{q^{-2n},q^{2n+2},q^2}{0,q^3};q^2,q^2\bigg). \label{ab-5-8-proof}
\end{align}
Taking $(q,\alpha,c)\rightarrow (q^2,1,q)$ in Lemma \ref{meq:2} and simplifying, we find
\begin{align}
{}_3\phi_2\bigg(\genfrac{}{}{0pt}{}{q^{-2n}, q^{2n+2},q^2}{0,q^3};q^2,q^2\bigg)
=\frac{1-q}{1-q^{2n+1}}q^{2n^2+3n}\sum_{j=-n}^nq^{-2j^2-j}. \label{ab-5-8-proof-1}
\end{align}
The theorem follows after substituting \eqref{ab-5-8-proof-1} into \eqref{ab-5-8-proof}.
\end{proof}
\begin{corollary}
Identity \eqref{mock-5-9} holds. In addition, we have
\begin{align}
&\sum_{n=0}^\infty(-1)^n\frac{(q^2;q^2)_n}{(q;q^2)_{n+1}}q^{n(n+1)} \nonumber \\
=&\sum_{n=0}^\infty\sum_{j=-n}^n(1+q^{2n+1})q^{4n^2+3n-2j^2-j} \label{5-8-cor-1} \\
=&\overline{f}_{1,3,1}(-q^4,-q^6,q^4)+q^7\overline{f}_{1,3,1}(-q^{12},-q^{14},q^4), \label{5-8-cor-1-simplify}\\
&\sum_{n=0}^\infty \frac{q^{n(n+1)}(-q^2;q^2)_n}{(q;q^2)_{n+1}} \nonumber \\
=&\frac{(-q^2;q^2)_\infty}{(q^2;q^2)_\infty}\sum_{n=0}^\infty (1+q^{2n+1})(-1)^nq^{4n^2+3n}\sum_{j=-n}^nq^{-2j^2-j} \label{5-8-cor-2} \\
=&\frac{J_4}{J_{2}^2}\left(f_{1,3,1}(q^4,q^6,q^4)-q^7f_{1,3,1}(q^{12},q^{14},q^4)\right) \label{5-8-cor-2-H} \\
=&-q^{-1}m(q,q^8,q^{-1/2})+q^{-1}\frac{J_{\frac{3}{2},8}^2J_{8}^3}{J_{\frac{1}{2},8}^2J_{1,8}J_{2,8}}, \label{5-8-cor-2-A} \\
&\sum_{n=1}^{\infty}\frac{q^n(-q;q)_{2n-2}}{(q;q^2)_n} \nonumber \\
=&q\frac{(-q;q)_\infty}{(q;q)_\infty}\sum_{n=0}^\infty (-1)^nq^{3n^2+3n}\sum_{j=-n}^nq^{-2j^2-j} \label{5-8-cor-3-psi} \\
=&\frac{1}{2}q\frac{J_2}{J_{1}^2}\left(f_{1,5,1}(q^3,q^5,q^2)-q^6f_{1,5,1}(q^9,q^{11},q^2)  \right) \label{5-8-cor-3-psi-H-pre} \\
=&q\frac{J_2}{J_{1}^2}f_{1,5,1}(q^3,q^5,q^2) \label{5-8-cor-3-psi-H} \\
=&-m(-q^{18},q^{48},-1)+q^{-3}m(-q^6,q^{48},-1)+q\frac{J_2}{J_1^2}\Phi_{1,4}(q^3,q^5,q^2) \label{5-8-cor-3-psi-A} \\
=&-m(-q^3,q^{12},q)+q^2\frac{J_2^2J_{12}J_{24}J_{4,24}^2}{J_1^2J_{1,24}J_{10,24}J_{11,24}}, \label{5-8-cor-3-psi-final} \\
&\sum_{n=0}^\infty \frac{q^{(n+1)^2}(-q;q^2)_n}{(q;q^2)_{n+1}} \nonumber \\ =&q\frac{(-q;q^2)_{\infty}}{(q^2;q^2)_{\infty}}\sum_{n=0}^{\infty}(-1)^nq^{4n^2+4n}\sum_{j=-n}^{n}q^{-2j^2-j} \label{5-8-cor-4} \\
=&\frac{1}{2}q\frac{J_2}{J_1J_4}\left(f_{1,3,1}(q^5,q^7,q^4)-q^8f_{1,3,1}(q^{13},q^{15},q^4)  \right) \label{5-8-cor-4-H-pre} \\
=&q\frac{J_2}{J_1J_4}f_{1,3,1}(q^5,q^7,q^4) \label{5-8-cor-4-H} \\
=&-m(q^{2},q^8,q^{-1/2})+\frac{J_{\frac{5}{2},8}J_8^3}{J_{\frac{1}{2},8}J_{1,8}J_{3,8}}. \label{5-8-cor-4-A}
\end{align}
\end{corollary}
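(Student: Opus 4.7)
The proof of this corollary is an orchestrated sequence of five specializations of Theorem \ref{thm-ab-5-8} followed by standard rearrangements. I would choose the parameter substitutions $(a,b) \to (0,0)$, $(0,1)$, $(0,-1)$, $(-q,-1)$, and $(0,-q)$ to obtain \eqref{mock-5-9}, \eqref{5-8-cor-1}, \eqref{5-8-cor-2}, \eqref{5-8-cor-3-psi}, and \eqref{5-8-cor-4}, respectively. In every case the critical limiting computation is $(q^2/a;q^2)_n\, a^n \to (-1)^n q^{n(n+1)}$ as $a \to 0$ (and likewise for $b$), combined with an infinite-product prefactor computation. For instance, with $(a,b) \to (-q,-1)$ one finds $(q^2/a;q^2)_n = (-q;q^2)_n$, $(q^2/b;q^2)_n = (-q^2;q^2)_n$, $(ab)^n = q^n$, and the prefactor $(q^2,q;q^2)_\infty/((-q^3,-q^2;q^2)_\infty) = (1+q)J_1^2/J_2$. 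Equating the two sides and canceling a common factor of $(1-q^2)/q$ produces the representation of $\psi_{-}^{(6)}(q)$ asserted in \eqref{5-8-cor-3-psi}.

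Next I would convert each Hecke-type double sum on the right-hand side into a building block. Using the substitution $r = n+j$, $s = n-j$ (so that $4n^2+3n-2j^2-j$ becomes $r^2/2 + 3rs + s^2/2 + r + 2s$, etc.) and splitting by parity of $r \equiv s \pmod 2$ turns each sum into a two-term combination matching the definition of either $\overline{f}_{1,3,1}$ (as in \eqref{5-8-cor-1-simplify}, with $r,s\geq 0$ and $r,s\leq-1$ contributing with the \emph{same} sign), or $f_{1,3,1}$ and $f_{1,5,1}$ (as in \eqref{5-8-cor-2-H}, \eqref{5-8-cor-3-psi-H-pre}, \eqref{5-8-cor-4-H-pre}, where the presence of a $(-1)^n$ factor forces a sign flip on the $r,s \leq -1$ piece via the $n \to -n-1$ symmetry of the quadratic form). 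The collapsing of \eqref{5-8-cor-3-psi-H-pre} to the one-term form \eqref{5-8-cor-3-psi-H} and similarly \eqref{5-8-cor-4-H-pre} to \eqref{5-8-cor-4-H} uses the inversion \eqref{f-id-2}: with $(a,b,c,x,y,q) \to (1,5,1,q^3,q^5,q^2)$ one computes $2a+b = 2c+b = a+b+c = 7$ and $xy = q^8$, giving $f_{1,5,1}(q^3,q^5,q^2) = -q^6 f_{1,5,1}(q^{11},q^9,q^2) = -q^6 f_{1,5,1}(q^9,q^{11},q^2)$ by the $x \leftrightarrow y$ symmetry enjoyed when $a=c$.

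The Appell-Lerch forms \eqref{5-8-cor-2-A}, \eqref{5-8-cor-3-psi-A}, \eqref{5-8-cor-4-A} then fall out by applying Theorem \ref{thm-fg-1} (with $(n,p) = (1,4)$ for \eqref{5-8-cor-3-psi-A} and $(n,p)=(1,1)$ for the others) or Theorem \ref{thm-fg-2}. The $g$-sum has only two terms when $n = 1$; evaluating the theta prefactors $j(q^k; q^{2m})$ via \eqref{j-id-1} (e.g.\ $j(q^3;q^2) = -q^{-1}J_1^2/J_2$, $j(q^5;q^2) = q^{-4}J_1^2/J_2$) and cancelling against the factor $q J_2/J_1^2$ yields the coefficients $-1$ and $q^{-3}$ in \eqref{5-8-cor-3-psi-A}. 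The residual $\Phi$ or $\Theta$ terms collect into the stated theta quotients.

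The genuinely hard step, and the one I would spend most care on, is passing from \eqref{5-8-cor-3-psi-A} to the final closed form \eqref{5-8-cor-3-psi-final}. This requires choosing the right decomposition via Lemma \ref{lem-m-decompose} (I would try $(x,q,z,z',n) \to (-q^3, q^{12}, q, q, 2)$), applying Lemma \ref{lem-m-minus} to align the $z$-parameters appearing in the resulting terms, and then recognizing that the leftover sum of theta quotients equals $q^2 J_2^2 J_{12} J_{24} J_{4,24}^2/(J_1^2 J_{1,24} J_{10,24} J_{11,24})$. That last reduction is a modular identity which I would verify by the method of \cite{Garvan-Liang}, since matching the multiplicative bookkeeping of the $J_{a,m}$ factors by hand is the step most prone to error.
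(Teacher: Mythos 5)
Your proposal matches the paper's proof essentially step for step: the same five specializations of Theorem \ref{thm-ab-5-8}, the same rewriting of $\psi_{-}^{(6)}(q)$ as $\frac{q}{1-q}\sum_{n\geq 0}(-q,-q^2;q^2)_nq^n/(q^3;q^2)_n$ before taking $(a,b)\to(-q,-1)$, the same use of \eqref{f-id-2} with $(1,5,1,q^3,q^5,q^2)$ and $(1,3,1,q^5,q^7,q^4)$ to collapse the two-term $f$-expressions, Theorem \ref{thm-fg-2} with $n=1$ for \eqref{5-8-cor-2-A} and \eqref{5-8-cor-4-A} (note your parenthetical ``$(n,p)=(1,1)$'' would apply to $f_{1,2,1}$ rather than $f_{1,3,1}$, for which Theorem \ref{thm-fg-1} would need $(n,p)=(1,2)$; your fallback to Theorem \ref{thm-fg-2} is the route the paper actually takes), Theorem \ref{thm-fg-1} with $(n,p)=(1,4)$ for \eqref{5-8-cor-3-psi-A}, and Lemma \ref{lem-m-decompose} plus a Garvan--Liang-style theta verification for \eqref{5-8-cor-3-psi-final}. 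The only substantive deviation is your choice $z'=q$ in that final decomposition, where the paper takes $(x,q,z,z',n)\to(-q^3,q^{12},q,-1,2)$ so that the resulting $m(-q^{18},q^{48},\cdot)$ and $m(-q^{6},q^{48},\cdot)$ terms already carry $z$-argument $-1$ and match \eqref{5-8-cor-3-psi-A} without a further appeal to Lemma \ref{lem-m-minus}; your route also works, just with that extra alignment step.
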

\begin{proof}
Taking $(a,b)\rightarrow (0,0)$  in Theorem \ref{thm-ab-5-8}, we obtain \eqref{mock-5-9} upon using \eqref{revise-j-convert}.

Taking $(a,b)\rightarrow (0,1)$ and $(0,-1)$ in Theorem \ref{thm-ab-5-8}, we obtain \eqref{5-8-cor-1} and \eqref{5-8-cor-2}, respectively.

We write
\begin{align}
\sum_{n=1}^{\infty}\frac{q^n(-q;q)_{2n-2}}{(q;q^2)_n}=\frac{q}{1-q}\sum_{n=0}^\infty \frac{(-q,-q^2;q^2)_n q^n}{(q^3;q^2)_n}.
\end{align}
Taking $(a,b)\rightarrow (-q,-1)$ in Theorem \ref{thm-ab-5-8}, we obtain \eqref{5-8-cor-3-psi}.

Similarly, taking $(a,b)\rightarrow (0,-q)$ in Theorem \ref{thm-ab-5-8}, we obtain \eqref{5-8-cor-4}.

Setting $(a,b,c,x,y,q)\rightarrow (1,5,1,q^3,q^5,q^2)$ in \eqref{f-id-2}, we obtain \eqref{5-8-cor-3-psi-H} from \eqref{5-8-cor-3-psi-H-pre}. Similarly, setting $(a,b,c,x,y,q)\rightarrow (1,3,1,q^5,q^7,q^4)$ in \eqref{f-id-2}, we obtain
\eqref{5-8-cor-4-H} from \eqref{5-8-cor-4-H-pre}.

The expression \eqref{5-8-cor-2-A} (resp.\ \eqref{5-8-cor-4-A}) follows from \eqref{5-8-cor-2-H} (resp.\ \eqref{5-8-cor-4-H}) and Theorem \ref{thm-fg-2} with $n=1$. The expression  \eqref{5-8-cor-3-psi-A} follows from \eqref{5-8-cor-3-psi-H} and Theorem \ref{thm-fg-1} with $(n,p)=(1,4)$.

Taking $(x,q,z,z',n)\rightarrow (-q^3,q^{12},q,-1,2)$ in Lemma \ref{lem-m-decompose} and using the method in \cite{Garvan-Liang} to prove theta function identities, we get \eqref{5-8-cor-3-psi-final} from \eqref{5-8-cor-3-psi-A}.
\end{proof}
\begin{rem}
(1) After replacing $q$ by $-q$, the left side of \eqref{5-8-cor-2} becomes the eighth order mock theta function $T_1^{(8)}(q)$. See \eqref{mock-8-4-defn}. From \cite[Eq.\ (5.38)]{Hickerson-Mortenson} we know that
\begin{align}\label{HM-T1-m}
T_1^{(8)}(q)=q^{-1}m(-q,q^8,q^6).
\end{align}
Taking $(x,q,z_1,z_0)\rightarrow (q,q^8,q^{-1/2},q^6)$ in Lemma \ref{lem-m-minus}, we see that \eqref{HM-T1-m} is equivalent to \eqref{5-8-cor-2-A}. \\
(2) The left side of \eqref{5-8-cor-3-psi} is the sixth order mock theta function $\psi_{-}^{(6)}(q)$. See \eqref{mock-6-9-defn}. From \cite[Eq.\ (5.31)]{Hickerson-Mortenson} we find the following representation:
\begin{align}\label{HM-psi-minus-m}
\psi_{-}^{(6)}(q)=-\frac{1}{2}m(1,q^3,q)+\frac{1}{2}q\frac{J_6^3}{J_1J_2}.
\end{align}
Setting $(x,q,z,z',n)\rightarrow (1,q^3,q,-1,4)$ in Lemma \ref{lem-m-decompose} and using the method in \cite{Garvan-Liang} to prove theta function identities, we see that \eqref{HM-psi-minus-m} is equivalent to \eqref{5-8-cor-3-psi-A}.\\
(3) The left side of \eqref{5-8-cor-4} is the eighth order mock theta function $V_1^{(8)}(q)$. See \eqref{mock-8-8-defn}. Taking $(x,q,z_1,z_0)\rightarrow (q^2,q^8,q^{-1/2},q)$ in Lemma \ref{lem-m-minus}, we see that \eqref{HM-V1-m} is equivalent to \eqref{5-8-cor-4-A}.
\end{rem}

\section{Mock theta functions of order 6}\label{sec-order-6}
In his lost notebook \cite{lostnotebook}, Ramanujan recorded seven mock theta functions of order 6. They are defined as
\begin{align}
&\phi^{(6)}(q):=\sum_{n=0}^{\infty}\frac{(-1)^n(q;q^2)_{n}q^{n^2}}{(-q;q)_{2n}}, \label{mock-6-1-defn} \\
&\psi^{(6)}(q):=\sum_{n=0}^{\infty}\frac{(-1)^nq^{(n+1)^2}(q;q^2)_{n}}{(-q;q)_{2n+1}}, \label{mock-6-2-defn} \\
&\rho^{(6)}(q):=\sum_{n=0}^{\infty}\frac{q^{\binom{n+1}{2}}(-q;q)_{n}}{(q;q^2)_{n+1}}, \label{mock-6-3-defn} \\
&\sigma^{(6)}(q):=\sum_{n=0}^{\infty}\frac{q^{\binom{n+2}{2}}(-q;q)_{n}}{(q;q^2)_{n+1}}, \label{mock-6-4-defn} \\
&\lambda^{(6)}(q):=\sum_{n=0}^{\infty}\frac{(-1)^nq^n(q;q^2)_{n}}{(-q;q)_{n}}, \label{mock-6-5-defn} \\
&\mu^{(6)}(q):=\frac{1}{2}+\frac{1}{2}\sum_{n=0}^{\infty}\frac{(-1)^nq^{n+1}(1+q^n)(q;q^2)_{n}}{(-q;q)_{n+1}}, \label{mock-6-6-defn} \\
&\gamma^{(6)}(q):=\sum_{n=0}^{\infty}\frac{q^{n^2}(q;q)_{n}}{(q^3;q^3)_{n}}.  \label{mock-6-7-defn}
\end{align}
In Ramanujan's original definition of $\mu^{(6)}(q)$, the series does not converge, and \eqref{mock-6-6-defn} is the correct understanding of his definition.

In 2007, Berndt and Chan \cite{Berndt-Chan} defined two new mock theta functions as:
\begin{align}
&\phi_{-}^{(6)}(q):=\sum_{n=1}^{\infty}\frac{q^n(-q;q)_{2n-1}}{(q;q^2)_{n}}, \label{mock-6-8-defn} \\
&\psi_{-}^{(6)}(q):=\sum_{n=1}^{\infty}\frac{q^{n}(-q;q)_{2n-2}}{(q;q^2)_{n}}. \label{mock-6-9-defn}
\end{align}
These two functions were discovered after an examination of the summands of $\phi^{(6)}(q)$ and $\psi^{(6)}(q)$ over the negative and non-positive integers.

The following representations for sixth order mock theta functions were proved by Andrews and Hickerson \cite{Andrews-Hickerson}, and Berndt and Chan \cite{Berndt-Chan}.
\begin{theorem}\label{thm-ord-6}
We have
\begin{align}
&\phi^{(6)}(q)=\frac{(q;q^2)_{\infty}}{(q^2;q^2)_{\infty}}\left(1+2\sum_{n=1}^{\infty}q^{2n^2-n}+\sum_{n=1}^{\infty}(-1)^nq^{3n^2-n}(1-q^{2n})\sum_{j=-n}^{n}(-1)^{j+1}q^{-j^2} \right), \label{mock-6-1} \\
&\psi^{(6)}(q)=q\frac{(q;q^2)_{\infty}}{(q^2;q^2)_{\infty}}
\sum_{n=0}^{\infty}\sum_{j=-n}^{n}(-1)^{n+j}q^{3n^2+3n-j^2}, \label{mock-6-2} \\
&\rho^{(6)}(q)=\frac{(-q;q)_{\infty}}{(q;q)_{\infty}}\sum_{n=0}^{\infty}\sum_{j=-n}^{n}(-1)^nq^{3n(n+1)/2-j(j+1)/2}
, \label{mock-6-3} \\
&\sigma^{(6)}(q)=q\frac{(-q;q)_\infty}{(q;q)_\infty}\sum_{n=0}^\infty (-1)^n(1-q^{n+1})q^{(3n^2+5n)/2}\sum_{j=0}^n q^{-j(j+1)/2},  \label{mock-6-4} \\
&\lambda^{(6)}(q)=\frac{(q;q^2)_{\infty}}{(q^2;q^2)_{\infty}}\sum_{n=0}^{\infty}(-1)^nq^{3n(n+1)/2}\sum_{|j|\leq n}(-1)^jq^{-j^2}, \label{mock-6-5} \\
&\mu^{(6)}(q)=\frac{(q;q^2)_\infty}{2(q^2;q^2)_\infty}\sum_{n=0}^\infty (-1)^nq^{(3n^2+n)/2}(1+q^{2n+1})\sum_{j=-n}^n(-1)^jq^{-j^2}, \label{mock-6-6} \\
&\gamma^{(6)}(q)=\frac{1}{(q;q)_\infty}\bigg(1+3\sum_{n=1}^{\infty}(-1)^n\frac{1+q^n}{1+q^n+q^{2n}}q^{\frac{3n^2+n}{2}}\bigg),  \label{mock-6-7} \\
&\phi_{-}^{(6)}(q)=\frac{(-q;q)_\infty}{(q;q)_\infty}\sum_{n=0}^{\infty}(1-q^{2n+2})(-1)^nq^{3n^2+5n+1}\sum_{j=0}^n(1+q^{2j+1})q^{-2j^2-3j}, \label{mock-6-8} \\
&\psi_{-}^{(6)}(q)=q\frac{(-q;q)_{\infty}}{(q;q)_{\infty}}\sum_{n=0}^{\infty}(-1)^nq^{3n^3+3n}\sum_{j=-n}^{n}q^{-2j^2-j}. \label{mock-6-9}
\end{align}
\end{theorem}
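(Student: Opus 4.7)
The plan is to derive each of the nine identities in Theorem \ref{thm-ord-6} as a specialization of an $(a,b)$-parameterized identity produced by Theorem \ref{thm-main}, in exactly the style of Sections \ref{sec-order-2}--\ref{sec-order-5}. Four of the nine are already present in the paper as byproducts of parameterized identities established for other mock theta functions: \eqref{mock-6-3} coincides with \eqref{2-2-cor-3-rho} (from Theorem \ref{thm-ab-2-2} at $(a,b)=(0,-1)$); \eqref{mock-6-4} coincides with \eqref{2-1-cor-add-sigma} (from Theorem \ref{thm-ab-2-1-2nd}); \eqref{mock-6-5} coincides with \eqref{5-5-cor-2-lambda} (from Theorem \ref{meq:2.1} at $(a,b)=(q^{1/2},-q^{1/2})$); and \eqref{mock-6-9} coincides with \eqref{5-8-cor-3-psi} (from Theorem \ref{thm-ab-5-8} at $(a,b)=(-q,-1)$). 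So the task reduces to producing five new parameterized identities whose specializations yield \eqref{mock-6-1}, \eqref{mock-6-2}, \eqref{mock-6-6}, \eqref{mock-6-7}, and \eqref{mock-6-8}.

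For $\phi^{(6)}(q)$, $\psi^{(6)}(q)$, and $\mu^{(6)}(q)$, whose Eulerian forms feature $(q;q^2)_n/(-q;q)_{2n}$ or $(q;q^2)_n/(-q;q)_{2n+1}$, I would apply Theorem \ref{thm-main} at base $q$ with $(c,d)$ drawn from $\{q^{1/2},-q^{1/2},iq^{1/2},-iq^{1/2},q^{3/2},-q^{3/2}\}$ and $u\in\{1,q,q^2\}$, chosen so that the LHS prefactor $(uq,uab/q;q)_\infty/(ua,ub;q)_\infty$ matches the target upon specialization. The terminated ${}_{3}\phi_{2}$ on the right would then be simplified using \eqref{sears:32} to reduce to a series at $q^{1/2}$, followed by one of Lemmas \ref{meq:1}--\ref{meq:3} (or Lemma \ref{lem-limit} after an $\alpha\to q^{-1}$ limit), producing a finite inner sum $\sum_{|j|\le n}(-1)^jq^{-j^2}$ in exactly the style of Theorems \ref{thm-ab-2-1-2nd} and \ref{thm-ab-2-2}. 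The factor $(1+q^{2n+1})$ in $\mu^{(6)}(q)$ is precisely the symmetrizer that permits rewriting the unilateral sum as a bilateral one, so $\mu^{(6)}$ should drop out of the same parameterized identity that handles $\phi^{(6)}$ by taking a different $(a,b)$.

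The most delicate case is $\gamma^{(6)}(q)$, because $(q^3;q^3)_n^{-1}$ forces the cube-root-of-unity factorization $1-q^{3n}=(1-q^n)(1-\zeta_{3}q^n)(1-\zeta_{3}^{2}q^n)$. In analogy with Theorem \ref{thm-ab-3-4}, I would take $(c,d,u,v)=(-\zeta_{3}q,-\zeta_{3}^{2}q,1,q)$ in Theorem \ref{thm-main}, so that the inner terminated ${}_{3}\phi_{2}$ is Saalschützian; then \eqref{Pfaff} at $(a,b,c)=(1,-\zeta_3,-\zeta_3^2)$ collapses it to $q^n/(1-q^n+q^{2n})$. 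Specializing $(a,b)\to(0,0)$ would then give $\sum_{n\ge 1}(-1)^n(1+q^n)q^{(3n^2+n)/2}/(1-q^n+q^{2n})$, which is exactly \eqref{mock-6-7}, the coefficient $3$ emerging after the residue-style partial-fraction splitting and folding over negative $n$ as in the derivation of \eqref{g-final}.

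Finally, for $\phi_{-}^{(6)}(q)$, whose Eulerian data $(-q;q)_{2n-1}/(q;q^2)_n$ mirrors $\psi_{-}^{(6)}(q)$ but carries an extra factor generating the telescoping sum $\sum_{j=0}^{n}(1+q^{2j+1})q^{-2j^2-3j}$, I expect a parameterized identity built from Theorem \ref{thm-main} at base $q^2$ with $(c,d,u,v)$ chosen so that the inner terminated ${}_{3}\phi_{2}$ yields precisely that sum, most likely via Lemma \ref{meq:2} with an appropriate $\alpha\to q^{-2}$ limit, and then specializing $(a,b)$ to one of $(0,-1)$, $(-q,-1)$ to introduce $(-q;q)_{2n-1}$. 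The main obstacle in all five fresh cases is pinning down the precise parameter quintuple $(c,d,u,v,(a,b))$ that simultaneously reproduces the Eulerian form on the left and admits a clean closed-form inner summation on the right; once this is done, the rearrangement of the resulting one-sided sum into the asymmetric bilateral Hecke-type shape actually written in \eqref{mock-6-1}, \eqref{mock-6-2}, and \eqref{mock-6-8} is routine, in the spirit of the manipulation \eqref{intro-Liu-eq-H}.
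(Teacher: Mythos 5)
Your overall framework is the right one, and your identification of \eqref{mock-6-3}, \eqref{mock-6-4}, \eqref{mock-6-5} and \eqref{mock-6-9} with \eqref{2-2-cor-3-rho}, \eqref{2-1-cor-add-sigma}, \eqref{5-5-cor-2-lambda} and \eqref{5-8-cor-3-psi} matches the paper exactly. But for the five remaining identities your concrete parameter choices contain errors, and one essential idea is missing. For $\gamma^{(6)}(q)$ you take $(c,d)=(-\zeta_3q,-\zeta_3^2q)$, which produces the denominator $(1+\zeta_3q^n)(1+\zeta_3^2q^n)=1-q^n+q^{2n}$; that is the parameterized identity for $\chi^{(3)}(q)$ (Theorem \ref{thm-ab-3-4}), not for $\gamma^{(6)}(q)$. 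Since $(q^3;q^3)_n=(q;q)_n(\zeta_3q;q)_n(\zeta_3^2q;q)_n$, you need $(c,d)=(\zeta_3q,\zeta_3^2q)$ without the minus signs, giving $3q^n/(1+q^n+q^{2n})$ from \eqref{Pfaff}; the factor $3$ comes directly from the Pfaff--Saalsch\"utz evaluation, not from folding over negative $n$, and indeed \eqref{mock-6-7} is a unilateral sum. Similarly, for $\phi^{(6)}$ and $\psi^{(6)}$ the base must be $q^2$, not $q$: since $(-q;q)_{2n}=(-q;q^2)_n(-q^2;q^2)_n$, the correct lower parameters are $(-q,-q^2)$ and $(-q^2,-q^3)$ in base $q^2$ (Theorems \ref{thm-ab-6-1} and \ref{thm-ab-6-2}), not half-integer powers of $q$ in base $q$.

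The most serious gap is $\mu^{(6)}(q)$. It cannot ``drop out of the same parameterized identity that handles $\phi^{(6)}$'': the denominator of $\mu^{(6)}$ is $(-q;q)_{n+1}$ (base $q$) while that of $\phi^{(6)}$ is $(-q;q)_{2n}$ (base $q^2$), and the summand of $\mu^{(6)}$ carries the extra factor $(1+q^n)$, so it is not a specialization of any single ${}_{3}\phi_{2}$ of the form covered by Theorem \ref{thm-main}. The paper instead builds a separate parameterized identity (Theorem \ref{thm-ab-6-6}, with $(c,d,u,v)=(0,-q,q^2,q)$) for the auxiliary function $H(q)=\sum_{n\ge 0}(-1)^nq^{2n+1}(q;q^2)_n/(-q;q)_n$, carries out a nontrivial telescoping manipulation with $S_n=\sum_{j=-n}^{n}(-1)^jq^{-j^2}$, and only then obtains \eqref{mock-6-6} by combining the result with \eqref{mock-6-5} through the Andrews--Hickerson relation $2\mu^{(6)}(q)=2+H(q)-\lambda^{(6)}(q)$. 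Without this detour your plan has no route to \eqref{mock-6-6}. Finally, for $\phi_{-}^{(6)}(q)$ the specializations $(a,b)=(0,-1)$ or $(-q,-1)$ you propose do not reproduce the Eulerian form $(-q;q)_{2n-1}/(q;q^2)_n$; the paper needs $u=q^4$ in base $q^2$ (Theorem \ref{thm-ab-6-8}) together with $(a,b)\to(-1,-q^{-1})$, and the inner sum comes from Lemma \ref{meq:2} at $\alpha=q^2$, not from a limit $\alpha\to q^{-2}$.
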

We will provide parameterized identities which recover these representations. As a byproduct, a new Hecke-type series representation for $\nu^{(3)}(q)$ will be provided in  \eqref{6-2-cor-2-nu}.
\subsection{Representations for $\phi^{(6)}(q)$}
\begin{theorem}\label{thm-ab-6-1}
For $\max\{|ab|/q^2|, |a|, |b|\}<1$, we have
\begin{align}
&\frac{(q^2,ab/q^2;q^2)_\infty}{(a,b;q^2)_\infty}
{}_3\phi_2\bigg(\genfrac{}{}{0pt}{}{q^2/a,q^2/b,q^2}{-q,-q^2};q^2,ab/q^2\bigg) \nonumber\\
=&1+\sum_{n=1}^{\infty}\frac{(q^2/a,q^2/b;q^2)_n}{(a,b;q^2)_n}(ab)^nq^{2n^2-3n}\left(2(-1)^nq^{-n^2}-(1-q^{2n})\sum_{j=-n}^{n}(-1)^{j}q^{-j^2}  \right).
\end{align}
\end{theorem}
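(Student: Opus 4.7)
The plan is to mirror the pattern established throughout Sections~\ref{sec-order-2}--\ref{sec-order-5}: specialize Theorem~\ref{thm-main} with $(q,c,d,u,v)\to(q^2,-q,-q^2,1,q^2)$, then evaluate the resulting terminated ${}_3\phi_2$ in closed form. Because $u=1$, the factor $(1-uq^{2n})(u;q)_n/[(1-u)(q;q)_n]$ requires a limit; as in the proof of Theorem~\ref{thm-ab-3-2}, I would separate the $n=0$ term and use $\lim_{u\to 1}(1-uq^{2n})(u;q)_n/[(1-u)(q;q)_n]=1+q^n$ for $n\ge 1$. This produces
\begin{align*}
\text{LHS}=1+\sum_{n=1}^\infty(1+q^{2n})\frac{(q^2/a,q^2/b;q^2)_n}{(a,b;q^2)_n}(-ab)^n q^{n^2-3n}\,T_n,
\end{align*}
where $T_n:={}_3\phi_2\!\bigg(\genfrac{}{}{0pt}{}{q^{-2n},q^{2n},q^2}{-q,-q^2};q^2,q^2\bigg)$.

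Matching the target formula, the task reduces to proving
\begin{align*}
(1+q^{2n})\,T_n=2-(-1)^n q^{n^2}(1-q^{2n})\sum_{j=-n}^{n}(-1)^j q^{-j^2}.
\end{align*}
My plan is a two-step evaluation. First, apply the Sears transformation \eqref{sears:32} with $q\to q^2$, $\alpha=1$, $\beta=q^2$, $c=-q$, $d=-q^2$, which produces
\begin{align*}
T_n=q^{n^2}\,{}_3\phi_2\!\bigg(\genfrac{}{}{0pt}{}{q^{-2n},q^{2n},-1}{-q,-q^2};q^2,-q^3\bigg).
\end{align*}
Second, observe that this new series fits Lemma~\ref{lem-limit} exactly (with $q\to q^2$, $c=-q^2$, $d=-q^3$): the upper parameter $-1=q^2/c$ and the lower parameters $-q=d/q^2$, $-q^2=q^4/c$ all align, and the argument matches $d=-q^3$. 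Applying Lemma~\ref{lem-limit} yields an expression for $T_n$ in terms of a telescoping-type sum over $j=2,\dots,n$.

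The main obstacle is the subsequent simplification. I plan to use the partial-fraction decomposition
\begin{align*}
\frac{1-q^{2j-1}}{(1-q^{2j})(1-q^{2j-2})}=\frac{q}{(1+q)(1-q^{2j})}+\frac{1}{(1+q)(1-q^{2j-2})},
\end{align*}
then shift $j\mapsto j+1$ in the second piece so that the two sums telescope; the overlapping terms will collapse to $(-1)^{j+1}q^{-j^2}$, and the leftover endpoints (together with the explicit $n=1$ bracket in Lemma~\ref{lem-limit}) should combine with the $(-1;q^2)_n/(-q^2;q^2)_n = 2/(1+q^{2n})$ prefactor to produce exactly $2/(1+q^{2n})$ and the indefinite sum $\sum_{|j|\le n}(-1)^j q^{-j^2}$. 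This is the most delicate step, structurally parallel to the argument \eqref{ab-3-3-proof-2}--\eqref{ab-3-3-proof-4} in the proof of Theorem~\ref{thm-ab-3-3} but more intricate because the bracketed constant term in Lemma~\ref{lem-limit} must conspire with the endpoint contributions. The $n=0$ case ($T_0=1$) will be verified directly to ensure consistency with the extracted $n=0$ term from Step 1.
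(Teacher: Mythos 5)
Your proposal is correct and follows essentially the same route as the paper's proof: the same specialization $(q,c,d,u,v)\to(q^2,-q,-q^2,1,q^2)$ of Theorem~\ref{thm-main}, the same Sears transformation \eqref{sears:32} with $(q,\alpha,\beta,c,d)\to(q^2,1,q^2,-q,-q^2)$, and the same application of Lemma~\ref{lem-limit} with $(q,c,d)\to(q^2,-q^2,-q^3)$. Your partial-fraction step is an algebraically equivalent rewriting of the paper's identity $(1+q^{-1})(1-q^{2j-1})=(1-q^{2j-2})+q^{-1}(1-q^{2j})$, and the telescoping you describe is exactly how the paper completes the simplification.
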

\begin{proof}
Taking $(q,c,d,u,v)\rightarrow (q^2,-q,-q^2,1,q^2)$ in Theorem \ref{thm-main}, we deduce that
\begin{align}
&\frac{(q^2,ab/q^2;q^2)_\infty}{(a,b;q^2)_\infty}
{}_3\phi_2\bigg(\genfrac{}{}{0pt}{}{q^2/a,q^2/b,q^2}{-q,-q^2};q^2,ab/q^2\bigg) \nonumber\\
=&~ 1+\sum_{n=1}^{\infty}(1+q^{2n})\frac{(q^2/a,q^2/b;q^2)_n}{(a,b;q^2)_n}(-ab)^nq^{n^2-3n}
{}_3\phi_2\bigg(\genfrac{}{}{0pt}{}{q^{-2n},q^{2n},q^2}{-q,-q^2};q^2,q^2\bigg). \label{ab-6-1-proof}
\end{align}
To evaluate the ${}_{3}\phi_{2}$ series on the right side, taking $(q, \alpha, \beta, c, d) \rightarrow (q^2, 1, q^2, -q, -q^2)$ in \eqref{sears:32}, we obtain
\begin{align}
{}_{3}\phi_{2}\bigg(\genfrac{}{}{0pt}{}{q^{-2n}, q^{2n},q^2}{-q,-q^{2}};q^2,q^2\bigg)=q^{n^2}{}_{3}\phi_{2}\bigg(\genfrac{}{}{0pt}{}{q^{-2n}, q^{2n},-1}{-q,-q^{2}};q^2,-q^3\bigg). \label{6-phi-proof-2}
\end{align}
Now taking $(q, c,d) \rightarrow (q^2, -q^2,-q^3)$ in Lemma \ref{lem-limit}, we deduce that
\begin{align}
&{}_{3}\phi_{2}\bigg(\genfrac{}{}{0pt}{}{q^{-2n}, q^{2n},-1}{-q,-q^{2}};q^2,-q^3\bigg)\nonumber \\
=&(-1)^n\frac{2(1-q^{2n})}{1+q^{2n}}\left(\frac{-1-2q+q^2}{2(1-q)(1+q)}+\sum_{j=2}^{n}(-1)^jq^{2j-j^2}\frac{(1+q^{-1})(1-q^{2j-1})}{(1-q^{2j-2})(1-q^{2j})} \right). \label{ab-6-1-proof-2}
\end{align}
Note that
\begin{align}
(1+q^{-1})(1-q^{2j-1})=(1-q^{2j-2})+q^{-1}(1-q^{2j}).
\end{align}
We have
\begin{align}
&\sum_{j=2}^{n}(-1)^jq^{2j-j^2}\frac{(1+q^{-1})(1-q^{2j-1})}{(1-q^{2j-2})(1-q^{2j})} \nonumber \\
=&~\sum_{j=2}^{n}\frac{(-1)^jq^{2j-j^2}}{1-q^{2j}}+\sum_{j=2}^{n}\frac{(-1)^jq^{2j-j^2-1}}{1-q^{2j-2}} \nonumber \\
=&~\sum_{j=2}^{n}\frac{(-1)^jq^{2j-j^2}}{1-q^{2j}}+\sum_{j=1}^{n-1}\frac{(-1)^{j+1}q^{-j^2}}{1-q^{2j}} \nonumber \\
=&~\frac{q^{-1}}{1-q^2}+\frac{(-1)^nq^{2n-n^2}}{1-q^{2n}}+\sum_{j=2}^{n-1}(-1)^{j+1}q^{-j^2}. \label{ab-6-1-proof-3}
\end{align}
Substituting \eqref{ab-6-1-proof-3} into \eqref{ab-6-1-proof-2}, after simplifications, we obtain
\begin{align}
&{}_{3}\phi_{2}\bigg(\genfrac{}{}{0pt}{}{q^{-2n}, q^{2n},-1}{-q,-q^{2}};q^2,-q^3\bigg) \nonumber \\
=&~ (-1)^n\cdot \frac{2(1-q^{2n})}{1+q^{2n}}\left(\frac{(-1)^nq^{-n^2}}{1-q^{2n}}-\frac{1}{2}+\sum_{j=1}^{n}(-1)^{j+1}q^{-j^2} \right). \label{6-phi-proof-3}
\end{align}
The theorem follows after substituting \eqref{6-phi-proof-3} into \eqref{6-phi-proof-2} and then combining with \eqref{ab-6-1-proof}.
\end{proof}
\begin{corollary}
Identity \eqref{mock-6-1} holds. In addition, we have
\begin{align}
&\sum_{n=0}^\infty \frac{q^{n^2-n}}{(-q;q^2)_n}\nonumber \\
=&2\frac{(-q^2;q^2)_\infty}{(q^2;q^2)_\infty} \left(1+\sum_{n=1}^\infty (-1)^n (1+q^{2n})q^{2n^2-2n}\right. \nonumber \\
&\quad \quad \left.-\frac{1}{2}\sum_{n=1}^\infty (1-q^{4n})q^{3n^2-2n}\sum_{j=-n}^{n}(-1)^{j}q^{-j^2} \right) \label{6-1-cor-1} \\
=&1-\frac{J_4}{J_{2}^2}\left(f_{1,2,1}(1,1,q^4)+qf_{1,2,1}(q^6,q^6,q^4) \right) \label{6-1-cor-1-H} \\
=&1+2q^{-1}m(q^2,q^{12},-1)-q^{-1}\frac{J_1J_3J_{8}^2J_{12}^2}{J_2J_4J_6J_{24}^2}, \label{6-1-cor-1-A} \\
&\sum_{n=0}^\infty \frac{q^{n^2}}{(-q^2;q^2)_n} \nonumber \\
=&\frac{(-q;q^2)_\infty}{(q^2;q^2)_\infty} \left(1+2\sum_{n=1}^\infty (-1)^nq^{2n^2-n}-\sum_{n=1}^\infty (1-q^{2n})q^{3n^2-n}\sum_{j=-n}^{n}(-1)^{j}q^{-j^2}  \Big)  \right) \label{6-1-cor-2} \\
=&2-\frac{J_2}{J_1J_4}\left(f_{1,2,1}(q,q,q^4)+q^2f_{1,2,1}(q^7,q^7,q^4) \right) \label{6-1-cor-2-H}\\
=&2-2m(q^7,q^{12},-1)+2q^{-1}m(q,q^{12},-1)-q^{-1}\frac{J_1J_6^2J_8^2}{J_2J_4J_{24}^2} \label{6-1-cor-2-A} \\
=&2m(q,-q^3,-1)+2q\frac{J_{12}^3}{J_{3,12}J_{4,12}}. \label{6-1-cor-2-final}
\end{align}
\end{corollary}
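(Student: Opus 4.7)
The plan is to derive all eight assertions as consequences of the parameterized identity in Theorem \ref{thm-ab-6-1} by taking suitable limits $(a,b)\to$ specific values, then routinely converting the resulting Hecke-type triple sums into $f_{a,b,c}(x,y,q)$ form and subsequently into $m(x,q,z)$ form. Throughout, I use the standard limit $(q^2/b;q^2)_n (ab/q^2)^n/(b;q^2)_n \to (-1)^n q^{n(n+1)}(a/q^2)^n$ as $b\to 0$, which is the key simplification that occurs whenever we take $b=0$.

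For the three Eulerian-form identities, I would specialize $(a,b)$ in Theorem \ref{thm-ab-6-1} as follows. Taking $(a,b)=(q,0)$: the ${}_3\phi_2$ summand on the left collapses to $(-1)^n(q;q^2)_n q^{n^2}/(-q;q)_{2n}$, so the left side equals $\tfrac{(q^2;q^2)_\infty}{(q;q^2)_\infty}\phi^{(6)}(q)$; the right-side summand factor becomes $(-1)^n q^{3n^2-n}$, yielding \eqref{mock-6-1} after splitting the bracketed expression into a theta piece and a Hecke piece. Taking $(a,b)=(-1,0)$: the factor $(q^2/a;q^2)_n=(-q^2;q^2)_n$ cancels against $(-q^2;q^2)_n$ in the denominator, producing the summand $q^{n^2-n}/(-q;q^2)_n$; the prefactor becomes $(q^2;q^2)_\infty/[2(-q^2;q^2)_\infty]$, which when multiplied across yields the factor $2$ on the right, proving \eqref{6-1-cor-1}. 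Taking $(a,b)=(-q,0)$: an analogous calculation gives the summand $q^{n^2}/(-q^2;q^2)_n$ and prefactor $(q^2;q^2)_\infty/(-q;q^2)_\infty$, delivering \eqref{6-1-cor-2}.

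For the $f_{a,b,c}$ reformulations \eqref{6-1-cor-1-H} and \eqref{6-1-cor-2-H}, the approach is routine: split the inner theta sum $\sum_{j=-n}^n (-1)^j q^{-j^2}$ into contributions of given parity and apply the substitution $r=n+j$, $s=n-j$ exactly as in the deduction \eqref{intro-Liu-eq-H}, which converts each triple sum into a linear combination of $f_{1,2,1}$ values. The $m(x,q,z)$ expressions \eqref{6-1-cor-1-A} and \eqref{6-1-cor-2-A} then follow by applying Theorem \ref{thm-fg-1} with $(n,p)=(1,1)$ to each $f_{1,2,1}$, where the $\Phi_{1,1}$ contribution simplifies to the theta quotients shown; here I expect the bookkeeping to be long but entirely mechanical.

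The last and most delicate step is deducing \eqref{6-1-cor-2-final} from \eqref{6-1-cor-2-A}. I would apply Lemma \ref{lem-m-decompose} with $(x,q,z,z',n)\to(q,-q^3,-1,-1,2)$ to decompose $m(q,-q^3,-1)$ into two $m$-functions at base $q^{12}$, and then apply \eqref{m-id-2} and \eqref{m-id-3} to convert $m(q^5,q^{12},-1)\mapsto 1-m(q^7,q^{12},-1)$ and $m(q^{-1},q^{12},-1)\mapsto q\,m(q,q^{12},-1)$. After these reductions the two expressions agree up to an identity between theta quotients, which must match $2\,(\text{theta term from Lemma \ref{lem-m-decompose}}) + 2q J_{12}^3/(J_{3,12}J_{4,12}) = -q^{-1}J_1J_6^2J_8^2/(J_2J_4J_{24}^2)$. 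This theta-function identity is the main obstacle; it is not obvious by inspection, but it can be verified either by hand after reducing to a common modulus or, more efficiently, using the computer-assisted method of Garvan--Liang cited elsewhere in the paper.
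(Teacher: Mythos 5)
Your proposal is correct and follows essentially the same route as the paper: the paper specializes Theorem \ref{thm-ab-6-1} at $(a,b)\to(0,q)$, $(0,-1)$, $(0,-q)$ (identical to your $(q,0)$, $(-1,0)$, $(-q,0)$ by the $a\leftrightarrow b$ symmetry of that theorem), obtains the $m$-expressions from Theorem \ref{thm-fg-1} with $(n,p)=(1,1)$, and derives \eqref{6-1-cor-2-final} via Lemma \ref{lem-m-decompose} with $(x,q,z,z',n)\to(q,-q^3,-1,-1,2)$ together with \eqref{m-id-2}--\eqref{m-id-3} and a Garvan--Liang verification of the residual theta identity. You have correctly isolated the same final theta-product identity that the paper leaves to that method.
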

\begin{proof}
Taking $(a,b)\rightarrow (0,q), (0, -1)$ and $(0,-q)$ in Theorem \ref{thm-ab-6-1}, we obtain \eqref{mock-6-1}, \eqref{6-1-cor-1} and \eqref{6-1-cor-2}, respectively.

The Appell-Lerch series expressions follow from their corresponding Hecke-type series and Theorem \ref{thm-fg-1} with $(n,p)=(1,1)$.

By Lemma \ref{lem-m-prop} we have
\begin{align}\label{revise-Cor7.3-neweq-1}
2-2m(q^7,q^{12},-1)=2q^{-5}m(q^{-5},q^{12},-1)=2m(q^5,q^{12},-1).
\end{align}
Taking $(x,q,z,z',n)\rightarrow (q,-q^3,-1,-1,2)$ in Lemma \ref{lem-m-decompose}, then substituting the result and \eqref{revise-Cor7.3-neweq-1} into \eqref{6-1-cor-2-A},  and using the method in \cite{Garvan-Liang} to prove theta function identities, we obtain \eqref{6-1-cor-2-final}.
\end{proof}
\begin{rem}
(1) The left side of \eqref{6-1-cor-1} is  equal to $\nu^{(3}(q)+1$. See \eqref{mock-3-6-defn} and \eqref{6-2-cor-2-nu}. Taking $(x,q,z_1,z_0)\rightarrow (q^2,q^{12},-1,-q^3)$ in Lemma \ref{lem-m-minus} and using the method in \cite{Garvan-Liang} for proving theta function identities, we see that \eqref{HM-nu3-m} is equivalent to \eqref{6-1-cor-1-A}. \\
(2) The left side of \eqref{6-1-cor-2} is the third order mock theta function $\phi^{(3)}(q)$. See \eqref{mock-3-2-defn}. The expression \eqref{6-1-cor-2-final} can be found in \cite[Eq.\ (5.5)]{Hickerson-Mortenson} with a typo that $J_4$ should be $J_{4,12}$. Mortenson \cite[Eq.\ (2.14)]{Mortenson-2013} gave a different Hecke-type series representation for $\phi^{(3)}(q)$.
\end{rem}

\subsection{Representations for $\psi^{(6)}(q)$}\label{sub-6-2}
\begin{theorem}\label{thm-ab-6-2}
For $\max \{|ab|, |aq^2|, |bq^2|\}<1$, we have
\begin{align}
&\frac{(q^2,ab;q^2)_\infty}{(aq^2,bq^2;q^2)_\infty}
{}_3\phi_2\bigg(\genfrac{}{}{0pt}{}{q^2/a,q^2/b,q^2}{-q^2,-q^3};q^2,ab\bigg) \nonumber\\
=&~ (1+q)\sum_{n=0}^{\infty}\sum_{j=-n}^{n}(1-q^{2n+1})\frac{(q^2/a,q^2/b;q^2)_n}{(aq^2,bq^2;q^2)_n}(ab)^n(-1)^jq^{2n^2+n-j^2}.
\end{align}
\end{theorem}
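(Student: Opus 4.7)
The plan is to specialize Theorem \ref{thm-main} to obtain the claimed identity. Concretely, I will take $(q,c,d,u,v)\rightarrow(q^2,-q^2,-q^3,q^2,q^2)$ in Theorem \ref{thm-main}. With this choice, the left-hand side of that theorem produces exactly
\begin{align*}
\frac{(q^2,ab;q^2)_\infty}{(aq^2,bq^2;q^2)_\infty}\,{}_3\phi_2\bigg(\genfrac{}{}{0pt}{}{q^2/a,q^2/b,q^2}{-q^2,-q^3};q^2,ab\bigg),
\end{align*}
while the right-hand side becomes
\begin{align*}
\sum_{n=0}^{\infty}(1-q^{4n+2})\frac{(q^2/a,q^2/b;q^2)_n}{(aq^2,bq^2;q^2)_n}(-ab)^n q^{n^2-n}\,{}_3\phi_2\bigg(\genfrac{}{}{0pt}{}{q^{-2n},q^{2n+2},q^2}{-q^2,-q^3};q^2,q^2\bigg).
\end{align*}

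The only remaining step is to evaluate the terminating ${}_3\phi_2$ on the right. For this I will invoke Andrews' identity \eqref{proof-add-andrews}, namely
\begin{align*}
\frac{1+q^{2n+1}}{1+q}\,{}_3\phi_2\bigg(\genfrac{}{}{0pt}{}{q^{-2n},q^{2n+2},q^2}{-q^2,-q^3};q^2,q^2\bigg)=(-1)^n q^{n^2+2n}\sum_{j=-n}^{n}(-1)^j q^{-j^2},
\end{align*}
which has already been used in the proof of Theorem \ref{meq:3.3}. Substituting this evaluation into the display above and combining the prefactors by using $(1-q^{4n+2})/(1+q^{2n+1})=1-q^{2n+1}$ together with $(-ab)^n q^{n^2-n}\cdot(-1)^n q^{n^2+2n}=(ab)^n q^{2n^2+n}$ produces the asserted right-hand side, and also brings out the overall factor $(1+q)$.

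There is essentially no obstacle here: the structure is identical to that of Theorems \ref{meq:1.1}, \ref{meq:3.2}, \ref{meq:3.3} and \ref{T:4.8} proved earlier. All the heavy lifting is done by Andrews' evaluation \eqref{proof-add-andrews}; the rest is mechanical bookkeeping of exponents and signs. In fact the statement of Theorem \ref{thm-ab-6-2} coincides term by term with that of Theorem \ref{meq:3.3}, so the same substitution works verbatim.
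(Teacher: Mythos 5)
Your proposal is correct and is essentially the paper's own argument: the paper proves Theorem \ref{thm-ab-6-2} by exactly the specialization $(q,c,d,u,v)\rightarrow(q^2,-q^2,-q^3,q^2,q^2)$ of Theorem \ref{thm-main}, and your observation that the statement coincides verbatim with Theorem \ref{meq:3.3} is accurate, so invoking \eqref{proof-add-andrews} finishes the job just as it did there. The only (cosmetic) difference is that the paper's proof of Theorem \ref{thm-ab-6-2} rederives that same ${}_3\phi_2$ evaluation from \eqref{sears:32} together with Lemma \ref{meq:1} instead of citing Andrews' identity directly.
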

\begin{proof}
Taking $(q,c,d,u,v) \rightarrow (q^2,-q^2,-q^3,q^2,q^2)$ in Theorem \ref{thm-main}, we deduce that
\begin{align}
&\frac{(q^2,ab;q^2)_\infty}{(aq^2,bq^2;q^2)_\infty}
{}_3\phi_2\bigg(\genfrac{}{}{0pt}{}{q^2/a,q^2/b,q^2}{-q^2,-q^3};q^2,ab\bigg) \nonumber\\
=&\sum_{n=0}^{\infty}(1-q^{4n+2})\frac{(q^2/a,q^2/b;q^2)_n}{(aq^2,bq^2;q^2)_n}(-ab)^nq^{n^2-n}
{}_3\phi_2\bigg(\genfrac{}{}{0pt}{}{q^{-2n},q^{2n+2},q^2}{-q^2,-q^3};q^2,q^2\bigg). \label{ab-6-2-proof}
\end{align}
Taking $(q, \alpha, \beta, c, d) \rightarrow (q^2, q^2, q^2, -q^2, -q^3)$ in \eqref{sears:32}, we deduce that
\begin{align}
{}_{3}\phi_{2}\bigg(\genfrac{}{}{0pt}{}{q^{-2n}, q^{2n+2},q^2}{-q^2,-q^{3}};q^2,q^2\bigg)=q^{n^2+n}{}_{3}\phi_{2}\bigg(\genfrac{}{}{0pt}{}{q^{-2n}, q^{2n+2},-q}{-q^2,-q^{3}};q^2,-q^2\bigg). \label{psi-proof-2}
\end{align}
Taking $(q, \alpha, c, d) \rightarrow (q^2, 1, -q, -q^2)$ in Lemma \ref{meq:1}, we deduce that
\begin{align}
{}_{3}\phi_{2}\bigg(\genfrac{}{}{0pt}{}{q^{-2n}, q^{2n+2},-q}{-q^2,-q^{3}};q^2,-q^2\bigg)=(-1)^nq^n\frac{1+q}{1+q^{2n+1}}\sum_{j=-n}^{n}(-1)^jq^{-j^2}. \label{psi-proof-3}
\end{align}
The theorem follows after substituting \eqref{psi-proof-3} into \eqref{psi-proof-2} and then combining with \eqref{ab-6-2-proof}.
\end{proof}

\begin{corollary}
Identity \eqref{mock-6-2} holds. In addition, we have
\begin{align}
&\sum_{n=0}^\infty \frac{(-1)^nq^{n(n+1)}(q^2;q^2)_n}{(-q;q)_{2n+1}} \nonumber \\
=&\sum_{n=0}^\infty\sum_{j=-n}^n(-1)^{n+j}(1-q^{2n+1})q^{3n^2+2n-j^2} \label{6-2-cor-1} \\
=&\overline{f}_{1,2,1}(-q^4,-q^4,q^4)-q^5\overline{f}_{1,2,1}(-q^{10},-q^{10},q^4) \label{6-2-cor-1-H} \\
=&\overline{f}_{1,2,1}(q^{3/2},-q^{3/2},q). \label{6-2-cor-1-combine} \\
&\sum_{n=0}^\infty \frac{q^{n(n+1)}}{(-q;q^2)_{n+1}} \nonumber \\
=&\frac{(-q^2;q^2)_\infty}{(q^2;q^2)_\infty}\sum_{n=0}^\infty (1-q^{2n+1})q^{3n^2+2n}\sum_{j=-n}^n(-1)^jq^{-j^2} \label{6-2-cor-2-nu} \\
=&\frac{J_4}{J_{2}^2}\left(f_{1,2,1}(q^4,q^4,q^4)+q^5f_{1,2,1}(q^{10},q^{10},q^4) \right) \label{6-2-cor-2-nu-H}\\
=&2q^{-1}m(q^2,q^{12},-1)-q^{-1}\frac{J_1J_3J_8^2J_{12}^2}{J_2J_4J_6J_{24}^2}. \label{6-2-cor-2-nu-A}
\end{align}
Furthermore, \eqref{6-2-cor-2-nu} is equivalent to \eqref{2-2-cor-1}.
\end{corollary}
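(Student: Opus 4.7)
The plan is to establish the three series identities \eqref{mock-6-2}, \eqref{6-2-cor-1}, and \eqref{6-2-cor-2-nu} as specialisations $(a,b)\to(0,q)$, $(0,1)$, and $(0,-1)$, respectively, of Theorem \ref{thm-ab-6-2}. In each case the limit $a\to 0$ is absorbed by the growing Pochhammer via the standard identity $\lim_{a\to 0}(q^2/a;q^2)_n a^n=(-1)^nq^{n(n+1)}$, and the remaining $b$-dependent Pochhammer ratios either cancel trivially when $b=1$, reduce to $(-q^2;q^2)_n/(-q^2;q^2)_n=1$ when $b=-1$, or telescope through $(q^3;q^2)_n=(q;q^2)_{n+1}/(1-q)$ and the factorisation $(-q;q)_{2n+1}=(1+q)(-q^2;q^2)_n(-q^3;q^2)_n$ when $b=q$. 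In the last case one reorganises the resulting expression so that $\psi^{(6)}(q)$ appears isolated on the left, yielding \eqref{mock-6-2}.

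To pass from the Hecke-type series \eqref{6-2-cor-1} to the $\overline{f}_{1,2,1}$-form \eqref{6-2-cor-1-H}, I will apply the standard splitting procedure used throughout the paper: split the factor $1-q^{2n+1}$ to write the double sum as a difference of two bi-sums; reindex the second via $n\mapsto -n-1$; and change variables through $(r,s)=(n+j,n-j)$, so that the admissible index set collapses to pairs with $\mathrm{sg}(r)=\mathrm{sg}(s)$ and $r\equiv s\pmod 2$. Decomposing by the common parity of $r,s$ then produces the two claimed $\overline{f}_{1,2,1}$ summands, and applying \eqref{barf-id-1} with $(a,b,c,x,y,q)\to(1,2,1,q^{3/2},-q^{3/2},q)$ collapses them into \eqref{6-2-cor-1-combine}, the two cross terms cancelling by the symmetry $\overline{f}_{1,2,1}(x,y,q)=\overline{f}_{1,2,1}(y,x,q)$. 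An entirely parallel computation starting from \eqref{6-2-cor-2-nu} produces the $f_{1,2,1}$-form \eqref{6-2-cor-2-nu-H}; an application of Theorem \ref{thm-fg-1} with $(n,p)=(1,1)$, combined with the direct evaluation of the resulting $\Phi_{1,1}$ contribution as a Jacobi eta quotient, then yields the Appell--Lerch representation \eqref{6-2-cor-2-nu-A}.

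For the final equivalence of \eqref{6-2-cor-2-nu} with \eqref{2-2-cor-1}, I will use the observation that since $q^{n(n+1)}$ is invariant under $q\mapsto -q$, the left side of \eqref{2-2-cor-1} equals $\nu^{(3)}(-q)$, while the left side of \eqref{6-2-cor-2-nu} is $\nu^{(3)}(q)$ by \eqref{mock-3-6-defn}. The task therefore reduces to showing that the right side of \eqref{6-2-cor-2-nu-A} under $q\mapsto -q$ coincides with the right side of \eqref{2-2-cor-1-A-simplify}. Concretely, apply Lemma \ref{lem-m-decompose} with $(x,q,z,z',n)\to(q^2,q^{12},-1,q^3,2)$ to rewrite $m(q^2,q^{12},-1)$ in terms of $m(-q^{16},q^{48},q^3)$ and $m(-q^{-8},q^{48},q^3)$; invoke Lemma \ref{lem-m-prop} to replace the second term by $-q^8\,m(-q^8,q^{48},q^{-3})$; and use Lemma \ref{lem-m-minus} with $(x,q,z_1,z_0)\to(-q^8,q^{48},q^3,q^{-3})$ to align the $z$-argument. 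At that point the $m$-contributions on the two sides of the target equivalence coincide, and the remaining difference reduces to a single identity among Jacobi theta quotients.

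The main technical obstacle is the verification of that residual theta-function identity: collecting the eta quotients coming from the $m$-transformations, the $q\mapsto -q$ transformation applied to the infinite product in \eqref{6-2-cor-2-nu-A}, and the $\Theta_{3,2}(q^3,q^4,q)$ term in \eqref{2-2-cor-1-A-simplify} produces an unwieldy Jacobi theta identity at level $48$. I anticipate handling it by the computer-assisted method of Garvan--Liang \cite{Garvan-Liang}, which has already been invoked repeatedly in the paper for analogous reconciliations, for instance in the equivalence of \eqref{HM-nu3-m} with \eqref{6-1-cor-1-A} and in the derivation of \eqref{2-2-cor-1-A-simplify} itself.
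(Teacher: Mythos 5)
Your proposal is correct and follows essentially the same route as the paper: the same specializations $(a,b)\to(0,q),(0,1),(0,-1)$ of Theorem \ref{thm-ab-6-2}, the same use of \eqref{barf-id-1} with $(1,2,1,q^{3/2},-q^{3/2},q)$ and of Theorem \ref{thm-fg-1} with $(n,p)=(1,1)$, and the same equivalence argument via Lemma \ref{lem-m-decompose} with $(q^2,q^{12},-1,q^3,2)$, \eqref{m-id-2}, Lemma \ref{lem-m-minus}, and a Garvan--Liang verification of the residual theta identity. No substantive differences to report.
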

\begin{proof}
Taking $(a,b)\rightarrow (0,q)$, $(0,1)$ and $(0,-1)$ in Theorem \ref{thm-ab-6-2}, we obtain \eqref{mock-6-2}, \eqref{6-2-cor-1} and \eqref{6-2-cor-2-nu}, respectively.

Setting $(a,b,c,x,y,q)\rightarrow (1,2,1,q^{3/2},-q^{3/2},q)$ in \eqref{barf-id-1},  we get \eqref{6-2-cor-1-combine}  from \eqref{6-2-cor-1-H}.

The expression \eqref{6-2-cor-2-nu-A} follows from \eqref{6-2-cor-2-nu-H} and Theorem \ref{thm-fg-1} with $(n,p)=(1,1)$.

Note that the left side of \eqref{6-2-cor-2-nu} is in fact the third order mock theta function $\nu^{(3)}(q)$. We now show that \eqref{6-2-cor-2-nu} can be deduced from \eqref{2-2-cor-1}, i.e., \eqref{Mortenson-nu} and vice versa.

Taking $(x,q,z,z',n)\rightarrow (q^2,q^{12},-1,q^3,2)$ in Lemma \ref{lem-m-decompose} and using \eqref{m-id-2}, we deduce that
\begin{align}
m(q^2,q^{12},-1)=& m(-q^{16},q^{48},q^3)+q^{-2}m(-q^{8},q^{48},q^{-3})\nonumber \\
&+q^3\frac{J_{24}^3}{\overline{J}_{2,12}J_{3,48}}\left(\frac{J_{19,24}J_{-3,48}}{\overline{J}_{19,24}\overline{J}_{0,24}}+q^2\frac{J_{31,24}J_{21,48}}{\overline{J}_{19,24}\overline{J}_{12,24}} \right). \label{revise-equiv-nu-1}
\end{align}
Setting $(x,q,z_1,z_0)\rightarrow (-q^{16},q^{48},q^3,q^{-3})$ in Lemma \ref{lem-m-minus}, we obtain
\begin{align}\label{revise-equiv-nu-2}
m(-q^{16},q^{48},q^3)-m(-q^{16},q^{48},q^{-3})=q^{-3}\frac{J_{48}^3J_{6,48}\overline{J}_{16,48}}{J_{-3,48}J_{3,48}\overline{J}_{13,48}\overline{J}_{19,48}}.
\end{align}
Setting $(x,q,z_1,z_0)\rightarrow (-q^8,q^{48},q^3,q^{-3})$ in Lemma \ref{lem-m-minus}, we obtain
\begin{align}\label{revise-equiv-nu-3}
m(-q^8,q^{48},q^3)-m(-q^8,q^{48},q^{-3})=q^{-3}\frac{J_{48}^3J_{6,48}\overline{J}_{8,48}}{J_{-3,48}J_{3,48}\overline{J}_{5,48}\overline{J}_{11,48}}.
\end{align}
Replacing $q$ by $-q$ in \eqref{6-2-cor-2-nu-A} and applying \eqref{revise-equiv-nu-1}--\eqref{revise-equiv-nu-3}, we deduce that
\begin{align}\label{revise-equiv-nu-4}
\nu^{(3)}(-q)=&-q^{-1}m(-q^{16},q^{48},q^3)-q^{-3}m(-q^{8},q^{48},q^{-3})-q^{-1}m(-q^{16},q^{48},q^{-3})\nonumber \\
&-q^{-3}m(-q^8,q^{48},q^{-3})
-q^2\frac{J_{24}^3}{\overline{J}_{2,12}J_{3,48}}\left(\frac{J_{19,24}J_{-3,48}}{\overline{J}_{19,24}\overline{J}_{0,24}}+q^2\frac{J_{31,24}J_{21,48}}{\overline{J}_{19,24}\overline{J}_{12,24}} \right)\nonumber \\
&-q^{-4}\frac{J_{48}^3J_{6,48}\overline{J}_{16,48}}{J_{-3,48}J_{3,48}\overline{J}_{13,48}\overline{J}_{19,48}}
-q^{-6}\frac{J_{48}^3J_{6,48}\overline{J}_{8,48}}{J_{-3,48}J_{3,48}\overline{J}_{5,48}}\nonumber \\
&+q^{-1}\frac{J_2^2J_6^2J_8^2J_{12}^2}{J_1J_3J_4^2J_{12}J_{24}^2}.
\end{align}
Comparing this with \eqref{2-2-cor-1-A}, it suffices to show that the sum of the products in \eqref{revise-equiv-nu-4} equals the product in \eqref{2-2-cor-1-A}. This is a routine exercise and can be verified using the method in \cite{Garvan-Liang}.
\end{proof}
\begin{rem}
As noted in the proof,  \eqref{6-2-cor-2-nu} provides a Hecke-type series representation for the third order mock theta function $\nu^{(3)}(q)$. See \eqref{nu-Hecke-pre}. This representation appears to be new. Using Lemma \ref{lem-m-minus} with $(x,q,z_1,z_0)\rightarrow (q^2,q^{12},-1,-q^3)$, we can show that \eqref{6-2-cor-2-nu-A} is equivalent to \eqref{HM-nu3-m}.

We also observe that \eqref{6-2-cor-2-nu-A} differs from \eqref{6-1-cor-1-A} by 1, which is obvious from their Eulerian forms.
\end{rem}

\subsection{Representations for $\rho^{(6)}(q)$, $\sigma^{(6)}(q)$ and $\lambda^{(6)}(q)$}
Identities \eqref{mock-6-3}, \eqref{mock-6-4} and  \eqref{mock-6-5} have already been established in \eqref{2-2-cor-3-rho},  \eqref{2-1-cor-add-sigma} and \eqref{5-5-cor-2-lambda}, respectively.

\subsection{Representations for $\mu^{(6)}(q)$}
\begin{theorem}\label{thm-ab-6-6}
For $\max \{|abq|, |aq^2|, |bq^2|\}<1$, we have
\begin{align}
&\frac{(q,abq;q)_\infty}{(aq^2,bq^2;q)_\infty}{}_3\phi_2\bigg(\genfrac{}{}{0pt}{}{q/a,q/b,q}{0,-q};q,abq\bigg) \nonumber \\
=&~ \sum_{n=0}^{\infty}(1-q^{2n+2})\frac{(q/a,q/b;q)_n}{(aq^2,bq^2;q)_n}(ab)^nq^{(3n^2+5n)/2} \nonumber \\
&\quad \quad \times \left(2(-1)^nq^{-n^2-n}-(1+q^{n+1})\sum_{j=-n}^{n}(-1)^jq^{-j^2}  \right).
\end{align}
\end{theorem}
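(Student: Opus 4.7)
The plan is to follow the template used for Theorems \ref{meq:2.1} and \ref{thm-ab-5-1}. First I would apply Theorem \ref{thm-main} with $(c,d,u,v)\to(0,-q,q^2,q)$ and then multiply both sides by the missing scalar $(1-q)(1-q^2)$; using $(q^2;q)_n/(q;q)_n=(1-q^{n+1})/(1-q)$ and matching powers of $q$, the stated identity reduces to proving the closed form
\begin{align*}
{}_3\phi_2\bigg(\genfrac{}{}{0pt}{}{q^{-n},q^{n+2},q}{0,-q};q,q\bigg)
=\frac{2q^n}{1-q^{n+1}}-\frac{(-1)^n(1+q^{n+1})q^{n^2+2n}}{1-q^{n+1}}\sum_{j=-n}^{n}(-1)^jq^{-j^2}.
\end{align*}

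To evaluate the left-hand side I would invoke Lemma \ref{meq:2} with $(\alpha,c)=(q,-q)$, so that $q^2/c=-q$, $\alpha c=-q^2$ and $q/c=-1$. Simplifying the resulting Pochhammer ratio via $(-q^2,q;q)_n/(-q,q^2;q)_n=(1-q)(1+q^{n+1})/\bigl((1+q)(1-q^{n+1})\bigr)$ gives
\begin{align*}
{}_3\phi_2\bigg(\genfrac{}{}{0pt}{}{q^{-n},q^{n+2},q}{0,-q};q,q\bigg)
=\frac{(1-q)(1+q^{n+1})(-1)^nq^{n^2+2n}}{(1+q)(1-q^{n+1})}\sum_{j=0}^{n}\frac{(1-q^{2j+1})(-1;q)_j(-1)^jq^{-j^2-j}}{(1-q)(-q^2;q)_j}.
\end{align*}

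The remaining reduction is elementary. For $j\geq 1$ one has $(-1;q)_j/(-q^2;q)_j=2(1+q)/\bigl((1+q^j)(1+q^{j+1})\bigr)$, together with the partial fraction
\begin{align*}
\frac{1-q^{2j+1}}{(1+q^j)(1+q^{j+1})}=\frac{1}{1+q^{j+1}}-\frac{q^j}{1+q^j}.
\end{align*}
Substituting these splits the inner sum into two sums that are combined by the reindexing $j\mapsto j-1$ in the first; together with the identity $\bigl(q^{-k^2+k}+q^{-k^2}\bigr)/(1+q^k)=q^{-k^2}$, the $(1+q^k)$ denominators cancel in the interior, leaving only boundary contributions at $j=1$ and $j=n+1$. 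The $j=n+1$ boundary produces the leading term $2q^n/(1-q^{n+1})$, while the $j=1$ boundary together with the $j=0$ term of the original sum furnishes $\sum_{j=-n}^{n}(-1)^jq^{-j^2}$ after being multiplied by the outer prefactor.

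The main obstacle is the bookkeeping of the telescoping: after the index shift, a miscount between the endpoints $j=1$ and $j=n+1$ or a mishandling of the $j=0$ term would easily flip a sign or shift a power of $q$ in the boundary contributions. The algebraic identities above are standard, but the combinatorial alignment of terms at the ends of the sum is where all the substance lies.
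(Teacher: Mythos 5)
Your proposal is correct and follows essentially the same route as the paper: the same specialization $(c,d,u,v)\to(0,-q,q^2,q)$ in Theorem \ref{thm-main}, the same evaluation of the terminated ${}_3\phi_2$ via Lemma \ref{meq:2} with $(\alpha,c)=(q,-q)$, and the same decomposition $1-q^{2j+1}=(1+q^j)-q^j(1+q^{j+1})$ followed by the index shift and the cancellation $(q^{-j^2+j}+q^{-j^2})/(1+q^j)=q^{-j^2}$. One small imprecision in your prose: after the denominators cancel the interior terms do not vanish but accumulate into $-\sum_{j=1}^{n}(-1)^jq^{-j^2}$, which together with the $j=0$ term $-\tfrac12$ supplies the factor $-\tfrac12\sum_{j=-n}^{n}(-1)^jq^{-j^2}$, while only the $j=n+1$ endpoint gives the isolated term $(-1)^nq^{-n^2-n}/(1+q^{n+1})$; with that bookkeeping the computation closes exactly as you predict.
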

\begin{proof}
Taking $(c,d,u,v)\rightarrow (0, -q, q^2, q)$ in Theorem \ref{thm-main}, we deduce that
\begin{align}
&\frac{(q,abq;q)_\infty}{(aq^2,bq^2;q)_\infty}{}_3\phi_2\bigg(\genfrac{}{}{0pt}{}{q/a,q/b,q}{0,-q};q,abq\bigg) \nonumber \\
=&\sum_{n=0}^{\infty}(1-q^{2n+2})(1-q^{n+1})\frac{(q/a,q/b;q)_n}{(aq^2,bq^2;q)_n}(-abq^2)^nq^{(n^2-3n)/2}{}_3\phi_2\bigg(\genfrac{}{}{0pt}{}{q^{-n},q^{n+2},q}{0,-q};q,q\bigg). \label{ab-6-6-proof}
\end{align}
Setting $(\alpha, c)=(q,-q)$ in Lemma \ref{meq:2}, we obtain
\begin{align}
{}_3\phi_2\bigg(\genfrac{}{}{0pt}{}{q^{-n},q^{n+2},q}{0,-q};q,q\bigg)
=2(-1)^n\frac{1+q^{n+1}}{1-q^{n+1}}q^{n^2+2n}\sum_{j=0}^{n}\frac{1-q^{2j+1}}{(1+q^j)(1+q^{j+1})}(-1)^jq^{-j^2-j}.
\end{align}
Note that
\begin{align}
&\sum_{j=0}^{n}\frac{1-q^{2j+1}}{(1+q^j)(1+q^{j+1})}(-1)^jq^{-j^2-j} \nonumber \\
=&~ \sum_{j=0}^{n}\frac{(1+q^j)-q^j(1+q^{j+1})}{(1+q^j)(1+q^{j+1})}(-1)^jq^{-j^2-j} \nonumber \\
=&~ \sum_{j=0}^{n}\frac{(-1)^jq^{-j^2-j}}{1+q^{j+1}}-\sum_{j=0}^{n}\frac{(-1)^jq^{-j^2}}{1+q^j} \nonumber \\
=&~ \sum_{j=1}^{n}\frac{(-1)^{j-1}q^{-j(j-1)}}{1+q^j}+\frac{(-1)^nq^{-n^2-n}}{1+q^{n+1}}-\frac{1}{2}-\sum_{j=1}^{n}\frac{(-1)^jq^{-j^2}}{1+q^j} \nonumber \\
=&~ \frac{(-1)^nq^{-n^2-n}}{1+q^{n+1}}-\frac{1}{2}\sum_{j=-n}^{n}(-1)^jq^{-j^2}.
\end{align}
Therefore,
\begin{align}
&{}_3\phi_2\bigg(\genfrac{}{}{0pt}{}{q^{-n},q^{n+2},q}{0,-q};q,q\bigg)\nonumber \\
=&~ (-1)^n\frac{q^{n^2+2n}}{1-q^{n+1}}\left(2(-1)^nq^{-n^2-n}-(1+q^{n+1})\sum_{j=-n}^{n}(-1)^jq^{-j^2}  \right). \label{ab-6-6-proof-1}
\end{align}
The theorem then follows after substituting \eqref{ab-6-6-proof-1} into \eqref{ab-6-6-proof}.
\end{proof}

\begin{corollary}
Identity \eqref{mock-6-6} holds. In addition, we have
\begin{align}
&\sum_{n=0}^{\infty}\frac{(-1)^nq^{2n+1}(q;q^2)_n}{(-q;q)_n} \nonumber \\
=&~ -2+\frac{(q;q^2)_\infty}{(q^2;q^2)_\infty}\sum_{n=0}^\infty (-1)^nq^{(3n^2+n)/2}(1+q^n+q^{2n+1})\sum_{j=-n}^{n}(-1)^jq^{-j^2} \label{revise-H(q)} \\
=&-2+\frac{J_1}{J_{2}^2}\left(f_{1,5,1}(-q,-q,q)+f_{1,5,1}(-q^2,-q^2,q)+qf_{1,5,1}(-q^3,-q^3,q) \right) \label{revise-H(q)-H} \\
=&-2+4m(-q^{10},q^{24},-1)+4q^{-1}m(-q^6,q^{24},-1)+4q^{-2}m(-q^2,q^{24},-1) \nonumber \\
& +\frac{J_1}{J_2^2}\left(\Phi_{1,4}(-q,-q,q)+\Phi_{1,4}(-q^2,-q^2,q)+q\Phi_{1,4}(-q^3,-q^3,q) \right) \label{revise-H(q)-A}\\
=&2q^{-1}m(1,q^6,-q^2)+4m(q^2,q^6,-1)-2+\frac{J_{1,2}\overline{J}_{3,12}}{\overline{J}_{1,4}}-\frac{J_{1,2}\overline{J}_{1,3}}{\overline{J}_{1,4}}.
\label{revise-H(q)-final}
\end{align}
\end{corollary}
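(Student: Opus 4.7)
The plan is to derive \eqref{revise-H(q)} from Theorem~\ref{thm-ab-6-6} via the substitution $(a,b)\to(q^{1/2},-q^{1/2})$, then obtain \eqref{mock-6-6} by combining \eqref{revise-H(q)} with the previously-established~\eqref{mock-6-5} for $\lambda^{(6)}(q)$ together with a direct Eulerian identity. The subsequent formulae \eqref{revise-H(q)-H}--\eqref{revise-H(q)-final} follow by the now-standard conversions to the $f_{1,5,1}$-form, to the $m$-form, and finally to the compact form.

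The substitution $(a,b)=(q^{1/2},-q^{1/2})$ in Theorem~\ref{thm-ab-6-6} produces $(q/a,q/b;q)_n=(q;q^2)_n$ and $abq=-q^2$, so the ${}_{3}\phi_{2}$ on the left-hand side equals $q^{-1}\sum_{n\ge 0}(-1)^nq^{2n+1}(q;q^2)_n/(-q;q)_n$; using $(q^{5/2},-q^{5/2};q)_\infty=(q^5;q^2)_\infty$ the prefactor simplifies to an explicit multiple of $(q^2;q^2)_\infty/(q;q^2)_\infty$. On the right-hand side, the bracket $[2(-1)^nq^{-n^2-n}-(1+q^{n+1})\sum_{j}(-1)^jq^{-j^2}]$ splits into two pieces: the ``$2(-1)^nq^{-n^2-n}$'' piece collapses via the Jacobi triple product to the constant $-2$ in \eqref{revise-H(q)}, and the ``$(1+q^{n+1})\sum_j(-1)^jq^{-j^2}$'' piece, after expanding $(1-q^{2n+2})(1+q^{n+1})=1+q^{n+1}-q^{2n+2}-q^{3n+3}$ and reindexing the last two summands by $n\mapsto-n-1$, produces the Hecke factor $(1+q^n+q^{2n+1})$. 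For~\eqref{mock-6-6}, I would split this factor as $(1+q^n+q^{2n+1})=q^n+(1+q^{2n+1})$: the ``$q^n$'' piece shifts the exponent from $(3n^2+n)/2$ to $3n(n+1)/2$ and combined with the prefactor equals $\lambda^{(6)}(q)$ by~\eqref{mock-6-5}, while the ``$(1+q^{2n+1})$'' piece is, by the claim~\eqref{mock-6-6}, precisely $2\mu^{(6)}(q)$. The reduction is therefore to the Eulerian identity
\begin{align*}
\sum_{n=0}^\infty\frac{(-1)^nq^{2n+1}(q;q^2)_n}{(-q;q)_n}=2\mu^{(6)}(q)+\lambda^{(6)}(q)-2,
\end{align*}
which I would verify directly from the definitions~\eqref{mock-6-5-defn} and~\eqref{mock-6-6-defn} by writing $q(1+q^n)/(1+q^{n+1})=1-(1-q)/(1+q^{n+1})$ and manipulating the auxiliary series $\sum(-1)^nq^n(q;q^2)_n/(-q;q)_{n+1}$ that arises.

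For \eqref{revise-H(q)-H}, I would apply the parity-split trick of \eqref{intro-Liu-eq-H} (setting $r=n+j$, $s=n-j$ and decomposing by the parity of $r+s$) to each of the three sums produced by expanding $(1+q^n+q^{2n+1})$, yielding the three $f_{1,5,1}(-q^k,-q^k,q)$ for $k=1,2,3$ in the combination stated. To pass to \eqref{revise-H(q)-A}, I would apply Theorem~\ref{thm-fg-1} with $(n,p)=(1,4)$; the special case $j(-q^k;q)=2q^{-\binom{k}{2}}J_2^2/J_1$ of~\eqref{j-id-1} makes the three $g_{1,5,1}$ pieces collapse respectively to $4m(-q^{10},q^{24},-1)$, $4q^{-1}m(-q^6,q^{24},-1)$, and $4q^{-2}m(-q^2,q^{24},-1)$, while the three $\Phi_{1,4}$ remainders assemble exactly as displayed.

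The main obstacle will be the final simplification \eqref{revise-H(q)-final}. Here I would apply Lemma~\ref{lem-m-decompose} twice, first with $(x,q,z,z',n)\to(1,q^6,-q^2,-1,2)$ and then with $(x,q,z,z',n)\to(q^2,q^6,-1,-1,2)$, to express $m(1,q^6,-q^2)$ and $m(q^2,q^6,-1)$ in terms of the three $m(-q^k,q^{24},-1)$ appearing in \eqref{revise-H(q)-A} together with explicit theta-quotient remainders. Substituting these relations into \eqref{revise-H(q)-A} leaves a large combination of infinite products which must collapse to $J_{1,2}\overline{J}_{3,12}/\overline{J}_{1,4}-J_{1,2}\overline{J}_{1,3}/\overline{J}_{1,4}$; this final theta-identity reduction is verified by the computer-assisted method of~\cite{Garvan-Liang}, which is routine but notation-heavy.
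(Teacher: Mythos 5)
Your overall architecture --- specialize Theorem \ref{thm-ab-6-6} at $(a,b)=(q^{1/2},-q^{1/2})$ to obtain \eqref{revise-H(q)}, combine it with \eqref{mock-6-5} and the elementary Eulerian relation $H(q)=2\mu^{(6)}(q)+\lambda^{(6)}(q)-2$ to obtain \eqref{mock-6-6}, then pass to the $f_{1,5,1}$-form via the parity split, apply Theorem \ref{thm-fg-1} with $(n,p)=(1,4)$, and finish via Lemma \ref{lem-m-decompose} with the two substitutions you name plus a Garvan--Liang verification --- is exactly the paper's route, and those later steps are sound (you should note that \eqref{revise-H(q)-H} also needs \eqref{f-id-2} three times to fold six $f_{1,5,1}$-terms into three, but that is routine). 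The genuine gap is in your derivation of \eqref{revise-H(q)}. After the substitution, the $n$-th term of the right side carries the rational factor $\frac{(1+q)(1-q^{2n+2})}{(1-q^{2n+1})(1-q^{2n+3})}$, and your treatment of the bracket $\left[2(-1)^nq^{-n^2-n}-(1+q^{n+1})S_n\right]$, where $S_n=\sum_{j=-n}^n(-1)^jq^{-j^2}$, ignores it entirely. Neither of your two sub-claims survives contact with that factor: the contribution of the piece $2(-1)^nq^{-n^2-n}$ is, for $0<q<1$, a sum of strictly positive terms times a positive prefactor, so it cannot equal $-2$ on its own; and expanding the numerator $(1-q^{2n+2})(1+q^{n+1})$ and reindexing $n\mapsto-n-1$ does nothing to clear the denominators (nor is $S_n$ meaningful on the reindexed range). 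The constant $-2$ and the factor $(1+q^n+q^{2n+1})$ do not arise from the two bracket pieces separately.

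What is actually needed is the following chain. First rewrite the bracket as $-q^{n+1}S_{n+1}-S_n$ using $S_{n+1}-S_n=2(-1)^{n+1}q^{-n^2-2n-1}$; then apply the partial-fraction identity $\frac{(1+q)(1-q^{2n+2})}{(1-q^{2n+1})(1-q^{2n+3})}=\frac{1}{1-q^{2n+1}}+\frac{q}{1-q^{2n+3}}$; shift $n\to n-1$ in the two resulting sums that carry the pole at $q^{2n+3}=1$; and finally substitute the relations $S_{n+1}=S_n+2(-1)^{n+1}q^{-n^2-2n-1}$ and $S_{n-1}=S_n-2(-1)^nq^{-n^2}$. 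Only then does the combination $q^{(3n^2+9n+4)/2}S_{n+1}-q^{(3n^2+n)/2}S_{n-1}$ appear over the single denominator $1-q^{2n+1}$, and this combination is exactly divisible by $1-q^{2n+1}$, with quotient $-q^{(3n^2+n)/2}(1+q^{2n+1})S_n+2(-1)^nq^{(n^2+n)/2}$. The first summand supplies the $(1+q^{2n+1})$ part of the Hecke factor, and the second, summed against Gauss's identity $\sum_{n\ge0}q^{n(n+1)/2}=(q^2;q^2)_\infty/(q;q^2)_\infty$, is what produces the constant $-2$. With \eqref{revise-H(q)} repaired in this way, the remainder of your proposal, including your elementary verification of $H(q)=2\mu^{(6)}(q)+\lambda^{(6)}(q)-2$ (which the paper instead cites from Andrews--Hickerson), goes through as described.
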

\begin{proof}
For convenience, we denote
\begin{align}
H(q):=\sum_{n=0}^{\infty}\frac{(-1)^nq^{2n+1}(q;q^2)_n}{(-q;q)_n}.
\end{align}
Taking $(a,b)\rightarrow (q^{1/2},-q^{1/2})$ in Theorem \ref{thm-ab-6-6}, we deduce that
\begin{align}
H(q)=& q\frac{(q;q^2)_\infty}{(q^2;q^2)_\infty}\sum_{n=0}^\infty
\frac{(1+q)(1-q^{2n+2})}{(1-q^{2n+1})(1-q^{2n+3})}(-1)^nq^{(3n^2+7n)/2}\nonumber \\
& \times \left(2(-1)^nq^{-n^2-n}-(1+q^{n+1})\sum_{j=-n}^{n}(-1)^jq^{-j^2}  \right). \label{6-6-proof-eq-1}
\end{align}
This gives \cite[Eq.\ (2.28)]{Andrews-Hickerson}.

Let $S_n=\sum_{j=-n}^{n}(-1)^jq^{-j^2}$. Since $S_{n+1}-S_{n}=2(-1)^{n+1}q^{-n^2-2n-1}$, we have
\begin{align}
&2(-1)^nq^{-n^2-n}-(1+q^{n+1})\sum_{j=-n}^{n}(-1)^jq^{-j^2} \nonumber \\
 = &~ -q^{n+1}(S_{n+1}-S_n)-(1+q^{n+1})S_n \nonumber \\
 =&~ -q^{n+1}S_{n+1}-S_n. \label{6-6-proof-add-eq-1}
\end{align}
Substituting \eqref{6-6-proof-add-eq-1} into \eqref{6-6-proof-eq-1}, and noting that
\begin{align}
\frac{(1+q)(1-q^{2n+2})}{(1-q^{2n+1})(1-q^{2n+3})}=\frac{1}{1-q^{2n+1}}+\frac{q}{1-q^{2n+3}},
\end{align}
we deduce that
\begin{align}
H(q)=&~ -\frac{(q;q^2)_\infty}{(q^2;q^2)_\infty}\Big(\sum_{n=0}^\infty (-1)^n\frac{q^{(3n^2+7n+2)/2}S_n}{1-q^{2n+1}}+\sum_{n=0}^\infty (-1)^n\frac{q^{(3n^2+9n+4)/2}S_{n+1}}{1-q^{2n+1}} \nonumber \\
&\quad \quad +q\sum_{n=0}^\infty (-1)^n\frac{q^{(3n^2+7n+2)/2}S_n}{1-q^{2n+3}} +q\sum_{n=0}^\infty (-1)^n \frac{q^{(3n^2+9n+4)/2}S_{n+1}}{1-q^{2n+3}}\Big). \label{6-6-proof-add-2}
\end{align}
Replacing $n$ by $n-1$ in the last second and last sum in \eqref{6-6-proof-add-2}, after rearrangements and simplifications, we obtain
\begin{align}
H(q)=&-\frac{(q;q^2)_\infty}{(q^2;q^2)_\infty} \Big(\frac{q}{1-q}- \sum_{n=1}^\infty (-1)^nq^{(3n^2+3n)/2}S_n \nonumber \\
& +\sum_{n=0}^\infty (-1)^n\frac{q^{(3n^2+9n+4)/2}}{1-q^{2n+1}}S_{n+1} +\sum_{n=1}^\infty (-1)^{n-1}\frac{q^{(3n^2+n)/2}}{1-q^{2n+1}}S_{n-1}   \Big). \label{6-6-cor-1}
\end{align}
Now we substitute the relations
\begin{align}
S_{n-1}=S_n-2(-1)^nq^{-n^2}, \quad S_{n+1}=S_n+2(-1)^{n+1}q^{-n^2-2n-1}
\end{align}
into \eqref{6-6-cor-1} to deduce that
\begin{align}
&H(q)= \frac{(q;q^2)_\infty}{(q^2;q^2)_\infty}\sum_{n=0}^\infty (-1)^n\Big(q^{(3n^2+3n)/2}S_n \nonumber \\
&\quad \quad \quad +q^{(3n^2+n)/2}(1+q^{2n+1})S_n -2(-1)^nq^{(n^2+n)/2}  \Big),
\end{align}
from which \eqref{revise-H(q)} follows.

Andrews and Hickerson \cite{Andrews-Hickerson} showed that
\begin{align}
2\mu^{(6)}(q)=2+H(q)-\lambda^{(6)}(q). \label{mu-lambda}
\end{align}
Combining \eqref{mu-lambda} with \eqref{6-6-cor-1} and \eqref{mock-6-5}, we get \eqref{mock-6-6}.

Now we convert the Hecke-type series in \eqref{revise-H(q)} to a form in terms of $f_{a,b,c}(x,y,q)$. We have
\begin{align}
&\sum_{n=0}^\infty (-1)^nq^{(3n^2+n)/2}\sum_{j=-n}^n(-1)^jq^{-j^2} \nonumber \\
=& \frac{1}{2}\left(\sum_{n=0}^\infty (-1)^nq^{(3n^2+n)/2}\sum_{j=-n}^n(-1)^jq^{-j^2}-\sum_{n=-\infty}^{-1}\sum_{j=n+1}^{-n-1}(-1)^{n+j}q^{\frac{3}{2}n^2+\frac{5}{2}n+1-j^2} \right). \label{add-H-simplify-1}
\end{align}
Similarly, we have
\begin{align}
&\sum_{n=0}^\infty (-1)^nq^{(3n^2+n)/2+2n+1}\sum_{j=-n}^n(-1)^jq^{-j^2} \nonumber \\
=& \frac{1}{2}\left(\sum_{n=0}^\infty \sum_{j=-n}^{n} (-1)^{n+j}q^{\frac{3}{2}n^2+\frac{5}{2}n+1-j^2}-\sum_{n=-\infty}^{-1}\sum_{j=n+1}^{-n-1}(-1)^{n+j}q^{\frac{3}{2}n^2+\frac{1}{2}n-j^2} \right). \label{add-H-simplify-2}
\end{align}
Combining \eqref{add-H-simplify-1} with \eqref{add-H-simplify-2}, we deduce that
\begin{align}
&\sum_{n=0}^\infty (-1)^nq^{(3n^2+n)/2}(1+q^{2n+1})\sum_{j=-n}^n(-1)^jq^{-j^2} \nonumber \\
=&\frac{1}{2}\left(\sum_{\substack{n+j\geq 0 \\ n-j\geq 0}}-\sum_{\substack{n+j<0\\n-j<0}} \right)(-1)^{n+j}q^{\frac{3}{2}n^2+\frac{1}{2}n-j^2} \nonumber \\
&  +\frac{1}{2}\left(\sum_{\substack{n+j\geq 0 \\ n-j\geq 0}}-\sum_{\substack{n+j<0\\n-j<0}} \right)(-1)^{n+j}q^{\frac{3}{2}n^2+\frac{5}{2}n+1-j^2}. \label{add-H-simplify-3}
\end{align}
Letting $n+j=r$ and $n-j=s$ and then splitting the sums according to the parity of $r$ and $s$, after simplifications, we arrive at
\begin{align}
&\sum_{n=0}^\infty (-1)^nq^{(3n^2+n)/2}(1+q^{2n+1})\sum_{j=-n}^n(-1)^jq^{-j^2} \nonumber \\
=&\frac{1}{2}\left(f_{1,5,1}(-q,-q,q)-q^2f_{1,5,1}(-q^4,-q^4,q)+qf_{1,5,1}(-q^3,-q^3,q)\right. \nonumber \\
&\quad \quad \left.-q^5f_{1,5,1}(-q^6,-q^6,q)\right). \label{add-H-simplify-4}
\end{align}
Now we treat the remaining sum in the right side of \eqref{revise-H(q)}. In a way similar to the above, we deduce that
\begin{align}
&\sum_{n=0}^\infty (-1)^nq^{(3n^2+n)/2+n}\sum_{j=-n}^n(-1)^jq^{-j^2} \nonumber \\
=&\frac{1}{2}\left( \sum_{n=0}^\infty (-1)^nq^{(3n^2+3n)/2}\sum_{j=-n}^n(-1)^jq^{-j^2} -\sum_{n=-\infty}^{-1}\sum_{j=n+1}^{-n-1}(-1)^{n+j}q^{(3n^2+3n)/2-j^2} \right) \nonumber \\
=&\frac{1}{2}\left(\sum_{\substack{n+j\geq 0 \\ n-j\geq 0}}-\sum_{\substack{n+j<0\\n-j<0}}  \right)(-1)^{n+j}q^{(3n^2+3n)/2-j^2} \nonumber \\
=&\frac{1}{2}\left(f_{1,5,1}(-q^2,-q^2,q)-q^3f_{1,5,1}(-q^5,-q^5,q) \right). \label{add-H-simplify-5}
\end{align}
Using \eqref{f-id-2} we deduce that
\begin{align}
f_{1,5,1}(-q,-q,q)&=-q^5f_{1,5,1}(-q^6,-q^6,q), \label{add-H-use-1} \\
f_{1,5,1}(-q^2,-q^2,q)&=-q^3f_{1,5,1}(-q^5,-q^5,q), \label{add-H-use-2} \\
f_{1,5,1}(-q^3,-q^3,q)&=-qf_{1,5,1}(-q^4,-q^4,q). \label{add-H-use-3}
\end{align}
Now adding \eqref{add-H-simplify-4} and \eqref{add-H-simplify-5} up and employing the identities in \eqref{add-H-use-1}--\eqref{add-H-use-3}, we arrive at \eqref{revise-H(q)-H}.

The expression \eqref{revise-H(q)-A} follows from \eqref{revise-H(q)-H} and Theorem \ref{thm-fg-1} with $(n,p)=(1,4)$.

Taking $(x,q,z,z',n)\rightarrow (1,q^6,-q^2,-1,2)$ and $(q^2,q^6,-1,-1,2)$ in Lemma \ref{lem-m-decompose}, then substituting the results into \eqref{revise-H(q)-A}, and using the method in \cite{Garvan-Liang} to prove theta function identities, we arrive at \eqref{revise-H(q)-final}.
\end{proof}
\begin{rem}
From \cite[Eq.\ (5.28)]{Hickerson-Mortenson} we find
\begin{align}\label{HM-mu6-m}
\mu^{(6)}(q)=2m(q^2,q^6,-1)-\frac{1}{2}\frac{J_{1,2}\overline{J}_{1,3}}{\overline{J}_{1,4}}.
\end{align}
The expression \eqref{revise-H(q)-final} can also be deduced from \eqref{HM-mu6-m} and \eqref{HM-lambda6-m} upon using \eqref{mu-lambda}.
\end{rem}

\subsection{Representations for $\gamma^{(6)}(q)$}
\begin{theorem}\label{thm-ab-6-7}
For $\max \{|ab/q|, |a|, |b|\}<1$, we have
\begin{align}
&\frac{(q,ab/q;q)_\infty}{(a,b;q)_\infty}
{}_3\phi_2\bigg(\genfrac{}{}{0pt}{}{q/a,q/b,q}{\zeta_3q,\zeta_3^2q};q,ab/q\bigg) \nonumber\\
=&~ 1+3\sum_{n=1}^{\infty}\frac{1+q^{n}}{1+q^n+q^{2n}}\frac{(q/a,q/b;q)_n}{(a,b;q)_n}(-ab)^nq^{\frac{n^2-n}{2}}.
\end{align}
\end{theorem}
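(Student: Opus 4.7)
The proof will follow exactly the template used for Theorem \ref{thm-ab-3-4}, with the roles of $-\zeta_3$ and $\zeta_3$ essentially swapped. The plan is to specialize Theorem \ref{thm-main} and then evaluate the resulting terminated ${}_3\phi_2$ series by means of the $q$-Pfaff--Saalsch\"utz summation \eqref{Pfaff}.

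First I would set $(c,d,u,v)\to (\zeta_3 q,\zeta_3^2 q,1,q)$ in Theorem \ref{thm-main}. This yields
\begin{align*}
&\frac{(q,ab/q;q)_\infty}{(a,b;q)_\infty}{}_3\phi_2\bigg(\genfrac{}{}{0pt}{}{q/a,q/b,q}{\zeta_3q,\zeta_3^2q};q,ab/q\bigg)\\
&=1+\sum_{n=1}^{\infty}(1+q^n)\frac{(q/a,q/b;q)_n}{(a,b;q)_n}(-ab)^n q^{(n^2-3n)/2}\,{}_3\phi_2\bigg(\genfrac{}{}{0pt}{}{q^{-n},q^n,q}{\zeta_3 q,\zeta_3^2 q};q,q\bigg).
\end{align*}
So it remains to evaluate the inner ${}_3\phi_2$ at argument $q$.

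Next I would apply \eqref{Pfaff} with $(a,b,c)\to (1,\zeta_3^2,\zeta_3)$. Then $aq/b=\zeta_3 q$, $aq/c=\zeta_3^2 q$, and $aq/(bc)=q$ (since $\zeta_3\cdot\zeta_3^2=1$), so the hypotheses of \eqref{Pfaff} match perfectly and
\begin{align*}
{}_3\phi_2\bigg(\genfrac{}{}{0pt}{}{q^{-n},q^n,q}{\zeta_3 q,\zeta_3^2 q};q,q\bigg)=\frac{(\zeta_3^2,\zeta_3;q)_n}{(\zeta_3 q,\zeta_3^2 q;q)_n}\,q^n.
\end{align*}
Using $(\zeta_3 q^k)(\zeta_3^2 q^k)=q^{2k}$ and $\zeta_3+\zeta_3^2=-1$, one has $(1-\zeta_3 q^k)(1-\zeta_3^2 q^k)=1+q^k+q^{2k}$, so the numerator and denominator telescope to leave the single ratio $3/(1+q^n+q^{2n})$.

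Putting it all together gives
\begin{align*}
{}_3\phi_2\bigg(\genfrac{}{}{0pt}{}{q^{-n},q^n,q}{\zeta_3 q,\zeta_3^2 q};q,q\bigg)=\frac{3q^n}{1+q^n+q^{2n}},
\end{align*}
and substitution into the first display yields the claim after combining $q^{(n^2-3n)/2}\cdot q^n=q^{(n^2-n)/2}$. There is really no obstacle here; the only thing to watch is the telescoping identification of $(\zeta_3,\zeta_3^2;q)_n/(\zeta_3 q,\zeta_3^2 q;q)_n$, and in particular to remember that the $k=0$ factor of the numerator is $3$ rather than a cyclotomic quantity in $q$. This step is the sole computational kernel of the proof.
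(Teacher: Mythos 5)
Your proposal is correct and follows exactly the paper's own proof: the paper also specializes Theorem \ref{thm-main} with $(c,d,u,v)\rightarrow(\zeta_3q,\zeta_3^2q,1,q)$ and then evaluates the terminated ${}_3\phi_2$ via the $q$-Pfaff--Saalsch\"utz formula \eqref{Pfaff} (with the equivalent labeling $(a,b,c)=(1,\zeta_3,\zeta_3^2)$), obtaining the same value $3q^n/(1+q^n+q^{2n})$. The telescoping computation and the final exponent bookkeeping are exactly as you describe.
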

\begin{proof}
Taking $(c,d,u,v)\rightarrow (\zeta_3q, \zeta_3^2q, 1, q)$ in Theorem \ref{thm-main}, we deduce that
\begin{align}
&\frac{(q,ab/q;q)_\infty}{(a,b;q)_\infty}
{}_3\phi_2\bigg(\genfrac{}{}{0pt}{}{q/a,q/b,q}{\zeta_3q,\zeta_3^2q};q,ab/q\bigg) \nonumber\\
=&1+\sum_{n=1}^{\infty}(1+q^{n})\frac{(q/a,q/b;q)_n}{(a,b;q)_n}(-ab)^nq^{\frac{n^2-3n}{2}}
{}_3\phi_2\bigg(\genfrac{}{}{0pt}{}{q^{-n},q^{n},q}{\zeta_3q,\zeta_3^2q};q,q\bigg). \label{ab-6-7-proof}
\end{align}
Setting $(a,b,c)=(1, \zeta_3, \zeta_3^2)$ in \eqref{Pfaff}, we obtain
\begin{align}
{}_3\phi_2\bigg(\genfrac{}{}{0pt}{}{q^{-n},q^{n},q}{\zeta_3q,\zeta_3^2q};q,q\bigg)=\frac{3q^n}{1+q^n+q^{2n}}. \label{ab-6-7-proof-1}
\end{align}
The theorem follows after substituting \eqref{ab-6-7-proof-1} into \eqref{ab-6-7-proof}.
\end{proof}
\begin{corollary}
Identity \eqref{mock-6-7} holds. In addition, we have
\begin{align}
&\sum_{n=0}^\infty \frac{q^{(n^2-n)/2}(q^2;q^2)_n}{(q^3;q^3)_n} \nonumber \\
=&\frac{3}{2}\frac{(-q;q)_\infty}{(q;q)_\infty}\sum_{n=-\infty}^\infty \frac{(-1)^nq^{n^2}(1+q^n)^2}{1+q^n+q^{2n}} \label{6-7-cor-1}\\
=&\frac{3}{2}-\frac{3(1+\zeta_3)}{2(1-\zeta_3)}\left(m(\zeta_3^2q,q^2,\zeta_3^2)-m(\zeta_3q,q^2,\zeta_3) \right), \label{6-7-cor-1-simplify} \\
&\sum_{n=0}^\infty \frac{(-1)^nq^{n^2}(q;q)_{2n}}{(q^6;q^6)_{n}} \nonumber \\
=&\frac{3}{2}\frac{(q;q^2)_\infty}{(q^2;q^2)_\infty}\sum_{n=-\infty}^\infty \frac{q^{2n^2+n}(1+q^{2n})}{1+q^{2n}+q^{4n}} \label{6-7-cor-2} \\
=&\frac{3}{2}+\frac{1}{2}\frac{J_{1}^4}{J_{2}^2J_{3}}+\frac{3(1+\zeta_3)}{2(1-\zeta_3)}q^{-1}\left(m(-\zeta_3q^{2},q,\zeta_3^2)-m(-\zeta_3^2q^2,q,\zeta_3) \right). \label{6-7-cor-2-simplify}
\end{align}
\end{corollary}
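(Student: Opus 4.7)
The overall strategy is to specialize the parameters $(a,b)$ in Theorem \ref{thm-ab-6-7} (or its $q\to q^2$ analogue) to produce the three Eulerian identities, then partial-fraction $1/(1+q^n+q^{2n})=(1-\zeta_3 q^n)^{-1}(1-\zeta_3^2 q^n)^{-1}$ and invoke the building blocks $h(x,q)$ and $k(x,q)$ from \eqref{h-defn} and \eqref{k-defn} to convert the resulting Appell-Lerch sums into expressions in $m(x,q,z)$.

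For \eqref{mock-6-7} I take the limit $(a,b)\to(0,0)$: using $(q/a;q)_n a^n\to(-1)^n q^{n(n+1)/2}$ together with $(\zeta_3 q,\zeta_3^2 q;q)_n=(q^3;q^3)_n/(q;q)_n$, the LHS of Theorem \ref{thm-ab-6-7} becomes $(q;q)_\infty\gamma^{(6)}(q)$ while the RHS is exactly the right-hand side of \eqref{mock-6-7}. For \eqref{6-7-cor-1} I take $(a,b)\to(0,-1)$; the crucial simplification $(-q;q)_n/(-1;q)_n=(1+q^n)/2$ supplies the extra factor needed so that the Appell-Lerch summand is $(-1)^n q^{n^2}(1+q^n)^2/(1+q^n+q^{2n})$. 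Writing this summand as $a_n$, the symmetry $a_n=a_{-n}$ and the value $a_0=4/3$ produce the identity $\tfrac{3}{2}\sum_{n=-\infty}^\infty a_n = 1+3\sum_{n\ge 1}a_n$, which matches the unilateral form given by Theorem \ref{thm-ab-6-7}. For \eqref{6-7-cor-2} I first replace $q$ by $q^2$ in Theorem \ref{thm-ab-6-7} and then specialize $(a,b)\to(0,q)$; the analogous symmetry $b_n=b_{-n}$ of $b_n=q^{2n^2+n}(1+q^{2n})/(1+q^{2n}+q^{4n})$ together with $b_0=2/3$ yields the bilateral form.

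To deduce \eqref{6-7-cor-1-simplify} I first split $(1+q^n)^2/(1+q^n+q^{2n})=1+q^n/(1+q^n+q^{2n})$. The pure theta piece $\sum_n(-1)^n q^{n^2}=J_1^2/J_2$ combines with the prefactor $\tfrac{3}{2}J_2/J_1^2$ to produce the constant $3/2$. For the remaining sum $\sum_n(-1)^n q^{n^2+n}/(1+q^n+q^{2n})$, partial fractions yield $j(q;q^2)\bigl[h(\zeta_3,q)/(1-\zeta_3)+h(\zeta_3^2,q)/(1-\zeta_3^2)\bigr]$; applying \eqref{h-m} and the cyclotomic identities $1+\zeta_3=-\zeta_3^2$ and $(1+\zeta_3)^2=\zeta_3$ collapses the two complex coefficients into the uniform factor $(1+\zeta_3)/(1-\zeta_3)$, and the relation $j(q;q^2)=J_1^2/J_2$ exactly cancels the theta prefactor. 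For \eqref{6-7-cor-2-simplify} the procedure is parallel but uses $k(x,q)$ in place of $h(x,q)$, because the exponent $2n^2+n=n(2n+1)$ in the bilateral sum is precisely the shape appearing in \eqref{k-defn}. Partial fractions followed by \eqref{k-id} at $x=\zeta_6$ and $x=\zeta_3$ produces the combination $m(-\zeta_3,q,\zeta_3^2)-m(-\zeta_3^2,q,\zeta_3)$ together with residual theta quotients $J_1^4/(2J_2^2 j(\zeta_3;q))$ and $J_1^4/(2J_2^2 j(\zeta_3^2;q))$; these consolidate via $j(\zeta_3;q)j(\zeta_3^2;q)=3J_3^2$ and $j(\zeta_3^2;q)=-\zeta_3^2 j(\zeta_3;q)$ to yield $\tfrac{1}{2}J_1^4/(J_2^2 J_3)$ plus the constant $\tfrac{3}{2}$.

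The main obstacle is the shift from $m(-\zeta_3,q,\zeta_3^2)$ and $m(-\zeta_3^2,q,\zeta_3)$, which are the natural outputs of \eqref{k-id}, to the asymmetric forms $m(-\zeta_3 q^2,q,\zeta_3^2)$ and $m(-\zeta_3^2 q^2,q,\zeta_3)$ appearing in \eqref{6-7-cor-2-simplify}. This requires two iterated applications of \eqref{m-id-3}, each step generating a polynomial-in-$q$ correction, and these corrections must conspire precisely with the residual theta products to reproduce the clean expression on the right of \eqref{6-7-cor-2-simplify}. Verifying this final theta-function cancellation is a classical identity of the sort routinely handled by the computer package and method of \cite{Garvan-Liang}.
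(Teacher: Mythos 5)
Your proposal follows essentially the same route as the paper: the same specializations $(a,b)\to(0,0)$, $(0,-1)$ and $(q,a,b)\to(q^2,0,q)$ of Theorem \ref{thm-ab-6-7}, the same splitting of the rational summand, and the same conversion to $m(x,q,z)$ via $h$ and \eqref{h-m} (resp.\ $k$ and \eqref{k-id}), with the cyclotomic bookkeeping you describe for \eqref{6-7-cor-1-simplify} checking out. One small slip in the bilateralization for \eqref{6-7-cor-1}: since $a_0=4/3$, the correct matching is $\tfrac{3}{4}\sum_{n\in\mathbb{Z}}a_n=1+\tfrac{3}{2}\sum_{n\ge 1}a_n$ (the theorem's unilateral sum carries the extra $\tfrac{1}{2}$ from $(-q;q)_n/(-1;q)_n$, and the remaining factor of $2$ needed to reach the stated $\tfrac{3}{2}$ comes from $(-1;q)_\infty=2(-q;q)_\infty$ in the left-hand prefactor), not $\tfrac{3}{2}\sum_{n\in\mathbb{Z}}a_n=1+3\sum_{n\ge 1}a_n$ as you wrote.
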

\begin{proof}
Taking $(a,b)\rightarrow (0,0)$ and $(0, -1)$ in Theorem \ref{thm-ab-6-7}, we obtain \eqref{mock-6-7} and \eqref{6-7-cor-1}, respectively.

Taking $(q,a,b)\rightarrow (q^2, 0, q)$ in Theorem \ref{thm-ab-6-7}, we obtain \eqref{6-7-cor-2}.

Note that
\begin{align}
\sum_{n=-\infty}^\infty \frac{(-1)^nq^{n^2}(1+q^n)^2}{1+q^n+q^{2n}}=\sum_{n=-\infty}^\infty (-1)^nq^{n^2}+\sum_{n=-\infty}^\infty \frac{(-1)^nq^{n^2+n}}{1+q^n+q^{2n}}. \label{revise-h-use-0}
\end{align}
By \eqref{h-defn} we have
\begin{align}
\sum_{n=-\infty}^\infty \frac{(-1)^nq^{n^2+n}}{1+q^n+q^{2n}}=& \frac{1}{1-\zeta_3}\left(\sum_{n=-\infty}^\infty \frac{(-1)^nq^{n^2+n}}{1-\zeta_3q^n}- \zeta_3\sum_{n=-\infty}^\infty \frac{(-1)^nq^{n^2+n}}{1-\zeta_3^2q^n}\right) \nonumber \\
=&\frac{\zeta_3^2}{1-\zeta_3}j(q;q^2)\left(h(\zeta_3^2q,q)-h(\zeta_3q,q) \right). \label{revise-h-use-1}
\end{align}
Substituting \eqref{revise-h-use-1} into \eqref{revise-h-use-0} and using \eqref{h-m}, we will eventually arrive at \eqref{6-7-cor-1-simplify}.

Next, we have
\begin{align}
\sum_{n=-\infty}^\infty \frac{q^{2n^2+n}(1+q^{2n})}{1+q^{2n}+q^{4n}}=\sum_{n=-\infty}^\infty q^{2n^2+n}-\sum_{n=-\infty}^\infty \frac{q^{2n^2+5n}}{1+q^{2n}+q^{4n}}. \label{revise-k-use-0}
\end{align}
Using \eqref{k-defn} we deduce that
\begin{align}
\sum_{n=-\infty}^\infty \frac{q^{2n^2+5n}}{1+q^{2n}+q^{4n}} =& \frac{1}{1-\zeta_3}\sum_{n=-\infty}^\infty \left(\frac{q^{2n^2+5n}}{1-\zeta_3q^{2n}}-\frac{\zeta_3q^{2n^2+5n}}{1-\zeta_3^2q^{2n}} \right) \nonumber \\
=&\frac{1}{1-\zeta_3}q^{-3}\sum_{n=-\infty}^\infty \left(\frac{q^{2n^2+n}}{1-\zeta_3q^{2n-2}}-\frac{\zeta_3q^{2n^2+n}}{1-\zeta_3^2q^{2n-2}} \right) \nonumber \\
=&\frac{1}{1-\zeta_3}q^{-4}j(-q;q^4)\left(\zeta_3^{-1}k(\zeta_3^{-1}q^{-1},q)-\zeta_3k(\zeta_3q^{-1},q)  \right).
\end{align}
Here for the second equality we replaced $n$ by $n-1$. Now using \eqref{k-id} we will eventually arrive at \eqref{6-7-cor-2-simplify}.
\end{proof}

\subsection{Representations for $\phi_{-}^{(6)}(q)$}
\begin{theorem}\label{thm-ab-6-8}
For $\max \{|abq^2|, |aq^4|, |bq^4|\}<1$, we have
\begin{align}
&\frac{(q^2,abq^2;q^2)_\infty}{(aq^4,bq^4;q^2)_\infty}
{}_3\phi_2\bigg(\genfrac{}{}{0pt}{}{q^2/a,q^2/b,q^2}{0,q^3};q^2,abq^2\bigg) \nonumber\\
=&(1-q)\sum_{n=0}^{\infty}(1-q^{4n+4})\frac{(q^2/a,q^2/b;q^2)_n}{(aq^4,bq^4;q^2)_n}(-ab)^nq^{3n^2+6n}
\sum_{j=0}^{n}(1+q^{2j+1})q^{-2j^2-3j}.
\end{align}
\end{theorem}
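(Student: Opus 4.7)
The plan is to mirror the strategy used for Theorem \ref{thm-ab-5-8}, which is the closest analogue: apply Theorem \ref{thm-main} once to convert the LHS into a sum over $n$ of a terminated ${}_3\phi_2$, and then evaluate that terminated series via Lemma \ref{meq:2}. The shape of the left-hand side dictates the parameters: we need $uab/q^2 = abq^2$ (the argument of the outer ${}_3\phi_2$) and $ua = aq^4$, both forcing $u = q^4$. Thus the first step will be to apply Theorem \ref{thm-main} with $(q,c,d,u,v) \to (q^2, 0, q^3, q^4, q^2)$. Keeping track of the implicit factor $(1-q^2)(1-q^4)$ arising from $(uq;q)_\infty \to (q^6;q^2)_\infty$ on one side and $1-u = 1-q^4$ on the other (handled exactly as in the proof of Theorem \ref{thm-ab-5-8}), this produces
\begin{align*}
\frac{(q^2,abq^2;q^2)_\infty}{(aq^4,bq^4;q^2)_\infty}{}_3\phi_2\bigg(\genfrac{}{}{0pt}{}{q^2/a,q^2/b,q^2}{0,q^3};q^2,abq^2\bigg)
&= \sum_{n=0}^{\infty}(1-q^{4n+4})(1-q^{2n+2})\frac{(q^2/a,q^2/b;q^2)_n}{(aq^4,bq^4;q^2)_n}(-ab)^n q^{n^2+n}\\
&\qquad\times\,{}_3\phi_2\bigg(\genfrac{}{}{0pt}{}{q^{-2n},q^{2n+4},q^2}{0,q^3};q^2,q^2\bigg).
\end{align*}

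The second step is the key evaluation: I claim that
\[
{}_3\phi_2\bigg(\genfrac{}{}{0pt}{}{q^{-2n},q^{2n+4},q^2}{0,q^3};q^2,q^2\bigg)
= \frac{1-q}{1-q^{2n+2}}\,q^{2n^2+5n}\sum_{j=0}^{n}(1+q^{2j+1})q^{-2j^2-3j}.
\]
This is obtained from Lemma \ref{meq:2} under the substitution $(q,\alpha,c) \to (q^2, q^2, q)$: the second upper parameter $\alpha q^{n+1}$ becomes $q^2 \cdot q^{2n+2} = q^{2n+4}$, and $q^2/c$ becomes $q^3$, so the LHS of the lemma matches exactly. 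On the right-hand side the prefactor $(\alpha c,q;q)_n/(q^2/c,\alpha q;q)_n$ collapses to $(q^2;q^2)_n/(q^4;q^2)_n = (1-q^2)/(1-q^{2n+2})$, the $j$-summand simplifies via $(1-q^{4j+2})/(1-q^{2j+1}) = 1+q^{2j+1}$ together with $(q;q^2)_j/(q^3;q^2)_j = (1-q)/(1-q^{2j+1})$, and the common factor $(1+q)$ cancels, yielding the stated closed form.

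The third step is the substitution of this evaluation back into the sum from step one; the factor $(1-q^{2n+2})q^{n^2+n}\cdot\frac{1-q}{1-q^{2n+2}}q^{2n^2+5n} = (1-q)q^{3n^2+6n}$ produces exactly the coefficient stated in Theorem \ref{thm-ab-6-8}, completing the proof.

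There is no serious obstacle here once the parameters are guessed correctly; the whole proof is mechanical given the tools already built in Section \ref{subsec-basic}. The only delicate point is matching Lemma \ref{meq:2} to the inner ${}_3\phi_2$: one must recognize that with $\alpha=q^2$ the factor $\alpha q^{n+1}$ (with base $q^2$) produces $q^{2n+4}$, so the lemma applies directly without needing Sears' transformation \eqref{sears:32} first. This is the one step I would double-check by comparing with the $n=0,1$ cases before writing up the general argument.
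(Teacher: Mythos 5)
Your proposal is correct and follows essentially the same route as the paper: the paper's proof also specializes Theorem \ref{thm-main} with $(q,c,d,u,v)\rightarrow(q^2,0,q^3,q^4,q^2)$ and then evaluates the inner terminated ${}_3\phi_2$ via Lemma \ref{meq:2} with $(q,\alpha,c)\rightarrow(q^2,q^2,q)$, with no intermediate Sears transformation. The only difference is cosmetic bookkeeping of the factor $(1-q^{2n+2})$, which you carry explicitly while the paper keeps it inside the Pochhammer ratio $(q^4;q^2)_n/(q^2;q^2)_n$.
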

\begin{proof}
Taking $(q,c,d,u,v) \rightarrow (q^2, 0, q^3, q^4, q^2)$ in Theorem \ref{thm-main}, we deduce that
\begin{align}
&\frac{(q^4,abq^2;q^2)_\infty}{(aq^4,bq^4;q^2)_\infty}
{}_3\phi_2\bigg(\genfrac{}{}{0pt}{}{q^2/a,q^2/b,q^2}{0,q^3};q^2,abq^2\bigg) \nonumber\\
=&\sum_{n=0}^{\infty}(1-q^{4n+4})\frac{(q^4,q^2/a,q^2/b;q^2)_n}{(q^2,aq^4,bq^4;q^2)_n}(-ab)^nq^{n^2+n}
{}_3\phi_2\bigg(\genfrac{}{}{0pt}{}{q^{-2n},q^{2n+4},q^2}{0,q^3};q^2,q^2\bigg). \label{ab-6-8-proof}
\end{align}
Taking $(q,\alpha, c)\rightarrow (q^2, q^2, q)$ in Lemma \ref{meq:2}, we obtain
\begin{align}
{}_3\phi_2\bigg(\genfrac{}{}{0pt}{}{q^{-2n},q^{2n+4},q^2}{0,q^3};q^2,q^2\bigg)
=(1-q)\frac{q^{2n^2+5n}}{1-q^{2n+2}}\sum_{j=0}^{n}(1+q^{2j+1})q^{-2j^2-3j}. \label{ab-6-8-proof-1}
\end{align}
The theorem follows after substituting \eqref{ab-6-8-proof-1} into \eqref{ab-6-8-proof}.
\end{proof}
\begin{corollary}
Identity \eqref{mock-6-8} holds. In addition, we have
\begin{align}
&\sum_{n=0}^\infty \frac{(-1)^nq^{n^2+3n}(q^2;q^2)_n}{(q;q^2)_{n+1}}=\sum_{n=0}^\infty (1+q^{2n+2})q^{4n^2+7n}\sum_{j=0}^n(1+q^{2j+1})q^{-2j^2-3j} \label{6-8-cor-1} \\
=& -2q^{-3}+\overline{f}_{1,3,1}(-q^6,-q^{12},q^4)+q^{11}\overline{f}_{1,3,1}(-q^{14},-q^{20},q^4), \label{6-8-cor-1-H} \\
&\sum_{n=0}^\infty \frac{q^{n^2+3n}(-q^2;q^2)_n}{(q;q^2)_{n+1}} \nonumber \\
=& \frac{(-q^2;q^2)_\infty}{(q^2;q^2)_\infty}\sum_{n=0}^\infty (-1)^n(1-q^{2n+2})q^{4n^2+7n}\sum_{j=0}^n(1+q^{2j+1})q^{-2j^2-3j} \label{6-8-cor-2} \\
=& \frac{J_4}{J_2^2}\left(f_{1,3,1}(q^6,q^{12},q^4)-q^{11}f_{1,3,1}(q^{14},q^{20},q^4) \right) \label{6-8-cor-2-H} \\
 =& -q^{-2}m(q^3,q^8,q^{-3/2})+q^{-2}\frac{J_{\frac{7}{2},8}^2J_{8}^3}{J_{\frac{3}{2},8}^2J_{2,8}J_{3,8}}. \label{6-8-cor-2-A}
\end{align}
\end{corollary}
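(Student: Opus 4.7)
The three Eulerian identities \eqref{mock-6-8}, \eqref{6-8-cor-1}, \eqref{6-8-cor-2} will all drop out of Theorem \ref{thm-ab-6-8} by the choice of the parameters $(a,b)$; then the two Hecke-type recasts \eqref{6-8-cor-1-H} and \eqref{6-8-cor-2-H} follow by routine reshuffling of the double sum into the symbols $\overline{f}_{1,3,1}$ and $f_{1,3,1}$; finally \eqref{6-8-cor-2-A} is obtained from \eqref{6-8-cor-2-H} by Theorem \ref{thm-fg-2} with $n=1$ and further simplification via Lemma \ref{lem-m-add}.

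\textbf{Step 1 (Eulerian specializations).} For \eqref{mock-6-8}, shift the index in $\phi_{-}^{(6)}(q)$ and split $(-q;q)_{2n+1}=(-q;q^2)_{n+1}(-q^2;q^2)_n$ to rewrite
\[
\phi_{-}^{(6)}(q) = \frac{q(1+q)}{1-q}\sum_{n=0}^{\infty}\frac{(-q^3,-q^2;q^2)_n\,q^n}{(q^3;q^2)_n}.
\]
Then take $(a,b)=(-q^{-1},-1)$ in Theorem \ref{thm-ab-6-8}: this forces $q^2/a=-q^3$, $q^2/b=-q^2$, $abq^2=q$, and the prefactor $(q^2,q;q^2)_\infty/(-q^3,-q^4;q^2)_\infty$ reduces to $(q;q)_\infty(1+q)(1+q^2)/(-q;q)_\infty$; matching the two sides and extracting the factor $q$ recovers \eqref{mock-6-8}. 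For \eqref{6-8-cor-1} I take the limit $(a,b)\to(0,1)$, and for \eqref{6-8-cor-2} the limit $(a,b)\to(0,-1)$. In both cases I invoke the standard asymptotic $(q^2/a;q^2)_n\,a^n\to(-1)^n q^{n^2+n}$ so that the combined factor $(-ab)^n(q^2/a,q^2/b;q^2)_n$ on the right-hand side of Theorem \ref{thm-ab-6-8} collapses to $(-1)^n q^{n^2+n}(q^2;q^2)_n$ or $(-1)^n q^{n^2+n}(-q^2;q^2)_n$ respectively; multiplying by $(abq^2)^n$ on the left produces the Eulerian $(-1)^n q^{n^2+3n}(q^2;q^2)_n$ and $q^{n^2+3n}(-q^2;q^2)_n$ numerators, and cancellation of the $(1-q^{4n+4})/(1-q^{2n+2})=1\pm q^{2n+2}$ factor against the surviving $(bq^4;q^2)_n$ yields the stated identities.

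\textbf{Step 2 (Hecke recasts).} To obtain \eqref{6-8-cor-1-H} and \eqref{6-8-cor-2-H}, I expand $(1+q^{2n+2})(1+q^{2j+1})$ into four monomials and symmetrize each of the four double sums by appending the ``missing'' negative tail. With the substitution $(r,s)=(n+j,\,n-j)$ and a parity split of $(r,s)$, each symmetrized sum is exactly of the form defining $\overline{f}_{1,3,1}(-q^{*},-q^{*},q^4)$ or $f_{1,3,1}(q^{*},q^{*},q^4)$; the step is entirely parallel to the derivations of \eqref{intro-Liu-eq-H} and \eqref{ADH-id-barf} shown in Section \ref{subsec-build}. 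The explicit boundary correction $-2q^{-3}$ in \eqref{6-8-cor-1-H} originates from the fact that in the $\overline{f}$-expansion the negative tail $(r,s)\in\{(-1,-1)\}$ contributes an extra term which the left-hand side does not have.

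\textbf{Step 3 (Appell-Lerch form).} For \eqref{6-8-cor-2-A}, I apply Theorem \ref{thm-fg-2} with $n=1$ to each of $f_{1,3,1}(q^6,q^{12},q^4)$ and $f_{1,3,1}(q^{14},q^{20},q^4)$. This yields a linear combination of four level-$q^{32}$ $m$-functions plus the theta quotients $\Theta_{1,2}(q^6,q^{12},q^4)$ and $\Theta_{1,2}(q^{14},q^{20},q^4)$. To collapse this to the single $m(q^3,q^8,q^{-3/2})$ appearing on the right, I use Lemma \ref{lem-m-add} with $(x,q,z)\to(q^3,q^8,q^{-3/2})$ (and once more with shifted arguments) to merge two pairs of $m$-functions of modulus $q^{32}$ into $m(q^3,q^8,q^{-3/2})$ at the cost of an explicit theta quotient. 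Assembling all theta quotients that remain and showing that they collapse to $q^{-2}J_{7/2,8}^2 J_8^3/(J_{1/2,8}^2 J_{2,8} J_{3,8})$ is a routine but lengthy infinite-product manipulation, which may be checked using the computer method of \cite{Garvan-Liang}. The main obstacle is precisely this final theta-product bookkeeping: the raw output of Theorem \ref{thm-fg-2} does not manifestly simplify, and one must juggle four $m$-symbols and two $\Theta_{1,2}$ terms while respecting the sign conventions that distinguish $m$ from $\overline{m}$ and $f_{a,b,c}$ from $\overline{f}_{a,b,c}$.
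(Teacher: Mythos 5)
Your proposal is correct and follows essentially the same route as the paper: the same specializations $(a,b)\to(-q^{-1},-1)$ (the paper's $(-1,-q^{-1})$, identical by the $a\leftrightarrow b$ symmetry of Theorem \ref{thm-ab-6-8}), $(0,1)$ and $(0,-1)$, the same $(r,s)=(n+j,n-j)$ reindexing with parity split, and Theorem \ref{thm-fg-2} with $n=1$ together with Lemma \ref{lem-m-add} for \eqref{6-8-cor-2-A}. The one point to make precise is that symmetrizing the inner $j$-sum (and hence the true origin of the $-2q^{-3}$) rests on the invariance of $(1+q^{2j+1})q^{-2j^2-3j}$ under $j\mapsto -j-1$, i.e.\ $\sum_{j=0}^{n}(1+q^{2j+1})q^{-2j^2-3j}=\sum_{j=-n}^{n}q^{-2j^2-3j}+q^{-2n^2-n+1}$, which the paper states explicitly and which is what actually produces the boundary corrections rather than a single $(r,s)=(-1,-1)$ term.
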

\begin{proof}
Taking $(a,b)\rightarrow (-1,-q^{-1})$, $(0, 1)$ and $(0,-1)$ in Theorem \ref{thm-ab-6-8}, we obtain \eqref{mock-6-8}, \eqref{6-8-cor-1} and \eqref{6-8-cor-2}, respectively.

We have
\begin{align}
&\sum_{j=0}^n (1+q^{2j+1})q^{-2j^2-3j}\nonumber \\
=& \sum_{j=0}^nq^{-2j^2-3j}+\sum_{j=0}^nq^{-2j^2-j+1} \nonumber \\
=& \sum_{j=0}^nq^{-2j^2-3j}+\sum_{j=-n-1}^{-1}q^{-2j^2-3j} \nonumber \\
=&\sum_{j=-n}^nq^{-2j^2-3j}+q^{-2n^2-n+1}. \label{revise-6-8-cor-add}
\end{align}
Here for the second equality we have replaced $j$ by $-j-1$ in the second sum. Now substituting \eqref{revise-6-8-cor-add} into \eqref{6-8-cor-1} and \eqref{6-8-cor-2}, after routine arguments and simplifications, we get \eqref{6-8-cor-1-H} and \eqref{6-8-cor-2-H}, respectively.

The expression \eqref{6-8-cor-2-A} follows from \eqref{6-8-cor-2-H} and Theorem \ref{thm-fg-2} with $n=1$.
\end{proof}
\begin{rem}
The left side of \eqref{6-8-cor-2} is equal to $q^{-2}T_0^{(8)}(-q)$. See \eqref{mock-8-3-defn}. If we take $(x,q,z_1,z_0)\rightarrow (q^3,q^8,q^{-3/2},q^2)$ in Lemma \ref{lem-m-minus}, we see that \eqref{6-8-cor-2-A} is equivalent to \eqref{HM-T0-m}.
\end{rem}
\subsection{Representations for $\psi_{-}^{(6)}(q)$}
Identity \eqref{mock-6-9} has already been established in \eqref{5-8-cor-3-psi}.

\section{Mock theta functions of order 8}\label{sec-order-8}
Gordon and McIntosh \cite{Gordon-McIntosh} studied four mock theta functions of order 8. They are defined as
\begin{align}
&S_0^{(8)}(q):=\sum_{n=0}^{\infty}\frac{q^{n^2}(-q;q^2)_{n}}{(-q^2;q^2)_{n}}, \label{mock-8-1-defn} \\
&S_1^{(8)}(q):=\sum_{n=0}^{\infty}\frac{q^{n(n+2)}(-q;q^2)_{n}}{(-q^2;q^2)_{n}}, \label{mock-8-2-defn} \\
&T_0^{(8)}(q):=\sum_{n=0}^{\infty}\frac{q^{(n+1)(n+2)}(-q^2;q^2)_{n}}{(-q;q^2)_{n+1}}, \label{mock-8-3-defn} \\
&T_1^{(8)}(q):=\sum_{n=0}^{\infty}\frac{q^{n(n+1)}(-q^2;q^2)_{n}}{(-q;q^2)_{n+1}}. \label{mock-8-4-defn}
\end{align}
The following functions appear in the transformation laws of the eighth order mock theta functions:
\begin{align}
&U_0^{(8)}(q):=\sum_{n=0}^{\infty}\frac{q^{n^2}(-q;q^2)_{n}}{(-q^4;q^4)_n}=S_0(q^2)+qS_1(q^2),  \label{mock-8-5-defn} \\
&U_1^{(8)}(q):=\sum_{n=0}^{\infty}\frac{q^{(n+1)^2}(-q;q^2)_n}{(-q^2;q^4)_{n+1}}=T_0(q^2)+qT_1(q^2),  \label{mock-8-6-defn}\\
&V_0^{(8)}(q):=-1+2\sum_{n=0}^{\infty}\frac{q^{n^2}(-q;q^2)_{n}}{(q;q^2)_n}=-1+2\sum_{n=0}^{\infty}\frac{q^{2n^2}(-q^2;q^4)_{n}}{(q;q^2)_{2n+1}}, \label{mock-8-7-defn} \\ &V_1^{(8)}(q):=\sum_{n=0}^{\infty}\frac{q^{(n+1)^2}(-q;q^2)_n}{(q;q^2)_{n+1}}=\sum_{n=0}^{\infty}\frac{q^{2n^2+2n+1}(-q^4;q^4)_n}{(q;q^2)_{2n+2}}=\sum_{n=0}^{\infty}\frac{q^{n+1}(-q;q)_{2n}}{(-q^2;q^4)_{n+1}}. \label{mock-8-8-defn}
\end{align}
Appell-Lerch series representations of $U_0^{(8)}(q), U_1^{(8)}(q), V_0^{(8)}(q)$ and $V_1^{(8)}(q)$ were obtained in \cite{Gordon-McIntosh}.
\begin{theorem}\label{thm-ord-8}
\begin{align}
&U_0^{(8)}(q)=\frac{(-q;q^2)_{\infty}}{(q^2;q^2)_{\infty}}\sum_{n=-\infty}^{\infty}\frac{1+q^{2n}}{1+q^{4n}}(-1)^{n}q^{2n^2+n}, \label{mock-8-5} \\
&U_1^{(8)}(q)=q\frac{(-q;q^2)_{\infty}}{(q^2;q^2)_{\infty}}\sum_{n=-\infty}^{\infty}(-1)^n\frac{q^{2n^2+3n}}{1+q^{4n+2}}, \label{mock-8-6} \\
&V_0^{(8)}(q)=-1+2\frac{(-q^2;q^4)_{\infty}}{(q^4;q^4)_{\infty}}\sum_{n=-\infty}^{\infty}\frac{(-1)^nq^{4n^2+2n}}{1-q^{4n+1}}, \label{mock-8-7-b} \\
&V_1^{(8)}(q)=\frac{(-q^4;q^4)_\infty}{(q^4;q^4)_\infty}\sum_{n=-\infty}^{\infty}\frac{(-1)^nq^{(2n+1)^2}}{1-q^{4n+1}}. \label{mock-8-8-b}
\end{align}
\end{theorem}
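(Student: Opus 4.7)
The plan is to prove each of the four identities in Theorem \ref{thm-ord-8} by the same systematic mechanism used in Sections \ref{sec-order-2}--\ref{sec-order-6}: establish an $(a,b)$-parameterized identity by specializing $(c,d,u,v)$ in Theorem \ref{thm-main} and evaluating the resulting terminated $_{3}\phi_{2}$ via Pfaff--Saalsch\"utz \eqref{Pfaff} or one of Lemmas \ref{meq:1}--\ref{meq:3}; then pick specific $(a,b)$ so that the left-hand Eulerian side collapses to the mock theta function while the right-hand side is the target Appell--Lerch series.

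For $U_0^{(8)}(q)$ and $U_1^{(8)}(q)$ the required parameterized identities are already in hand. Theorem \ref{thm-ab-3-2}, obtained by the choice $(c,d,u,v)\to(iq,-iq,1,q)$ followed by Pfaff--Saalsch\"utz, specializes at $(q,a,b)\to(q^2,0,-q)$ to give \eqref{3-2-cor-3}, which is precisely \eqref{mock-8-5}. Similarly Theorem \ref{thm-ab-3-6}, coming from $(c,d,u,v)\to(iq^{3/2},-iq^{3/2},q,q)$, specializes at $(q,a,b)\to(q^2,0,-q)$ to yield \eqref{3-6-cor-3}, which is \eqref{mock-8-6}. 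For $V_1^{(8)}(q)$ I would first use Theorem \ref{thm-ab-3-6} with $(q,a,b)\to(q^2,-q,-1)$, applied to the third Eulerian expression for $V_1^{(8)}(q)$ in \eqref{mock-8-8-defn}; this produces \eqref{3-6-cor-4}, which after folding the $n<0$ terms back via $n\mapsto -n-1$ becomes the two-sided Appell--Lerch series \eqref{intro-V1-3}. To identify this with \eqref{mock-8-8-b} I would rewrite both sides in terms of $m(x,q,z)$ using \eqref{h-defn}, \eqref{h-m}, \eqref{k-defn}, \eqref{k-id}, and deduce equivalence from a single application of Lemma \ref{lem-m-minus} together with a routine theta-quotient simplification (this is the promised argument of Section \ref{subsec-mock-V18}).

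For $V_0^{(8)}(q)$ a new parameterized identity is needed. Starting from the second Eulerian form in \eqref{mock-8-7-defn}, I would apply Theorem \ref{thm-main} with base $q^4$ and a choice of $(c,d,u,v)$ designed so that the inner terminated $_{3}\phi_{2}$ simplifies—via Pfaff--Saalsch\"utz or Lemma \ref{meq:3}—to a single term with denominator of the form $1-q^{4n+1}$ (the analog of the tricks used in Theorem \ref{thm-ab-2-3} for $\mu^{(2)}(q)$ and Theorem \ref{thm-ab-6-7} for $\phi^{(3)}(q)$). Specializing the resulting $(a,b)$-identity at a degenerate value such as $(a,b)\to(0,-q)$ or $(q,-q)$ should collapse the Eulerian side to $V_0^{(8)}(q)$ and the right side to \eqref{mock-8-7-b}. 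The principal obstacle, and the only step that is not mechanical, is locating this correct choice of $(c,d,u,v)$: among the admissible combinations only a few produce a terminated $_{3}\phi_{2}$ that sums cleanly to the single fraction $1/(1-q^{4n+1})$ required by \eqref{mock-8-7-b}, and the $V_1^{(8)}$ equivalence also requires moderately long theta-function manipulation best verified by the algorithmic method of \cite{Garvan-Liang}.
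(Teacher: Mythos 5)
Your treatment of $U_0^{(8)}$ and $U_1^{(8)}$ coincides with the paper's: both are read off from \eqref{3-2-cor-3} and \eqref{3-6-cor-3}, i.e.\ from Theorems \ref{thm-ab-3-2} and \ref{thm-ab-3-6} at $(q,a,b)\to(q^2,0,-q)$. For $V_1^{(8)}$ you take a genuinely different route: the paper does \emph{not} obtain \eqref{mock-8-8-b} by first proving \eqref{intro-V1-3} and converting; it builds a new base-$q^4$ parameterized identity (Theorem \ref{thm-ab-8-8}, from $(q,c,d,u,v)\to(q^4,q^5,q^7,q^4,q^4)$ and Pfaff--Saalsch\"utz with $(a,b,c)=(q^4,q,q^3)$), specializes $(a,b)\to(0,-1)$ against the second Eulerian form in \eqref{mock-8-8-defn}, and reaches \eqref{mock-8-8-b} directly after the elementary split $1-q^{8n+4}=(1-q^{4n+3})+q^{4n+3}(1-q^{4n+1})$. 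Your route is viable --- the equivalence of \eqref{mock-8-8-b} and \eqref{mock-8-8-c} is exactly what Section \ref{subsec-mock-V18} proves --- but it costs more than you budget: one must decompose the bilateral sum in \eqref{mock-8-8-c} by the parity of $n$, then apply Lemma \ref{lem-m-minus} \emph{twice} (to $m(q^2,q^8,-1)-m(q^2,q^8,q)$ and to $m(q^2,q^8,-q^4)-m(q^2,q^8,q)$) together with the theta identity \eqref{V18-equiv-3}, not ``a single application.''

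The genuine gap is $V_0^{(8)}$. You correctly identify that a new parameterized identity is needed and that locating $(c,d,u,v)$ is the crux, but you do not produce it, and your description of what the terminated ${}_3\phi_2$ should evaluate to would send the search astray: no admissible Pfaff--Saalsch\"utz specialization yields a \emph{single} fraction $1/(1-q^{4n+1})$. The working choice is $(q,c,d,u,v)\to(q^4,q^3,q^5,1,q^4)$, for which \eqref{Pfaff} with $(a,b,c)=(1,q,q^{-1})$ gives
\begin{align*}
{}_{3}\phi_{2}\bigg(\genfrac{}{}{0pt}{}{q^{-4n}, q^{4n},q^4}{q^3,q^5};q^4,q^4\bigg)=\frac{(1-q)(1-q^{-1})\,q^{4n}}{(1-q^{4n+1})(1-q^{4n-1})},
\end{align*}
a \emph{two}-factor denominator; the single-denominator bilateral series \eqref{mock-8-7-b} emerges only after specializing $(a,b)\to(0,-q^2)$ against the second Eulerian form in \eqref{mock-8-7-defn} and then splitting $(1-q^{-1})(1+q^{4n})=(1-q^{4n-1})-q^{-1}(1-q^{4n+1})$, with $n\mapsto -n$ in one of the two resulting sums. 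Without exhibiting this choice (or an equivalent one), the proof of \eqref{mock-8-7-b} is not complete.
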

From the relations
\begin{align}
&S_0^{(8)}(q)=\frac{1}{2}\left(U_0^{(8)}(q^{1/2})+U_0(-q^{1/2})\right), \quad S_1^{(8)}(q)=\frac{1}{2}q^{-1/2}\left(U_0(q^{1/2})-U_0(-q^{1/2})\right), \\
&T_0^{(8)}(q)=\frac{1}{2}\left(U_1^{(8)}(q^{1/2})+U_1(-q^{1/2})\right), \quad S_1^{(8)}(q)=\frac{1}{2}q^{-1/2}\left(U_1(q^{1/2})-U_1(-q^{1/2})\right),
\end{align}
It is clear that Appell-Lerch series representations for $S_0^{(8)}(q)$, $S_1^{(8)}(q)$, $T_0^{(8)}(q)$ and $T_{1}^{(8)}(q)$ can also be deduced from Theorem \ref{thm-ord-8}, and so here we omit them.

We will give new proofs to Theorem \ref{thm-ord-8}. Moreover, we also find the following  representations for the eighth order mock theta functions.
\begin{theorem}\label{thm-ord-8-add}
We have
\begin{align}
S_0^{(8)}(q)&=\frac{(-q;q^2)_{\infty}}{(q^2;q^2)_{\infty}}\sum_{n=0}^{\infty}q^{4n^2+n}(1-q^{6n+3})\sum_{j=-n}^{n}(-1)^jq^{-2j^2}, \label{mock-8-1} \\
S_1^{(8)}(q)&=\frac{(-q;q^2)_{\infty}}{(q^2;q^2)_{\infty}}\sum_{n=0}^{\infty}q^{4n^2+3n}(1-q^{2n+1})\sum_{j=-n}^{n}(-1)^jq^{-2j^2}, \label{mock-8-2} \\
T_0^{(8)}(q)&=q^2\frac{(-q^2;q^2)_{\infty}}{(q^2;q^2)_{\infty}}\sum_{n=0}^{\infty}q^{4n^2+7n}(1-q^{2n+2})\sum_{j=-n-1}^{n}(-1)^jq^{-2j^2-3j}, \label{mock-8-3} \\
T_1^{(8)}(q)&=\frac{(-q^2;q^2)_{\infty}}{(q^2;q^2)_{\infty}}\sum_{n=0}^{\infty}q^{4n^2+3n}(1-q^{2n+1})\sum_{j=-n}^{n}(-1)^jq^{-2j^2-j}, \label{mock-8-4} \\
V_0^{(8)}(q)&=-1+2\frac{(-q;q^2)_\infty}{(q^2;q^2)_\infty}\sum_{n=0}^{\infty}(-1)^nq^{4n^2+2n}\sum_{j=0}^{2n}q^{-\binom{j+1}{2}}(1+q^{4n+2}),  \label{mock-8-7-a-thm} \\
V_1^{(8)}(q)&=q\frac{(-q;q^2)_{\infty}}{(q^2;q^2)_{\infty}}\sum_{n=0}^{\infty}(-1)^nq^{4n^2+4n}\sum_{j=-n}^{n}q^{-2j^2-j} \label{mock-8-8-a-thm} \\
&=q\frac{(-q;q)_\infty}{(q;q)_\infty}\sum_{n=0}^\infty (-1)^n\frac{1-q^{2n+1}}{1+q^{4n+2}}q^{n^2+2n}. \label{mock-8-8-c}
\end{align}
Furthermore, \eqref{mock-8-8-c} is equivalent to \eqref{mock-8-8-b}.
\end{theorem}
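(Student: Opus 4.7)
The plan is to assemble Theorem~\ref{thm-ord-8-add} by cross-referencing the parameterized identities already proved in Sections~\ref{sec-order-3}, \ref{sec-order-5} and~\ref{sec-order-6}, rather than giving fresh derivations for each formula. Concretely, identity \eqref{mock-8-1} is precisely the content of \eqref{5-1-cor-1}, which arises from Theorem~\ref{thm-ab-5-1} upon taking $(q,a,b)\to(q^2,0,-q)$. Identity \eqref{mock-8-2} is obtained from \eqref{5-5-cor-3} by replacing $q$ with $-q$, since $S_1^{(8)}(q)$ coincides with the left side of \eqref{5-5-cor-3} after that substitution. In the same way, \eqref{mock-8-3} follows from \eqref{6-8-cor-2}, \eqref{mock-8-4} follows from \eqref{5-8-cor-2}, \eqref{mock-8-7-a-thm} follows from \eqref{5-4-cor-1}, and \eqref{mock-8-8-a-thm} is a restatement of \eqref{5-8-cor-4}. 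Finally, \eqref{mock-8-8-c} is \eqref{3-6-cor-4}, specialized from Theorem~\ref{thm-ab-3-6} via $(q,a,b)\to(q^2,-q,-1)$.

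With the six Hecke-type/Appell-Lerch formulas attributed in this fashion, the only content left to address is the stated equivalence of \eqref{mock-8-8-c} and \eqref{mock-8-8-b}. The strategy is to convert both representations to a common form in the building block $m(x,q,z)$ and then manipulate with the properties collected in Section~\ref{subsec-build}. From \eqref{3-6-cor-4-simplify} we already know that \eqref{mock-8-8-c} equals $-\tfrac{1}{2}(1+i)m(i,q^2,q)$, so the task reduces to verifying the same equality for the right side of \eqref{mock-8-8-b}. For this I would rewrite the Appell-Lerch series on the right of \eqref{mock-8-8-b} as a multiple of $h(q,q^4)$ (with $h$ as in \eqref{h-defn}), invoke \eqref{h-m} to get the expression $-m(q^2,q^8,q)$ that also appears in \cite[Eq.\ (5.42)]{Hickerson-Mortenson}, and then reconcile the two forms $-\tfrac{1}{2}(1+i)m(i,q^2,q)$ and $-m(q^2,q^8,q)$ through the decomposition lemmas.

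The bridge between these two $m$-expressions will use Lemma~\ref{lem-m-decompose}, applied with $(x,q,z,z',n)\to(i,q^2,q,q,2)$ (or an analogous quadruple), to relate $m(i,q^2,q)$ to a combination of $m(\pm q^2,q^8,\cdot)$ values; Lemma~\ref{lem-m-minus} then shifts the base point $z$ so that the $m(q^2,q^8,q)$ produced by \eqref{mock-8-8-b} appears explicitly. What remains after these substitutions is an identity purely among theta quotients, which can be established by the standard routine outlined after Lemma~\ref{lem-m-add} and verified mechanically with the method of \cite{Garvan-Liang}. The main obstacle is not conceptual but organizational: one must keep track of the matching factors of $(1+i)$, of the signs $\pm q$ inside $m(x,q,z)$, and of the residual infinite products that arise from each application of \eqref{m-minus}, and check that they collapse to zero. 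Once that bookkeeping is done, the equivalence of \eqref{mock-8-8-c} and \eqref{mock-8-8-b} drops out, completing the proof of the theorem.
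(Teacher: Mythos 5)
Your attributions for the six series representations coincide with the paper's for \eqref{mock-8-1}, \eqref{mock-8-2}, \eqref{mock-8-4}, \eqref{mock-8-7-a-thm}, \eqref{mock-8-8-a-thm} and \eqref{mock-8-8-c}, but you diverge in two places, both legitimately. First, for \eqref{mock-8-3} the paper sets up a fresh parameterized identity (Theorem \ref{thm-ab-8-3}, with lower parameter $-q^3$) and takes $(a,b)\to(0,-1)$, whereas you pull it out of \eqref{6-8-cor-2}; that works, since the left side of \eqref{6-8-cor-2} is $q^{-2}T_0^{(8)}(-q)$ and, after $q\mapsto -q$, the inner sum $\sum_{j=0}^{n}(-1)^j(1-q^{2j+1})q^{-2j^2-3j}$ folds into $\sum_{j=-n-1}^{n}(-1)^jq^{-2j^2-3j}$ via $j\mapsto -j-1$ in its second half — so you get the theorem's form without any new machinery. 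Second, and more substantially, for the equivalence of \eqref{mock-8-8-c} and \eqref{mock-8-8-b} the paper never passes through $m(i,q^2,q)$: it splits the bilateral sum in \eqref{mock-8-8-c} by the parity of $n$, landing directly on $2q\frac{J_{16}^2}{J_8}m(q^2,q^8,-1)-\frac{J_8^5}{J_4^2J_{16}^2}m(q^2,q^8,-q^4)$ (see \eqref{New-V18}), so that only Lemma \ref{lem-m-minus} (applied twice, to move $z$ from $-1$ and $-q^4$ to $q$) and one theta identity are needed to reach $-m(q^2,q^8,q)$. Your route through $-\tfrac12(1+i)m(i,q^2,q)$ and the level-$2$ case of Lemma \ref{lem-m-decompose} is viable — the decomposition produces $m(q^2,q^8,z')$ and, after \eqref{m-id-2}, a companion $-i\,m(q^2,q^8,z'^{-1})$, and $-\tfrac12(1+i)(1-i)=-1$ supplies the correct constant — and indeed the paper itself remarks after \eqref{3-6-cor-4-simplify} that this bridge exists, omitting the details. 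The trade-off is that your path drags the complex parameter $i$ into the residual theta quotients, so the final ``products collapse to zero'' check is noticeably heavier than the paper's, which stays at modulus $q^8$ throughout and needs only the $z$-shift lemma.
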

From the known relations \eqref{mock-8-5-defn} and \eqref{mock-8-6-defn}, it is clear that Hecke-type series representations for $U_0^{(8)}(q)$ and $U_1^{(8)}(q)$ can be deduced from \eqref{mock-8-1}--\eqref{mock-8-4}, and so we omit them.

The Hecke-type series representations in Theorem \ref{thm-ord-8} were earlier found by Cui, Gu and Hao \cite{CGH} by Bailey pairs. They also write them in terms of $f_{a,b,c}(x,y,q)$. See \cite[Theorem 1.4]{CGH}.

\subsection{Representations of $S_0^{(8)}(q)$ and $S_1^{(8)}(q)$}

Identity \eqref{mock-8-1} has already been established in \eqref{5-1-cor-1}.

Identity \eqref{mock-8-2} has already been established in \eqref{5-5-cor-3}.

\subsection{Representations of $T_0^{(8)}(q)$}
\begin{theorem}\label{thm-ab-8-3}
For $\max \{|abq^2|, |aq^4|, |bq^4|\}<1$, we have
\begin{align}
&\frac{(q^2,abq^2;q^2)_\infty}{(aq^4,bq^4;q^2)_\infty}
{}_3\phi_2\bigg(\genfrac{}{}{0pt}{}{q^2/a,q^2/b,q^2}{0,-q^3};q^2,abq^2\bigg) \nonumber\\
=& (1+q)\sum_{n=0}^{\infty}(1-q^{4n+4})\frac{(q^2/a,q^2/b;q^2)_n}{(aq^4,bq^4;q^2)_n}(ab)^nq^{3n^2+6n}\sum_{j=0}^n(-1)^j(1-q^{2j+1})q^{-2j^2-3j}.
\end{align}
\end{theorem}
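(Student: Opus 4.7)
\medskip

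\noindent\textbf{Proof proposal.} The plan is to follow the same recipe that produced the cognate identities \eqref{thm-meq:1.1-eq}, \eqref{thm-meq:1.2-eq}, and especially Theorem \ref{thm-ab-6-8}, since the latter differs from our target only by changing $d=q^3$ to $d=-q^3$ and flipping a few signs. First I will specialize the master transformation in Theorem \ref{thm-main} by the substitution
\[
(q,c,d,u,v)\longrightarrow(q^2,\,0,\,-q^3,\,q^4,\,q^2).
\]
The exponents on the left side then produce $(uq^2,uab/q^2;q^2)_\infty/(ua,ub;q^2)_\infty=(q^6,abq^2;q^2)_\infty/(aq^4,bq^4;q^2)_\infty$, which after absorbing the factor $(1-q^2)(1-q^4)$ (appearing as $(q^6;q^2)_\infty=(q^2;q^2)_\infty/((1-q^2)(1-q^4))$ on the left and via $(u;q^2)_n/(q^2;q^2)_n=(1-q^{2n+2})/(1-q^2)$ and $1/(1-u)=1/(1-q^4)$ on the right) yields
\[
\frac{(q^2,abq^2;q^2)_\infty}{(aq^4,bq^4;q^2)_\infty}\,{}_3\phi_2\!\left(\genfrac{}{}{0pt}{}{q^2/a,q^2/b,q^2}{0,-q^3};q^2,abq^2\right)=\sum_{n=0}^\infty(1-q^{4n+4})(1-q^{2n+2})\frac{(q^2/a,q^2/b;q^2)_n}{(aq^4,bq^4;q^2)_n}(-ab)^n q^{n^2+n}\,\Phi_n,
\]
where $\Phi_n$ denotes the terminated inner series ${}_3\phi_2(q^{-2n},q^{2n+4},q^2;0,-q^3;q^2,q^2)$.

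Next I will evaluate $\Phi_n$ in closed form by applying Lemma \ref{meq:2} in base $q^2$ with $(\alpha,c)=(q^2,-q)$; these choices align the bottom parameter $q^4/c=-q^3$ and the second top parameter $\alpha q^{2n+2}=q^{2n+4}$ with what appears in $\Phi_n$. Computing the ratios
\[
\frac{(\alpha c,q^2;q^2)_n}{(q^4/c,\alpha q^2;q^2)_n}=\frac{(-q^3,q^2;q^2)_n}{(-q^3,q^4;q^2)_n}=\frac{1-q^2}{1-q^{2n+2}},\qquad (\alpha/c)^n=(-q)^n,
\]
together with the simplification of the $j$-sum using $(-q;q^2)_j/(-q^3;q^2)_j=(1+q)/(1+q^{2j+1})$ and $1-q^{4j+2}=(1-q^{2j+1})(1+q^{2j+1})$, yields
\[
\Phi_n=\frac{1+q}{1-q^{2n+2}}(-q)^n q^{2n^2+4n}\sum_{j=0}^n(-1)^j(1-q^{2j+1})q^{-2j^2-3j}.
\]

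Finally I will substitute this evaluation back into the summation identity from the first step. The $(1-q^{2n+2})$ factor cancels, and the combination $(-ab)^n(-q)^n q^{n^2+n}q^{2n^2+4n}$ collapses to $(ab)^n q^{3n^2+6n}$, which produces exactly the claimed right-hand side
\[
(1+q)\sum_{n=0}^\infty(1-q^{4n+4})\frac{(q^2/a,q^2/b;q^2)_n}{(aq^4,bq^4;q^2)_n}(ab)^n q^{3n^2+6n}\sum_{j=0}^n(-1)^j(1-q^{2j+1})q^{-2j^2-3j}.
\]
I expect no real obstacle here; the entire argument is parallel to the proof of Theorem \ref{thm-ab-6-8}, and the only point requiring care is bookkeeping the factors $(1-q^2)$, $(1-q^4)$, $(1+q)$ and the $(-1)^n$ that arises from $(-q)^n$, which together transmute the $(-ab)^n$ into $(ab)^n$ and shift the exponent from $n^2+n$ to $3n^2+6n$.
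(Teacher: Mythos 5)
Your proposal is correct and follows exactly the paper's own proof: the same specialization $(q,c,d,u,v)\rightarrow(q^2,0,-q^3,q^4,q^2)$ of Theorem \ref{thm-main}, followed by the same evaluation of the terminated ${}_3\phi_2$ via Lemma \ref{meq:2} with $(q,\alpha,c)\rightarrow(q^2,q^2,-q)$. The bookkeeping of the factors $(1-q^2)$, $(1-q^4)$, $(-q)^n$ and the exponent collapse to $q^{3n^2+6n}$ all check out.
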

\begin{proof}
Taking $(q,c,d,u,v)\rightarrow (q^2, 0,-q^3, q^4, q^2)$ in Theorem \ref{thm-main}, we deduce that
\begin{align}
&\frac{(q^4,abq^2;q^2)_\infty}{(aq^4,bq^4;q^2)_\infty}
{}_3\phi_2\bigg(\genfrac{}{}{0pt}{}{q^2/a,q^2/b,q^2}{0,-q^3};q^2,abq^2\bigg)
\nonumber \\
=&\sum_{n=0}^{\infty}(1-q^{4n+4})\frac{1-q^{2n+2}}{1-q^2}\frac{(q^2/a,q^2/b;q^2)_n}{(aq^4,bq^4;q^2)_n}(-ab)^nq^{n^2+n}
{}_3\phi_2\bigg(\genfrac{}{}{0pt}{}{q^{-2n},q^{2n+4},q^2}{0,-q^3};q^2,q^2\bigg). \label{ab-8-3-proof}
\end{align}
Taking $(q, \alpha, c) \rightarrow (q^2, q^2, -q)$ in Lemma \ref{meq:2},  we deduce that
\begin{align}
{}_{3}\phi_{2}\bigg(\genfrac{}{}{0pt}{}{q^{-2n}, q^{2n+4},q^2}{0,-q^3};q^2,q^2\bigg)=\frac{1+q}{1-q^{2n+2}}(-1)^nq^{2n^2+5n}\sum_{j=0}^{n}(-1)^j(1-q^{2j+1})q^{-2j^2-3j}. \label{8-T0-2}
\end{align}
The theorem follows after substituting \eqref{8-T0-2} into \eqref{ab-8-3-proof}.
\end{proof}
\begin{corollary}
Identity \eqref{mock-8-3} holds. In addition, we have
\begin{align}
&\sum_{n=1}^\infty \frac{(-1)^nq^{n^2}(q;q^2)_n}{(-q;q^2)_n}\nonumber \\
=& -q\frac{(q;q^2)_\infty}{(q^2;q^2)_\infty}\times \sum_{n=0}^\infty (-1)^n(1-q^{4n+4})q^{4n^2+6n}\sum_{j=0}^n (-1)^j(1-q^{2j+1})q^{-2j^2-3j} \label{8-3-cor-1} \\
=& -q\frac{J_1}{J_2^2}\left(f_{1,3,1}(-q^5,-q^{11},q^4)-q^{10}f_{1,3,1}(-q^{13},-q^{19},q^4) \right)+q^{-1} \label{8-3-cor-1-H} \\
=& -m(q^{4},q^8,q^{-3/2})-q^{-1}m(q^{8},q^8,q^{-3/2})+q^{-1}+\frac{J_{1,8}^2J_{3,8}J_{4,8}J_{8}^3}{J_{\frac{1}{2},8}J_{\frac{3}{2},8}^2J_{2,8}^2J_{\frac{5}{2},8}} \label{8-3-cor-1-A} \\
=&\frac{1}{2}+q^{-1}m(1,q^8,-q)-\frac{1}{2}\frac{J_1^2J_4^3}{J_2^3J_8}. \label{8-3-cor-1-final}
\end{align}
\end{corollary}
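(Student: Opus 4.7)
\medskip

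\noindent\textbf{Proof proposal.} The corollary contains five displayed identities, and the plan is to treat them in order, deriving each from its predecessor together with results already available in the paper.

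For \eqref{mock-8-3}, the strategy is to specialize Theorem \ref{thm-ab-8-3} by taking $(a,b)\to(-1,0)$. Using $a=-1$ gives $(q^2/a;q^2)_n=(-q^2;q^2)_n$, while the limit $b\to 0$ produces the standard reduction $(q^2/b;q^2)_n\,b^n\to(-1)^nq^{n(n+1)}$ and $(bq^4;q^2)_n\to 1$. The ${}_3\phi_2$ on the left then simplifies to $\sum_n (-q^2;q^2)_n\,q^{n^2+3n}/(-q^3;q^2)_n$, which matches the Eulerian form of $T_0^{(8)}(q)$ after pulling out the factor $q^2/(1+q)$ from the definition \eqref{mock-8-3-defn}. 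The prefactor $(q^2;q^2)_\infty(1+q^2)/(-q^2;q^2)_\infty$ combines cleanly with the prefactor $(1+q)(1+q^2)$ from the right side. What remains is the inner $j$-sum $\sum_{j=0}^n(-1)^j(1-q^{2j+1})q^{-2j^2-3j}$, which, by substituting $j\mapsto -j-1$ in the $q^{2j+1}$-part, rewrites as $\sum_{j=-n-1}^{n}(-1)^j q^{-2j^2-3j}$, yielding \eqref{mock-8-3}.

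For \eqref{8-3-cor-1}, the plan is analogous: choose another specialization of $(a,b)$ in Theorem \ref{thm-ab-8-3} (a natural candidate, by comparison with the pattern used for $\phi_{-}^{(6)}$ just above, is $(a,b)\to(q,-1)$ or $(a,b)\to(q,0)$ after combining with a boundary term), so that $(q^2/a;q^2)_n$ produces $(q;q^2)_n$ in the numerator and the denominator $(-q^3;q^2)_n$ converts to $(-q;q^2)_n$ up to the usual factor $(1+q)/(1+q^{2n+1})$. The resulting Hecke-type double sum is exactly the right side of \eqref{8-3-cor-1}, with the $q^{-1}$ in \eqref{8-3-cor-1-H} coming from the $n=0$ boundary contribution. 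The passage from \eqref{8-3-cor-1} to the $f_{1,3,1}$-form \eqref{8-3-cor-1-H} is then routine, following the template established in \eqref{intro-Liu-eq-H}--\eqref{Liu-eq-simplify}: expand the inner $\sum_{j=0}^n(-1)^j(1-q^{2j+1})q^{-2j^2-3j}$ into a bilateral sum, convert the outer $\sum_n$ to a symmetric sum via the $(-1)^n(1-q^{4n+4})$ factor, then change variables $r=n+j+c_1$, $s=n-j+c_2$ with appropriate integer shifts to hit the quadratic form $r^2+3rs+s^2$ of $f_{1,3,1}$.

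For \eqref{8-3-cor-1-A}, apply Theorem \ref{thm-fg-2} with $n=1$ to each piece $f_{1,3,1}(-q^5,-q^{11},q^4)$ and $f_{1,3,1}(-q^{13},-q^{19},q^4)$. This produces $m$-functions at the arguments $(q^4,q^8,q^{-3/2})$ and $(q^8,q^8,q^{-3/2})$, together with the theta-product contributions from $\Theta_{1,2}$. The remaining task for \eqref{8-3-cor-1-final} is twofold. First, use \eqref{m-id-3} in the form $m(q^8,q^8,z)=1-m(1,q^8,z)$ to absorb $-q^{-1}m(q^8,q^8,q^{-3/2})$ into $q^{-1}m(1,q^8,q^{-3/2})+q^{-1}$ (the two $q^{-1}$'s cancel). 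Second, apply Lemma \ref{lem-m-minus} with $(x,q,z_1,z_0)\to(1,q^8,-q,q^{-3/2})$ to change the base point from $q^{-3/2}$ to $-q$, and apply Lemma \ref{lem-m-decompose} (with $(x,q,z,z',n)\to(q^4,q^8,q^{-3/2},-q,?)$, or an analogous aggregation) to eliminate the remaining $m(q^4,q^8,q^{-3/2})$ term in favor of $m(1,q^8,-q)$ plus further theta quotients. The $\frac{1}{2}$ in \eqref{8-3-cor-1-final} is accounted for by the identity \eqref{eq-fact} $m(q,q^2,-1)=\tfrac12$, which arises naturally from these reductions.

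The principal obstacle will be the last step: after collecting all the theta quotients produced by $\Theta_{1,2}$, by the $m$-subtraction formula, and by the $m$-decomposition formula, one must verify that they coalesce into the single compact product $-\tfrac12\,J_1^2J_4^3/(J_2^3 J_8)$. This is purely an identity among Jacobi triple products, so in principle it reduces to a finite-dimensional linear check by the modular-forms argument used in \cite{Garvan-Liang}; nevertheless, choosing the most efficient sequence of $z$-shifts and decompositions (so that the intermediate expression has a manageable number of theta quotients) requires some experimentation. Once that identity is verified, \eqref{8-3-cor-1-final} follows.
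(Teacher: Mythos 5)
Your treatment of \eqref{mock-8-3} is correct and coincides with the paper's (the paper takes $(a,b)\to(0,-1)$ in Theorem \ref{thm-ab-8-3}, which is the same as your $(-1,0)$ by the symmetry of the identity in $a$ and $b$, and the rewriting of the $j$-sum via $j\mapsto -j-1$ is exactly the step needed). The passage to \eqref{8-3-cor-1-A} via Theorem \ref{thm-fg-2} with $n=1$ also matches the paper.

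The genuine gap is in \eqref{8-3-cor-1}: neither of your proposed specializations works. With $(a,b)\to(q,0)$ the right side of Theorem \ref{thm-ab-8-3} retains the ratio $(q;q^2)_n/(q^5;q^2)_n=\frac{(1-q)(1-q^3)}{(1-q^{2n+1})(1-q^{2n+3})}$, so one gets an Appell--Lerch-type sum rather than the pure Hecke-type series claimed in \eqref{8-3-cor-1}; with $(a,b)\to(q,-1)$ the ratio $(q;q^2)_n(-q^2;q^2)_n/\bigl((q^5;q^2)_n(-q^4;q^2)_n\bigr)$ likewise fails to collapse. The specialization that works is $(a,b)\to(0,q^{-1})$: then $q^2/b=q^3=bq^4$, so the $b$-dependent Pochhammer quotient on the right is identically $1$ and the double sum is exactly the Hecke-type series in \eqref{8-3-cor-1}, while on the left the ${}_3\phi_2$ becomes $\sum_{n\ge 0}(-1)^nq^{n^2+2n}(q^3;q^2)_n/(-q^3;q^2)_n$, which after the shift $n\mapsto n-1$ and absorption of the prefactors is $-q^{-1}(1+q)(q^2;q^2)_\infty/(q;q^2)_\infty$ times the stated Eulerian series. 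Without this choice the whole chain does not start. A secondary, smaller issue: in the final step you propose to ``eliminate'' $m(q^4,q^8,q^{-3/2})$ in favor of $m(1,q^8,-q)$ using Lemma \ref{lem-m-decompose}; that lemma changes the modulus $q\mapsto q^{n^2}$, not the $x$-parameter, so it cannot convert $m(q^4,q^8,\cdot)$ into $m(1,q^8,\cdot)$. What the paper does (and what you need) is Lemma \ref{lem-m-minus} with $z_1=q^{-3/2}$, $z_0=-1$ followed by \eqref{eq-fact}, which shows $m(q^4,q^8,q^{-3/2})=\tfrac12+{}$a theta quotient; you do correctly identify \eqref{eq-fact} as the source of the $\tfrac12$ in \eqref{8-3-cor-1-final}, and the remaining verification of the theta-product identity by the method of \cite{Garvan-Liang} is as you describe.
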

\begin{proof}
Taking $(a,b)\rightarrow (0,-1)$ and $(0, q^{-1})$ in Theorem \ref{thm-ab-8-3}, we obtain \eqref{mock-8-3} and \eqref{8-3-cor-1}, respectively.

The expression \eqref{8-3-cor-1-A} follows from \eqref{8-3-cor-1-H} and Theorem \ref{thm-fg-2} with $n=1$.

Using \eqref{eq-fact} and Lemma \ref{lem-m-minus} we see that
\begin{align}
m(q^{4},q^8,q^{-3/2})=m(q^4,q^8,-1)+\frac{1}{2}\frac{J_8^3J_{3,8}J_{4,8}}{J_{\frac{3}{2},8}^2J_{\frac{5}{2},8}^2} =\frac{1}{2}+\frac{1}{2}\frac{J_8^3J_{3,8}J_{4,8}}{J_{\frac{3}{2},8}^2J_{\frac{5}{2},8}^2}. \label{revise-sec8-T08-cor-neweq-1}
\end{align}
Using \eqref{m-id-3} we obtain
\begin{align}
m(q^8,q^8,q^{-3/2})=1-m(1,q^8,q^{-3/2}). \label{revise-sec8-T08-cor-neweq-2}
\end{align}
By Lemma \ref{lem-m-minus} we have
\begin{align}
m(1,q^8,q^{-3/2})-m(1,q^8,-q)
=-q\frac{J_{1,16}^3J_{5,16}J_{7,16}^2J_{8}J_{16}^4}{J_{\frac{1}{2},16}J_{\frac{3}{2},16}^2J_{2,16}^2J_{\frac{5}{2},16}J_{\frac{11}{2},16}J_{\frac{13}{2},16}^2J_{\frac{15}{2},16}}.   \label{revise-sec8-T08-cor-neweq-3}
\end{align}
Substituting \eqref{revise-sec8-T08-cor-neweq-1}--\eqref{revise-sec8-T08-cor-neweq-3} into \eqref{8-3-cor-1-A} and using the method in \cite{Garvan-Liang} to prove theta function identities, we arrive at \eqref{8-3-cor-1-final}.
\end{proof}
\begin{rem}
It is not difficult to see that the left side of \eqref{8-3-cor-1} equals $\frac{1}{2}V_0^{(8)}(-q)-\frac{1}{2}$.  Thus \eqref{8-3-cor-1-final} is equivalent to \eqref{HM-V08-m}.
\end{rem}
\subsection{Representations for $T_1^{(8)}(q)$, $U_0^{(8)}(q)$, $U_1^{(8)}(q)$}

Identity \eqref{mock-8-4} has already been established in \eqref{5-8-cor-2}.

Identity \eqref{mock-8-5} has already been established in \eqref{3-2-cor-3}.

Identity \eqref{mock-8-6} has already been established in \eqref{3-6-cor-3}.

\subsection{Representations for $V_0^{(8)}(q)$}
Identity \eqref{mock-8-7-a-thm} follows from \eqref{5-4-cor-1}.

To get \eqref{mock-8-7-b}, we first establish the following identity.
\begin{theorem}\label{thm-ab-8-7}
For $\max \{|ab/q^4|, |a|, |b|\}<1$, we have
\begin{align}
&\frac{(q^4,ab/q^4;q^4)_\infty}{(a,b;q^4)_\infty}
{}_3\phi_2\bigg(\genfrac{}{}{0pt}{}{q^4/a,q^4/b,q^4}{q^3,q^5};q^4,ab/q^4\bigg) \nonumber\\
=& 1-(1-q)^2\sum_{n=1}^{\infty}\frac{1+q^{4n}}{(1-q^{4n-1})(1-q^{4n+1})}\frac{(q^4/a,q^4/b;q^4)_n}{(a,b;q^4)_n}(-ab)^nq^{2n^2-2n-1}.
\end{align}
\end{theorem}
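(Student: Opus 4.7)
The argument follows the standard template used throughout the paper (compare the proofs of Theorems \ref{thm-ab-6-7}, \ref{thm-ab-3-4}, \ref{thm-ab-3-2}, etc.): specialize Theorem \ref{thm-main} to kill the $v$ parameter against the target product, then evaluate the resulting terminated ${}_3\phi_2$ by $q$-Pfaff--Saalsch\"utz summation. First I would substitute $(q,c,d,u,v)\to(q^4,q^3,q^5,1,q^4)$ in Theorem \ref{thm-main}. The prefactor and the ${}_{3}\phi_{2}$ on the LHS then match exactly the LHS of the claim, while on the RHS we must interpret the $u=1$ case as a limit, as was done for Theorems \ref{thm-ab-2-3}, \ref{thm-ab-3-2}, \ref{thm-ab-6-7}: the indeterminate $n=0$ term degenerates to the constant $1$, and for $n\geq 1$ one uses
\[
\lim_{u\to 1}\frac{(1-uq^{4n})(u;q^4)_n}{(1-u)(q^4;q^4)_n}=1+q^{4n}.
\]
This yields
\begin{align*}
&\frac{(q^4,ab/q^4;q^4)_\infty}{(a,b;q^4)_\infty}{}_3\phi_2\bigg(\genfrac{}{}{0pt}{}{q^4/a,q^4/b,q^4}{q^3,q^5};q^4,ab/q^4\bigg)\\
&=1+\sum_{n=1}^\infty(1+q^{4n})\frac{(q^4/a,q^4/b;q^4)_n}{(a,b;q^4)_n}(-ab)^n q^{2n^2-6n}\,I_n,
\end{align*}
where $I_n:={}_3\phi_2(q^{-4n},q^{4n},q^4;q^3,q^5;q^4,q^4)$.

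Next I would observe that $I_n$ is balanced, since $q^{-4n}\cdot q^{4n}\cdot q^4\cdot q^4=q^3\cdot q^5$, so the $q$-Pfaff--Saalsch\"utz identity \eqref{Pfaff} applies directly. Matching the template in \eqref{Pfaff} with base $q^4$ and parameters $(a,b,c)=(1,q,q^{-1})$, so that $aq^4/b=q^3$, $aq^4/c=q^5$, $aq^4/(bc)=q^4$, one gets
\[
I_n=\frac{(q,q^{-1};q^4)_n}{(q^3,q^5;q^4)_n}\,q^{4n}.
\]
A short simplification, using the telescopings $(q;q^4)_n/(q^5;q^4)_n=(1-q)/(1-q^{4n+1})$ and $(q^{-1};q^4)_n/(q^3;q^4)_n=(1-q^{-1})/(1-q^{4n-1})$, together with $1-q^{-1}=-q^{-1}(1-q)$, collapses this to
\[
I_n=\frac{-(1-q)^2\,q^{4n-1}}{(1-q^{4n-1})(1-q^{4n+1})}.
\]

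Finally, plugging this evaluation back into the displayed identity and combining $q^{2n^2-6n}\cdot q^{4n-1}=q^{2n^2-2n-1}$ produces exactly the right-hand side of the claim, with the overall sign $-(1-q)^2$ pulled out of the summation. There is no genuine obstacle in the proof; the only delicate point is the $u=1$ limit in Theorem \ref{thm-main}, but this is by now routine and has been handled identically in several earlier proofs in the paper.
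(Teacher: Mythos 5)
Your proposal is correct and follows exactly the paper's own proof: the same specialization $(q,c,d,u,v)\to(q^4,q^3,q^5,1,q^4)$ of Theorem \ref{thm-main}, followed by the $q$-Pfaff--Saalsch\"utz evaluation \eqref{Pfaff} with base $q^4$ and $(a,b,c)=(1,q,q^{-1})$, and the same final simplification. The only addition is your explicit remark on the $u=1$ limit, which the paper handles implicitly.
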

\begin{proof}
Taking $(q,c,d,u,v) \rightarrow (q^4, q^3, q^5, 1,q^4)$ in Theorem \ref{thm-main}, we deduce that
\begin{align}
&\frac{(q^4,ab/q^4;q^4)_\infty}{(a,b;q^4)_\infty}
{}_3\phi_2\bigg(\genfrac{}{}{0pt}{}{q^4/a,q^4/b,q^4}{q^3,q^5};q^4,ab/q^4\bigg) \nonumber\\
=&1+\sum_{n=1}^{\infty}(1+q^{4n})\frac{(q^4/a,q^4/b;q^4)_n}{(a,b;q^4)_n}(-ab)^nq^{2n^2-6n}
{}_3\phi_2\bigg(\genfrac{}{}{0pt}{}{q^{-4n},q^{4n},q^4}{q^3,q^5};q^4,q^4\bigg). \label{ab-8-7-2nd}
\end{align}
Replacing $q$ by $q^4$ and setting $(a,b,c)=(1, q, q^{-1})$ in \eqref{Pfaff}, we obtain
\begin{align}
{}_{3}\phi_{2}\bigg(\genfrac{}{}{0pt}{}{q^{-4n}, q^{4n},q^4}{q^3,q^5};q^4,q^4\bigg)=\frac{(1-q)(1-q^{-1})}{(1-q^{4n+1})(1-q^{4n-1})}q^{4n}. \label{mock-8-7-3}
\end{align}
The theorem then follows after substituting \eqref{mock-8-7-3} into \eqref{ab-8-7-2nd}.
\end{proof}
\begin{corollary}
Identity \eqref{mock-8-7-b} holds. In addition, we have
\begin{align}
&\sum_{n=0}^\infty \frac{(-1)^nq^{2n^2}(q^2;q^4)_n}{(q;q^2)_{2n+1}} \nonumber \\
=&\frac{(q^2;q^4)_\infty}{(q^4;q^4)_\infty}\sum_{n=-\infty}^\infty \frac{q^{4n^2+2n}}{1-q^{4n+1}} \label{8-7-cor-1} \\
=&m(-q,q^2,q)+\frac{1}{2}\frac{J_{2}^5}{J_{1}^2J_{4}^2} \label{8-7-cor-1-m} \\
=&\frac{1}{2}+\frac{1}{2}\frac{J_{2}^5}{J_{1}^2J_{4}^2}. \label{8-7-cor-1-simplify}
\end{align}
\end{corollary}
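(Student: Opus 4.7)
The plan is to first derive \eqref{mock-8-7-b} and \eqref{8-7-cor-1} by specializing Theorem \ref{thm-ab-8-7}, then to evaluate $m(-q,q^2,q)$ in closed form, and finally to reconcile the resulting expressions to obtain \eqref{8-7-cor-1-m} and \eqref{8-7-cor-1-simplify}.

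First, I would take $(a,b)\to(0,-q^2)$ and $(a,b)\to(0,q^2)$ in Theorem \ref{thm-ab-8-7}. In both limits the standard expansion $(q^4/a;q^4)_n(-a)^n \to q^{2n^2+2n}$ turns the ${}_3\phi_2$ on the left-hand side into the desired Eulerian sum, using the factorization $(q;q^2)_{2n+1} = (q;q^4)_{n+1}(q^3;q^4)_n$ and $(1-q)(q^5;q^4)_n = (q;q^4)_{n+1}$; in the $(0,-q^2)$ case the extra factor $b^n=(-1)^nq^{2n}$ from the parameter produces a clean Eulerian sum that one recognizes as $\frac{1}{2}(V_0^{(8)}(q)+1)$ via the second form in \eqref{mock-8-7-defn}, while in the $(0,q^2)$ case one obtains the left-hand side of \eqref{8-7-cor-1}.

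Second, on the right-hand side of Theorem \ref{thm-ab-8-7}, I would apply the partial fraction decomposition
\[
\frac{1+q^{4n}}{(1-q^{4n-1})(1-q^{4n+1})} = \frac{1}{1-q}\Bigl[\frac{1}{1-q^{4n-1}} - \frac{q}{1-q^{4n+1}}\Bigr]
\]
and fold the resulting $(1-q^{4n-1})^{-1}$-piece (over $n\geq 1$) into a sum over $n\leq -1$ via $n\mapsto -n$, using $1/(1-q^{-4n+1}) = -q^{4n-1}/(1-q^{4n-1})$. The constant $1$ in Theorem \ref{thm-ab-8-7} provides the $n=0$ term, and the whole right-hand side assembles into a bilateral Appell-Lerch sum $\sum_{n\in\mathbb{Z}}\epsilon^n q^{4n^2+2n}/(1-q^{4n+1})$ with $\epsilon=-1$ for $b=-q^2$ (giving \eqref{mock-8-7-b} after clearing the common factor $1-q$) and $\epsilon=1$ for $b=q^2$ (giving \eqref{8-7-cor-1}).

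Third, I would evaluate $m(-q,q^2,q)$ directly from \eqref{m-defn}: one has
\[
m(-q,q^2,q) = \frac{1}{j(q;q^2)}\sum_{r\in\mathbb{Z}}\frac{(-1)^rq^{r^2}}{1+q^{2r}},
\]
and pairing $r$ with $-r$ in the summand (using $\frac{1}{1+q^{2r}}+\frac{1}{1+q^{-2r}}=1$) yields $\sum_{r\in\mathbb{Z}}\frac{(-1)^rq^{r^2}}{1+q^{2r}} = \frac{1}{2}\sum_{r\in\mathbb{Z}}(-1)^rq^{r^2} = \frac{1}{2}J_1^2/J_2$ by Jacobi's triple product; since $j(q;q^2)=J_1^2/J_2$, this gives $m(-q,q^2,q)=\frac{1}{2}$, so \eqref{8-7-cor-1-m} and \eqref{8-7-cor-1-simplify} are equivalent. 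To complete the chain I would verify the theta identity
\[
\sum_{n\in\mathbb{Z}}\frac{q^{4n^2+2n}}{1-q^{4n+1}} = \frac{J_4^2}{2J_2} + \frac{J_2^4}{2J_1^2},
\]
which combined with $(q^2;q^4)_\infty/J_4 = J_2/J_4^2$ yields \eqref{8-7-cor-1-simplify} from \eqref{8-7-cor-1}; rewriting the standalone $\frac{1}{2}$ as $m(-q,q^2,q)$ then recovers \eqref{8-7-cor-1-m}. The main obstacle lies here: this Lambert-type identity is not immediate from the reciprocal Jacobi formula \eqref{reciprocal-Jacobi} (which gives only an alternating analog) nor from Lemmas \ref{lem-m-prop}--\ref{lem-m-add} in a one-step fashion; the route I expect to work is to split the sum by the parity of $n$, identify each half with a level-$8$ Appell-Lerch sum evaluable via \eqref{reciprocal-Jacobi} applied at $q\to q^8$, and then reduce the ensuing quotient of theta products to the stated form using the methods of \cite{Garvan-Liang}.
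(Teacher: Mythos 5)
Your derivation of \eqref{mock-8-7-b} and \eqref{8-7-cor-1} is exactly the paper's: the same specializations $(a,b)\to(0,-q^2)$ and $(0,q^2)$ in Theorem \ref{thm-ab-8-7}, the same partial-fraction treatment of $\frac{1+q^{4n}}{(1-q^{4n-1})(1-q^{4n+1})}$, and the same folding $n\mapsto -n$ to assemble the bilateral Lambert series. Your evaluation $m(-q,q^2,q)=\tfrac{1}{2}$ by pairing $r$ with $-r$ in \eqref{m-defn} is also correct, and is a nice alternative to the paper's argument, which instead derives $m(-q,q^2,q)=1-m(-q,q^2,q)$ from \eqref{m-id-3} combined with \eqref{m-id-2} and \eqref{m-id-1}.

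The genuine gap is in the step you yourself flag as the main obstacle, and the route you propose for it cannot work. The sum $\sum_{n\in\mathbb{Z}}q^{4n^2+2n}/(1-q^{4n+1})$ is a level-two Appell--Lerch sum: the quadratic exponent $4n^2$ is twice what the denominator $1-q^{4n+1}$ would require for \eqref{reciprocal-Jacobi} to apply, and splitting by the parity of $n$ only worsens the mismatch (the even half is $\sum_m q^{16m^2+4m}/(1-q^{8m+1})$, whose exponent $16m^2$ is incompatible with $q^{8\binom{m+1}{2}}=q^{4m^2+4m}$ at base $q^8$, and at base $q^{32}$ the denominator no longer matches). Each half is itself a higher-level Appell--Lerch sum, generically mock, so neither is a theta quotient evaluable by \eqref{reciprocal-Jacobi}; the claimed identity $\sum_n q^{4n^2+2n}/(1-q^{4n+1})=\frac{J_4^2}{2J_2}+\frac{J_2^4}{2J_1^2}$ is true precisely because the mock part of this level-two sum degenerates to the constant $\tfrac{1}{2}$, and that fact is what you are trying to prove. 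The tool the paper set up for exactly this situation is the function $k(x,q)$ of \eqref{k-defn}: with $q\to q^2$ and $x=q^{1/2}$ one has $\sum_n q^{4n^2+2n}/(1-q^{4n+1})=q^{1/2}j(-q^2;q^8)\,k(q^{1/2},q^2)$, and since $\frac{J_2}{J_4^2}\,j(-q^2;q^8)=1$, identity \eqref{k-id} turns the right side of \eqref{8-7-cor-1} into $m(-q,q^2,q^{-1})+\frac{J_2^4}{2J_4^2\,j(q;q^2)}=m(-q,q^2,q)+\frac{1}{2}\frac{J_2^5}{J_1^2J_4^2}$, which is \eqref{8-7-cor-1-m}; your value $m(-q,q^2,q)=\tfrac{1}{2}$ then gives \eqref{8-7-cor-1-simplify}. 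Without \eqref{k-id} (or an equivalent level-reduction) the chain from \eqref{8-7-cor-1} to \eqref{8-7-cor-1-m} is not closed.
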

\begin{proof}
Taking $(a,b)\rightarrow (0, -q^2)$ in Theorem \ref{thm-ab-8-7}, we deduce that
\begin{align}
V_0^{(8)}(q)=-1+2\frac{(-q^2;q^4)_\infty}{(q^4;q^4)_\infty}\left(\frac{1}{1-q}+(1-q^{-1})\sum_{n=1}^\infty \frac{(1+q^{4n})(-1)^nq^{4n^2+2n}}{(1-q^{4n-1})(1-q^{4n+1})} \right).
\end{align}
Note that
\begin{align}
(1-q^{-1})(1+q^{4n})=(1-q^{4n-1})-q^{-1}(1-q^{4n+1}).
\end{align}
We obtain
\begin{align}
V_0^{(8)}(q)=-1+2\frac{(-q^2;q^4)_\infty}{(q^4;q^4)_\infty}\left(\frac{1}{1-q}+\sum_{n=1}^\infty \frac{(-1)^nq^{4n^2+2n}}{1-q^{4n+1}}-\sum_{n=1}^\infty \frac{(-1)^nq^{4n^2+2n-1}}{1-q^{4n-1}} \right).
\end{align}
Upon replacing $n$ by $-n$ in the second sum, we obtain \eqref{mock-8-7-b}.

Similarly, taking $(a,b)\rightarrow (0, q^2)$ in Theorem \ref{thm-ab-8-7}, we obtain \eqref{8-7-cor-1}.

From \eqref{k-defn} we have
\begin{align}
\sum_{n=-\infty}^\infty \frac{q^{4n^2+2n}}{1-q^{4n+1}} =q^{1/2}j(-q^2;q^8)k(q^{1/2};q^2).
\end{align}
By \eqref{k-id} we obtain \eqref{8-7-cor-1-m}.

To get \eqref{8-7-cor-1-simplify}, we need to prove that
\begin{align}
m(-q,q^2,q)=\frac{1}{2}. \label{m-special-value-1}
\end{align}
In fact, setting $(x,q,z)\rightarrow (-q^{-1},q^2,q)$ in \eqref{m-id-3}, we deduce that
\begin{align}
m(-q,q^2,q)=1+q^{-1}m(-q^{-1},q^2,q)=1-m(-q,q^2,q).
\end{align}
The equality in \eqref{m-special-value-1} then follows.
\end{proof}
\begin{rem}
(1) Comparing \eqref{3-2-cor-1-simplify} with \eqref{8-7-cor-1-simplify}, we deduce the following interesting identity:
\begin{align}
\sum_{n=0}^{\infty}\frac{(-q;q)_nq^{(n^2-n)/2}}{(-q^2;q^2)_n}=2\sum_{n=0}^\infty \frac{(-1)^nq^{2n^2}(q^2;q^4)_n}{(q;q^2)_{2n+1}}. \label{revise-relation-1}
\end{align}
(2) If we set $x=-q$ in \cite[Eq.\ (2.16)]{Mortenson-2014AIM}, we get \eqref{8-7-cor-1-m}.
\end{rem}

\subsection{Representations for $V_1^{(8)}(q)$}\label{subsec-mock-V18}
The Hecke-type series representation \eqref{mock-8-8-a-thm} follows from  \eqref{5-8-cor-4} and the first expression of $V_1^{(8)}(q)$ in \eqref{mock-8-8-defn}.

The Appell-Lerch series representation \eqref{mock-8-8-c} follows from \eqref{3-6-cor-4} and the third expression of $V_1^{(8)}(q)$ in \eqref{mock-8-8-defn}. Here we show that \eqref{mock-8-8-c} is equivalent to \eqref{mock-8-8-b}.

To get another Appell-Lerch series representation, namely \eqref{mock-8-8-b}, we establish the following result.
\begin{theorem}\label{thm-ab-8-8}
For $\max \{|ab|, |aq^4|, |bq^4|\}<1$, we have
\begin{align}
&\frac{(q^4,ab;q^4)_\infty}{(aq^4,bq^4;q^4)_\infty}
{}_3\phi_2\bigg(\genfrac{}{}{0pt}{}{q^4/a,q^4/b,q^4}{q^5,q^7};q^4,ab\bigg) \nonumber\\
=&(1-q)(1-q^3)\sum_{n=0}^{\infty}\frac{1-q^{8n+4}}{(1-q^{4n+1})(1-q^{4n+3})}\frac{(q^4/a,q^4/b;q^4)_n}{(aq^4,bq^4;q^4)_n}(-ab)^nq^{2n^2+2n}.
\end{align}
\end{theorem}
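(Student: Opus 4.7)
The plan is to derive the identity as another direct application of Theorem \ref{thm-main}, following the template used in the preceding theorems of Section \ref{sec-order-8} (compare in particular the proof of Theorem \ref{thm-ab-8-7}, whose structure this result mirrors). Specifically, I would replace $q$ by $q^{4}$ throughout Theorem \ref{thm-main} and specialize
\[
(c,d,u,v)\longrightarrow (q^{5},\,q^{7},\,q^{4},\,q^{4}).
\]
The left-hand side of Theorem \ref{thm-main} then becomes, after collapsing $(q^{8};q^{4})_\infty=(q^{4};q^{4})_\infty/(1-q^{4})$ into the prefactor exactly as in Theorem \ref{thm-ab-3-3-add} and Theorem \ref{thm-ab-8-7},
\[
\frac{(q^{4},ab;q^{4})_\infty}{(aq^{4},bq^{4};q^{4})_\infty}
\,{}_3\phi_2\!\left(\genfrac{}{}{0pt}{}{q^{4}/a,q^{4}/b,q^{4}}{q^{5},q^{7}};q^{4},ab\right),
\]
which is exactly the left-hand side of the stated theorem. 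Meanwhile, the right-hand side of Theorem \ref{thm-main} supplies the sum
\[
\sum_{n=0}^{\infty}(1-q^{8n+4})\frac{(q^{4}/a,q^{4}/b;q^{4})_n}{(aq^{4},bq^{4};q^{4})_n}(-ab)^n q^{2n^{2}-2n}\;
{}_3\phi_2\!\left(\genfrac{}{}{0pt}{}{q^{-4n},q^{4n+4},q^{4}}{q^{5},q^{7}};q^{4},q^{4}\right),
\]
after simplification of the power $(-q^{4}ab)^{n}q^{4(n^{2}-3n)/2}=(-ab)^{n}q^{2n^{2}-2n}$.

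The remaining task is to evaluate the terminated inner ${}_3\phi_2$ in closed form. I would invoke the $q$-Pfaff--Saalsch\"utz summation formula \eqref{Pfaff} with $q\to q^{4}$ and the parameter identification $(a,b,c)=(q^{4},q^{3},q)$, noting that then $aq^{4}/(bc)=q^{4}$ (so Saalsch\"utz balanced condition holds) and the lower parameters $aq^{4}/b=q^{5}$, $aq^{4}/c=q^{7}$ match. This gives
\[
{}_3\phi_2\!\left(\genfrac{}{}{0pt}{}{q^{-4n},q^{4n+4},q^{4}}{q^{5},q^{7}};q^{4},q^{4}\right)
=\frac{(q^{3},q;q^{4})_n}{(q^{5},q^{7};q^{4})_n}\,q^{4n}
=\frac{(1-q)(1-q^{3})\,q^{4n}}{(1-q^{4n+1})(1-q^{4n+3})},
\]
where the last equality is a telescoping of the $q$-shifted factorials (the ratio cancels everything except the first factors of the numerator and the last factors of the denominator).

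Substituting this evaluation into the sum above and combining $q^{4n}\cdot q^{2n^{2}-2n}=q^{2n^{2}+2n}$ yields the right-hand side of the theorem. The entire argument is mechanical once the two ingredients are in place; I do not foresee any genuine obstacle beyond bookkeeping. The only point that requires a moment's care is the initial matching of parameters in \eqref{Pfaff}—verifying that the Saalsch\"utz balance $aq^{4}/(bc)=q^{4}$ is exactly the top argument of our ${}_3\phi_2$—but once the identification $(a,b,c)=(q^{4},q^{3},q)$ is in hand, the telescoping of $(q^{3},q;q^{4})_n/(q^{5},q^{7};q^{4})_n$ produces precisely the factor $(1-q)(1-q^{3})/[(1-q^{4n+1})(1-q^{4n+3})]$ that appears in the target identity.
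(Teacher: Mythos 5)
Your proposal is correct and is essentially identical to the paper's proof: the paper likewise specializes Theorem \ref{thm-main} with $(q,c,d,u,v)\rightarrow(q^4,q^5,q^7,q^4,q^4)$ and evaluates the inner terminated ${}_3\phi_2$ via the $q$-Pfaff--Saalsch\"utz formula \eqref{Pfaff}, using the parameter choice $(a,b,c)=(q^4,q,q^3)$ (equivalent to yours by the symmetry of \eqref{Pfaff} in $b$ and $c$). All the bookkeeping you describe, including the cancellation of the $(1-q^4)$ factor and the exponent simplification, matches the paper's computation.
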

\begin{proof}
Taking $(q,c,d,u,v) \rightarrow (q^4, q^5, q^7, q^4, q^4)$ in Theorem \ref{thm-main}, we deduce that
\begin{align}
&\frac{(q^4,ab;q^4)_\infty}{(aq^4,bq^4;q^4)_\infty}
{}_3\phi_2\bigg(\genfrac{}{}{0pt}{}{q^4/a,q^4/b,q^4}{q^5,q^7};q^4,ab\bigg) \nonumber\\
&=\sum_{n=0}^{\infty}(1-q^{8n+4})\frac{(q^4/a,q^4/b;q^4)_n}{(aq^4,bq^4;q^4)_n}(-ab)^nq^{2n^2-2n}
{}_3\phi_2\bigg(\genfrac{}{}{0pt}{}{q^{-4n},q^{4n+4},q^4}{q^5,q^7};q^4,q^4\bigg). \label{ab-8-8-2nd-proof}
\end{align}
Replacing $q$ by $q^4$ and setting $(a,b,c)=(q^4,q,q^3)$ in \eqref{Pfaff}, we deduce that
\begin{align}
{}_{3}\phi_{2}\bigg(\genfrac{}{}{0pt}{}{q^{-4n}, q^{4n+4},q^4}{q^5,q^7};q^4,q^4\bigg)=\frac{(1-q)(1-q^3)q^{4n}}{(1-q^{4n+1})(1-q^{4n+3})}. \label{mock-8-8-b-3}
\end{align}
The theorem then follows after substituting \eqref{mock-8-8-b-3} into \eqref{ab-8-8-2nd-proof}.
\end{proof}
\begin{corollary}
Identity \eqref{mock-8-8-b} holds. In addition, we have
\begin{align}
&\sum_{n=0}^\infty \frac{(-1)^nq^{2n(n+1)}(q^4;q^4)_n}{(q;q^2)_{2n+2}}\nonumber \\
=& \sum_{n=0}^\infty \frac{q^{4n^2+4n}}{1-q^{4n+1}}-\sum_{n=-\infty}^{-1}\frac{q^{4n^2+4n}}{1-q^{4n+1}} \label{8-8-cor-1} \\
=& \overline{m}(-q^2,q^8,-q^8)+q\overline{m}(-q^{-2},q^8,-q^{12}), \label{8-8-cor-1-simplify} \\
&\sum_{n=0}^\infty \frac{(-1)^nq^{2n^2+4n}(q^2;q^4)_n}{(q^3;q^2)_{2n+1}}\nonumber \\
=&\frac{(q^2;q^4)_\infty}{(q^4;q^4)_\infty}\sum_{n=-\infty}^\infty \frac{q^{4n^2+6n}}{1-q^{4n+1}} \label{8-8-cor-2} \\
=&q^{-2}m(-1,q^8,-q^2)-q^{-1}m(-q^4,q^8,-q^2) \label{8-8-cor-2-simplify} \\
=&-\frac{1}{2}q^{-1}+\frac{1}{2}q^{-1}\frac{J_2^5}{J_1^2J_4^2}. \label{8-8-cor-2-final}
\end{align}
\end{corollary}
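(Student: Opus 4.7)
The plan is to derive each identity in the corollary by applying Theorem \ref{thm-ab-8-8} with a carefully chosen pair $(a,b)$, and then converting the resulting unilateral sum over $n\geq 0$ into a bilateral sum (or false Appell--Lerch expression) through partial fractions and the index substitution $n\mapsto -n-1$. Specifically, I would take $(a,b)=(-1,0)$ to produce \eqref{mock-8-8-b}, using the second Eulerian form of $V_1^{(8)}$ from \eqref{mock-8-8-defn} together with the factorization $(q;q^2)_{2n+2}=(1-q)(1-q^3)(q^5,q^7;q^4)_n$; $(a,b)=(0,1)$ to produce \eqref{8-8-cor-1}, where the choice $b=1$ leaves the factor $(q^4;q^4)_n$ uncancelled on the left-hand side; and $(a,b)=(q^2,0)$ to produce \eqref{8-8-cor-2}, with $a=q^2$ contributing the $(q^2;q^4)_n$ factor, whose ratio $(q^2;q^4)_n/(q^6;q^4)_n=(1-q^2)/(1-q^{4n+2})$ absorbs the $(1-q^{4n+2})$ half of $1-q^{8n+4}$.

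The central manipulation in all three cases is a partial fraction followed by reindexing. For \eqref{mock-8-8-b} and \eqref{8-8-cor-1} I would use
\[
\frac{1-q^{8n+4}}{(1-q^{4n+1})(1-q^{4n+3})}=-1+\frac{1}{1-q^{4n+1}}+\frac{1}{1-q^{4n+3}},
\]
while for \eqref{8-8-cor-2} the simplified prefactor leads to
\[
\frac{(1-q)(1+q^{4n+2})}{(1-q^{4n+1})(1-q^{4n+3})}=\frac{1}{1-q^{4n+1}}-\frac{q}{1-q^{4n+3}}.
\]
Applying $n\mapsto -n-1$ to the $1-q^{4n+3}$ piece (using $1-q^{-k}=-q^{-k}(1-q^k)$) turns it into a sum over $n\leq -1$ with $1-q^{4n+1}$ in the denominator, but with the exponent shifted from $q^{4n^2+4n}$ to $q^{4n^2+8n+1}$. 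The identity $q^{4n^2+8n+1}=q^{4n^2+4n}+q^{4n^2+4n}(q^{4n+1}-1)$ then splits the shifted sum into a genuine $n\leq -1$ piece with the original exponent plus a polynomial correction; combined with the $-1$ term from the partial fraction (and using the symmetry $\sum_{n\geq 0}q^{4n^2+4n}=\sum_{n\leq -1}q^{4n^2+4n}$ under $n\mapsto -n-1$), all polynomial pieces cancel. In the alternating case \eqref{mock-8-8-b} the further cancellation $\sum_{n\in\mathbb{Z}}(-1)^n q^{4n(n+1)}=0$ (by the same involution) yields the bilateral Appell--Lerch sum. In the unsigned case \eqref{8-8-cor-1} that second cancellation is absent and the two halves contribute with opposite signs, producing the false Appell--Lerch shape $\sum_{n\geq 0}-\sum_{n\leq -1}$.

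The identifications \eqref{8-8-cor-1-simplify} and \eqref{8-8-cor-2-simplify} in terms of $\overline{m}$ and $m$ follow routinely from the definitions \eqref{barm-defn} and \eqref{m-defn} by splitting the bilateral sums according to the parity of $n$ and matching exponents, as in the worked examples of Section \ref{subsec-build}. The further simplification \eqref{8-8-cor-2-final} uses Lemma \ref{lem-m-prop} (in particular \eqref{m-id-3}) on $m(-1,q^8,-q^2)$ together with the special value $m(-q,q^2,q)=\tfrac{1}{2}$ from \eqref{m-special-value-1} and the product identity \eqref{8-7-cor-1-simplify}. Finally, the equivalence of \eqref{mock-8-8-c} and \eqref{mock-8-8-b} is verified by recognizing
\[
\sum_{n\in\mathbb{Z}}\frac{(-1)^n q^{4n(n+1)}}{1-q^{4n+1}}=j(q^4;q^8)\,h(q,q^4)
\]
from the definition of $h(x,q)$ and then applying \eqref{h-m}, together with the theta--product simplification $\frac{(-q^4;q^4)_\infty\, j(q^4;q^8)}{(q^4;q^4)_\infty}=1$ (which follows from $(q^4;q^4)_\infty=(q^4;q^8)_\infty(q^8;q^8)_\infty$ and $(-q^4;q^4)_\infty=(q^8;q^8)_\infty/(q^4;q^8)_\infty$); both representations then reduce to $V_1^{(8)}(q)=-m(q^2,q^8,q)$, matching \eqref{HM-V1-m}. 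The main obstacle will be the delicate bookkeeping between the $-1$ correction from partial fractions and the exponent shift from $n\mapsto -n-1$: it is the complete cancellation in \eqref{mock-8-8-b} that produces an ordinary Appell--Lerch sum, while the analogous non-cancellation in \eqref{8-8-cor-1} is precisely what forces the false Appell--Lerch function $\overline m$ to enter.
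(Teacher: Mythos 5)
Your derivations of \eqref{mock-8-8-b}, \eqref{8-8-cor-1} and \eqref{8-8-cor-2} follow essentially the paper's route: the same specializations of Theorem \ref{thm-ab-8-8} (your $(a,b)=(-1,0)$ and $(q^2,0)$ are the paper's $(0,-1)$ and $(0,q^2)$ under the manifest $a\leftrightarrow b$ symmetry), followed by a partial-fraction split and the reflection $n\mapsto -n-1$. Your decomposition $\frac{1-q^{8n+4}}{(1-q^{4n+1})(1-q^{4n+3})}=-1+\frac{1}{1-q^{4n+1}}+\frac{1}{1-q^{4n+3}}$ is algebraically the same as the paper's numerator split $(1-q^{8n+4})=(1-q^{4n+3})+q^{4n+3}(1-q^{4n+1})$, but it forces you to carry the polynomial corrections $\sum(-1)^nq^{4n(n+1)}$ and $\sum q^{4n(n+1)}$ and verify their cancellation; the paper's version makes the reindexed piece land exactly on the exponent $q^{4n^2+4n}$ with no residue. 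Your bookkeeping does close up correctly in both the alternating and unsigned cases, and your observation that it is precisely the failure of the $(-1)^n$-cancellation that produces the false Appell--Lerch shape in \eqref{8-8-cor-1} is the right way to see it.

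There is, however, a genuine gap at \eqref{8-8-cor-2-final}. The tools you cite --- \eqref{m-id-3}, the special value \eqref{m-special-value-1}, and the product identity \eqref{8-7-cor-1-simplify} --- all live either at level $q^8$ with the same $x$ up to $q^{\pm 8}$, or at level $q^2$; none of them can merge the two distinct level-$q^8$ sums $m(-1,q^8,-q^2)$ and $m(-q^4,q^8,-q^2)$ in \eqref{8-8-cor-2-simplify} into a single level-$q^2$ object. What is needed is the level-changing decomposition of Lemma \ref{lem-m-decompose}: the paper applies it with $(x,q,z,z',n)\rightarrow(q,q^2,-1,-q^2,2)$, which expresses $m(q,q^2,-1)$ as exactly the combination $m(-q^4,q^8,-q^2)-q^{-1}m(-1,q^8,-q^2)$ plus theta quotients, and then invokes $m(q,q^2,-1)=\tfrac12$ from \eqref{eq-fact} (not the value $m(-q,q^2,q)=\tfrac12$ you quote). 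Without some form of Lemma \ref{lem-m-decompose} (or Lemma \ref{lem-m-add}) your plan for this step does not go through. Two smaller points: your intermediate identity $(-q^4;q^4)_\infty=(q^8;q^8)_\infty/(q^4;q^8)_\infty$ is off by a factor of $(q^8;q^8)_\infty$ (Euler gives $(-q^4;q^4)_\infty=1/(q^4;q^8)_\infty$), although the product simplification you actually need is still true; and your recognition of the bilateral sum in \eqref{mock-8-8-b} as $j(q^4;q^8)h(q,q^4)$ followed by \eqref{h-m} is a genuinely shorter route to $V_1^{(8)}(q)=-m(q^2,q^8,q)$ than the paper's, but the equivalence with \eqref{mock-8-8-c} (which is in any case proved outside this corollary) still requires the parity decomposition and two applications of Lemma \ref{lem-m-minus} that you do not supply, since the quadratic exponent $q^{n^2+2n}$ there does not match the modulus $q^{4n}$ in its denominators.
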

\begin{proof}
We use the second expression of $V_1^{(8)}(q)$ to prove \eqref{mock-8-8-b}. We first write
\begin{align}
V_1^{(8)}(q)=\frac{q}{(1-q)(1-q^3)}\sum_{n=0}^{\infty}\frac{(-q^4;q^4)_nq^{2n^2+2n}}{(q^5,q^7;q^4)_n}. \label{mock-8-8-b-1}
\end{align}
Taking $(a,b)\rightarrow (0,-1)$ in Theorem \ref{thm-ab-8-8}, we deduce that
\begin{align}
V_1^{(8)}(q)&=q\sum_{n=0}^{\infty}\frac{(-q^4;q^4)_nq^{2n^2+2n}}{(q;q^2)_{2n+2}} \nonumber \\
&=q\frac{(-q^4;q^4)_{\infty}}{(q^4;q^4)_{\infty}}\sum_{n=0}^{\infty}\frac{(1-q^{8n+4})(-1)^nq^{4n^2+4n}}{(1-q^{4n+1})(1-q^{4n+3})} \nonumber \\
&=q\frac{(-q^4;q^4)_{\infty}}{(q^4;q^4)_{\infty}}\sum_{n=0}^{\infty}\frac{((1-q^{4n+3})+q^{4n+3}(1-q^{4n+1}))(-1)^nq^{4n^2+4n}}{(1-q^{4n+1})(1-q^{4n+3})} \nonumber \\
&=q\frac{(-q^4;q^4)_{\infty}}{(q^4;q^4)_{\infty}} \left(\sum_{n=0}^{\infty}\frac{(-1)^nq^{4n^2+4n}}{1-q^{4n+1}}+\sum_{n=0}^{\infty}\frac{(-1)^nq^{4n^2+8n+3}}{1-q^{4n+3}} \right)\nonumber \\
&=q\frac{(-q^4;q^4)_{\infty}}{(q^4;q^4)_{\infty}}\sum_{n=-\infty}^{\infty}\frac{(-1)^nq^{4n^2+4n}}{1-q^{4n+1}}.
\end{align}
This proves \eqref{mock-8-8-b}.

In the same way, taking $(a,b)\rightarrow (0,1)$ in Theorem \ref{thm-ab-8-8}, we obtain \eqref{8-8-cor-1}.

Taking $(a,b)\rightarrow (0,q^2)$ in Theorem \ref{thm-ab-8-8}, we deduce that
\begin{align}
\sum_{n=0}^\infty \frac{(-1)^nq^{2n^2+4n}(q^2;q^4)_n}{(q^3;q^2)_{2n+1}}=(1-q)\frac{(q^2;q^4)_\infty}{(q^4;q^4)_\infty}\sum_{n=0}^\infty \frac{(1+q^{4n+2})q^{4n^2+6n}}{(1-q^{4n+1})(1-q^{4n+3})}. \label{8-8-cor-2-proof}
\end{align}
Note that
\begin{align*}
(1-q)(1+q^{4n+2})=(1-q^{4n+3})-q(1-q^{4n+1}).
\end{align*}
Applying this identity to \eqref{8-8-cor-2-proof}, after rearrangements and simplifications, we obtain \eqref{8-8-cor-2}.

Now writing \eqref{8-8-cor-2} as \eqref{8-8-cor-2-simplify} is a routine exercise. If we take $(x,q,z,z',n)\rightarrow (q,q^2,-1,-q^2,2)$ in Lemma \ref{lem-m-decompose} and use \eqref{eq-fact}, we get \eqref{8-8-cor-2-final} from \eqref{8-8-cor-2-simplify}.
\end{proof}
\begin{rem}
Comparing \eqref{8-8-cor-2-final} with \eqref{8-7-cor-1-simplify}, we obtain the following interesting identity:
\begin{align}\label{revise-sec8-new-1}
\sum_{n=0}^\infty \frac{(-1)^nq^{2n^2}(q^2;q^4)_n}{(q;q^2)_{2n+1}} =\sum_{n=0}^\infty \frac{(-1)^nq^{2n^2+4n+1}(q^2;q^4)_n}{(q^3;q^2)_{2n+1}}+1.
\end{align}
\end{rem}

Though \eqref{mock-8-8-b} and \eqref{mock-8-8-c} look different, they can be deduced from each other.
\begin{proof} [Proof of the equivalence of \eqref{mock-8-8-b} and \eqref{mock-8-8-c}]
We can rewrite \eqref{mock-8-8-c} as
\begin{align}\label{New-V18}
V_1^{(8)}(q)=&q\frac{J_2}{J_1^2}\sum_{n=-\infty}^\infty \frac{(-1)^nq^{n^2+2n}}{1+q^{4n+2}} \nonumber \\
=&q\frac{J_2}{J_1^2}\left(\sum_{n=-\infty}^\infty \frac{q^{4n^2+4n}}{1+q^{8n+2}}-\sum_{n=-\infty}^\infty \frac{q^{4n^2+8n+3}}{1+q^{8n+6}} \right)\nonumber \\
=& \frac{J_2}{J_1^2}\left(2q\frac{J_{16}^2}{J_8}m(q^2,q^8,-1)-\frac{J_8^5}{J_4^2J_{16}^2}m(q^2,q^8,-q^4)\right).
\end{align}
Here the first equality follows by splitting the sum in \eqref{mock-8-8-c} into two sums and then replacing $n$ by $-n-1$ in the second sum. The second equality follows by decomposing the sum in the first line according to the parity of $n$.

We can also rewrite \eqref{mock-8-8-b} as \cite[Eq.\ (5.42)]{Hickerson-Mortenson}
\begin{align}
V_1^{(8)}(q)=-m(q^2,q^8,q). \label{HM-V18}
\end{align}
Taking $(x,q,z_1,z_0)\rightarrow (q^2,q^8,-1,q)$ in Lemma \ref{lem-m-minus}, we deduce that
\begin{align}\label{V18-equiv-1}
m(q^2,q^8,-1)-m(q^2,q^8,q)=q\frac{J_8^3\overline{J}_{-1,8}\overline{J}_{3,8}}{J_{1,8}\overline{J}_{0,8}J_{3,8}\overline{J}_{2,8}}
=\frac{1}{2}\frac{J_2^4J_8^4}{J_1^2J_4^3J_{16}^2}.
\end{align}
Taking $(x,q,z_1,z_0)\rightarrow (q^2,q^8,-q^4,q)$ in Lemma \ref{lem-m-minus}, we deduce that
\begin{align}\label{V18-equiv-2}
m(q^2,q^8,-q^4)-m(q^2,q^8,q)=q\frac{J_8^3\overline{J}_{3,8}\overline{J}_{7,8}}{J_{1,8}\overline{J}_{4,8}J_{3,8}\overline{J}_{6,8}}
=q\frac{J_2^4J_{16}^2}{J_1^2J_4J_8^2}.
\end{align}
Note that
\begin{align}
\frac{J_8^5}{J_4^2J_{16}^2}-2q\frac{J_{16}^2}{J_8}=& \sum_{n=-\infty}^\infty q^{4n^2}-\sum_{n=-\infty}^\infty q^{(2n+1)^2}\nonumber \\
&=\sum_{n=-\infty}^\infty (-1)^nq^{n^2}=\frac{J_1^2}{J_2}. \label{V18-equiv-3}
\end{align}
Now substituting \eqref{V18-equiv-1} and \eqref{V18-equiv-2} into \eqref{New-V18}, and then applying \eqref{V18-equiv-3} for simplifications, we arrive at \eqref{HM-V18}.
\end{proof}
\section{Concluding Remarks}
There are many other problems which can be investigated in the future. For example, a natural question arises if we look again at our proofs of the representations of the mock theta functions. Below in  Table \ref{tab-list} we list the theorems that can be applied to give representations to at least two mock theta functions:
\begin{table}[h]
\caption{} \label{tab-list}
\begin{tabular}{l|l}
  \hline
  Theorem \ref{thm-ab-2-1-1st} & $A^{(2)}(q)$, $B^{(2)}(q)$, $\omega^{(3)}(q)$ \\
  \hline
  Theorem \ref{thm-ab-2-1-2nd} &$A^{(2)}(q)$, $\psi^{(3)}(q)$, $\sigma^{(6)}(q)$ \\
  \hline
  Theorem \ref{thm-ab-2-2} & $B^{(2)}(q)$, $\nu^{(3)}(q)$, $\rho^{(6)}(q)$ \\
  \hline
  Theorem \ref{thm-ab-2-3} & $\mu^{(2)}(q)$, $f^{(3)}(q)$ \\
  \hline
  Theorem \ref{thm-ab-3-2} & $\phi^{(3)}(q)$, $U_0^{(8)}(q)$ \\
  \hline
  Theorem \ref{thm-ab-3-3} & $\psi^{(3)}(q)$, $\sigma^{(6)}(q)$ \\
  \hline
  Theorem \ref{thm-ab-3-6} & $\nu^{(3)}(q)$, $U_1^{(8)}(q)$, $V_1^{(8)}(q)$ \\
  \hline
  Theorem \ref{thm-ab-5-1} & $f_0^{(5)}(q)$, $S_0^{(8)}(q)$ \\
  \hline
  Theorem \ref{thm-ab-5-4} & $F_0^{(5)}(q)$, $V_0^{(8)}(q)$, $T_0^{(8)}(q)$ \\
  \hline
  Theorem \ref{meq:2.1} & $f_1^{(5)}(q)$, $\lambda^{(6)}(q)$, $S_1^{(8)}(q)$ \\
  \hline
  Theorem \ref{thm-ab-5-6} & $\phi_1^{(5)}(q)$, $\psi_1^{(5)}(q)$ \\
  \hline
  Theorem \ref{thm-ab-5-8} & $F_1^{(5)}(q)$, $\psi_{-}^{(6)}(q)$, $T_1^{(8)}(q)$, $V_1^{(8)}(q)$ \\
  \hline
  Theorem \ref{thm-ab-6-1} & $\phi^{(3)}(q)$, $\nu^{(3)}(q)$, $\phi^{(6)}(q)$  \\
  \hline
  Theorem \ref{thm-ab-6-2} & $\nu^{(3)}(q)$, $\psi^{(6)}(q)$ \\
  \hline
  Theorem \ref{thm-ab-6-8} & $\phi_{-}^{(6)}(q)$, $T_0^{(8)}(q)$ \\
  \hline
\end{tabular}
\end{table}

\noindent Clearly, the mock theta functions in the same row of this table share similar Hecke-type series representations. However, it is not clear to us whether they have further deep relations.

Another question we can ask is about mock theta functions of orders 7 and 10. The alert readers may have already noted that we have skipped them. The reason is that we do face difficulties in applying the previous procedure to these functions.

In his last letter to Hardy, Ramanujan gave three mock theta functions of order 7:
\begin{align}
&\mathcal{F}_0^{(7)}(q):=\sum_{n=0}^\infty \frac{q^{n^2}}{(q^{n+1};q)_n}, \label{mock-7-1-defn} \\
&\mathcal{F}_1^{(7)}(q):=\sum_{n=1}^\infty \frac{q^{n^2}}{(q^n;q)_n}, \label{mock-7-2-defn} \\
&\mathcal{F}_2^{(7)}(q):=\sum_{n=0}^\infty \frac{q^{n^2+n}}{(q^{n+1};q)_{n+1}}. \label{mock-7-3-defn}
\end{align}
In his lost notebook \cite{lostnotebook}, Ramanujan recorded four mock theta functions of order 10:
\begin{align}
\phi^{(10)}(q)&:=\sum_{n=0}^\infty \frac{q^{n(n+1)/2}}{(q;q^2)_{n+1}}, \label{mock-10-1-defn} \\
\psi^{(10)}(q)&:=\sum_{n=1}^\infty \frac{q^{n(n+1)/2}}{(q;q^2)_n}, \label{mock-10-2-defn} \\
X^{(10)}(q)&:=\sum_{n=0}^\infty \frac{(-1)^nq^{n^2}}{(-q;q)_{2n}}, \label{mock-10-3-defn} \\
\chi^{(10)}(q)&:=\sum_{n=1}^\infty \frac{(-1)^{n-1}q^{n^2}}{(-q;q)_{2n-1}}. \label{mock-10-4-defn}
\end{align}
Using Bailey pairs, Andrews \cite{Andrews-TAMS} gave the following Hecke-type series representations for the seventh order functions:
\begin{align}
\mathcal{F}_0^{(7)}(q)=&~\frac{1}{(q;q)_\infty}\Big(\sum_{n=0}^\infty\sum_{|j|\leq n}q^{7n^2+n-j^2}(1-q^{12n+6}) \nonumber \\
&-2q\sum_{n=0}^\infty\sum_{j=0}^nq^{7n^2+8n-j^2-j}(1-q^{12n+13})  \Big), \label{mock-7-1} \\
\mathcal{F}_1^{(7)}(q)=&~\frac{1}{(q;q)_\infty}\Big(-2\sum_{n=0}^\infty\sum_{j=0}^{n-1}q^{7n^2-2n-j^2-j}(1-q^{4n})\nonumber \\
&+\sum_{n=0}^\infty\sum_{|j|\leq n}q^{7n^2+5n+1-j^2}(1-q^{4n+2})  \Big), \label{mock-7-2} \\
\mathcal{F}_2^{(7)}(q)=&~\frac{1}{(q;q)_\infty}\Big(\sum_{n=0}^\infty\sum_{|j|\leq n}q^{7n^2+n-j^2}(1-q^{8n+3})\nonumber \\
&-2q^2\sum_{n=0}^\infty\sum_{j=0}^nq^{7n^2+8n-j^2-j}(1-q^{8n+7}) \Big). \label{mock-7-3}
\end{align}
Using Bailey pairs, Garvan \cite{Garvan-arXiv} found new Hecke-type series for these three functions.

Again by Bailey's Lemma, Choi \cite{Choi-1,Choi-2} gave Hecke-type series representations for the tenth order mock theta functions:
\begin{align}
\phi^{(10)}(q)=&~\frac{(q^2;q^2)_\infty}{(q;q)_\infty^2}\Big(\sum_{n=0}^\infty\sum_{|j|\leq n}q^{5n^2+2n-j^2}(1-q^{6n+3})\nonumber \\
&-2\sum_{n=0}^\infty\sum_{j=0}^nq^{5n^2+7n+2-j^2-j}(1-q^{6n+6}) \Big), \label{mock-10-1} \\
\psi^{(10)}(q)=&~\frac{(q^2;q^2)_\infty}{(q;q)_\infty^2}\Big(\sum_{n=0}^\infty\sum_{|j|\leq n}q^{5n^2+4n+1-j^2}(1-q^{2n+1})\nonumber \\
&-2\sum_{n=0}^\infty\sum_{j=0}^nq^{5n^2+9n+4-j^2-j}(1-q^{2n+2}) \Big), \label{mock-10-2} \\
X^{(10)}(q)=&~\frac{(q;q)_\infty}{(q^2;q^2)_\infty^2}\Big(\sum_{n=0}^\infty\sum_{|j|\leq n}q^{10n^2+2n-2j^2}(1-q^{16n+8}) \nonumber \\
&+2\sum_{n=0}^\infty\sum_{j=0}^nq^{10n^2+12n+3-2j^2-2j}(1-q^{16n+16}) \Big), \label{mock-10-3} \\
\chi^{(10)}(q)=&~\frac{(q;q)_\infty}{(q^2;q^2)_\infty^2}\Big( 2\sum_{n=0}^\infty\sum_{j=0}^nq^{10n^2+16n+6-2j^2-2j}(1-q^{8n+8}) \nonumber \\ &+\sum_{n=0}^\infty\sum_{|j|\leq n}q^{10n^2+6n+1-2j^2}(1-q^{8n+4})\Big). \label{mock-10-4}
\end{align}
Unfortunately we are not able to provide new proofs for the representations in \eqref{mock-7-1}--\eqref{mock-10-4}. The major difficulties arise from evaluating certain terminated ${}_4\phi_{3}$ and ${}_{3}\phi_{2}$ series. For example, for the seventh order mock theta function $\mathcal{F}_{0}^{(7)}(q)$, we first write it as
\begin{align}
\mathcal{F}_0^{(7)}(q)=\sum_{n=0}^\infty\frac{q^{n^2}}{(-q,q^{1/2}, -q^{1/2};q)_n}.
\end{align}
Taking $(m, \alpha, a, b, b_1, b_2, b_3, c_1, c_2, c_3, z)\rightarrow (3, 1, 0, 0, q, 0, 0, q^{1/2}, -q^{1/2}, -q, 1)$ in Theorem \ref{thm-key}, we deduce that
\begin{align}
\mathcal{F}_0^{(7)}(q)=\frac{1}{(q;q)_\infty}\left(1+\sum_{n=1}^\infty (-1)^n(1+q^n)q^{(3n^2-n)/2}{}_{4}\phi_{3}\bigg(\genfrac{}{}{0pt}{}{q^{-n}, q^{n},q, 0}{-q,q^{1/2}, -q^{1/2}};q,q\bigg)\right).
\end{align}
We do not know how to evaluate the terminated ${}_{4}\phi_{3}$ series on the right side. The same problems exist for $\mathcal{F}_{1}^{(7)}(q)$ and $\mathcal{F}_2^{(7)}(q)$.

As for the tenth order functions, the situation is quite similar. Taking $\phi^{(10)}(q)$ as an example, we first write it as
\begin{align}
\phi^{(10)}(q)=\frac{1}{1-q}\sum_{n=0}^\infty\frac{q^{n(n+1)/2}}{(q^{3/2},-q^{3/2};q)_{n}}.
\end{align}
Taking $(c,d,u,v)=(q^{3/2},-q^{3/2},-q,0)$ in Theorem \ref{thm-main}, we deduce that
\begin{align}
&\frac{(-q,-ab;q)_\infty}{(-aq,-bq;q)_\infty}
{}_3\phi_2\bigg(\genfrac{}{}{0pt}{}{q/a,q/b,0}{q^{3/2},-q^{3/2}};q,-ab\bigg) \nonumber\\
=&\sum_{n=0}^{\infty}(1+q^{2n+1})\frac{(-q,q/a,q/b;q)_n}{(q,-aq,-bq;q)_n}(ab)^nq^{(n^2-n)/{2}}
{}_3\phi_2\bigg(\genfrac{}{}{0pt}{}{q^{-n},-q^{n+1},0}{q^{3/2},-q^{3/2}};q,q\bigg). \label{mock-10-proof}
\end{align}
Setting $(a,b)\rightarrow (0,1)$ in \eqref{mock-10-proof} would yield a representation for $\phi^{(10)}(q)$. However, we cannot find useful formulas for evaluating the terminated ${}_3\phi_2$ series on the right side of \eqref{mock-10-proof}. It would be quite interesting if one could settle the difficulties in calculating these kinds of terminated sums.

\subsection*{Acknowledgements}
We thank Prof.\ George Andrews for pointing out the reference \cite{Andrews-GJM} where the identity \eqref{2-3-cor-3-unusual} appeared.  We are also grateful to the referee for his/her valuable comments. The second author was supported by the National Natural Science Foundation of China (11801424), the Fundamental Research Funds for the Central Universities (Project No. 2042018kf0027, Grant No. 1301--413000053) and a start-up research grant (No. 1301--413100048) of the Wuhan University.

\end{document}